\newtheorem{thm}{Theorem}[section]
\newtheorem*{thm*}{Theorem}
\newtheorem{lem}[thm]{Lemma}
\newtheorem{prop}[thm]{Proposition}
\newtheorem{cor}[thm]{Corollary}
\theoremstyle{definition}
\newtheorem{defn}[thm]{Definition}
\newtheorem{exmp}[thm]{Example}
\newtheorem{rmk}[thm]{Remark}
\newcommand{\RN}[1]{%
 \textup{\uppercase\expandafter{\romannumeral#1}}%
}
\newcommand{\Addresses}{{
 \bigskip
 \footnotesize
 \textsc{Instituto de Matem\'{a}ticas, Universidad Nacional Aut\'{o}noma de M\'{e}xico,
   Circuito exterior, Ciudad Universitaria, 04510, M\'{e}xico City, M\'{e}xico}\par\nopagebreak
 \textit{E-mail address}: \texttt{jon.wilson@im.unam.mx} }}
 \title{Positivity for quasi-cluster algebras}
\author{Jon Wilson}
\date{}
\newcommand*\circled[1]{\kern-2.5em%
  \put(0,4){\color{white}\circle*{18}}\put(-0.5,4){\circle{12}}%
  \put(-3,0){\color{black}\small#1}~~}
\begin{document}

\maketitle

\thispagestyle{firstpage}

\begin{abstract}

We generalise the expansion formulae of Musiker, Schiffler and Williams \cite{musiker2011positivity}, obtained for cluster algebras from orientable surfaces, to a larger class of coefficients which we call \textit{principal laminations}. In doing so, for any quasi-cluster algebra from a non-orientable surface, we are able to obtain expansion formulae for each cluster variable with respect to any initial quasi-triangulation $T$, and any choice of principal lamination. Moreover, generalising the `separation of additions' formula of Fomin and Zelevinsky \cite{fomin2007cluster}, we settle a conjecture of Lam and Pylyavskyy \cite{lam2012laurent} in the setting of quasi-cluster algebras. Namely, we prove the \textit{positivity conjecture} for quasi-cluster algebras with respect to any choice of coefficients.

\end{abstract}

\pagenumbering{arabic}

\tableofcontents

\section{Introduction}

The underlying framework of the paper revolves around the notion of a \textit{cluster structure}. This is a ring whose generators, \textit{cluster variables}, are grouped into overlapping subsets of the same cardinality, called \textit{clusters}. Often the complete set of generators is not known from the outset; rather an initial cluster is provided, which is also equipped with an iterative rule, \textit{mutation}, describing how more cluster variables/clusters are formed.

$$ \mathbf{x} = \{x_1,\ldots, x_n\} \xrightarrow{\text{mutation}} \mathbf{x'} = \{x_1',\ldots, x_n'\}$$

This act of mutation on a cluster only changes one cluster variable $x_k$ for some, $k \in \{1,\ldots, n\}$. Specifically, $x_i = x_i'$ for $i \neq k$ and $$x_k' = \frac{L_k(x_1,\ldots, x_n)}{x_k}$$ where $L_k(x_1,\ldots, x_n)$ is a Laurent polynomial in the cluster variables $x_1,\ldots, x_n$. One should note that $L_k$ is not fixed; mutations of different clusters will in general involve different Laurent polynomials.

This idea was first introduced by Fomin and Zelevinsky in the context of \textit{cluster algebras} \cite{fomin2002cluster}. There the data describing how clusters mutate comes from skew-symmetrizable matrices $B= (b_{ij})$. Namely, $L_k$ is the binomial associated to the $k^{th}$ column of $B$. $$x_k' = \frac{\displaystyle \prod_{b_{ik}>0} x_i^{b_{ik}} + \prod_{b_{ik}<0} x_i^{-b_{ik}}}{x_k}$$ \indent The original motivation was to help probe the study of total positivity and the dual canonical bases of reductive groups, initiated by Lusztig \cite{lusztig1994total}. Although this remains a very important part of the theory, the true growth of the subject is really owed to its unexpected ubiquity. Indeed, cluster structures are now known to intertwine and propagate through mirror symmetry, integrable systems, Poisson geometry, quiver representations, dilogarithm identities, Teichm\"{u}ller theory, and the list goes on. The reward of finding a cluster structure is that it allows one to study a global `picture' by local means; the iterative nature of mutation allows each cluster variable to be expressed as a rational function in the initial cluster variables. The foundational miracle of cluster algebras is the \textit{Laurent phenomenon} which states that each cluster variable is in fact a Laurent polynomial in the initial cluster variables \cite{fomin2002cluster}. Inspired by cluster algebras' roots in 'total positivity' the so called \textit{positivity conjecture} was postulated by Fomin and Zelevinsky, speculating that this Laurent polynomial has non-negative coefficients. Numerous papers have been devoted to special cases of the conjecture, which has now been proven by Lee and Schiffler for skew-symmetric type \cite{lee2015positivity}, and by Gross, Hacking, Keel and Kontsevich in the full generality of skew-symmetrizable type \cite{gross2018canonical}. \newline 
\indent  In recent years a whole wealth of cluster structures have emerged which lie outside the realm of cluster algebras. Surprisingly, almost all of these fall (or are at least believed to fall) under the umbrella of Lam and Pylyavskyy's \textit{Laurent phenomenon (LP) algebras}; this is a very broad cluster structure framework that was specifically designed to produce the Laurent phenomenon \cite{lam2012laurent}. Examples include the cluster structures of Gross, Hacking and Keel via mutations of toric models for certain log CY varieties \cite{gross2013birational}; the generalised cluster algebras of Chekhov and Shapiro \cite{chekhov2013teichmuller}, the electrical networks arising from the so called \textit{electrical Lie groups} of Lam and Pylyavskyy \cite{lam2015electrical}; and the positivity tests of \textit{response matrices} for circular planar electrical networks \cite{alman2016laurent}. \newline \indent A large open conjecture of Lam and Pylyavskyy essentially states that the positivity conjecture (of cluster algebras) also holds for LP algebras. There is some subtly to the statement since LP algebras can have exchange polynomials with negative coefficients, which would clearly prevent the positivity property in general. However, outside of this case the positivity conjecture is believed to hold. For a special choice of initial seed this was proven for \textit{graph LP algebras} -- a class of LP algebras which possess only finitely many cluster variables.\newline
\indent In the setting of cluster algebras, a particularly profitable testing ground for conjectures comes from the study of triangulated orientable surfaces. Given an orientable marked surface we may \textit{triangulate} it. For each (tagged) triangulation $T$ of the surface we may assign a \textit{seed} consisting of a cluster and a skew-symmetric matrix $B$; here the cluster variables correspond to (tagged) arcs in $T$, and $B$ is determined by inscribing oriented cycles in each triangle (with respect to the surface's orientation). These seeds form a cluster algebra structure where mutations correspond to flipping arcs in triangulations \cite{fomin2008cluster}. One can explain this behaviour by endowing the surface with any decorated hyperbolic structure. The correspondence is then uncovered by recognising that cluster variables can alternatively be viewed as the \textit{lambda length} of their corresponding arc, and the matrices encode how these lengths are related. Moreover, Fomin and Thurston showed this framework could also encode arbitrary coefficients systems via collections of curves known as \textit{laminations} \cite{fomin2012cluster}.\newline
\indent Dupont and Palesi studied the analogous cluster structures arising from unpunctured non-orientable surfaces \cite{dupont2015quasi}, which we subsequently extended to punctured surfaces \cite{wilson2018laurent}. Here the notion of `triangulation' is called a \textit{(tagged) quasi-triangulation} which is comprised of \textit{(tagged) quasi-arcs}; a class of curves consisting of one-sided closed curves as well as the usual (tagged) arcs of Fomin, Shapiro and Thurston. For each quasi-arc $\alpha$ one obtains a cluster variable $x_{\alpha}$ by considering its associated lambda length $\lambda(\alpha)$. There is a notion of flip for each quasi-arc in a quasi-triangulation, so a cluster structure is obtained by calculating the relationship between the lambda lengths of flipped quasi-arcs. The resulting structure is known as the \textit{quasi-cluster algebra}. Unlike cluster algebras, not all exchange relations in a quasi-cluster algebra are binomial.  By adding laminations to the surface, we also extended this construction to form the concept of a \textit{quasi-cluster algebra with coefficients}. We showed in \cite{wilson2017laurent},\cite{wilson2018laurent} that the resulting cluster structures all fall into the framework of LP algebras. An immediate consequence being that the Laurent phenomenon also holds in the context of quasi-cluster algebras. In this paper we significantly strengthen that result by proving Lam and Pylyavskyy's positivity conjecture for all quasi-cluster algebras (with coefficients): \newline

\noindent \textbf{Main Theorem}. \textit{Let $(S,M)$ be a non-orientable marked surface and let $\mathbf{L}$ be a multi-lamination. Fixing a quasi-triangulation $T$ we may consider the initial seed $\Sigma_T := \{\mathbf{x}, T\}$ and the associated quasi-cluster algebra $\mathcal{A}_{\mathbf{L}}(S,M)$. Then each cluster variable of $\mathcal{A}_{\mathbf{L}}(S,M)$ is a Laurent polynomial, in $\mathbf{x}$, with non-negative coefficients.} \newline

Recall that each cluster parameterizes the decorated Teichm\"{u}ller space of the associated bordered surface $(S,M)$. The affirmed conjecture therefore has particularly deep consequences -- it tells us the change of coordinate functions between any two of these parameterizations are actually Laurent polynomials with non-negative coefficients. \newline \indent 

The paper is organised as follows. In Section 2 we first recall the construction of a quasi-cluster algebra, $\mathcal{A}(S,M)$, from a bordered surface $(S,M)$, and proceed in defining the \textit{laminated quasi-cluster algebra}, $\mathcal{A}_{\mathbf{L}}(S,M)$, with respect to a multi-lamination $\mathbf{L}$ on $(S,M)$. In view of Remark \ref{coefficientbijectionrmk}, one should view these multi-laminations as all possible coefficient systems of the underlying quasi-cluster algebra $\mathcal{A}(S,M)$. We finish the section by discussing the combinatorics behind cluster algebras from orientable surfaces, and the inherited combinatorics on the orientable double cover of non-orientable surfaces. Sections 3 is devoted to introducing abstract snake and band graphs. \newline \indent The overall goal of the paper is to obtain explicit expansion formulae for each cluster variable $x_{\alpha}$ in $\mathcal{A}_{\mathbf{L}}(S,M)$, with respect to an initial tagged quasi-triangulation $T$. In section 4, in the interest of comprehensibility, we restrict ourselves to tagged quasi-triangulations of $(S,M)$ which do not contain notched arcs nor one-sided closed curves -- we refer to them as \textit{triangulations}. Our approach imitates the work of Musiker, Schiffler and Williams \cite{musiker2011positivity},\cite{musiker2013bases} -- fixing such a triangulation, $T$, we associate a snake or band graph $S_{\alpha,T}$ to each quasi-arc $\alpha$ of $(S,M)$ (with respect to $T$). Section 5 is dedicated to finding expansion formulae, with respect to a triangulation $T$, for cluster variables corresponding to quasi-arcs. We begin by obtaining such formulae for the coefficient-free quasi-cluster algebra, $\mathcal{A}(S,M)$. Specifically, for each quasi-arc $\alpha$, we show that the corresponding cluster variable, $x_{\alpha}$, has the following expansion:

$$x_{\alpha} = \frac{1}{cross(T,\alpha)}\sum_{P \in \mathcal{P}_{T,\alpha}} x(P).$$

\noindent When $\alpha$ is an arc the above expansion formula follows by lifting to the orientable double cover and invoking the work of Musiker, Schiffler and Williams \cite{musiker2011positivity}. The real difficulty is to obtain the expansion formula when $\alpha$ is a one-sided closed curve. To achieve this we utilise the fact there exists arcs $\beta$ and $\gamma$ such that $x_{\gamma} = x_{\alpha}x_{\beta}$. We are thus able to derive an expansion formulae for $x_{\alpha}$ by embedding $S_{\beta,T}$ in $S_{\gamma,T}$, and studying the compliment of these various embeddings. \newline \indent
In the later half of Section 5 we embark on the mission of obtaining expansion formulae for quasi-cluster algebras with a particular class of coefficients. This class comes from a type of multi-lamination called \textit{principal laminations}, which may be thought of as a generalisation of the principal coefficients of cluster algebras. Indeed, when $(S,M)$ is orientable then $\mathcal{A}(S,M)$ is a cluster algebra, and principal coefficients of $\mathcal{A}(S,M)$ correspond to a particular principal lamination. In the coefficient free case we were able to obtain expansion formulae `for free' by lifting the picture to the orientable double cover $\overline{(S,M)}$. However, the situation is no longer this simple. The explicit expansion formulae of Musiker, Schiffler and Williams were only obtained for cluster algebras with principal coefficients. To employ their result we would need a multi-lamination on $(S,M)$ which lifts to principal coefficients on the orientable double cover -- no such multi-lamination exists. One of the motivating reasons behind principal laminations on $(S,M)$ comes from the fact their lifts are (usually) principal laminations on $\overline{(S,M)}$. Using the \textit{separation of additions} formulae of Fomin and Zelevinsky, in Subsection 5.2 we concentrate on orientable surfaces and extend the results of Musiker, Schiffler and Williams to all principal laminations -- which should be an interesting result for cluster algebraists in its own right.  
$$x_{\mathbf{L_T}}(\alpha) = \frac{1}{cross(\alpha, T)} {\displaystyle \sum_{P \in \mathcal{P}_{T,\alpha}} x(P)y_{\mathbf{L_T}}(P)}.$$ Here the $y$-monomial, $y_{\mathbf{L_T}}(P)$, comes from the notion of $\mathbf{L_T}$-oriented diagonals of $S_{\alpha,T}$ -- a generalisation of the $\alpha$-oriented diagonals of Musiker and Schiffler \cite{musiker2010cluster}.

As a corollary, if $\mathbf{L_T}$ is a principal lamination which contains no one-sided closed curves, we immediately obtain expansion formulae for every arc in the corresponding quasi-cluster algebra $\mathcal{A}_{\mathbf{L_T}}(S,M)$. If $\mathbf{L_T}$ does contain one-sided closed curves then its lift is no longer a principal lamination on $\overline{(S,M)}$. In which case, we pass to a closely related \textit{quasi-principal lamination} $\mathbf{L_T}^*$ on $(S,M)$. We obtain an expansion formulae for $x_{\mathbf{L_T}}(\alpha) \in \mathcal{A}_{\mathbf{L_T}}(S,M)$ by comparing it to $x_{\mathbf{L_T}^*}(\alpha) \in \mathcal{A}_{\mathbf{L_T}^*}(S,M)$. Namely, we show: 

$$x_{\mathbf{L_T}}(\alpha) = \frac{x_{\mathbf{L_T}^*}(\alpha)}{bad(\mathbf{L_T},\alpha)} = \frac{1}{bad(\mathbf{L_T},\alpha)cross(\alpha, T)} {\displaystyle \sum_{P} x(P)y_{\mathbf{L_T^*}}(P)}.$$ \newline \indent
In Section 6 we obtain expansion formulae for any tagged quasi-arc $\alpha$ of $(S,M)$, with respect to any initial tagged triangulation $T$. We begin the section by explaining how the snake and band graphs, $S_{\alpha,T}$, are constructed in this setting. We also define the $y$-monomial, $y_{\mathbf{L_T}}(P)$ associated to each good matching $P$ of $S_{\alpha,T}$. From there, using analogous techniques to those employed in Section 5, we obtain expansion formulae for each combinatorial type of tagged quasi arc. \newline \indent In Section 7 we extend the results to the case where our initial $T$ is an arbitrary tagged quasi-triangulation. In Section 8 we prove a generalised version of Fomin and Zelevinsky's `separation of additions' formula with respect to any principal lamination. Namely, for any quasi-arc $\alpha$ and any multi-lamination $\mathbf{L}$, we show that $x_{\mathbf{L}}(\alpha)$ can be obtained via a certain specialisation of $x_{\mathbf{L_T}}(\alpha)$, where $\mathbf{L_T}$ is a principal lamination. As a direct corollary, we prove Lam and Pylyavskyy's positivity conjecture for all quasi-cluster algebras, with respect to any choice of coefficients.

\section*{\large \centering Acknowledgements}

The results of this paper were obtained during my postdoctoral fellowship at IMUNAM. I wish to thank Christof Geiss and Daniel Labardini-Fragoso for the warm and stimulating environment they provided during my time here. I am also grateful to Ilke Canakci, Emily Gunawan and Daniel Labardini-Fragoso for helpful discussions.

\section{Quasi-cluster algebras}

\subsection{Quasi-triangulations and flips}

In this section we recall the work of \cite{wilson2018laurent}, which arose from the combined works of Fomin, Thurston \cite{fomin2012cluster}, and Dupont, Palesi \cite{dupont2015quasi}. \newline \indent

Let $S$ be a compact $2$-dimensional manifold and let $M$ be a finite set of \textit{marked} points of $S$ such that each boundary component contains at least one marked point. We refer to marked points in the interior of $S$ as \textit{punctures}. \newline \indent The general idea is that `\textit{triangulations}' of a tuple $(S,M)$ will provide us with a cluster structure. However, some $(S,M)$ will be too `small' to admit such a structure, and we therefore wish to exclude them from our consideration -- the following definition indicates all such tuples.

\begin{defn}
A tuple $(S,M)$ is called a \textit{\textbf{bordered surface}} if $(S,M)$ is not an unpunctured or once-punctured monogon, digon or triangle; a once or twice punctured sphere; a M\"obius strip with one marked point on the boundary; the once-punctured projective space; the thrice-punctured sphere; the twice-punctured projective space; the once-punctured Klein bottle.

\end{defn}

\begin{defn}

An \textit{\textbf{ordinary arc}} of $(S,M)$ is a simple curve in $S$ whose two endpoints are in $M$, and which is not homotopic to a boundary arc or a marked point.
\end{defn}

\begin{defn}

A \textit{\textbf{tagged arc}} $ \gamma $ is an ordinary arc which has been `tagged' at its endpoints in one of two possible ways; \textbf{\textit{plain}} or \textbf{\textit{notched}}. Moreover, the tagging is required to satisfy the following conditions:

\begin{itemize}

\item Any endpoint of $ \gamma $ lying on the boundary $ \partial S $ is tagged plain.

\item If the endpoints of $ \gamma $ coincide they must be tagged the same way.

\end{itemize}

\end{defn}

\begin{defn}

A simple closed curve in $ S $ is called \textbf{\textit{two-sided}} if it admits a regular neighbourhood which is orientable. If no such neighbourhood exists it is called \textit{\textbf{one-sided}}.

\end{defn}

\begin{defn}

A \textbf{\textit{tagged quasi-arc}} is either a tagged arc or a one-sided closed curve.\end{defn}

\begin{rmk}

Often, when the context is clear, we shall simply refer to a tagged quasi-arc as a quasi-arc. Throughout the paper we shall always consider (tagged) quasi-arcs up to isotopy.

\end{rmk}

\begin{defn}

A \textbf{\textit{cross-cap}} is a cylinder where antipodal points on one of the boundary components are identified. Namely, it is real projective plane with one boundary component. See Figure \ref{crosscap}.
\end{defn}

\begin{rmk}

Note that any compact non-orientable surface (with boundary) is homeomorphic to a sphere where (more than) $ k $ open disks are removed, and $ k $ of them have been exchanged for cross-caps, for some $ k\geq 1 $.

\end{rmk}

\begin{figure}[H]
\begin{center}
\includegraphics[width=3cm]{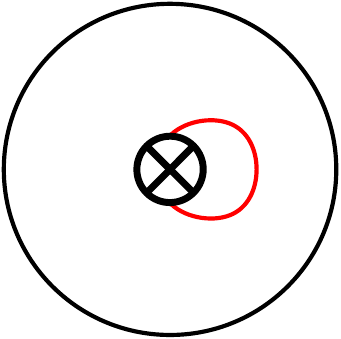}
\caption{Here we depict a crosscap. The red curve is an example of a one-sided closed curve.}
\label{crosscap}
\end{center}
\end{figure}

\begin{defn}[Compatibility of arcs] 

Let $ \alpha $ and $ \beta $ be two arcs of $ (S,M) $. We say $ \alpha $ and $ \beta $ are \textbf{\textit{compatible}} \textit{if and only if} the following conditions are all satisfied:

\begin{itemize}

\item There exist isotopic representatives of $ \alpha $ and $ \beta $ that do not intersect in the interior of $S$.

\item Suppose the untagged versions of $ \alpha $ and $ \beta $ do not coincide. Then if $\alpha$ and $\beta$ share an endpoint $p$, the ends of $\alpha$ and $\beta$ must be tagged the same way at $p$.

\item Suppose the untagged versions of $ \alpha $ and $ \beta $ do coincide. Then exactly one end of $ \alpha $ must be tagged the same way as the corresponding end of $\beta$.

\end{itemize}

\end{defn}

\underline{\textbf{Notation}}: Let $\gamma$ be an arc bounding a M\"{o}bius strip with one marked point, $M_1^{\gamma}$. We denote the unique one-sided closed curve of $M_1^{\gamma}$ by $\alpha_{\gamma}$, and we denote the unique arc of $M_1^{\gamma}$ by $\beta_{\gamma}$ -- see Figure \ref{intersectioncompatible}.

\begin{figure}[H]
\begin{center}
\includegraphics[width=3cm]{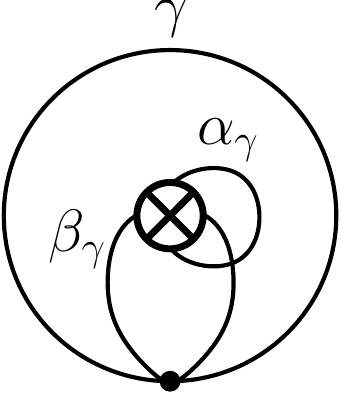}
\caption{The two quasi-arcs $\alpha_{\gamma}$ and $\beta_{\gamma}$ enclosed in the M\"obius strip $M_1^{\gamma}$.}
\label{intersectioncompatible}
\end{center}
\end{figure}

\begin{defn}[Compatibility of quasi-arcs]
\label{newcompatibilitydef}
Two quasi-arcs $ \alpha $ and $\beta$ are said to be \textbf{\textit{compatible}} if either:
\begin{itemize}
\item $\alpha$ and $\beta$ are compatible arcs;
\item $\alpha$ and $\beta$ are \underline{not} both arcs, and either $\alpha$ and $\beta$ do not intersect or $\{ \alpha,\beta \} = \{ \alpha_{\gamma} , \beta_{\gamma} \}$ for some arc $\gamma$ bounding a M\"obius strip with one marked point on the boundary - see Figure \ref{intersectioncompatible}.
\end{itemize}

\end{defn}

\begin{defn}

\label{quasitriangulationdef}
A \textbf{\textit{quasi-triangulation}} of $ (S,M) $ is a maximal collection of pairwise compatible quasi-arcs of $ (S,M) $ which contains no arcs that cut out a once-punctured monogon, or a M\"obius strip with one marked point on the boundary.

\end{defn}

\begin{prop}[Proposition 3.11, \cite{wilson2018laurent}]
\label{flip}
Let $ T $ be a quasi-triangulation of $ (S,M) $. Then for any quasi-arc $ \gamma \in T $ there exists a unique quasi-arc $\gamma'$ in $(S,M)$ such that $\gamma' \neq \gamma$ and $\mu_{ \gamma }(T) := T \setminus \{ \gamma \} \cup \gamma' $ is a quasi-triangulation. We say $\gamma'$ is the \textbf{\textit{flip}} of $\gamma$ with respect to $T$.

\end{prop}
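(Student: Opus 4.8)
The plan is to establish existence and uniqueness of the flip separately, and in each case to reduce to the already-understood situations: flips of ordinary (tagged) arcs on orientable surfaces, as treated by Fomin--Shapiro--Thurston, and the purely local picture inside a M\"obius strip with one marked point. First I would fix a quasi-triangulation $T$ and a quasi-arc $\gamma \in T$ and split into two cases according to whether $\gamma$ is an ordinary tagged arc or a one-sided closed curve. If $\gamma = \alpha_{\delta}$ is a one-sided closed curve, then by Definition \ref{newcompatibilitydef} the only quasi-arc in $T$ that can fail to be compatible with a replacement must interact with the M\"obius strip $M_1^{\delta}$ cut out by the arc $\delta$ bounding it; a short local analysis inside $M_1^{\delta}$ shows that $\delta \in T$ (otherwise $T$ would not be maximal, or would contain an arc cutting out a M\"obius strip with one marked point, contradicting Definition \ref{quasitriangulationdef}), and that the only quasi-arc of $M_1^{\delta}$ other than $\alpha_{\delta}$ which is compatible with everything in $T \setminus \{\gamma\}$ is $\beta_{\delta}$. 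This gives both existence ($\gamma' = \beta_{\delta}$) and uniqueness in this case. Symmetrically, if $\gamma = \beta_{\delta}$ is the arc inside a M\"obius strip with one marked point, the same local picture forces $\gamma' = \alpha_{\delta}$.

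Next I would handle the case where $\gamma$ is an ordinary tagged arc that does not lie inside any M\"obius strip with one marked point. Here the idea is that $T \setminus \{\gamma\}$ is a collection of pairwise compatible quasi-arcs, and any quasi-arc $\gamma'$ completing it to a quasi-triangulation must be an ordinary tagged arc: a one-sided closed curve $\alpha_{\delta}$ could only be added if its companion arc $\delta$ were present and compatible with $\alpha_{\delta}$, but the prohibition in Definition \ref{quasitriangulationdef} on arcs cutting out a M\"obius strip with one marked point would then be violated unless the configuration was already of the special form handled above. So $\gamma'$ must be a tagged arc compatible with every arc of $T \setminus \{\gamma\}$ and distinct from $\gamma$. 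At this point one passes to the orientable double cover (or, more elementarily, works in the complement of $T \setminus \{\gamma\}$): the quasi-arcs of $T \setminus \{\gamma\}$ cut $(S,M)$ into pieces, and $\gamma$ lies in a piece that is either a quadrilateral (giving the usual two diagonals, hence a unique flip) or a once-punctured digon / self-folded configuration (giving the notched/plain alternative familiar from Fomin--Shapiro--Thurston). Invoking the orientable-surface flip statement on each such piece, lifted appropriately, yields a unique $\gamma'$.

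The main obstacle I anticipate is the bookkeeping around the one-sided closed curves and the non-orientability: one must be careful that adding back a candidate $\gamma'$ does not inadvertently create an arc cutting out a once-punctured monogon or a M\"obius strip with one marked point (the two forbidden configurations in Definition \ref{quasitriangulationdef}), and conversely that the ``obvious'' candidate coming from the orientable theory is not one of these forbidden objects — if it is, one must check that the genuine flip is instead a one-sided closed curve, which is exactly the phenomenon that makes quasi-triangulations richer than ordinary triangulations. Making the local analysis inside (and in a neighbourhood of) a M\"obius strip with one marked point fully rigorous, and verifying that these local cases exhaust all the ways the orientable-cover argument can break down, is where the real work lies; the rest is a careful but routine reduction to \cite{fomin2008cluster} and \cite{dupont2015quasi}.
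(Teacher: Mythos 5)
First, a point of reference: this paper never proves Proposition \ref{flip} — it is imported wholesale from [Proposition 3.11, \cite{wilson2018laurent}] (building on \cite{dupont2015quasi}) — so there is no in-paper argument to measure you against. Judged on its own terms, your sketch breaks down precisely in the non-orientable cases, i.e.\ exactly where the statement differs from Fomin--Shapiro--Thurston. In your Case 1 you assert that if $\gamma=\alpha_{\delta}$ is a one-sided closed curve of $T$ then the bounding arc $\delta\in T$ and the flip is $\gamma'=\beta_{\delta}$. Both claims are false: Definition \ref{quasitriangulationdef} explicitly forbids any arc cutting out a M\"obius strip with one marked point from belonging to a quasi-triangulation, so $\delta\notin T$ always; and if $\alpha_{\delta}\in T$, then maximality together with Definition \ref{newcompatibilitydef} forces the crossing arc $\beta_{\delta}$ to be in $T$ already (this is the structural fact the paper itself uses in Section 7: every one-sided closed curve of $T$ intersects precisely one arc $\beta_{\alpha}\in T$), so $\beta_{\delta}$ cannot serve as the replacement quasi-arc. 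The genuine flip of a one-sided closed curve is the arc appearing in flip type 3 of Figure \ref{combinatorialflips}, and the flip of $\beta_{\delta}$ when $\alpha_{\delta}\in T$ is the arc of type 4; the flip is never the swap $\alpha_{\delta}\leftrightarrow\beta_{\delta}$, since those two quasi-arcs are compatible and coexist in $T$.

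The same confusion undermines your Case 2. You argue that when the ordinary arc $\gamma$ being flipped does not lie in an $M_1^{\delta}$, the replacement must again be an ordinary tagged arc, reducing everything to quadrilateral/once-punctured-digon pictures on the double cover. But flip type 3 says exactly that an ordinary arc can flip to a one-sided closed curve: for instance in the M\"obius strip with two marked points, flipping the arc joining the two marked points in a quasi-triangulation that contains the through-crosscap loop produces the one-sided closed curve, and that arc is not contained in any $M_1^{\delta}$, so your dichotomy never sees it. Your final paragraph concedes this phenomenon may occur, which contradicts the body of the argument, and no mechanism is offered for proving existence and uniqueness when it does. (A smaller issue: the passage to the orientable double cover needs more care than ``invoke the orientable flip statement on each piece'', since a flip downstairs corresponds to a pair of flips upstairs as in Proposition \ref{quiverandflip}, and maximality of $T$ is not the same as maximality of $\overline{T}$.) The correct local analysis must treat the four flip regions of Figure \ref{combinatorialflips} — in particular the region consisting of an $M_1$ together with the adjacent triangle, where the arc/one-sided-curve exchange happens — rather than the $\alpha_{\delta},\beta_{\delta}$ swap you propose.
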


In Figure \ref{combinatorialflips} we list the possible flips (up to a change in tagging at any potential punctures) between quasi-arcs and the relationships between their corresponding lambda lengths. \begin{enumerate}[label=\arabic*)]

\item $\gamma$ is the diagonal of quadrilateral in $(S,M)$ in which no two consecutive edges are glued together.

\begin{figure}[H]
\begin{center}
\includegraphics[width=10cm]{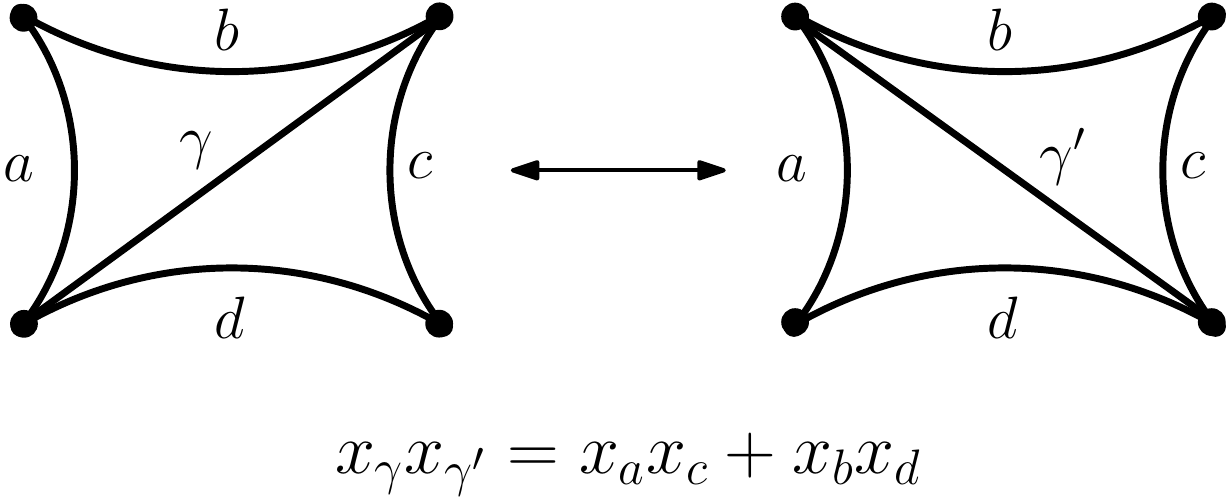}
\end{center}
\end{figure}

\item $\gamma$ is an interior arc of a once-punctured digon.

\begin{figure}[H]
\begin{center}
\includegraphics[width=11cm]{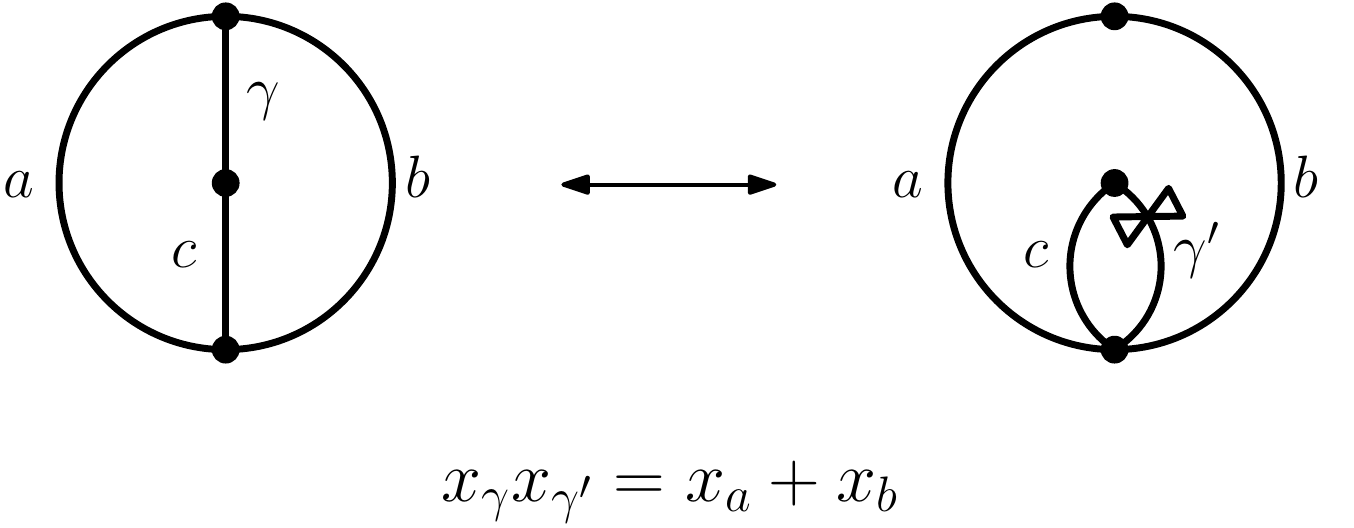}
\end{center}
\end{figure}

\item $\gamma$ is an arc that flips to a one-sided closed curve, or vice verca.

\begin{figure}[H]
\begin{center}
\includegraphics[width=11cm]{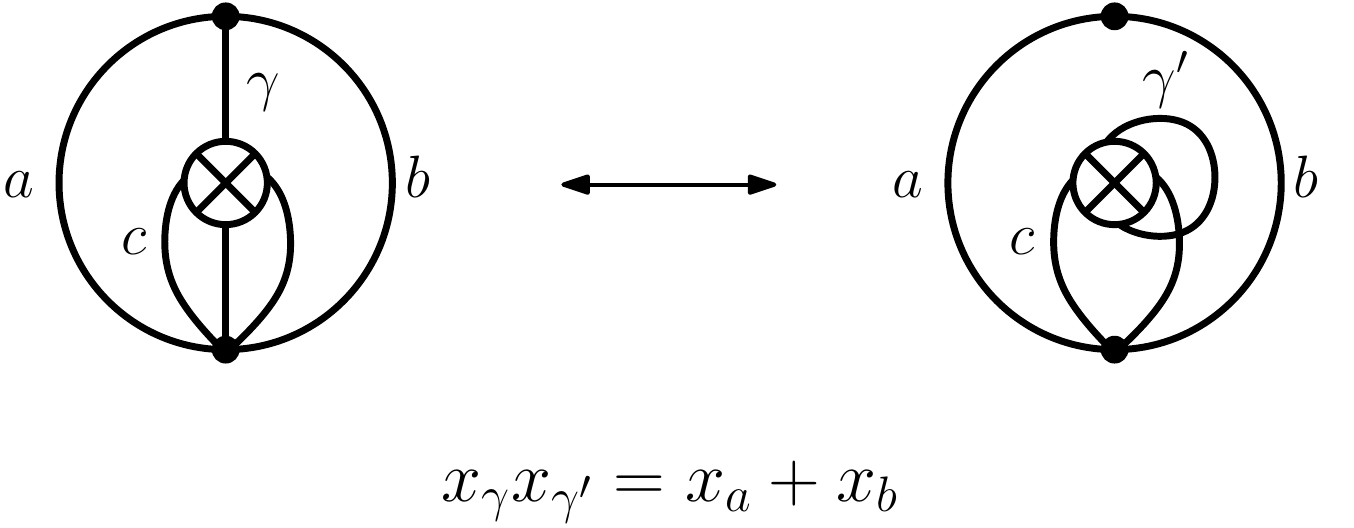}
\end{center}
\end{figure}

\item $\gamma$ is an arc intersecting a one-sided close curve $c$.

\begin{figure}[H]
\begin{center}
\includegraphics[width=11cm]{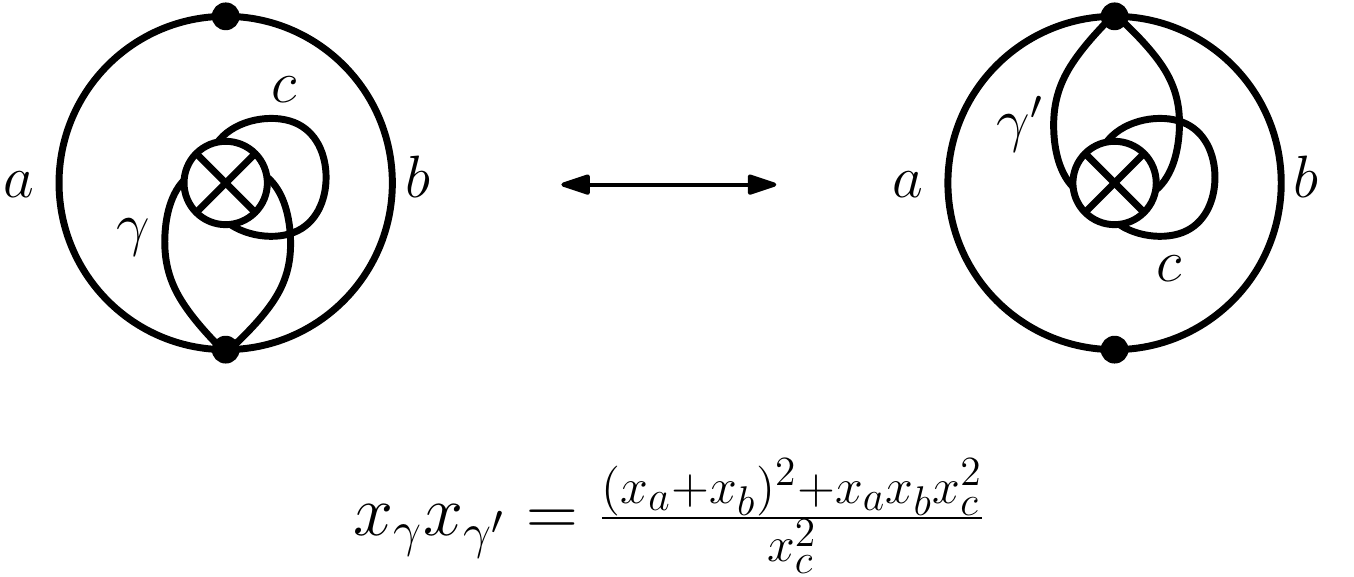}
\caption{A list of all combinatorial flip types as well as their corresponding exchange relations. Note that we may change the tagging at each marked point on the boundary of these configurations, however, the exchange relation will remain unchanged.}
\label{combinatorialflips}
\end{center}
\end{figure}

\end{enumerate}

\begin{defn}

A quasi triangulation is referred to as a \textit{\textbf{triangulation}} if it contains no one-sided closed curves.

\end{defn}

\subsection{The quasi-cluster algebra}

\begin{defn}

Let $\mathbb{T}_n$ be the (labelled) $n$-regular tree where edges are labelled by the numbers $1,\ldots, n$ such that the $n$ edges incident to a vertex receive different labels.

\end{defn}

Let $(\mathbf{x},T)$ be a seed of $(S,M)$. If we label the cluster variables of $\mathbf{x}$ $1,\ldots, n$ then we can consider the labelled n-regular tree $\mathbb{T}_n$ generated by this seed through mutations. Each vertex in $\mathbb{T}_n$ has $n$ incident vertices labelled $1,\ldots,n$. Vertices represent seeds and the edges correspond to mutation. In particular, the label of the edge indicates which direction the seed is being mutated in. \newline

Let $\mathcal{X}$ be the set of all cluster variables appearing in the seeds of $\mathbb{T}_n$. $\mathcal{A}_{(\mathbf{x},T)}(S,M) := \mathbb{ZP}[\mathcal{X}]$ is the \textit{\textbf{quasi-cluster algebra}} of the seed $(\mathbf{x},T)$.

The definition of a quasi-cluster algebra depends on the choice of the initial seed. However, if we choose a different initial seed the resulting quasi-cluster algebra will be isomorphic to $\mathcal{A}_{(\mathbf{x},T)}(S,M)$. As such, it makes sense to talk about the quasi-cluster algebra of $(S,M)$.

\subsection{Quasi-cluster algebras with arbitrary coefficients}

\begin{defn}
\label{laminationdef}
Let $(S,M)$ be a bordered surface and let $L$ be a finite collection of curves in $(S,M)$ which are non-self-intersecting and pairwise non-intersecting curves. Such a collection $L$ is called a \textit{\textbf{lamination}} of $(S,M)$ if each connected component of $L$ is any one of the following curves:

\begin{itemize}

\item A curve which connects two unmarked points in $\partial S$, but is not isotopic to a piece of boundary containing one or zero marked points;

\item A curve with one endpoint belonging to $\partial S \setminus M$, and whose other end spirals into a puncture;

\item A curve with both ends spiralling into puncture(s). However, we forbid the case when both ends of the curve spiral into the same puncture if the curve does not enclose anything else;

\item A one-sided closed curve;

\item A two-sided closed curve which does not bound a M\"{o}bius strip or a disk with zero or one punctures.

\end{itemize}

\end{defn}

\begin{defn}

A \textbf{\textit{multi-lamination}}, $\mathbf{L}$, of a bordered surface $(S,M)$ consists of a finite collection of laminations of $(S,M)$.

\end{defn}

\begin{figure}[H]
\begin{center}
\includegraphics[width=11cm]{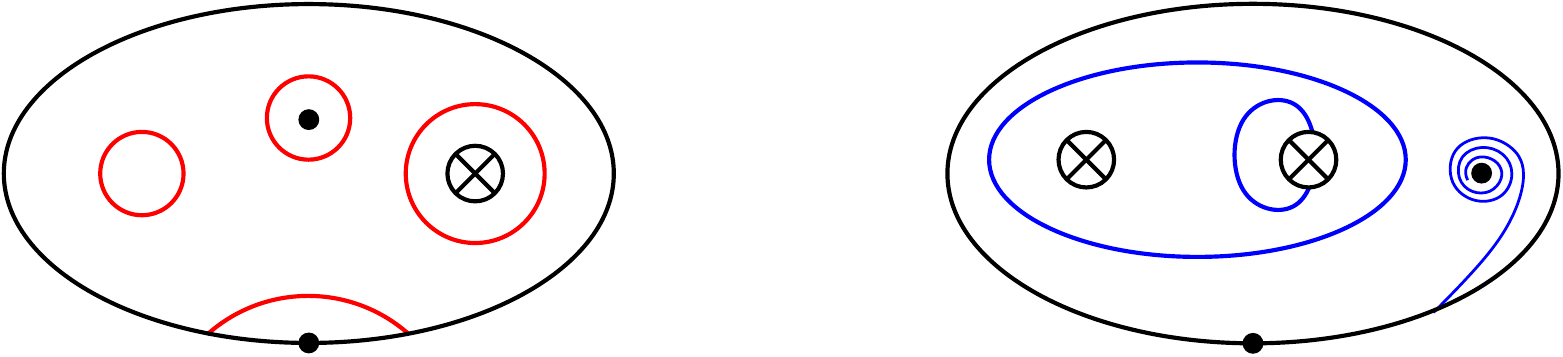}
\caption{In the left surface none of the curves are laminations. All curves in the right surface are legitimate laminations.}
\label{laminations}
\end{center}
\end{figure}

\begin{defn}[Tropical lambda length]

Let $\gamma$ be a quasi-arc on an unpunctured surface $(S,M)$, and let $\mathbf{L}$ be a multi-lamination. For each $L$ in $\mathbf{L}$ we define $l_{L}(\gamma))$ as follows:

\begin{itemize}

\item If $\gamma$ is an arc then $l_{L}(\gamma)$ is the minimal number of intersection points between $L$ and any curve isotopic to $\gamma$.

\item If $\gamma$ is a one-sided closed curve then the definition splits into two cases: 

\begin{itemize}

\item  if $ L $ contains no curves homotopic to $ \gamma $ then $ l_{L}(\gamma) $ is the minimal number of intersection points between $ L $ and any one-sided closed curve which is homotopic to $ \gamma $.

\item if $L$ contains a curve homotopic to $\gamma$, then $l_{L}(\gamma)$ is defined as \textbf{minus} the number of curves in $L$ which are homotopic to $\gamma$.
 
\end{itemize}
\end{itemize}

The \textit{\textbf{tropical lambda length}}, $c_{\mathbf{L}}(\gamma)$, of $\gamma$ is defined as:

\begin{equation}
c_{\mathbf{L}}(\gamma) = \prod_{L_i \in \mathbf{L}} x_{n+i}^{-\frac{l_{L_i}(\gamma)}{2}}
\end{equation}

\end{defn}

\begin{rmk}

Note that for punctured surfaces the tropical lambda length is not well defined. Namely, if a lamination $L$ spirals around a given puncture $p$, then any arc incident to $p$ will intersect $L$ infinitely many times. However, one can `open' up the punctures to obtain an unpunctured surface, and can then proceed to define the tropical lambda for all tagged arcs $\gamma$ and laminations on punctured surfaces too. This tropical lambda length will depend on the choice of `lift', $\overline{\gamma}$, of $\gamma$ to the opened surface, however, the `laminated lambda length' defined below only depends on the original arc $\gamma$ (as then the quantity $x_{\mathbf{\overline{L}}}(\gamma) := \frac{\lambda(\overline{\gamma})}{c_{\mathbf{\overline{L}}}(\overline{\gamma})}$ turns out to be independent of the choice of $\overline{\gamma}$). A vigourous exposition of this may be found in \cite{wilson2018laurent}.

\end{rmk}

\begin{defn}

Let $(S,M)$ be a bordered surface, and let $\mathbf{L}$ be a multi-lamination. For each quasi-arc $\gamma$ we define the \textbf{\textit{laminated lambda length}}, $x_{\mathbf{L}}(\gamma)$, of $\gamma$ to be:

\begin{equation}
\label{laminated length}
x_{\mathbf{L}}(\gamma) := \frac{\lambda(\gamma)}{c_{\mathbf{L}}(\gamma)}
\end{equation}

\end{defn}

\begin{prop}[Theorem 5.44, \cite{wilson2018laurent}]

Let $T = \{x_1,\ldots, x_n\}$ be a quasi-triangulation and let $\mathbf{L} = \{L_1,\ldots, L_m\}$ be a multi-lamination. Then the collection of variables $x_{\mathbf{L}}(\gamma_1), \ldots, x_{\mathbf{L}}(\gamma_n)$ are algebraically independent over $\mathbb{Z}[x_{n+1},\ldots, x_{n+m}]$.

\end{prop}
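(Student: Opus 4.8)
The plan is to reduce the statement to the analogous fact on the orientable double cover $\overline{(S,M)}$, where the corresponding result is known from the theory of cluster algebras from orientable surfaces (Fomin--Shapiro--Thurston, and the refinements of Fomin--Thurston for laminated coefficients). First I would recall that a quasi-triangulation $T = \{\gamma_1,\ldots,\gamma_n\}$ lifts to a collection $\overline{T}$ of arcs on $\overline{(S,M)}$: a two-sided quasi-arc $\gamma_i$ lifts to two disjoint arcs $\overline{\gamma_i}^{\,+}, \overline{\gamma_i}^{\,-}$, while a one-sided closed curve lifts to a single two-sided closed curve, which after the standard manipulation can be replaced (as in the construction of $\mathcal{A}(S,M)$ via lambda lengths) by the lift of the arc $\beta_\gamma$ bounding its M\"obius neighbourhood together with $\gamma$ itself; in all cases one obtains a genuine triangulation (or tagged triangulation) $\overline{T}$ of the orientable double cover. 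Likewise the multi-lamination $\mathbf{L}$ lifts to a multi-lamination $\overline{\mathbf{L}}$ on $\overline{(S,M)}$. The lambda length $\lambda(\gamma_i)$ equals $\lambda(\overline{\gamma_i}^{\,\pm})$ under the decorated hyperbolic structure pulled back to the cover, and the tropical lambda length $c_{\mathbf{L}}(\gamma_i)$ is matched with $c_{\overline{\mathbf{L}}}(\overline{\gamma_i}^{\,\pm})$ up to the bookkeeping of which coefficient variables $x_{n+1},\ldots,x_{n+m}$ are being used; so each $x_{\mathbf{L}}(\gamma_i)$ is (a monomial multiple of) a lifted laminated lambda length.

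Next I would invoke the corresponding algebraic independence statement on $\overline{(S,M)}$: the laminated lambda lengths associated to the arcs of a tagged triangulation of an orientable surface, together with the coefficient variables, are algebraically independent, because they form (part of) a cluster in a cluster algebra from a surface and distinct cluster variables/coefficients in such an algebra are always algebraically independent over the ground ring (this is Theorem~5.44 of \cite{wilson2018laurent} in the orientable case, itself resting on the fact that the $\lambda$-lengths give local coordinates on decorated Teichm\"uller space, following Penner). Given a hypothetical polynomial relation $F(x_{\mathbf{L}}(\gamma_1),\ldots,x_{\mathbf{L}}(\gamma_n)) = 0$ with coefficients in $\mathbb{Z}[x_{n+1},\ldots,x_{n+m}]$, I would pull $F$ back along the covering map to obtain a polynomial relation among a subset of the lifted laminated lambda lengths $x_{\overline{\mathbf{L}}}(\overline{\gamma_i}^{\,\pm})$ and the (lifted) coefficient variables. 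Since these lifted quantities are among the cluster variables/coefficients of the cluster algebra of $\overline{(S,M)}$, they are algebraically independent, forcing $F \equiv 0$.

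The main obstacle is handling the one-sided closed curves in $T$: for such a $\gamma$ there is no single arc on the cover playing the role of $x_\gamma$, and instead one must use the relation $x_{\gamma_\beta} = x_{\alpha}x_{\beta}$ (the notation $M_1^\gamma$, $\alpha_\gamma$, $\beta_\gamma$ of the excerpt) to express $x_{\mathbf{L}}(\gamma)$ as a ratio of genuine lifted lambda lengths. One must check that passing through this ratio does not destroy algebraic independence --- i.e. that the finitely many lambda lengths entering the numerators and denominators, over all one-sided curves in $T$, are themselves algebraically independent, which again follows because they all lie in a single cluster algebra from $\overline{(S,M)}$ (possibly after enlarging $\overline{T}$ by the relevant $\beta_\gamma$-lifts, which remain compatible). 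A secondary technical point is that $\overline{T}$ may be a \emph{tagged} triangulation even when $T$ is not tagged, so the cited orientable-case independence must be the tagged version; this is available since the cluster algebra of a surface is unchanged (up to the tagging involution) by replacing a triangulation with a tagged one. Modulo these lifting subtleties, the argument is a straightforward transfer of algebraic independence from the double cover.
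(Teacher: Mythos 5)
The paper does not prove this statement; it is cited verbatim as Theorem~5.44 of \cite{wilson2018laurent}, so there is no in-paper argument to compare against. Judging your proposal on its own merits, the strategy of transferring algebraic independence from the orientable double cover has a genuine gap at the crucial step. A polynomial relation $F(x_{\mathbf L}(\gamma_1),\ldots,x_{\mathbf L}(\gamma_n))=0$ downstairs, once you interpret the laminated lambda lengths as functions on decorated Teichm\"uller space of $(S,M)$ and pull back along the covering map, yields the vanishing of $F$ in the lifted lambda lengths only on the image of decorated Teichm\"uller space of $(S,M)$ inside that of $\overline{(S,M)}$, i.e.\ on the locus of deck-transformation--symmetric structures where $\lambda(\overline{\gamma_i}^{+})=\lambda(\overline{\gamma_i}^{-})$. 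This is a proper submanifold of dimension $n$ inside the $2n$-dimensional space, and the algebraic independence of the lifted cluster variables of $\overline{(S,M)}$ (which holds \emph{identically} on the whole $2n$-dimensional space) gives no control over relations that vanish only on a proper subvariety. So ``forcing $F\equiv 0$'' does not follow, and the reduction to the double cover goes in a circle: what you would actually need to know about the symmetric locus is precisely the independence statement on $(S,M)$ you set out to prove.

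The efficient route is the direct Penner-style argument applied to $(S,M)$ itself, without the cover: the lambda lengths $\lambda(\gamma_1),\ldots,\lambda(\gamma_n)$ of a quasi-triangulation give global coordinates (a diffeomorphism onto $\mathbb{R}_{>0}^n$) on the decorated Teichm\"uller space of $(S,M)$, by the quasi-cluster analogue of Penner's theorem, so no nonzero polynomial over $\mathbb{Q}$ vanishes on their joint image; since each $x_{\mathbf L}(\gamma_i)=\lambda(\gamma_i)/c_{\mathbf L}(\gamma_i)$ differs from $\lambda(\gamma_i)$ only by a Laurent monomial in the frozen variables $x_{n+1},\ldots,x_{n+m}$, the $x_{\mathbf L}(\gamma_i)$ remain algebraically independent over $\mathbb{Z}[x_{n+1},\ldots,x_{n+m}]$. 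Your handling of one-sided closed curves via the ratio $x_{\gamma_\alpha}=x_\alpha x_{\beta_\alpha}$ is not needed for this argument, since the lambda length of a one-sided closed curve participates directly in the Teichm\"uller coordinates.
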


\begin{defn}

Fix a bordered surface $(S,M)$ of rank $n$ and a multi-lamination $\mathbf{L} = \{L_1,\ldots, L_m\}$. Let $\mathcal{F}$ be the field of rational functions in $n$ formal variables over the coefficient ring $\mathbb{Z}[x_{n+1},\ldots,x_{n+m}]$. \newline \indent A \textbf{\textit{(labelled) seed}} of $(S,M,\mathbf{L})$ is a pair, $(\mathbf{x},T)$, where:

\begin{itemize}

\item $T$ is a quasi-triangulation with quasi-arcs labelled $1 \ldots, n$,

\item $\mathbf{x} := (x_{1} \ldots, x_{n})$ is an (ordered) $n$-tuple of elements in $\mathcal{F}$ which are algebraically independent over $\mathbb{Z}[x_{n+1},\ldots ,x_{n+m}]$.
 
 \end{itemize}
 
 We say $\mathbf{x}$ is a \textbf{\textit{cluster}} and each $x_{i}$ is a \textbf{\textit{cluster variable}}.

\end{defn}

\begin{defn}[Mutation of seeds]
\label{mutation of seeds}

Let $$(\mathbf{x} := (x_{1},\ldots, x_{n}),T)$$ be a labelled seed of $(S,M,\mathbf{L})$ and let $k \in \{1,\ldots, n\}$. We define \textit{\textbf{mutation}} of $(\mathbf{x},T)$ in \textit{\textbf{direction $k$}} to be the new seed $\mu_{k}(\mathbf{x},T) := (\mathbf{x}',T')$, where $\mathbf{x}' := (x_{1}',\ldots, x_{n}')$ and  $T' $ are defined as follows:

\begin{itemize}

\item $T'$ is obtained from flipping the quasi-arc $\gamma_k$ in $T$ labelled by $k$. The new quasi-arc $\gamma_k'$ is then labelled by $k$, and the label on all other quasi-arcs remains the same.

\item If $i \neq k$ then $x_i' := x_i$.

\item $x_k' := \frac{P(x_1, \ldots, x_n)}{x_k}$, where $P$ is the Laurent polynomial with coefficients in $\mathbb{Z}[x_{n+1},\ldots, x_{n+m}]$ such that: $$x_{\mathbf{L}}(\gamma_k)x_{\mathbf{L}}(\gamma_k') = P(x_{\mathbf{L}}(\gamma_k), \ldots, x_{\mathbf{L}}(\gamma_k))$$

\end{itemize}

\end{defn}

\begin{rmk}

Note that the Laurent polynomial $P$ in Definition \ref{mutation of seeds} can be obtained from substituting equation (\ref{laminated length}) into the exchange relation of the corresponding flip type found in Definition \ref{flip}.

\end{rmk}

\begin{defn}

As before, a \textit{\textbf{(labelled) seed pattern}} is an assignment of a labelled seed to each vertex of a labelled tree $\mathbb{T}_n$, such that any two seeds connected by an edge labelled by a $k$, are related by a mutation in direction $k$.

Note that a single labelled seed $(\mathbf{x},T)$ completely determines the whole labelled seed pattern. Let $\mathcal{X}$ be the set of all cluster variables appearing in all of the seeds in the seed pattern generated by $(\mathbf{x},T)$. We say $$\mathcal{A}_{(\mathbf{x},T)}(S,M,\mathbf{L}) := \mathbb{Z}[x_{n+1},\ldots, x_{n+m}][\mathcal{X}]$$ is the \textit{\textbf{laminated quasi-cluster algebra}} of the initial labelled seed $(\mathbf{x},T)$. \newline \indent

\end{defn}

\begin{rmk}

For any two labelled seeds $(\mathbf{x},T)$ and $(\mathbf{x}',T')$ we see that their corresponding laminated quasi-cluster algebras are isomorphic. In that sense, we may talk about, $\mathcal{A}(S,M,\mathbf{L})$, the laminated quasi-cluster algebra of $(S,M,\mathbf{L})$.

\end{rmk}

\subsection{The combinatorics behind cluster algebras from surfaces}

Let $T$ be a tagged triangulation of an orientable bordered surface $(S,M)$ and let $\mathbf{L}$ be a multi-lamination. We now wish to describe a convenient way of encoding the exchange relations of the tagged arcs of $T$ via the so called \textit{extended exchange matrix} $\tilde{B}_T$. We shall first explain the procedure in the absence of $\mathbf{L}$.

\begin{defn}

An \textit{\textbf{ideal triangulation}} of $(S,M)$ is a maximal collection of pairwise non-intersecting ordinary arcs. To each tagged triangulation $T$ we may associate a unique ideal triangulation $T^{\circ}$ by applying the rules below in the order they are listed.

\begin{itemize}

\item If a puncture $p$ has more than one incident notched endpoint, then replace all these notched endpoints at $p$ with plain ones.

\item If a puncture $p$ has precisely one incident notched endpoint, then replace the tagged arc $\gamma^{(p)}$ (to which this notch belongs) with the ordinary arc $\ell_p$ which encloses $\gamma^{(p)}$ and $p$ in a once-punctured monogon. We call $\ell_p$ a \textbf{\textit{loop}}.

\end{itemize}

\end{defn}

\begin{defn}

Let $T =\{\gamma_1,\ldots, \gamma_n\}$ be a tagged triangulation with associated ideal triangulation $T^{\circ} =\{\gamma_1^{\circ},\ldots, \gamma_n^{\circ}\}$. We define a quiver $Q_T$ of $T$. The vertices $i$ of $Q_T$ correspond to the tagged arcs $\gamma_i$ of $T$, which in turn correspond to $\gamma_i^{\circ}$ in $T^{\circ}$. The arrows of $Q_T$ are defined as follows: for each non-self-folded triangle $\Delta$ in $T^{\circ}$ there is an arrow $i \rightarrow j$ \textit{if and only if} one of the following is satisfied:

\begin{itemize}

\item $\gamma_i^{\circ}$ and $\gamma_j^{\circ}$ are sides of $\Delta$, and $\gamma_j^{\circ}$ follows $\gamma_i^{\circ}$ in a clockwise ordering;

\item $\ell$ is a loop in $T^{\circ}$ enclosing $\gamma_i^{\circ}$. Moreover, $\ell$ and $\gamma_j^{\circ}$ are sides of $\Delta$, and $\gamma_j^{\circ}$ follows $\ell$ in a clockwise ordering;

\item $\ell$ is a loop in $T^{\circ}$ enclosing $\gamma_j^{\circ}$. Moreover, $\ell$ and $\gamma_i^{\circ}$ are sides of $\Delta$, and $\ell$ follows $\gamma_i^{\circ}$ in a clockwise ordering.

\end{itemize}

\end{defn}

Each quiver $Q_T$ may naturally be identified with a skew-symmetric matrix $B(T) = (b_{ij})$. Specifically, the columns and rows of $B(T)$ are labelled by the vertices of $Q_T$, and $$b_{ij} := \# \{\text{arrows \hspace{1mm}} i \rightarrow j \text{\hspace{1mm} in \hspace{1mm}} Q_{T}\} - \# \{\text{arrows \hspace{1mm}} j \rightarrow i \text{\hspace{1mm} in \hspace{1mm}} Q_{T}\}.$$ This form is particularly convenient since the columns of $B(T)$ encode the exchange relations of the tagged arcs in $T$. We shall now describe how one can define the anbalagous \textit{extended exchange matrix}: $$\tilde{B}(T) = (b_{ij})_{\substack{i \in \{1,\ldots,n\}\cup\{L \in \mathbf{L}\} \\ \hspace{-11.5mm}j \in \{1,\ldots,n\}}}$$ which encodes the exchange relations in the presence of a multi-lamination $\mathbf{L}$. To explain this procedure it suffices to consider a single lamination $L$.

\begin{defn}[Shear coordinates of ideal triangulations] 
\label{shear}
Let $ T $ be an ideal triangulation of an orientable bordered surface $ (S,M) $, and let $ L $ be a lamination. Suppose that $ \gamma $ is an arc of $ T $ which is not the folded side of a self-folded triangle, and let $ Q_{\gamma} $ be the quadrilateral of $ T $ whose diagonal is $ \gamma $. The \textit{\textbf{shear coordinate}}, $b_{T}(L, \gamma)$, of $ L $ and $ \gamma $, with respect to $T$, is defined by:
$$ b_{T}(L, \gamma) := \# \Big\{ \stackanchor{\text{$S$-shape intersections}}{\text{of $L$ with $Q_{\gamma}$}}\Big\} - \# \Big\{ \stackanchor{\text{$Z$-shape intersections}}{\text{of $L$ with $Q_{\gamma}$}}\Big\}$$

In the context of the extended exchange matrix $\tilde{B}(T)$, for each $L \in \mathbf{L}$ we define $b_{L j} := b_{T}(L, \gamma_j)$.

\end{defn}

\begin{figure}[H]
\begin{center}
\includegraphics[width=11cm]{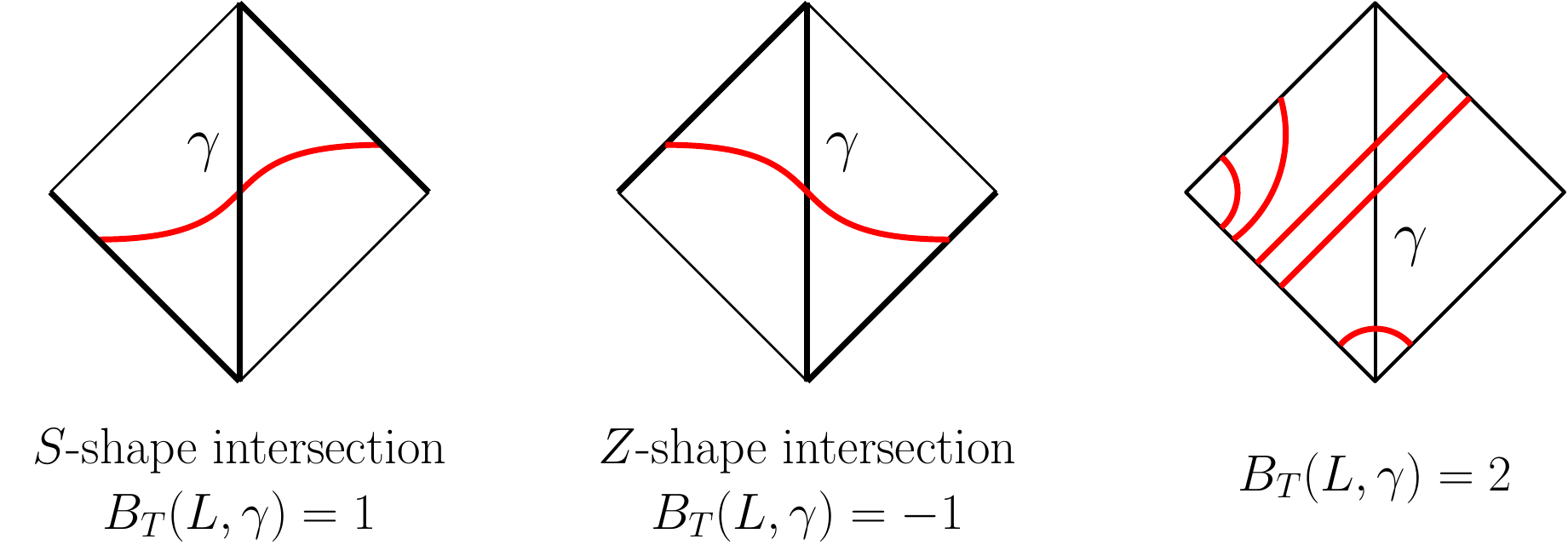}
\caption{$S$-shape and $Z$-shape intersections.}
\label{s-z-figure}
\end{center}
\end{figure}

We explain below how Fomin and Thurston \cite{fomin2012cluster} extended the notion of shear coordinates to (tagged) triangulations of orientable bordered surfaces.

\begin{defn}[Shear coordinates of triangulations]

Let $T$ be a triangulation and $L$ a lamination. To each puncture $p$ we apply the following procedure: if $L$ spirals into a puncture $p$, and all arcs incident to $p$ are notched at $p$, then reverse the direction of spiralling of $L$ at $p$, and replace all these notched taggings with plain ones. \newline
\indent Applying the procedure above we can convert the lamination $L$ of $T$ into a lamination $L_1$ of a triangulation $T_1$, where any notched arc in $T_1$ appears with its plain counterpart. Let $T^{\circ}$ denote the ideal triangulation associated to $T_1$. For each arc $\gamma$ of $T$, let $\gamma^{\circ}$ denote the corresponding arc in $T^{\circ}$. We define the \textit{\textbf{shear coordinate}}, $ b_{T}(L,\gamma) $, of $ L $ and $ \gamma $ (with respect to $T$) as follows:

\begin{itemize}

\item if $ \gamma^{\circ} $ is \underline{not} the self-folded side of a triangle in $T^{\circ}$ then define $$ b_{T}(L,\gamma) := b_{ T^{\circ}}(L_1,\gamma^{\circ}); $$

\item otherwise, $\gamma^{\circ}$ is the self-folded side of a triangle in $T^{\circ}$ with associated puncture $p$. In this case, reverse the direction of spiralling of $ L_1 $ at $ p $ and denote the new lamination by $ L_2 $. Consider the triangle $\Delta$ in $T^{\circ}$ that is folded along $\gamma^{\circ}$, and denote the remaining side of $\Delta$ by $\beta$ . We define $$ b_{T}(L,\gamma):= b_{T^{\circ}}(L_2,\beta)$$ .

\end{itemize}

Analogous to Definition \ref{shear}, for each $L \in \mathbf{L}$ we define $b_{L j} := b_{T}(L, \gamma_j)$.

\end{defn}

Each multi-lamination $\mathbf{L}$ will give rise to a different cluster algebra $\mathcal{A}_{\mathbf{L}}(S,M)$. However, the following remarkable theorem of Fomin an Zelevinsky tells us that each cluster variable $x_{\alpha}^T$ in $\mathcal{A}_{\mathbf{L}}(S,M)$ may be obtained by specialising the corresponding cluster variable $X_{\alpha}^{T}$ in $\mathcal{A}_{\bullet}(S,M)$ -- the cluster algebra with \textit{principal coefficients}. See Remark \ref{principalcoeffcients} for more details on principal coefficients.

\begin{thm}[Theorem 3.7, \cite{fomin2007cluster}]
\label{orientablesep}

Consider a triangulation $T$, an arc $\alpha$, and a multi-lamination $\mathbf{L} = \{L_{n+1},\ldots, L_{m}\}$. Then we have

$$ x_{\alpha}^T(x_1,\ldots, x_n, x_{n+1}, \ldots, x_{m}) = \frac{X_{\alpha}^{T}|_{\mathcal{F}}(x_1,\ldots, x_n; y_1, \ldots, y_n)}{{X_{\alpha}^{T}|_{Trop(x_{n+1}, \ldots, x_{m})}}(1,\ldots, 1; y_1, \ldots, y_n)},$$

where:

\begin{itemize}

\item $X_{\alpha}^{T} \in \mathcal{A}_{\mathbf{\bullet}}(S,M)$ and  $x_{\alpha}^T \in \mathcal{A}_{\mathbf{L}}(S,M)$ are the cluster variables corresponding to $\alpha$,
\item $y_j = \displaystyle \prod_{k=n+1}^m x_{k}^{b_{kj}}$ (here $b_{kj} := b_{T}(L_k,\gamma_j)).$

\end{itemize}

\end{thm}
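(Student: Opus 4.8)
The plan is to recognise this statement as Fomin and Zelevinsky's \emph{separation of additions} theorem specialised to the cluster algebra $\mathcal{A}(S,M)$ attached to the orientable triangulated surface, and to reconstruct its proof by induction on distance in the exchange tree. First I would fix the dictionary: by Fomin--Thurston \cite{fomin2012cluster}, $\mathcal{A}_{\mathbf{L}}(S,M)$ is the cluster algebra with exchange matrix $B(T)$, coefficient semifield $Trop(x_{n+1},\ldots,x_m)$, and initial $y$-variables $y_j=\prod_{k=n+1}^m x_k^{b_T(L_k,\gamma_j)}$, while $\mathcal{A}_{\bullet}(S,M)$ is the same data with principal coefficients. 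The assertion then becomes: for every vertex $t$ of $\mathbb{T}_n$ and every index $l$, one has $x_{l;t}=X_{l;t}|_{\mathcal{F}}(x;\hat{y})\big/X_{l;t}|_{Trop}(1;y)$, where $\hat{y}_j=y_j\prod_i x_i^{b_{ij}}$ is formed from the fixed initial data $(x,B(T))$.

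Next I would record the two structural facts about principal coefficients that drive the induction. First, by the Laurent phenomenon together with a homogeneity argument, each $X_{l;t}$ lies in $\mathbb{Z}[x_1^{\pm},\ldots,x_n^{\pm}][y_1,\ldots,y_n]$ and is homogeneous for the $\mathbb{Z}^n$-grading $\deg x_i=e_i$, $\deg y_j=-b_j$ ($b_j$ the $j$-th column of $B(T)$); hence $X_{l;t}=x^{g_{l;t}}F_{l;t}(\hat{y})$, where $F_{l;t}(y):=X_{l;t}(1,\ldots,1;y)$ is the $F$-polynomial, so the numerator and denominator of the claimed ratio are exactly $x^{g_{l;t}}F_{l;t}$ evaluated with ordinary, resp.\ tropical, addition. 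Second, the tuple $\hat{y}=(\hat{y}_1,\ldots,\hat{y}_n)$ transforms under $\mu_k$ computed in $\mathcal{F}$ by the $Y$-seed mutation rule, and its tropical shadow $(y_1,\ldots,y_n)$ transforms by the same rule computed in $Trop(x_{n+1},\ldots,x_m)$. These are §§3,5--6 of \cite{fomin2007cluster}.

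Now I would run the induction on $d(t_0,t)$. The base case $t=t_0$ is trivial, since $x_i=x_i$ and the denominator is $1$. For the inductive step, write $\alpha$ as the flip of an arc $\beta$ in a triangulation $T'$ one mutation closer to $T$; then $x_{\alpha}^T$, together with the unchanged cluster variables, satisfies the corresponding exchange relation, which is binomial because $(S,M)$ is orientable and $T$ (having no one-sided closed curves) yields an honest cluster algebra, and likewise $X_{\alpha}^T$ satisfies the corresponding relation in $\mathcal{A}_{\bullet}(S,M)$. Feeding the inductive hypothesis into the exchange relation for $x_{\alpha}^T$ and using the $Y$-seed transformation rules above, one checks that the claimed ratio of $X$-expressions satisfies the very same exchange relation; by uniqueness of the flip (Proposition \ref{flip}) and the resulting uniqueness of the solution of the exchange relation, $x_{\alpha}^T$ must coincide with that ratio.

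The main obstacle is precisely the bookkeeping inside this step: the monomials $\hat{y}_j$ are built from the \emph{initial} seed, yet must be shown to interact correctly with a mutation arbitrarily far away, and one must verify that the collapse of terms forced by tropical addition in the denominator is mirrored exactly by cancellations in the numerator governed by ordinary addition — this is the whole content of ``separation of additions'', and everything else is formal. Since the surface is orientable and $T$ contains no one-sided closed curves, nothing beyond the classical cluster-algebra situation occurs, so the argument is literally \cite[Theorem 3.7]{fomin2007cluster} applied to $\mathcal{A}(S,M)$ with the coefficient data read off from $\mathbf{L}$ through shear coordinates.
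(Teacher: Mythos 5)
The paper does not actually prove this theorem: it is quoted verbatim as Fomin--Zelevinsky's ``separation of additions'' result [Theorem~3.7, \cite{fomin2007cluster}], with the dictionary between $\mathcal{A}_{\mathbf{L}}(S,M)$ and a cluster algebra with geometric coefficients supplied by the shear-coordinate construction of \cite{fomin2012cluster}. Your reconstruction of the FZ proof (F-polynomial/$g$-vector homogeneity, $Y$-seed dynamics, induction on distance in $\mathbb{T}_n$, and the observation that tropical addition in the denominator mirrors the cancellations forced in the numerator) is the correct argument and, not coincidentally, is precisely the template the paper re-uses when it proves its own generalisation to quasi-cluster algebras in Section~8 (Theorem~\ref{separation}, via Proposition~\ref{ymutation}, Corollary~\ref{ymuationcorollary}, and Lemma~\ref{ydynamics}).

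One small notational slip: in the first display you write $X_{l;t}|_{\mathcal{F}}(x;\hat{y})$ for the numerator, whereas the statement's numerator is $X_{\alpha}^{T}|_{\mathcal{F}}(x;y)$ with $y_j=\prod_k x_k^{b_{kj}}$; the two agree only because of the identity $X_{l;t}(x;y)=x^{g_{l;t}}F_{l;t}(\hat{y})$, which you do state correctly in the next sentence. As written, substituting $\hat{y}_j$ for the $y$-arguments of $X_{l;t}$ would double-count the $\prod_i x_i^{b_{ij}}$ factor. This is easily fixed and does not affect the substance of the argument.
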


\subsection{The orientable double cover}

Throughout this paper it will often prove very useful to lift our non-orientable surface to its \textit{orientable double cover} - we describe the construction of this cover below. \newline \indent

Let $(S,M)$ be a bordered surface. We construct an orientable double cover of $(S,M)$ as follows. First consider the orientable surface $\tilde{S}$ obtained by replacing each cross-cap with a cylinder, see Figure \ref{surfaceandcylinder}.

\begin{figure}[H]
\begin{center}
\includegraphics[width=10cm]{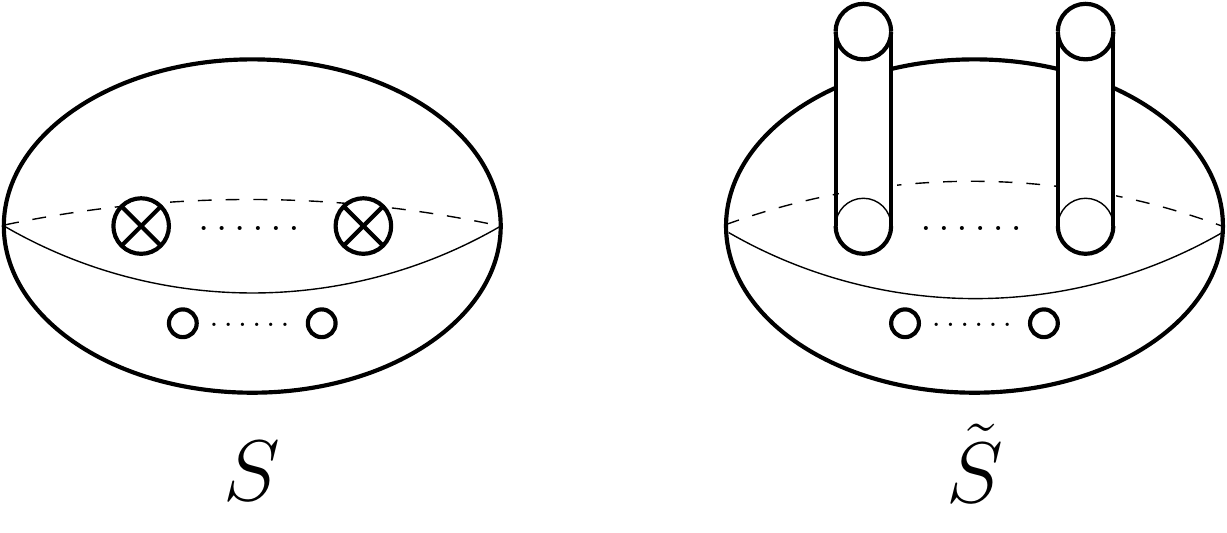}
\caption{A non-orientable surface $S$ and the surface $\tilde{S}$ obtained by replacing each cross-cap with a cylinder. The small circles represent boundary components.}
\label{surfaceandcylinder}
\end{center}
\end{figure}

The orientable double cover $\overline{(S,M)}$ of $(S,M)$ is obtained by gluing together two copies of $\tilde{S}$. Specifically we glue each newly adjoined cylinder in the first copy, with a half twist, to the corresponding cylinder in the second copy, see Figure \ref{surfaceglueing}.

\begin{figure}[H]
\begin{center}
\includegraphics[width=10cm]{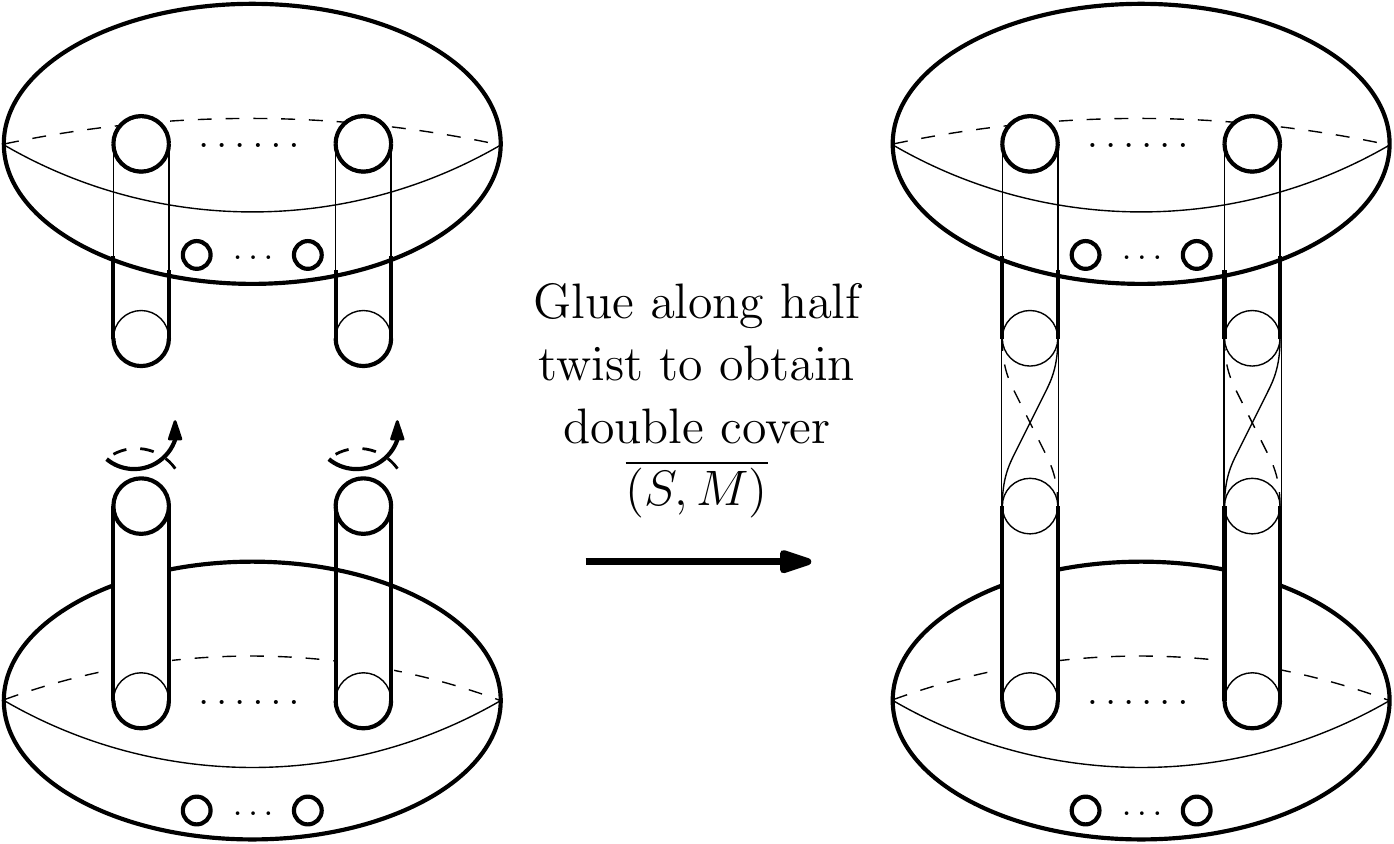}
\caption{The double cover $\overline{(S,M)}$ obtained by glueing two copies of $\tilde{S}$ along the boundaries of the newly adjoined cylinders.}
\label{surfaceglueing}
\end{center}
\end{figure}

\begin{rmk}

Note that each tagged arc $\gamma$ of $(S,M)$ lifts to two tagged arcs $\overline{\gamma}$ and $\tilde{\overline{\gamma}}$ of $\overline{(S,M)}$. Moreover, each tagged triangulation $T$ of $(S,M)$ lifts to a tagged triangulation $\overline{T}$ of the orientable double cover $\overline{(S,M)}$. It is worth noting that a one-sided closed curve $\alpha$ of $(S,M)$ lifts to a single closed curve $\overline{\alpha}$ in $\overline{(S,M)}$.

\end{rmk}

The following key proposition links mutation on $(S,M)$ to mutation on the orientable double cover $\overline{(S,M)}$

\begin{prop}[Proposition 4.2, \cite{wilson2018laurent}]
\label{quiverandflip}
Let $\gamma$ be an arc in a triangulation $T$. If $\mu_{\gamma}(T)$ is a triangulation then $\mu_{\overline{\gamma}}\circ\mu_{\tilde{\overline{\gamma}}}(\overline{T}) = \mu_{\tilde{\overline{\gamma}}}\circ\mu_{\overline{\gamma}}(\overline{T}) = \overline{\mu_{\gamma}(T)}$.

\end{prop}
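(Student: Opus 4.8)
The plan is to verify Proposition \ref{quiverandflip} by separately tracking what happens to the underlying combinatorics (the quiver/matrix) and to the lifted arcs, and then checking these are compatible. First I would recall from the construction of $\overline{(S,M)}$ that the deck transformation $\sigma$ of the double cover swaps $\overline{\gamma} \leftrightarrow \tilde{\overline{\gamma}}$ for each arc, and that $\overline{T}$ is $\sigma$-invariant as a set. The key structural fact to isolate is that in $\overline{T}$ the two lifts $\overline{\gamma}$ and $\tilde{\overline{\gamma}}$ of a single arc $\gamma \in T$ are \emph{not} connected by an arrow in the quiver $Q_{\overline{T}}$, i.e.\ $b_{\overline{\gamma}\,\tilde{\overline{\gamma}}} = 0$. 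This should follow because two lifts of the same arc can never be two sides of a common triangle in $\overline{T}$: a triangle of $\overline{T}$ projects to a triangle of $T$ (or a self-folded configuration), and within one triangle of $T$ the three sides are distinct arcs whose lifts sit in a single sheet-neighbourhood, so the two sheets' copies of $\gamma$ land in different triangles upstairs.

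With that in hand, the matrix-level statement is essentially the standard fact that mutations at two vertices $i,j$ with $b_{ij}=0$ commute, and that the composite mutation $\mu_i \circ \mu_j$ at such a pair has a clean effect: $\mu_{\overline{\gamma}} \circ \mu_{\tilde{\overline{\gamma}}}(Q_{\overline{T}}) = \mu_{\tilde{\overline{\gamma}}} \circ \mu_{\overline{\gamma}}(Q_{\overline{T}})$. So the first equality in the statement is immediate once $b_{\overline{\gamma}\,\tilde{\overline{\gamma}}}=0$ is established, at the level of both the combinatorial data and the arcs (flipping $\overline{\gamma}$ does not move or obstruct $\tilde{\overline{\gamma}}$, since they are non-crossing and lie in a quadrilateral whose other sides are unaffected). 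The substantive content is the second equality, $\mu_{\overline{\gamma}}\circ\mu_{\tilde{\overline{\gamma}}}(\overline{T}) = \overline{\mu_\gamma(T)}$: I would argue that flipping $\gamma$ in $T$ replaces $\gamma$ by $\gamma'$, and lifting commutes with this because the flip is a purely local move inside the quadrilateral $Q_\gamma$ surrounding $\gamma$; that quadrilateral lifts to two disjoint quadrilaterals $\overline{Q_\gamma}$, $\tilde{\overline{Q_\gamma}}$ upstairs (disjoint precisely because the two sheets are separated away from the cross-cap cylinders, and $Q_\gamma$ can be chosen to avoid them or, when it meets a cross-cap, one checks the lift directly using the half-twist gluing), and flipping $\overline{\gamma}$ inside $\overline{Q_\gamma}$ followed by flipping $\tilde{\overline{\gamma}}$ inside $\tilde{\overline{Q_\gamma}}$ produces exactly the two lifts $\overline{\gamma'}$, $\tilde{\overline{\gamma'}}$ of $\gamma'$, leaving all other arcs fixed — which is the definition of $\overline{\mu_\gamma(T)}$.

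The main obstacle I anticipate is the case analysis when $\gamma$ (or its flip region $Q_\gamma$) interacts with a cross-cap, since there the "two disjoint lifted quadrilaterals" picture is less transparent and one must use the explicit half-twist identification to see that $\overline{\gamma}$ and $\tilde{\overline{\gamma}}$ still bound disjoint quadrilaterals whose flips are independent. One has the hypothesis that $\mu_\gamma(T)$ is a \emph{triangulation} (no one-sided closed curves appear), which rules out the flip types 3) and 4) from Figure \ref{combinatorialflips} where a one-sided closed curve is created or destroyed, so the relevant flips are of the quadrilateral type and behave well under lifting; I would use this to keep the case analysis finite and to guarantee that the lifted configuration is again an honest triangulation of $\overline{(S,M)}$. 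Finally I would note the $\sigma$-equivariance check: since $\sigma$ swaps $\overline{\gamma}$ and $\tilde{\overline{\gamma}}$ and fixes $Q_{\overline{T}}$ otherwise, the composite $\mu_{\overline{\gamma}}\circ\mu_{\tilde{\overline{\gamma}}}$ is $\sigma$-equivariant, confirming that the output is genuinely a lift of a triangulation of $(S,M)$, and comparing the changed arc with $\gamma'$ identifies that triangulation as $\mu_\gamma(T)$.
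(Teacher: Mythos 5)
The paper itself does not prove this statement; it is imported verbatim from [Proposition 4.2, \cite{wilson2018laurent}], so I can only judge your argument on its own terms. Your overall strategy is the natural (and almost certainly the intended) one: establish that the two lifts $\overline{\gamma}$ and $\tilde{\overline{\gamma}}$ are never sides of a common triangle of $\overline{T}$, deduce that the two flips upstairs are independent and hence commute, and then use locality of the flip (the other diagonal of the lifted quadrilateral projects to the other diagonal of $Q_\gamma$) together with $\sigma$-equivariance to identify $\mu_{\overline{\gamma}}\circ\mu_{\tilde{\overline{\gamma}}}(\overline{T})$ with $\overline{\mu_\gamma(T)}$; and you correctly locate where the hypothesis ``$\mu_\gamma(T)$ is a triangulation'' is needed, namely to exclude flip type 3) of Figure \ref{combinatorialflips} (type 4) is already excluded because $T$ contains no one-sided closed curve).

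The weak point is your justification of the key non-adjacency fact, which is false as stated. It is not true that the three sides of a triangle of $T$ are always distinct arcs: besides self-folded triangles (which arise in the associated ideal triangulation when $T$ carries notched arcs, and are harmless because a once-punctured monogon is orientable and lifts to two disjoint copies), the genuinely non-orientable danger is a triangle with a repeated side glued with a reversal --- the triangle inside a M\"obius strip $M_1^{\delta}$ with sides $\delta,\beta_\delta,\beta_\delta$ (notation of Figure \ref{intersectioncompatible}). For such a triangle the two lifts of $\beta_\delta$ \emph{do} bound a common triangle upstairs, so this is exactly the configuration your argument must rule out; it cannot be dismissed by saying the sheets are separated away from the cross-caps, nor can $Q_\gamma$ be ``chosen'' to avoid a cross-cap, since the quadrilateral is determined by $T$ and $\gamma$. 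The fix is available from the paper's definitions: an anti-self-folded triangle never occurs in a quasi-triangulation, because its third side $\delta$ would be an arc cutting out a M\"obius strip with one marked point, forbidden by Definition \ref{quasitriangulationdef} (and if $\delta$ were a boundary segment, $(S,M)$ would be the excluded $M_1$); alternatively, for the flipped arc itself, if $\gamma$ appeared twice in a triangle in this way then $\gamma=\beta_\delta$ and its flip would be the one-sided closed curve $\alpha_\delta$, contradicting the hypothesis you are given. With non-adjacency justified this way, and with ``disjoint lifted quadrilaterals'' weakened to ``lifted quadrilaterals sharing no triangle'' (they may perfectly well share edges, which does not obstruct commutation or the identification of the flipped arcs with the two lifts of $\gamma'$), the rest of your argument goes through.
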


In Definition \ref{mutation of seeds} the description of mutation requires one to keep track of the geometry of the surface, and, in particular, coefficient systems arise from multi-laminations on $(S,M)$. However, as shown in \cite{wilson2018laurent}, a purely combinatorial description of this process can be described in terms of Lam and Pylyavskyy's \textit{Laurent phenomenon algebras}  \cite{lam2012laurent}. In that context, one can create arbitrary coefficient systems by inserting frozen variables into the exchange polynomials. The following theorem tells us that any such coefficient system arises from a (unique) multi-lamination. Hence, if one wishes to imitate Fomin and Zelevinsky's coefficient systems of \textit{geometric} type, the set-up described in Definition \ref{mutation of seeds} is indeed the broadest one should ask for.

\begin{thm}[\cite{wilson2018laurent}]
\label{coefficientbijection}

Let $T$ be a tagged triangulation of $(S,M)$. Consider the lifted triangulation $\overline{T}:= \{\overline{\gamma}_1,\ldots, \overline{\gamma}_n, \tilde{\overline{\gamma}}_1, \ldots, \tilde{\overline{\gamma}}_n\}$ on the oriented double cover $\overline{(S,M)}$. For each lamination $L = \{L_1, \ldots, L_n \}$ on $(S,M)$, let $\overline{L} = \{\overline{L}_1, \ldots, \overline{L}_n\}$ denote the lifted lamination on $\overline{(S,M)}$. Then there exists the following bijection between laminations on $(S,M)$ and $\mathbb{Z}^n$:

\begin{align*}
 \Gamma_T \hspace{1mm}: \hspace{2mm} &\Big\{ \stackanchor{\text{\hspace{0.5mm} Laminations of \hspace{0.5mm}}}{\text{$(S,M)$}}\Big\} \hspace{5mm} \longrightarrow \hspace{25mm} \mathbb{Z}^n\\
  & \hspace{15mm} L \hspace{24mm}  \hspace{1mm} \mapsto \hspace{7mm} (b_{\overline{T}}(\overline{L},\overline{\gamma}_1), \ldots, b_{\overline{T}}(\overline{L},\overline{\gamma}_n)).
\end{align*}

\noindent where $b_{\overline{T}}(\overline{L},\overline{\gamma}_i)$ denotes the \textit{shear coordinate} of $\overline{\gamma_i}$ with respect to $\overline{T}$ and $\overline{L}$.

\end{thm}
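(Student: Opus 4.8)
The plan is to construct the inverse map explicitly and verify that both composites are the identity. First I would observe that the claimed map $\Gamma_T$ sends a lamination $L$ on $(S,M)$ to the tuple of shear coordinates of the \emph{lifted} lamination $\overline{L}$, taken against the \emph{odd half} $\{\overline{\gamma}_1,\ldots,\overline{\gamma}_n\}$ of the lifted triangulation $\overline{T}$. The key structural fact to extract is the $\mathbb{Z}/2$-symmetry: the deck transformation $\sigma$ of the cover $\overline{(S,M)}\to(S,M)$ swaps $\overline{\gamma}_i \leftrightarrow \tilde{\overline{\gamma}}_i$ and fixes $\overline{L}$ (since $\overline{L}$ is by definition $\sigma$-invariant, being a lift). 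Since a one-sided closed curve lifts to a single closed curve while ordinary arcs lift to pairs, one must be slightly careful, but in all cases $\overline{L}$ is $\sigma$-stable. Because shear coordinates are defined via $S$-shape versus $Z$-shape intersections with the quadrilateral $Q_{\overline{\gamma}_i}$, and $\sigma$ is orientation-reversing on $\overline{(S,M)}$ (it glues with a half twist), applying $\sigma$ interchanges $S$-shapes and $Z$-shapes; hence $b_{\overline{T}}(\overline{L},\tilde{\overline{\gamma}}_i) = -\,b_{\overline{T}}(\overline{L},\overline{\gamma}_i)$. Thus the full shear-coordinate vector of $\overline{L}$ on $\overline{(S,M)}$ is determined by its restriction to the odd half, and conversely any $v \in \mathbb{Z}^n$ gives an antisymmetric vector $(v,-v) \in \mathbb{Z}^{2n}$.

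Next I would invoke the orientable theory on the double cover: by Fomin--Thurston, on the \emph{orientable} surface $\overline{(S,M)}$ the map sending a lamination to its shear-coordinate vector against the triangulation $\overline{T}$ is a bijection onto $\mathbb{Z}^{2n}$. Restricting to $\sigma$-invariant laminations, the previous paragraph shows the image lands exactly in the antisymmetric sublattice $\{(v,-v) : v\in\mathbb{Z}^n\}\cong\mathbb{Z}^n$. So the composite
\[
\Big\{\text{$\sigma$-invariant laminations on }\overline{(S,M)}\Big\} \;\longrightarrow\; \mathbb{Z}^{2n}_{\mathrm{antisym}} \;\cong\; \mathbb{Z}^n
\]
is a bijection. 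It then remains to identify $\sigma$-invariant laminations on $\overline{(S,M)}$ with laminations on $(S,M)$ via the lifting/projection correspondence $L \mapsto \overline{L}$. This is essentially a covering-space bookkeeping argument: projecting a $\sigma$-invariant (multi)curve downstairs yields a collection of curves on $(S,M)$, and one checks against Definition \ref{laminationdef} that the allowed component types upstairs (arcs between unmarked boundary points, curves spiralling into punctures, two-sided closed curves, one-sided closed curves' doubles, etc.) correspond precisely to the allowed types downstairs — in particular a one-sided closed curve downstairs is exactly a closed curve upstairs fixed by $\sigma$ without a second sheet, and the forbidden cases (bounding a Möbius strip, a monogon, etc.) match up because the excluded small surfaces in the definition of a bordered surface were chosen precisely so the cover behaves well. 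Since $L \mapsto \overline{L}$ and projection are mutually inverse on these classes, composing with the bijection above yields $\Gamma_T$, and it is a bijection.

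The main obstacle I anticipate is the last correspondence — showing $L \leftrightarrow \overline{L}$ is a genuine bijection between laminations of $(S,M)$ and $\sigma$-invariant laminations of $\overline{(S,M)}$, matching the (somewhat long) lists of permitted component types and the forbidden degenerate configurations. One has to handle closed curves carefully: a two-sided closed curve on $(S,M)$ lifts to two disjoint curves upstairs, a one-sided one lifts to a single curve, and a curve that on $(S,M)$ does not bound a Möbius strip must be checked not to bound a disk or once-punctured disk upstairs. A secondary (but routine) obstacle is the sign computation $b_{\overline{T}}(\overline{L},\tilde{\overline{\gamma}}_i) = -b_{\overline{T}}(\overline{L},\overline{\gamma}_i)$: one must verify that the half-twist gluing makes $\sigma$ orientation-reversing and that under an orientation-reversing homeomorphism the $S$-shape/$Z$-shape dichotomy genuinely swaps, which is clear from Figure \ref{s-z-figure} but should be stated. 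Injectivity and surjectivity of $\Gamma_T$ then follow formally from the orientable-case bijection of Fomin--Thurston restricted to the antisymmetric sublattice, so no independent hard analysis is needed beyond these two identifications.
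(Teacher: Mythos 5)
The paper itself offers no argument for Theorem \ref{coefficientbijection}: it is imported verbatim from \cite{wilson2018laurent}, so there is no in-paper proof to compare against and your proposal has to stand on its own. Its skeleton (antisymmetry of shear coordinates under the deck involution $\sigma$, the Fomin--Thurston bijection on the orientable cover, then descent to $(S,M)$) is certainly the natural route, and your sign computation $b_{\overline{T}}(\overline{L},\tilde{\overline{\gamma}}_i)=-b_{\overline{T}}(\overline{L},\overline{\gamma}_i)$ is correct since $\sigma$ is orientation-reversing and exchanges $S$- and $Z$-crossings. A small fixable omission: ``the image lands \emph{exactly} in the antisymmetric sublattice'' needs the one-line equivariance-plus-uniqueness argument (for an antisymmetric vector $v$, the unique Fomin--Thurston lamination $\Lambda$ with coordinates $(v,-v)$ satisfies $b(\sigma(\Lambda),\cdot)=b(\Lambda,\cdot)$, hence $\sigma(\Lambda)=\Lambda$); you assert the conclusion without this step.

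The genuine gap is the step you dismiss as ``covering-space bookkeeping'': the claim that $L\mapsto\overline{L}$ and projection are mutually inverse between laminations of $(S,M)$ and $\sigma$-invariant laminations of $\overline{(S,M)}$ is false as stated, and repairing it is precisely the content of the cited theorem. Concretely, let $\alpha$ be a one-sided closed curve and $\beta$ the boundary of a M\"obius-strip neighbourhood of $\alpha$. The two lifts of $\beta$ form a $\sigma$-invariant lamination of the double cover (a swapped pair of disjoint curves, each isotopic to the single lifted curve $\overline{\alpha}$, so its shear vector is $2\,(v_\alpha,-v_\alpha)$ and is antisymmetric), yet it is the lift of no lamination of $(S,M)$: $\beta$ is excluded by Definition \ref{laminationdef} (it bounds a M\"obius strip), and ``two copies of $\alpha$'' cannot be realised disjointly since isotopic one-sided curves must intersect. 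So surjectivity of $\Gamma_T$ onto $\mathbb{Z}^n$ does \emph{not} follow ``formally'' from Fomin--Thurston restricted to the antisymmetric sublattice together with your claimed lift/projection correspondence; one must explain how every antisymmetric vector -- in particular those whose unique upstairs lamination contains parallel $\sigma$-swapped copies of an invariant curve -- is realised by an admissible lamination downstairs, i.e.\ reconcile multiplicities of one-sided curves with their M\"obius-bounding doubles under the conventions of \cite{wilson2018laurent}. Your one checking direction (that a curve not bounding a M\"obius strip downstairs has non-degenerate lifts upstairs) does not address this converse direction, which is where the actual work, and possibly a modification of the naive correspondence, lies.
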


\begin{rmk}
\label{coefficientbijectionrmk}

Coupling Theorem \ref{coefficientbijection} and [Theorem 6.21, \cite{wilson2018laurent}] we arrive at the following bijection:

$$\Large{\Bigg\{\substack{\text{coefficient systems}\\ \text{of $\mathcal{A}(S,M)$} \\ \text{w.r.t (specialised) LP algebras}}\Bigg\}\hspace{2mm} \longleftrightarrow \hspace{2mm} \Big\{ \substack{\text{multi-laminations}\\ \text{of $(S,M)$}}\Big\}}$$

\end{rmk}

\section{Snake and band graphs}

In this section we first recall the notion of an (abstract) \textit{snake} graph, as seen in \cite{canakci2013snake}, \cite{canakci2015snake}. Following this, we define (abstract) \textit{band} graphs. However, it should be noted our notion of band graph differs from that found in the literature \cite{ccanakcci2017snake}, \cite{felikson2017bases}, \cite{musiker2013bases} -- more specifically, edges with different \textit{sgn} may be glued.

\subsection{Snake graphs}

\begin{defn}

A \textit{\textbf{tile}} is a graph consisting of four vertices and four edges, such that each vertex has degree two. \newline 
\indent Embedding this in the plane, we shall always view a tile as a square where edges are parallel to the $x$ and $y$ axes. With respect to this embedding we shall label the edges North (N), East (E), South (S) and West (W) in the obvious way.\end{defn}

\begin{figure}[H]
\begin{center}
\includegraphics[width=3.5cm]{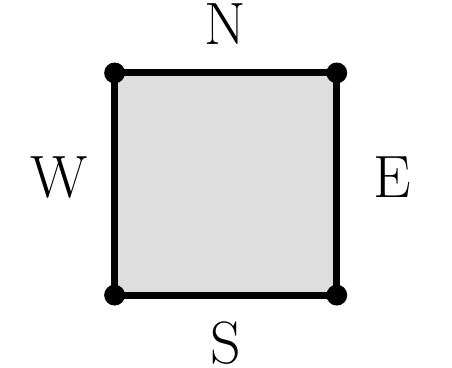}
\caption{A tile with the canonical north, east, south and west edge labelings.}
\label{tileshade}
\end{center}
\end{figure}

We say two tiles are \textit{\textbf{glued}} if they share a common edge. We shall now introduce the combinatorial structure underpinning this paper -- it revolves around the idea of glueing tiles together in a specific way.

\begin{defn}

A \textit{\textbf{snake graph}} $\mathcal{G} = (G_1, \ldots, G_d)$ is a sequence of tiles $G_1, \ldots, G_d$ such that for each $i \in \{1,\ldots, d-1\}$:

\begin{itemize}

\item the north or east edge of $G_i$ is glued to the south or west edge of $G_{i+1}$,

\item $G_i$ and $G_{i+1}$ share precisely one edge. We shall always denote this edge by $e_i$.

\end{itemize}

Moreover, we refer to a sequence of consecutive tiles $G_i, G_{i+1}, \ldots, G_{j-1}, G_j$ appearing in $\mathcal{G}$ as a \textit{\textbf{sub snake graph}} of $\mathcal{G}$.

\end{defn}

\begin{defn}

Let $\mathcal{G} = (G_1, \ldots, G_d)$ be a snake graph.

\begin{itemize} 

\item If the east (resp. north) edge of $G_i$ is glued to the west (resp. south) edge of $G_{i+1}$ for each $i \in \{1,\ldots, n-1\}$, then $\mathcal{G}$ is said to be \textit{\textbf{straight}}.

\item If no three consecutive tiles $G_i, G_{i+1}, G_{i+2}$ form a straight sub snake graph, then $\mathcal{G}$ is said to be \textit{\textbf{zig-zag}}.

\end{itemize}

\end{defn}

\begin{figure}[H]
\begin{center}
\includegraphics[width=10cm]{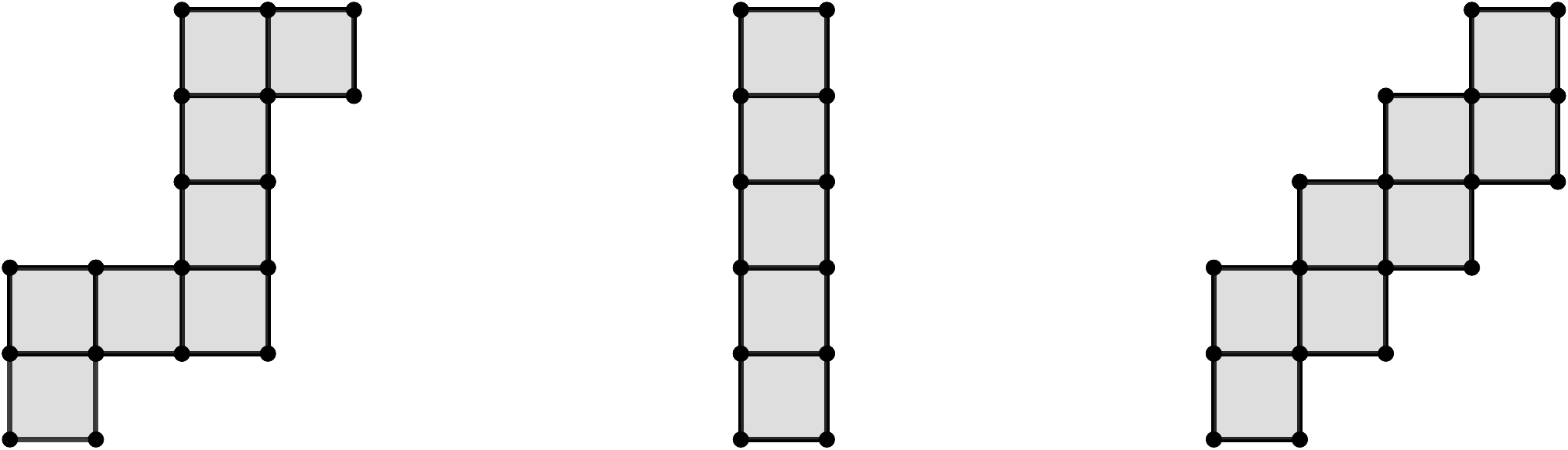}
\caption{An example of a snake graph, straight snake graph, and a zig-zag snake graph, respectively.}
\label{snakegraphshade}
\end{center}
\end{figure}

\begin{defn}

We define a \textbf{\textit{sign function}}, $sgn$, on a snake graph $\mathcal{G}$ to be a map from the set of edges of $\mathcal{G}$ to $\{+,-\}$ such that, for each tile $G_i$, we have: 

\begin{itemize}

\item $sgn(N(G_i)) = sgn(W(G_i)) \text{\hspace{5mm} and \hspace{5mm}} sgn(S(G_i)) = sgn(E(G_i))$

\item $sgn(N(G_i)) \neq sgn(S(G_i)).$

\end{itemize}

\end{defn}

\begin{figure}[H]
\begin{center}
\includegraphics[width=4.5cm]{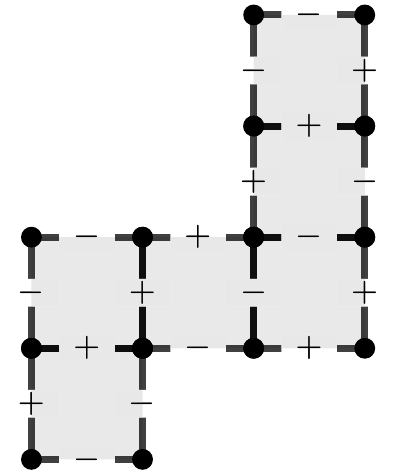}
\caption{An example of a $sgn$ function on a snake graph. Interchanging `$+$' and `$-$' produces the other possible $sgn$ function.}
\end{center}
\end{figure}

\begin{defn}
 
 A \textbf{perfect matching} of a graph $G$ is a collection of edges of $G$ such that \underline{every} vertex of $G$ is contained in \underline{exactly one} of these edges.
 
\end{defn}

\subsection{Band graphs}

Roughly speaking, a band graph is the result of glueing the ends of a snake graph together. Just as snake graphs enable us to obtain expansion formulae for cluster variables corresponding to arcs, band graphs will provide us with the framework to write expansion formulae for the variables corresponding to one-sided closed curves.

\begin{defn}
\label{bandgraph}

Let $\mathcal{G} = (G_1, \ldots, G_d)$ be a snake graph. \newline \indent Choose an edge in $\{S(G_1),W(G_1)\}$ and label it by $b$. Furthermore, let $x \in b$ denote the south-west vertex of $G_1$, and let $y$ denote the remaining vertex of $b$. \newline \indent Similarly, choose an edge in $\{N(G_d),E(G_d)\}$ and label it by $b'$. We denote by $y' \in b'$ the north-east vertex of $G_d$, and we let $x'$ denote the remaining vertex of $b'$.

The \textbf{\textit{band graph}}, $\mathcal{G}^{b}$, \textit{\textbf{glued along $\textbf{b}$}} is then defined to be the snake graph $\mathcal{G}$ glued along $b$ and $\tilde{b}$, such that $x$ (resp. $y$) is glued to $x'$ (resp. $y'$). \newline \indent Following the terminology of \cite{musiker2013bases}, we refer to the glued edge in $\mathcal{G}^{b}$ as the \textit{\textbf{cut edge}}.

\end{defn}

\begin{figure}[H]
\begin{center}
\includegraphics[width=4.5cm]{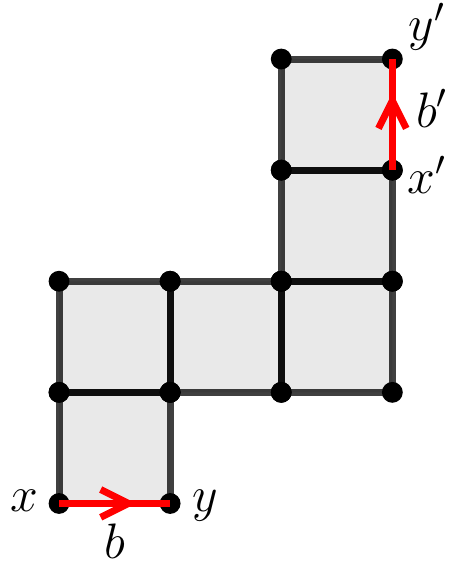}
\caption{An example of a band graph glued along $b$ and $b'$.}
\end{center}
\end{figure}

\begin{rmk}

Note that, unlike in \cite{ccanakcci2017snake}, \cite{felikson2017bases}, \cite{musiker2013bases}, we do not impose the condition that $sgn(b) = sgn(b')$. In fact, all band graphs found in this paper are completely opposite in the sense that they satisfy $sgn(b) \neq sgn(b')$. However, in our upcoming work on `Skein relations for non-orientable surfaces' and `Bases for quasi-cluster algebras' we shall encounter both situations, due to the added consideration of two-sided closed curves \cite{wilson2020bases}, \cite{wilson2020skein}.

\end{rmk}

\begin{defn}
\label{good matching}
 
As in Definition \ref{bandgraph}, let $\mathcal{G}^{b}$ be the band graph formed by glueing a snake graph $\mathcal{G}$ along $b$ and $\tilde{b}$. A \textbf{good matching} of $\mathcal{G}^{b}$ is a perfect matching which can be extended to a perfect matching of $\mathcal{G}$.
\end{defn}

\begin{rmk} Note that Definition \ref{good matching} may be restated as follows. A perfect matching $P$ of $\mathcal{G}^{b}$ is a good matching if the edges matching the vertices of the glued edge both lie on the same side of the cut -- specifically, when viewed as a matching of $\mathcal{G}$, $P$ must contain edge(s) matching both $x$ and $y$ or both $x'$ and $y'$. 

\begin{itemize}

\item  $P$ is a \textbf{\textit{right cut}} with respect to $b$ if $x$ and $y$ are matched in $\mathcal{G}$.

\item $P$ is a \textbf{\textit{left cut}} with respect to $b$ if $x'$ and $y'$ are matched in $\mathcal{G}$.

\item $P$ is a \textbf{\textit{centre cut}} with respect to $b$ if $P$ contains $b$.

\end{itemize}

\end{rmk}

\begin{figure}[H]
\begin{center}
\includegraphics[width=5cm]{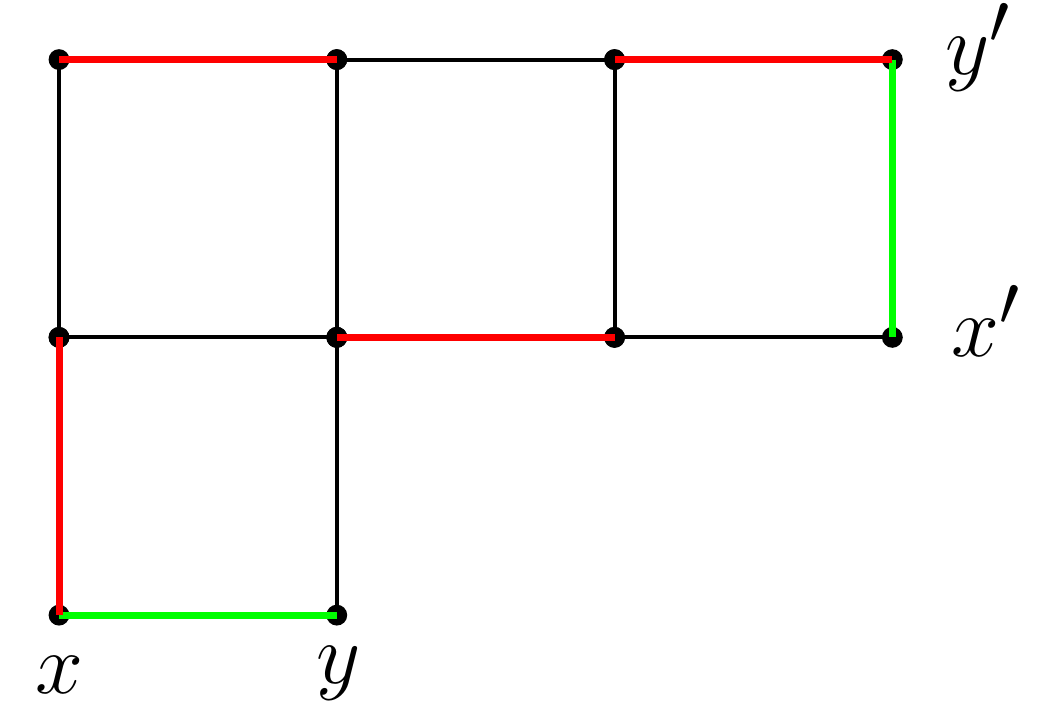}
\caption{A perfect matching but \underline{not} a good matching.}
\end{center}
\end{figure}

\section{Snake and band graphs from surfaces}
\label{snakebandconstruction}

In this section we explain how to associate snake and band graphs to quasi-arcs of $(S,M)$. To describe this procedure we shall lift ourselves to the orientable double cover $\overline{(S,M)}$, however, this is purely for convenience; it is not technically necessary to do so. \newline \indent Throughout this section we fix a triangulation $T$ of $\overline{(S,M)}$ which is the lift of a triangulation of $(S,M)$. \newline

The basic idea will be that tiles in our snake/band graph correspond to certain quadrilaterals on our surface.

\subsection{Snake graphs associated to arcs}

\begin{defn}
\label{relativeorientation}

Let $\gamma$ be a directed arc in $\overline{(S,M)}$, and denote by $p_1, \ldots, p_d$ the intersection points of $\gamma$ with our fixed triangulation $T$ -- listed in order of intersection. To this end, for $k \in \{1,\ldots, d\}$, we let $\tau_{i_k}$ denote the arc in $T$ containing the point $p_k$.

Let $Q_{i_j}$ be a quadrilateral in $T$ with diagonal labelled by $\tau_{i_j}$. We shall denote the triangles in $T$ either side of $\tau_{i_j}$ by $\Delta_j$ and $\Delta_{j+1}$. Moreover, we label them in such a way that, with respect to the orientation of $\gamma$ through $p_j$, $\Delta_j$ precedes $\Delta_{j+1}$. \newline
We view $Q_{i_j}$ as a tile $G_j$ by embedding it in the plane such that:

\begin{itemize}

\item the diagonal $\tau_{i_j}$ of $G_j$ connects the north-west and south-east vertices.

\item $\Delta_j$ (resp. $\Delta_{j+1}$) is the lower (upper) half of $G_j$.

\end{itemize}

Note that, following this procedure, there are two different ways to embed $Q_{i_j}$ as a tile $G_j$, and they differ in orienation. If the orientation on $Q_{i_j}$ (induced by $(S,M)$) agrees with the orientation on $G_j$ (induced by the clockwise orientation of the plane) then we say $rel(G_j) =1$. If it disagrees then $rel(G_j) = -1$. \newline \indent We call $rel(G_j)$ the \textbf{\textit{relative orientation}} of the tile $G_j$ with respect to $Q_{i_j}$.

\end{defn}

\begin{figure}[H]
\begin{center}
\includegraphics[width=13cm]{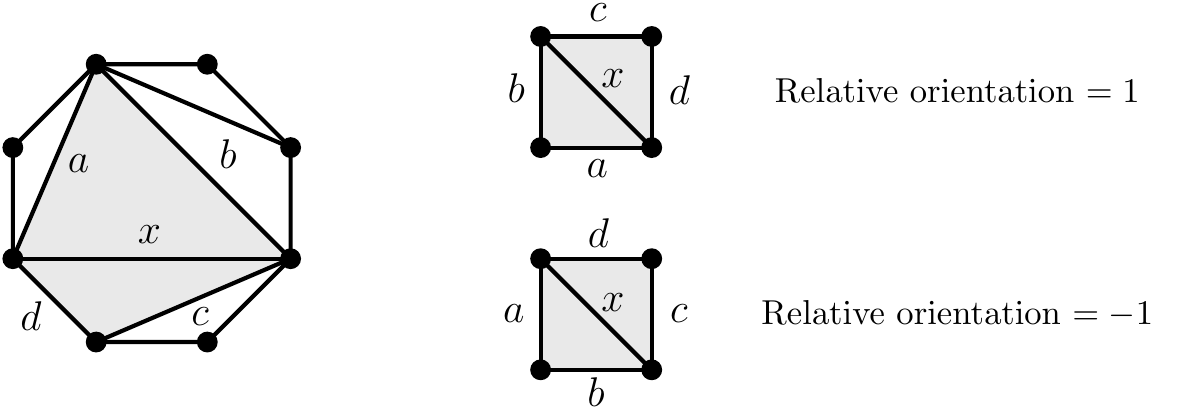}
\caption{An illustration of relative orientation}
\end{center}
\end{figure}

\begin{defn}
\label{arcsnakegraph}

Let $\gamma$ be a directed arc in $(S,M)$, and chose a lift $\overline{\gamma}$ in $\overline{(S,M)}$. Following the set-up of Definition \ref{relativeorientation}, we associate a tile $G_i$ for each intersection point, $p_i$, of $\overline{\gamma}$ with $\overline{T}$, such that $rel(G_i) \neq rel(G_{i+1})$. To define a snake graph $(G_1,\ldots, G_d)$ we just need to decide how these tiles are glued together. \newline \indent To explain the glueing process, note that $\tau_{i_j}$ and $\tau_{i_{j+1}}$ form two sides of the triangle $\Delta_j$; we denote the remaining side by $\tau_{[i_j]}$. The \textbf{\textit{snake graph of $\gamma$ with respect to $T$}} is then defined by glueing $G_j$ to $G_{j+1}$ along $\tau_{[i_j]}$. \newline \indent Up to isomorphism, this snake graph is independent of choice of lift $\overline{\gamma}$, consequently, we denote it by $\mathcal{G}_{\gamma, T}$

\end{defn}

\begin{figure}[H]
\begin{center}
\includegraphics[width=10cm]{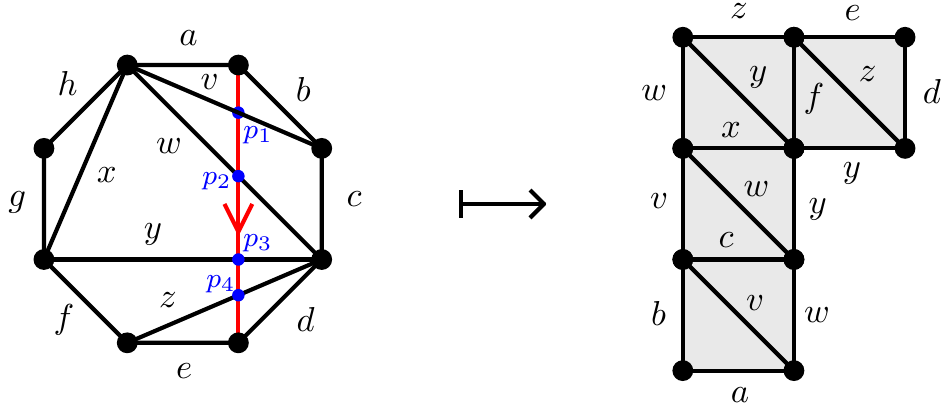}
\caption{The construction of a snake graph from an arc.}
\end{center}
\end{figure}

\subsection{Band graphs associated to one-sided closed curves}

Let $T$ denote a triangulation of $(S,M)$ and let $\overline{T}$ denote the lifted triangulation in the orientable double cover $\overline{(S,M)}$.

\begin{defn}
\label{surfacebandgraph}

Consider a one-sided closed curve $\alpha$ in $(S,M)$ and fix an orientation on $\alpha$ as well as a point $x \in \alpha$, such that $x$ is not a point of any arc in $T$. Note that $\alpha$ lifts to an oriented (two-sided) closed curve $\overline{\alpha}$ in $\overline{(S,M)}$, and $x$ lifts to two points $\overline{x}$ and $\tilde{\overline{x}}$. \newline
\indent Let $\mathcal{G}_{\alpha, T, x} = (G_1,\ldots, G_d)$ denote the snake graph obtained from following the curve from $\overline{x}$ to $\tilde{\overline{x}}$ along $\overline{\alpha}$, under the procedure described in Definition \ref{arcsnakegraph}. Note that if we continue along $\overline{\alpha}$, and apply the procedure to the next quadrilateral, we obtain a snake graph $(G_1,\ldots, G_d, G_{d+1})$ where $G_1$ and $G_{d+1}$ are the two lifts of a quadrilateral in $T$. As such, if we denote the glued edge of $G_d$ and $G_{d+1}$ by $\overline{b}$, then $\tilde{\overline{b}} \in \{S(G_1),W(G_1)\}$ and $sgn(\overline{b}) \neq sgn(\tilde{\overline{b}})$.

The \textbf{\textit{band graph of $\alpha$ with respect to $T$}} is defined as the band graph $\mathcal{G}_{\alpha, T, x}^{\overline{b}}$ (see Definition \ref{bandgraph}). Up to isomorphism, this band graph is independent of starting point $x$, consequently, we denote it by $\mathcal{G}_{\alpha, T}$.

\end{defn}

\begin{figure}[H]
\label{M4lift}
\begin{center}
\includegraphics[width=9cm]{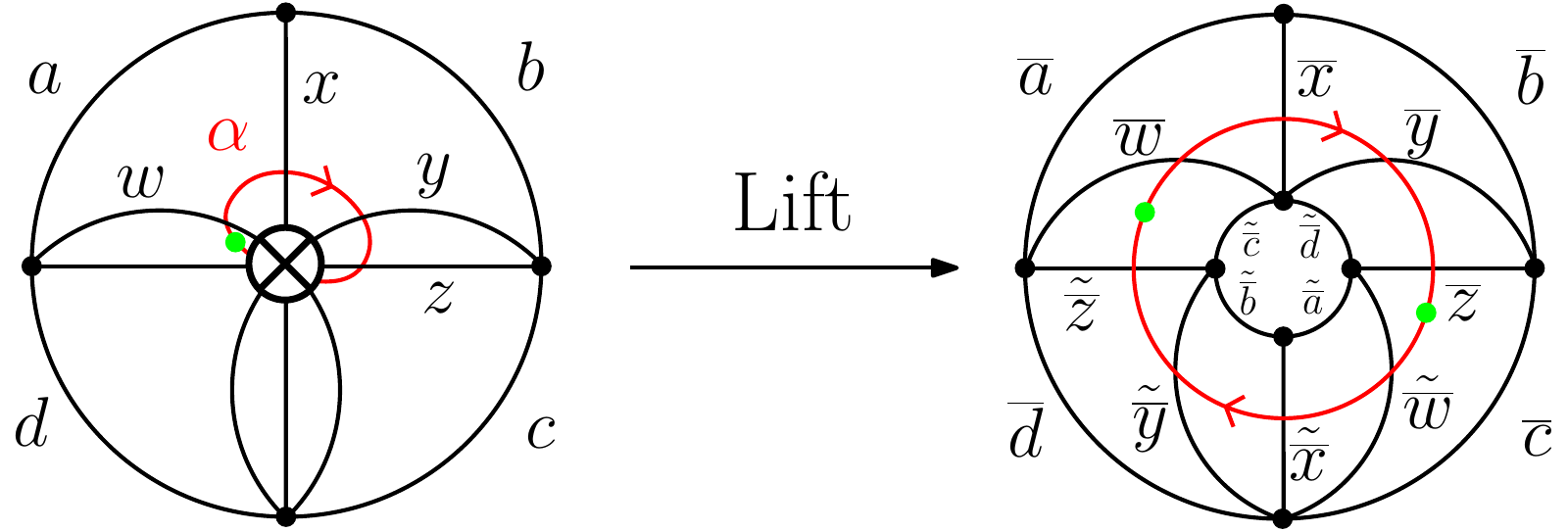}
\caption{The lift of a triangulation $T$ and an oriented one-sided closed curve $\alpha$.}
\end{center}
\end{figure}

\begin{figure}[H]
\label{bandgraph}
\begin{center}
\includegraphics[width=7cm]{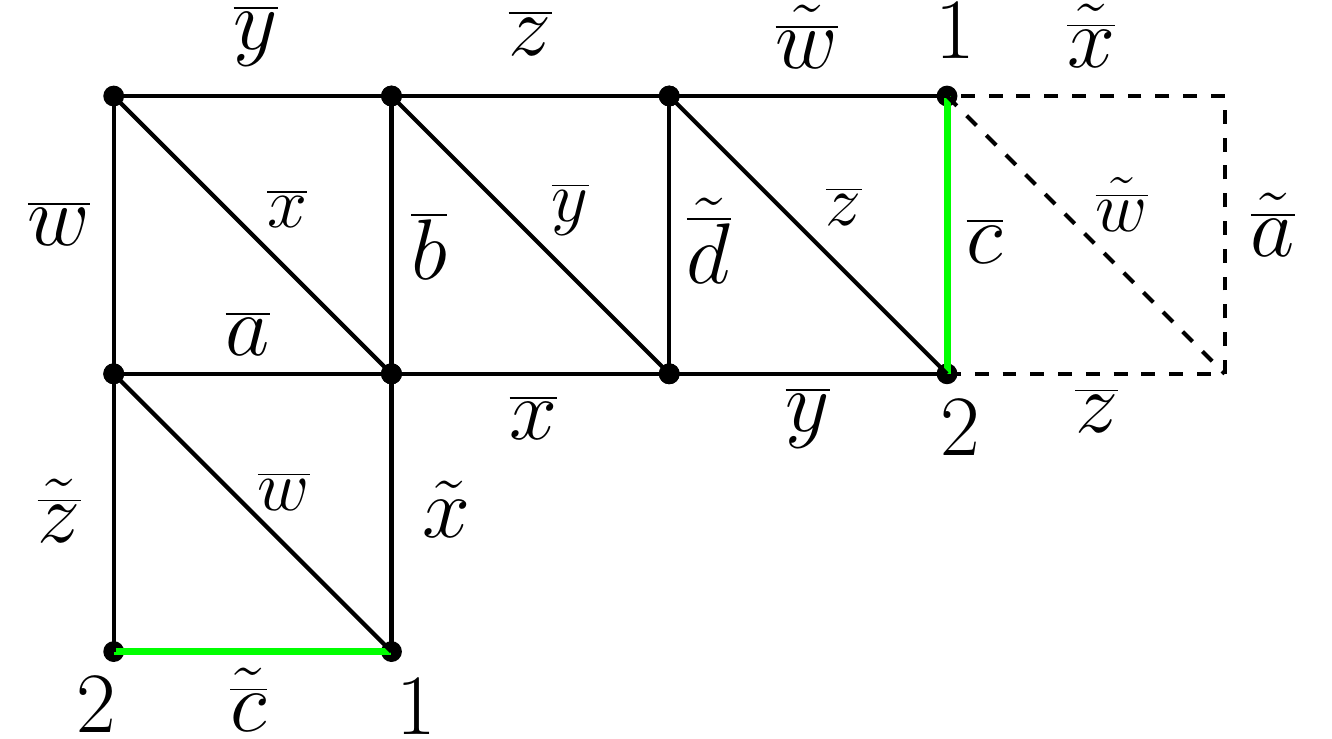}
\caption{The construction of a band graph with respect to $\alpha$ and $T$ found in Figure \ref{M4lift}.}
\end{center}
\end{figure}

\section{Expansion formulae for quasi-arcs with respect to triangulations without self-folded triangles}

\subsection{Expansion formulae for coefficient-free quasi-cluster algebras}

\begin{defn}

Let $T$ be an ideal triangulation. For a directed arc $\gamma$ not in $T$, let $\gamma_{i_1}, \ldots, \gamma_{i_d}$ be the sequence of arcs in $T$ which $\gamma$ intersects. The \textit{\textbf{crossing monomial}} of $\alpha$ with respect to $T$ is defined as: $$cross(\gamma, T) := {\displaystyle \prod_{j=1}^{d} x_{\gamma_{i_j}}}.$$

\noindent If $\gamma$ is an arc in $T$ then $cross(\gamma, T) := \frac{1}{x_{\gamma}}$.

\end{defn}

\begin{defn}

Let $T$ be an ideal triangulation without self-folded triangles and let $\gamma$ be a quasi-arc not in $T$. For $P \in \mathcal{P}_{T,\gamma}$ we define the \textit{\textbf{weight monomial}} $x(P)$ as follows:

$$x(P) := {\displaystyle \prod_{\gamma_i \in P} x_{\gamma_i}}.$$

\noindent If $\gamma$ is an arc in $T$ then $\mathcal{P}_{T,\gamma} = \{*\}$ and $x(*) := 1$.

\end{defn}

The following theorem was obtained by Musiker, Schiffler and Williams in the setting of orientable surfaces [Theorem 4.10, \cite{musiker2011positivity}]. However, the result easily extends to non-orientable surfaces. Indeed, any arc $\gamma$ on a non-orientable surface $(S,M)$ lifts to two arcs $\overline{\gamma}$ and $\tilde{\overline{\gamma}}$ on the orientable double cover $\overline{(S,M)}$. Moreover, each triangulation $T = \{\gamma_1,\ldots, \gamma_n\}$ of $(S,M)$ lifts to the the triangulation $\overline{T} = \{\overline{\gamma}_1, \tilde{\overline{\gamma}}, \ldots, \overline{\gamma}_1, \tilde{\overline{\gamma}}\}$. Therefore, for any arc $\alpha$ of $(S,M)$ we may obtain an expansion formula for $x_{\overline{\alpha}}$ (and of course $x_{\tilde{\overline{\alpha}}}$) in terms of $x_{\overline{\gamma}_1},x_{\tilde{\overline{\gamma}}_1}, \ldots, x_{\overline{\gamma}_n}, x_{\tilde{\overline{\gamma}}_n}$. Making the specialisations $x_{\overline{\gamma}_1} = x_{\tilde{\overline{\gamma}}_1}, \ldots, x_{\overline{\gamma}_n} = x_{\tilde{\overline{\gamma}}_n}$ recovers the geometry of $(S,M)$, whence providing the desired expansion formula of $x_{\alpha}$ in terms of $x_{\overline{\gamma}_1}, \ldots, x_{\overline{\gamma}_n}$.

\begin{thm}
\label{coefficientfreeexpansionarcs}
Let $T$ be a triangulation, of a bordered surface $(S,M)$, without self-folded triangles. Then for any arc $\alpha$ in $(S,M)$ (including loops around punctures) we have:

$$x_{\alpha} = \frac{1}{cross(T,\alpha)}\sum_{P \in \mathcal{P}_{T,\alpha}} x(P). $$

\end{thm}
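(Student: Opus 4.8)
The plan is to reduce the statement to the known orientable case of Musiker, Schiffler and Williams [Theorem 4.10, \cite{musiker2011positivity}] via the orientable double cover $\overline{(S,M)}$, as already outlined in the paragraph preceding the theorem. First I would fix the triangulation $T = \{\gamma_1,\ldots,\gamma_n\}$ of $(S,M)$ and pass to the lifted triangulation $\overline{T} = \{\overline{\gamma}_1, \tilde{\overline{\gamma}}_1, \ldots, \overline{\gamma}_n, \tilde{\overline{\gamma}}_n\}$ of $\overline{(S,M)}$, which is a (genuine, orientable) triangulation without self-folded triangles — here one needs to check that the lift of a self-folded-free triangulation is again self-folded-free, and that $\overline{T}$ really is a triangulation of the orientable cover (this is recorded in the Remark following the double-cover construction). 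For the chosen arc $\alpha$ of $(S,M)$, pick a lift $\overline{\alpha}$; since $\overline{\alpha}$ is an ordinary arc of the orientable surface $\overline{(S,M)}$, the MSW expansion formula applies and gives
$$x_{\overline{\alpha}} = \frac{1}{cross(\overline{T},\overline{\alpha})}\sum_{\overline{P} \in \mathcal{P}_{\overline{T},\overline{\alpha}}} x(\overline{P}),$$
a Laurent polynomial in the $2n$ variables $x_{\overline{\gamma}_i}, x_{\tilde{\overline{\gamma}}_i}$.

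Next I would perform the diagonal specialisation $x_{\overline{\gamma}_i} = x_{\tilde{\overline{\gamma}}_i} =: x_{\gamma_i}$ for each $i$. The key point is that this specialisation is compatible with the quasi-cluster structure: by Proposition \ref{quiverandflip} the double-cover mutations in directions $\overline{\gamma}$ and $\tilde{\overline{\gamma}}$ commute and their composite descends to the flip $\mu_\gamma$ on $(S,M)$, so the exchange relations on $(S,M)$ are exactly the images under this specialisation of the exchange relations on $\overline{(S,M)}$; hence the image of $x_{\overline{\alpha}}$ under the specialisation is the cluster variable $x_\alpha \in \mathcal{A}(S,M)$. It remains to identify the specialised right-hand side with $\frac{1}{cross(T,\alpha)}\sum_{P\in\mathcal{P}_{T,\alpha}} x(P)$. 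For the crossing monomial this is immediate: the arcs of $\overline{T}$ crossed by $\overline{\alpha}$, after specialisation, are precisely the arcs of $T$ crossed by $\alpha$ counted with multiplicity, so $cross(\overline{T},\overline{\alpha})$ specialises to $cross(T,\alpha)$. For the sum over matchings one argues that the snake graph $\mathcal{G}_{\overline{\alpha},\overline{T}}$ of $\overline{\alpha}$ with respect to $\overline{T}$ coincides (as an edge-labelled graph) with the snake graph $\mathcal{G}_{\alpha,T}$ defined in Definition \ref{arcsnakegraph} — indeed the latter was \emph{defined} by lifting to the double cover — so there is a bijection $\mathcal{P}_{\overline{T},\overline{\alpha}} \cong \mathcal{P}_{T,\alpha}$ identifying $x(\overline{P})$ with $x(P)$ after the specialisation of edge labels.

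The main obstacle I anticipate is not the general mechanism but the bookkeeping needed when $\alpha$ is a loop enclosing a puncture (the parenthetical ``including loops around punctures''). Such an $\alpha$ is the ideal arc $\ell_p$ associated to a once-punctured-monogon configuration, and its lift and the relevant quadrilaterals must be analysed with care: one must confirm that $\ell_p$ still lifts to a genuine ordinary arc on $\overline{(S,M)}$ to which MSW applies, and that the combinatorics of its snake graph match up correctly under specialisation (in particular that no self-folded triangle is created in $\overline{T}$ around the lifted punctures). A secondary technical point is checking that the specialisation $x_{\overline{\gamma}_i}=x_{\tilde{\overline{\gamma}}_i}$ does not collapse distinct cluster variables in a way that would make the identification of $x_\alpha$ ambiguous — this is handled by the algebraic independence statement (Proposition, Theorem 5.44 of \cite{wilson2018laurent}) and the isomorphism-type remarks on quasi-cluster algebras, but it should be stated explicitly. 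Everything else is a routine transport of the MSW formula through the double-cover correspondence.
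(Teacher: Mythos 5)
Your proposal takes essentially the same approach as the paper: the paper gives no separate proof environment for this theorem, instead relying on the paragraph immediately preceding it, which says to lift to the orientable double cover $\overline{(S,M)}$, apply the Musiker--Schiffler--Williams expansion formula there, and perform the diagonal specialisation $x_{\overline{\gamma}_i}=x_{\tilde{\overline{\gamma}}_i}$ to recover the geometry of $(S,M)$. Your write-up fills in the same mechanism with more care -- justifying compatibility of the specialisation with mutation via descent of flips from the double cover, identifying $\mathcal{G}_{\overline{\alpha},\overline{T}}$ with $\mathcal{G}_{\alpha,T}$ (which is indeed how the latter is defined), and flagging the loop and self-folded-triangle bookkeeping -- but it is the same argument the paper sketches.
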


We now embark on the task of finding expansion formulae for one-sided closed curves.

\begin{defn}

Let $\gamma \in (S,M)$ be an arc enclosing some $M_1$ and $\delta \in (S,M)$ be a quasi-arc which has non-trivial intersection with $\gamma$. There may be many intersections between $\delta$ and $\gamma$, however they split into combinations of three natural types (see Figure):

\begin{itemize}

\item (Type I) $\delta$ intersects $\gamma$ twice and passes through the crosscap of $M_1$. 

\item (Type II) $\delta$ intersects $\gamma$ twice but does not pass through the crosscap of $M_1$.

\item (Type III) $\delta$ intersects $\gamma$ once (hence it has an endpoint on $M_1$ and passes through the crosscap).

\end{itemize}

\end{defn}

\begin{figure}[H]
\begin{center}
\includegraphics[width=12cm]{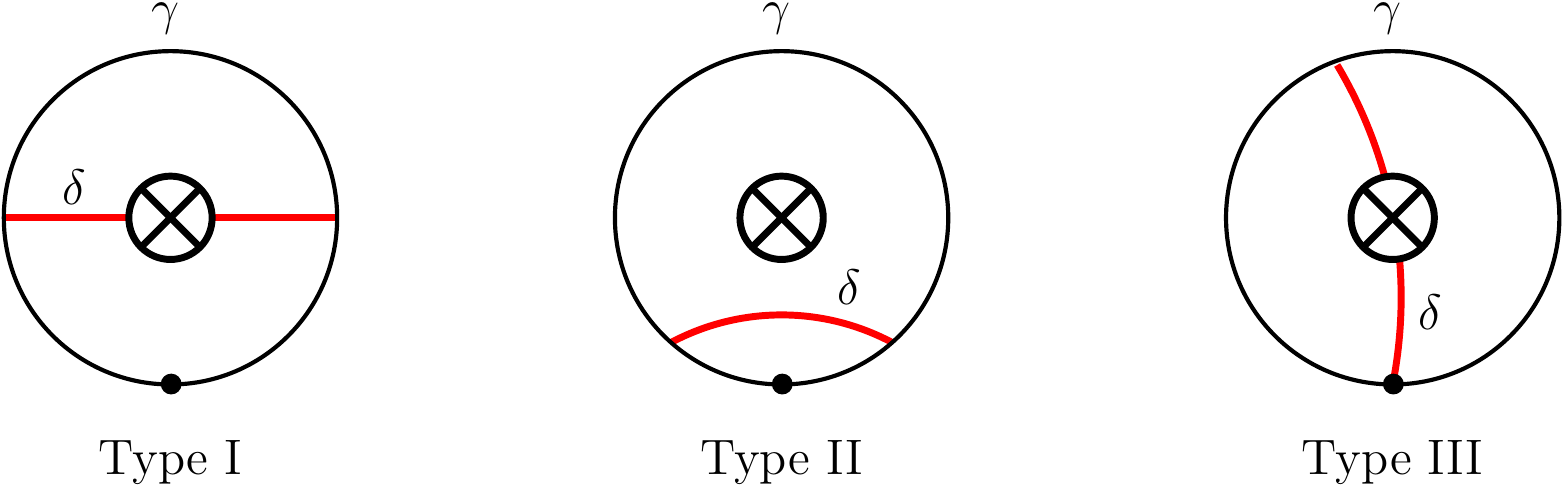}
\caption{The three types of intersections of an arc $\delta$ and an arc $\gamma$ bounding a M\"{o}bius strip with 1 marked point.}
\end{center}
\end{figure}

\begin{rmk}

Note that a quasi-arc $\delta$ can have varying types of intersections with $\gamma$, however it cannot have both Type II and Type III intersections.

\end{rmk}

\begin{lem}
\label{arcexistence}
Let $T$ be an ideal triangulation of $(S,M)$ and $\alpha$ a one-sided closed curve. Then there exists an arc $\gamma$ enclosing $\alpha$ in $M_1$ such that there are no arcs $\delta \in T$ having a Type II intersection with $\gamma$.

\end{lem}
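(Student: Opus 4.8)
The plan is to start from an arbitrary arc $\gamma$ enclosing $\alpha$ in a M\"obius strip $M_1$ and then \emph{modify} $\gamma$ whenever it has a Type~II intersection with some arc of $T$, arguing that each such modification strictly decreases a suitable complexity measure so the process terminates. First I would fix \emph{any} arc $\gamma_0$ bounding a M\"obius strip $M_1^{\gamma_0}$ containing $\alpha$ (such a $\gamma_0$ exists: take a small regular neighbourhood of the cross-cap through which $\alpha$ passes, together with an arc connecting it to a marked point). Among all such enclosing arcs, choose $\gamma$ so that the total number of intersection points $\#(\gamma\cap T):=\sum_{\tau\in T} i(\gamma,\tau)$ is minimal (here $i(\cdot,\cdot)$ denotes geometric intersection number of isotopy classes). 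The claim is that this minimal-position $\gamma$ has no Type~II intersection with any $\delta\in T$.

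The key step is the following local surgery. Suppose some $\delta\in T$ has a Type~II intersection with $\gamma$: then $\delta$ crosses $\gamma$ at two consecutive points $p,q$, cutting off a sub-arc $\delta'$ of $\delta$ lying \emph{inside} $M_1^\gamma$ which does \emph{not} pass through the cross-cap. Since $\delta'$ misses the cross-cap, the region of $M_1^\gamma$ between $\delta'$ and the portion of $\gamma$ between $p$ and $q$ is an orientable disk (a "bigon-with-content") whose interior meets $\alpha$ either not at all or in arcs that can be pushed out. I would then replace the $p$--$q$ portion of $\gamma$ by a curve running parallel to $\delta'$ (pushed slightly off $\delta$ to the far side), obtaining a new simple closed-up arc $\gamma^{\mathrm{new}}$. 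One checks: (i) $\gamma^{\mathrm{new}}$ still bounds a M\"obius strip containing $\alpha$, because we only altered $\gamma$ on the orientable side and the cross-cap (hence $\alpha$) remains enclosed; (ii) $\gamma^{\mathrm{new}}$ is still an arc of $(S,M)$ with the same endpoints; and (iii) $\#(\gamma^{\mathrm{new}}\cap T)<\#(\gamma\cap T)$, because the two intersection points with $\delta$ are removed and, after isotoping into minimal position, no new intersections with any arc of $T$ are created — any arc of $T$ crossing the parallel copy of $\delta'$ would have had to cross $\delta$ itself inside that disk, impossible since arcs of $T$ are pairwise non-crossing, while crossings with $\delta'$ inside the disk can be removed by an innermost-bigon argument using that $T$ has no self-folded triangles. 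This contradicts minimality of $\gamma$, so no Type~II intersection exists.

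The main obstacle I anticipate is carefully justifying point~(iii): controlling the intersection number of $\gamma^{\mathrm{new}}$ with \emph{every} arc of $T$ simultaneously, and in particular ruling out the creation of new crossings or the loss of simplicity of $\gamma^{\mathrm{new}}$ when the disk cut off by $\delta'$ contains other arcs of $T$ or further strands of $\gamma$ itself. The cleanest way around this is probably to do the surgery at an \emph{innermost} Type~II bigon — choose $p,q$ and the sub-arc $\delta'$ so that the disk $D$ it bounds with $\gamma$ contains no other Type~II-cutting-off sub-arcs — and then observe that the arcs of $T$ entering $D$ must exit through the $\gamma$-side of $\partial D$ (not through $\delta'$, by non-crossing of $T$), so the parallel push of $\delta'$ can be isotoped to meet each of them at most as often as the replaced piece of $\gamma$ did, with at least the two $\delta$-points saved. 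An alternative, possibly smoother route — and the one I would fall back on if the surgery bookkeeping becomes unwieldy — is to phrase everything on the orientable double cover $\overline{(S,M)}$: lift $\gamma$ to the pair $\overline{\gamma},\tilde{\overline{\gamma}}$ and $\alpha$ to the single closed curve $\overline{\alpha}$, reduce the pair $\{\overline{\gamma},\tilde{\overline{\gamma}}\}$ to minimal position with respect to $\overline{T}$ equivariantly under the deck involution, and translate "no Type~II intersection downstairs" into a statement about the lifted bigons, invoking the standard orientable bigon-removal lemma.
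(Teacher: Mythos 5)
You take a genuinely different route from the paper. The paper's proof is an explicit one-step construction: take any $\eta\in T$ meeting $\alpha$, let $x$ be the first intersection point along a chosen orientation of $\eta$, define $\beta$ to be the arc isotopic to the concatenation $s(\eta)\xrightarrow{\eta}x\xrightarrow{\alpha}x\xrightarrow{\eta^{-1}}s(\eta)$, and take $\gamma$ to be the arc enclosing $\alpha$ and $\beta$ in an $M_1$. For this $\gamma$, the piece of $\eta$ from $s(\eta)$ to $x$ runs inside $M_1^{\gamma}$ from the marked point to $\alpha$, so any Type II sub-arc $\delta'$ of some $\delta\in T$ inside $M_1^{\gamma}$ (which necessarily separates the marked point from the cross-cap) would have to cross $\eta$ --- impossible since arcs of $T$ are pairwise non-crossing.

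Your minimize-then-surger argument has a concrete gap in the surgery step. You claim that the region of $M_1^{\gamma}$ between $\delta'$ and the $p$--$q$ portion of $\gamma$ is an orientable disk, and replace the $p$--$q$ portion of $\gamma$ by a parallel copy of $\delta'$. But $M_1^{\gamma}$ is a M\"obius strip with a single marked point $m$ (the common endpoint of $\gamma$), and cutting it along the boundary-parallel arc $\delta'$ yields a disk and a smaller M\"obius strip; $m$ lies on the boundary of exactly one of these two pieces. Since $M_1$ contains no punctures, if $m$ sits on the M\"obius-strip side then the disk is an honest bigon between $\gamma$ and $\delta$, and such a bigon is already absent once $\gamma$ is in minimal position with $T$ --- as your minimizer is. So the Type~II intersections that actually survive are precisely those where the disk cut off by $\delta'$ contains $m$, equivalently where $\delta'$ separates $m$ from $\alpha$. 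In that situation the region between $\delta'$ and the $p$--$q$ portion of $\gamma$ is the M\"obius strip, \emph{not} a disk, and your replacement produces a simple \emph{closed} curve with no marked endpoint (while replacing the complementary portion of $\gamma$, the one through $m$, yields an arc bounding a disk rather than a M\"obius strip containing $\alpha$). Either way, no competitor $\gamma^{\mathrm{new}}$ is produced, and the minimality contradiction does not materialize. The double-cover fallback inherits the same problem: the lift of this disk still carries a lift of $m$ on its boundary and is therefore not a removable bigon upstairs either. You would need a separate argument to rule out this case for the minimizer, or a genuinely different surgery; the paper's explicit choice of $\gamma$, hugging $\eta$, sidesteps the issue entirely.
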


\begin{proof}

Since $\alpha \notin T$ there exists $\eta \in T$ which intersects $\alpha$. After fixing some orientation on $\eta$, let $x$ denote the first (and possibly only) intersection point of $\eta$ with $\alpha$. Define $\beta$ to be the arc isotopic to the following curve:

$$s(\eta) \xrightarrow{\text{$\eta$}} x \xrightarrow{\text{$\alpha$}} x \xrightarrow{\text{$\eta^{-1}$}} s(\eta)$$

where $s(\eta)$ denotes the starting endpoint of $\eta$, and $\eta^{-1}$ indicates we are travelling against the orientation on $\eta$. Define $\gamma$ to be the (unique) arc enclosing $\alpha$ and $\beta$ in some $M_1$ (see figure). The lemma is then proved by noting any arc $\delta \in T$ cannot have Type II intersection with $\gamma$, since this would imply $\delta$ intersects $\eta \in T$.

\end{proof}

Recall that our aim is to prove the expansion formula for $x_{\alpha}$ by embedding the respective band and snake graphs of $\alpha$ and $\beta$ into the snake graph of $\gamma$. The previous lemma tells us that for a given triangulation $T$ and a one-sided closed curve $\alpha$, we can choose $\gamma$ (or equivalently $\beta$) sufficiency nicely. In particular, under this choice, we are able to easily describe $S_{\gamma,\overline{T}^{\circ}}$ in terms of $S_{\alpha,\overline{T}^{\circ}}$ and $S_{\beta,\overline{T}^{\circ}}$.

\begin{lem}
\label{arcproperty}
Let $T$ be an ideal triangulation and $\gamma$ be an arc enclosing some $M_1$ such that no $\delta \in T$ has Type II intersections with $\gamma$. After fixing an orientation on $\gamma$, let $$(\gamma_{i_1}, \ldots, \gamma_{i_d})$$ be all arcs in $T$ that intersect $\gamma$ (including multiplicity), which are listed in order of their intersections.

Then there exist $s,t \in \{1,\ldots, d\}$ such that:

\begin{itemize}

\item $s+t = d$

\item $\{\gamma_{i_{s+1}}, \ldots, \gamma_{i_{t}}\}$ are arcs whose corresponding intersection points $p_{s+1}, \ldots, p_{t}$ are Type III intersections.

\item $\{\gamma_{i_{1}} = \gamma_{i_{t+1}}, \ldots, \gamma_{i_{s}} = \gamma_{i_{t+s}}\}$ are arcs whose corresponding intersection points $(p_{1},p_{t+1}) \ldots, (p_{s}, p_{t+s})$ form Type I intersections.

\end{itemize}

\end{lem}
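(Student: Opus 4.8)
The plan is to analyze the structure of $\gamma$ as the arc enclosing $\alpha$ and $\beta$ in a M\"obius strip $M_1$, and to exploit the fact that $\gamma$ is (isotopic to) a curve that traverses a neighbourhood of $M_1$ in a controlled way. First I would describe the geometry of a M\"obius strip with one marked point: the boundary $\gamma$ of $M_1$ can be homotoped so that it enters $M_1$, winds once around the core (through the crosscap), and exits. Since $M_1$ itself contains no arcs of $T$ in its interior (as $\gamma$ is an arc of a larger triangulation and $\alpha, \beta$ are the only quasi-arcs inside $M_1$, neither of which is in $T$), every intersection point of $\gamma$ with an arc $\delta \in T$ occurs outside $M_1$, where $\delta$ is being "cut" by the two strands of $\gamma$ that run parallel into and out of $M_1$.

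The key step is then to split the intersection sequence $(\gamma_{i_1}, \ldots, \gamma_{i_d})$ according to whether a given strand of $\gamma$ passes through the crosscap between consecutive intersections. By hypothesis no $\delta \in T$ has a Type II intersection with $\gamma$, so each $\delta$ meeting $\gamma$ does so either in a Type I fashion (twice, with the curve passing through the crosscap between the two intersection points) or a Type III fashion (once, with that strand of $\gamma$ terminating on $M_1$ after passing through the crosscap). I would argue that, reading along $\gamma$ from its chosen orientation, the intersection points come in a pattern: an initial block $p_1, \ldots, p_s$ corresponding to the "outgoing" strand, a middle block $p_{s+1}, \ldots, p_t$ where $\gamma$ is actually inside $M_1$ passing through the crosscap — these are precisely the Type III points (the single intersections) — and a final block $p_{t+1}, \ldots, p_{t+s}$ corresponding to the "return" strand. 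The return strand runs parallel (in reverse) to the outgoing strand, so it crosses the same arcs in the opposite order: $\gamma_{i_{t+k}} = \gamma_{i_{s+1-k}}$, and each such pair $(p_k, p_{t+k})$ together constitutes a single Type I intersection of the full curve $\delta$ with $\gamma$. Setting $s + t = d$ is then forced by counting: the total is $s$ (outgoing) $+ (t-s)$ (inside, Type III) $+ s$ (return) $= t + s = d$.

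The main obstacle I expect is verifying that the intersection points genuinely organize into these three contiguous blocks — i.e., that one cannot have Type I pairs interleaved with Type III points along $\gamma$, and that the return strand really does reverse the outgoing sequence exactly. This requires a careful homotopy argument: using the fact that any $\delta \in T$ is simple and $T$ consists of pairwise non-intersecting arcs, together with the absence of Type II intersections, to pin down that every strand segment of $\gamma$ lying outside $M_1$ can be pushed to coincide (as an unoriented curve) with the corresponding segment of $\beta$ or of the doubled $\eta$-path from Lemma \ref{arcexistence}. I would make this precise by passing to the explicit local picture near the crosscap (as in Figures of Section 2) and checking that the only way $\gamma$ meets the triangulation is in the claimed Type I / Type III pattern; the indexing identities $\gamma_{i_1} = \gamma_{i_{t+1}}, \ldots, \gamma_{i_s} = \gamma_{i_{t+s}}$ then follow by reading off the two parallel strands. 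The relabelling to achieve exactly $s + t = d$ with the stated index ranges is then bookkeeping.
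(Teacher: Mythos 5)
Your overall strategy (analyse the segments of the $T$-arcs inside $M_1$, use the absence of Type II intersections to force every such segment through the crosscap, and read off the block structure of the crossing points along $\gamma$) is in the spirit of what the paper does, since the paper simply asserts the pattern follows directly from the geometry. However, your proposal contains a genuine error at exactly the point where the geometry matters. You assert that the "return strand runs parallel (in reverse) to the outgoing strand, so it crosses the same arcs in the opposite order: $\gamma_{i_{t+k}} = \gamma_{i_{s+1-k}}$", and then claim the identities $\gamma_{i_1} = \gamma_{i_{t+1}}, \ldots, \gamma_{i_s} = \gamma_{i_{t+s}}$ "follow by reading off the two parallel strands". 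These two statements are incompatible, and the first one is false. Because $\beta$ (and every Type I or Type III segment) passes through the crosscap, which reverses orientation, the arc $\gamma$ is the boundary of a M\"obius band neighbourhood of $\beta$ and therefore runs along $\beta$ \emph{twice in the same direction}, not once forward and once backward. Equivalently: two disjoint segments inside $M_1$, each crossing the crosscap once, have \emph{linked} endpoint pairs on $\gamma$ (check this in the square model of the M\"obius strip), so the second crossings of the Type I arcs occur in the \emph{same} order as the first crossings, which is exactly the lemma's third bullet; your "parallel in reverse" picture is the two-sided (orientable) picture and would produce a nested pattern $\gamma_{i_{t+k}} = \gamma_{i_{s+1-k}}$, contradicting the statement being proved and breaking the later structure results (Corollary \ref{3tiles}, Proposition \ref{decomposition}) that depend on the same-order pairing.

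A second, related inaccuracy: $\gamma$ cannot be homotoped to "enter $M_1$ and wind once around the core through the crosscap" -- a loop passing through the crosscap once is orientation-reversing, whereas $\gamma$ bounds $M_1$ and is orientation-preserving (it is isotopic to the double of $\beta$, passing the crosscap region twice). Likewise the intersections with the arcs of $T$ are governed by what happens \emph{inside} $M_1$: each $\delta\in T$ meeting $\gamma$ contributes a segment in $M_1$ through the crosscap, and it is the disjointness of these segments (pairwise linking of Type I pairs, every Type III point lying inside every Type I interval since its second endpoint is the basepoint $m$) that forces the order $p_1,\ldots,p_s$ (first crossings), then the Type III points, then $p_{t+1},\ldots,p_{t+s}$ (second crossings in the same order), giving $d = 2s+(t-s) = s+t$. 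With the reversal claim corrected and the argument relocated to the interior of $M_1$ as above, your outline does yield the lemma.
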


\begin{proof}

This follows directly from the assumption that there are no Type II intersections. In particular, $s$ and $t-s$ are the number of Type I and Type III intersections, respectively.

\end{proof}

\begin{figure}[H]
\begin{center}
\includegraphics[width=11cm]{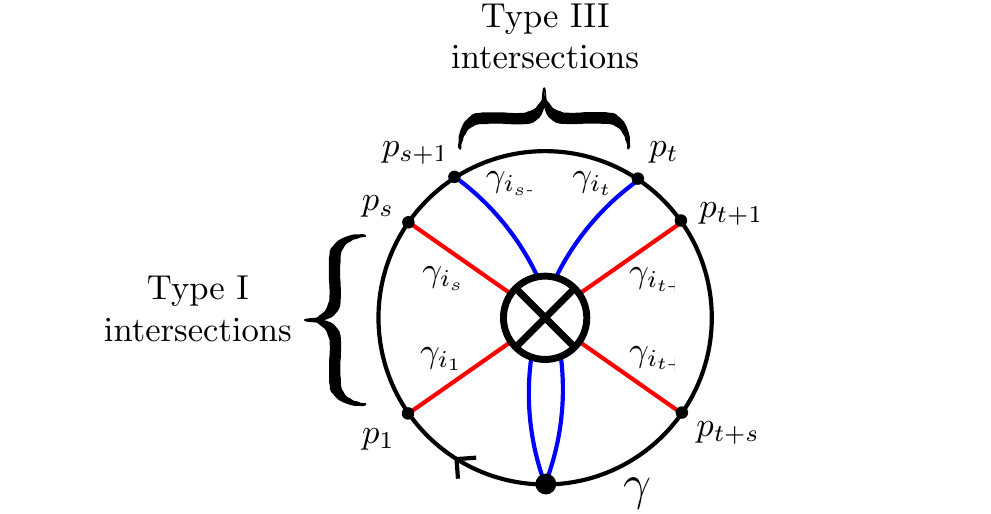}
\caption{The form of the ideal triangulation discussed in Lemma \ref{arcproperty}. The red arcs and blue arcs indicate the arcs in $T$ with type I and type III intersections, respectively.}
\end{center}
\end{figure}

Lemma \ref{arcproperty} has two immediate corollaries.

\begin{cor}
\label{3tiles}

Following the setup of Lemma \ref{arcexistence} and Lemma \ref{arcproperty}, in the snake graph $S_{\gamma,\overline{T}}$ we have the following:

\begin{itemize}

\item If $s+1 \neq t$ then the tiles corresponding to $\gamma_{i_{s}},\gamma_{i_{s+1}}, \gamma_{i_{s+2}}$ form a straight line in $S_{\gamma,\overline{T}^{\circ}}$. Likewise, the tiles corresponding to $\gamma_{i_{t-1}},\gamma_{i_{t}}, \gamma_{i_{t+1}}$ also form a straight line in $S_{\gamma,\overline{T}^{\circ}}$. \newline Moreover, the consecutive tiles corresponding to $\gamma_{i_{s+1}}, \ldots, \gamma_{i_{t}}$ form a zig-zag in $S_{\gamma,\overline{T}^{\circ}}$.

\item If $s+1 = t$ then the tiles corresponding to $\gamma_{i_s},\gamma_{i_{s+1}}=\gamma_{i_{t}}$, and $\gamma_{i_{t+1}}$ form a zig-zag in $S_{\gamma,\overline{T}^{\circ}}$.

\end{itemize}

\end{cor}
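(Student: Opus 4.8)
The plan is to analyse the local structure of the snake graph $S_{\gamma,\overline{T}^{\circ}}$ near the transition between the Type I region and the Type III region, using only the combinatorial definition of the snake graph (Definition \ref{arcsnakegraph}) together with the picture of the triangulation afforded by Lemma \ref{arcproperty}. The key observation is that straightness versus zig-zag of three consecutive tiles $G_j, G_{j+1}, G_{j+2}$ is detected entirely by the relative orientations $rel(G_j), rel(G_{j+1}), rel(G_{j+2})$ and by which sides of the intermediate triangles $\Delta$ the curve $\gamma$ enters and exits; so the whole statement reduces to bookkeeping on the surface.

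First I would translate ``straight line of tiles'' into surface language: three consecutive tiles $G_j, G_{j+1}, G_{j+2}$ form a straight sub snake graph precisely when the arc $\gamma$ passes through the shared triangle between $\tau_{i_j}$ and $\tau_{i_{j+1}}$ and through the shared triangle between $\tau_{i_{j+1}}$ and $\tau_{i_{j+2}}$ in such a way that the edges $\tau_{[i_j]}$ and $\tau_{[i_{j+1}]}$ glued in the snake graph are ``opposite'' — equivalently, the two triangles fan out on the same side of $\tau_{i_{j+1}}$ rather than folding back; this is the condition $rel(G_j) = rel(G_{j+2})$ combined with the triangle $\Delta_{j+1}$ being crossed ``straight through''. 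Then I would read off from Figure (the one following Lemma \ref{arcproperty}) what the triangulation looks like around $\gamma$: the Type I strands $\gamma_{i_1},\ldots,\gamma_{i_s}$ cross $\gamma$ twice and sit in a nested ``fan'' configuration feeding into the crosscap, while the Type III strands $\gamma_{i_{s+1}},\ldots,\gamma_{i_t}$ each have an endpoint on $M_1$ and cross $\gamma$ once, and these necessarily alternate sides as $\gamma$ proceeds (because consecutive endpoints on the boundary of the Möbius strip alternate), forcing the zig-zag pattern among the tiles they index.

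Next I would handle the three claimed straight triples / zig-zag triple separately. For $\gamma_{i_s}, \gamma_{i_{s+1}}, \gamma_{i_{s+2}}$ (the entry into the Type III block, assuming $s+1\ne t$): the tile of $\gamma_{i_s}$ is the last tile of the Type I fan and the tile of $\gamma_{i_{s+2}}$ is the second tile of the zig-zag; I would check that the triangle $\Delta$ separating $\tau_{i_s}$ from $\tau_{i_{s+1}}$ is the ``transition triangle'' whose third side is crossed by $\gamma$ going straight in, forcing $rel(G_s)=rel(G_{s+2})$ and hence straightness. The triple $\gamma_{i_{t-1}}, \gamma_{i_t}, \gamma_{i_{t+1}}$ is symmetric (exit of the Type III block, re-entering the Type I fan on the way back out), handled by the same argument with orientations reversed, using $s+t=d$ and the identification $\gamma_{i_1}=\gamma_{i_{t+1}}$, etc. For the interior of the Type III block, that consecutive endpoints on the single boundary marked point of $M_1$ alternate sides gives $rel(G_{j})\ne rel(G_{j+2})$ for all $j$ in that range — i.e. no straight triple — which is exactly the zig-zag condition. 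The degenerate case $s+1=t$ (exactly one Type III intersection) is then immediate: the three tiles of $\gamma_{i_s}, \gamma_{i_{s+1}}=\gamma_{i_t}, \gamma_{i_{t+1}}$ are the entry triple and exit triple collapsed together, and the single reversal at the lone Type III crossing makes them a zig-zag.

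The main obstacle I anticipate is purely notational rather than conceptual: correctly pinning down, in the transition triangles, which of the two possible planar embeddings of the quadrilateral is forced, so that the ``straight'' conclusion genuinely follows and is not accidentally a zig-zag. This amounts to carefully tracking the relative orientation $rel(G_j)$ through the nested fan of Type I arcs and verifying it is locally constant there, then jumps exactly once at each Type III crossing. I would do this by drawing the lift to $\overline{(S,M)}$ explicitly (as in the figures of Section \ref{snakebandconstruction}) for the generic local picture and citing Lemma \ref{arcproperty} to guarantee no other intersection types intrude; once the local picture is fixed, each of the three assertions is a one-line check against the definition of straight and zig-zag.
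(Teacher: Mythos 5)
Your high-level geometric picture is broadly right, but the mechanism you propose for detecting straight versus zig-zag is wrong, and if you tried to carry it out literally against the definitions it would collapse. By Definition~\ref{arcsnakegraph} the tiles of any snake graph are embedded so that $rel(G_i)\neq rel(G_{i+1})$ for every $i$; relative orientation \emph{always} alternates, tile by tile, regardless of the geometry. Consequently $rel(G_j)=rel(G_{j+2})$ holds for every snake graph and every $j$ — it is not a condition that distinguishes straight triples from zig-zag triples. Your claims that $rel$ is ``locally constant'' through the Type~I fan and ``jumps exactly once at each Type~III crossing,'' and that $rel(G_j)\neq rel(G_{j+2})$ characterises the zig-zag region, are therefore false: the first situation never occurs, and the second condition can never be satisfied.

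The correct local criterion, which you actually state in your first paragraph before sliding into the $rel$ language, is whether the two shared edges $\tau_{[i_j]}$ and $\tau_{[i_{j+1}]}$ are \emph{opposite} or \emph{adjacent} sides of the middle tile $G_{j+1}$. Translated back to the surface, this is the standard duality: three consecutive crossed arcs $\tau_{i_j},\tau_{i_{j+1}},\tau_{i_{j+2}}$ sharing a common vertex (a \emph{fan}) give a zig-zag of tiles, while three consecutive crossed arcs with no common vertex (a \emph{zig-zag} of arcs) give a straight line of tiles. The paper's proof is just two applications of this duality: the Type~III arcs $\gamma_{i_{s+1}},\ldots,\gamma_{i_t}$ all share the unique marked point of $M_1$ as an endpoint, hence form a fan, hence their tiles zig-zag; and at the boundary of that block the Type~I arc $\gamma_{i_s}=\gamma_{i_{t+s}}$ is \emph{not} incident to that marked point, so $(\gamma_{i_s},\gamma_{i_{s+1}},\gamma_{i_{s+2}})$ is a zig-zag of arcs and its tiles are straight (symmetrically for $(\gamma_{i_{t-1}},\gamma_{i_t},\gamma_{i_{t+1}})$), while in the degenerate case $s+1=t$ the triple $(\gamma_{i_s},\gamma_{i_t},\gamma_{i_{t+1}})$ is itself a fan. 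To repair your proof, drop the $rel$-tracking entirely and replace it with this fan/zig-zag observation, which you can read directly off Lemma~\ref{arcproperty}.
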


\begin{proof}

When $s+1 \neq t$, the first statement follows from the fact $\gamma_{i_{t+s}}$ corresponds to a Type I intersection. So the sequence of arcs $(\gamma_{i_{t+s}}=\gamma_{i_{s}},\gamma_{i_{s+1}}, \gamma_{i_{s+2}})$ form a zig-zag in $\overline{T}^{\circ}$. Hence their corresponding tiles in $S_{\gamma,\overline{T}^{\circ}}$ form a straight line. Similarly, for the second statement, since $\gamma_{i_{1}}$ corresponds to a Type I intersection, then $(\gamma_{i_{t-1}},\gamma_{i_{t}}, \gamma_{i_{t+1}} =\gamma_{i_{1}})$ form a zig-zag in $S_{\gamma,\overline{T}^{\circ}}$. \newline \indent Moreover, the consecutive tiles corresponding to $\gamma_{i_{s+1}}, \ldots, \gamma_{i_{t}}$ form a zig-zag in $S_{\gamma,\overline{T}}$ since $\gamma_{i_{s+1}}, \ldots, \gamma_{i_{t}}$ form a fan in $S_{\gamma,\overline{T}^{\circ}}$\newline

When $s+1 = t$ then $(\gamma_{i_{t+s}}=\gamma_{i_{s}},\gamma_{i_{s+1}} = \gamma_{i_{t}}, \gamma_{i_{t+1}} =\gamma_{i_{1}})$ form a fan in $T$. Hence their corresponding tiles in $S_{\gamma,\overline{T}^{\circ}}$ form a zig-zag.

\end{proof}

\begin{cor}
\label{crossings}
Fix an ideal triangulation $T$ and a one-sided closed curve $\alpha$. Let $\beta$ and $\gamma$ be the arcs guaranteed by Lemma \ref{arcexistence}. Then the following equality holds:

$$cross(T,\gamma) = cross(T,\alpha)cross(T,\beta)$$

\begin{rmk}

Corollary \ref{crossings} of course holds for any arc $\gamma$ (and accompanying arc $\beta$) which encloses $\alpha$ in some $M_1$. However, we shall only need the version stated above.

\end{rmk}

\end{cor}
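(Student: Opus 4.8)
The plan is to compare the three multisets of arcs of $T$ that are crossed, respectively, by $\gamma$, by $\alpha$, and by $\beta$, and to show the first is the disjoint union of the other two. Since for any quasi-arc $\delta$ the quantity $cross(T,\delta)$ is precisely the product $\prod_\mu x_\mu$ over the arcs $\mu$ of $T$ met by $\delta$, counted with multiplicity, establishing this decomposition of multisets immediately yields the stated identity. Throughout I write $\alpha = \alpha_\gamma$ and $\beta = \beta_\gamma$ for the unique one-sided closed curve and the unique arc of $M_1^\gamma$, so that $\gamma = \partial M_1^\gamma$ and $\alpha,\beta \subset M_1^\gamma$.

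First I would unwind Lemma \ref{arcproperty}. With $s,t$ and the cyclic ordering $(\gamma_{i_1},\dots,\gamma_{i_d})$ as in that lemma, the arcs of $T$ crossed by $\gamma$ are: the $s$ arcs $\gamma_{i_1},\dots,\gamma_{i_s}$, each met twice (the Type I pairs, since $\gamma_{i_j}=\gamma_{i_{t+j}}$), together with the $t-s$ arcs $\gamma_{i_{s+1}},\dots,\gamma_{i_t}$, each met once (the Type III crossings). Hence
$$cross(T,\gamma) = \Big(\prod_{j=1}^{s} x_{\gamma_{i_j}}\Big)^{2}\prod_{j=s+1}^{t} x_{\gamma_{i_j}}.$$

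The geometric heart of the argument is then to count, for each $\delta \in T$, how many times $\delta$ meets $\alpha$ and $\beta$ inside $M_1^\gamma$. Because $M_1^\gamma$ is too small to contain any arc other than $\beta_\gamma$, every arc of $T$ meeting the interior of $M_1^\gamma$ does so along pieces that are Type I (a chord through the crosscap, with two $\gamma$-crossings) or Type III (an arc from the marked point of $M_1^\gamma$ through the crosscap, with one $\gamma$-crossing), Type II being excluded by hypothesis; and an arc of $T$ disjoint from $\gamma$ lies outside $M_1^\gamma$ and so meets neither $\alpha$ nor $\beta$. Working in the crosscap model of $M_1^\gamma$, "passing through the crosscap once" is exactly "crossing the core curve $\alpha$ once", so every Type I and every Type III piece meets $\alpha$ exactly once; meanwhile a Type I chord (again through the crosscap once) meets $\beta$ exactly once, by the mod-$2$ intersection pairing together with the fact that $\beta$ represents a generator of $\pi_1(M_1^\gamma)$, whereas a Type III piece is homotopic rel its endpoint at the marked point to $\beta^{\pm 1}$ and can therefore be isotoped off $\beta$ entirely. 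Consequently
$$cross(T,\alpha) = \Big(\prod_{j=1}^{s} x_{\gamma_{i_j}}\Big)\prod_{j=s+1}^{t} x_{\gamma_{i_j}}, \qquad cross(T,\beta) = \prod_{j=1}^{s} x_{\gamma_{i_j}},$$
and multiplying these two expressions recovers the displayed formula for $cross(T,\gamma)$.

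The step I expect to be the main obstacle is making this geometric count fully rigorous — in particular, verifying that the portion of a Type III arc inside $M_1^\gamma$ is always homotopic rel endpoints to $\beta^{\pm1}$ (so that it can be pushed off $\beta$), that a Type I chord genuinely meets $\beta$ exactly once and not more, and that no arc of $T$ can meet $\alpha$ or $\beta$ without crossing $\gamma$. Rather than arguing abstractly, I would carry this out by reducing each arc to a normal form relative to the three intersection types classified above (the local pictures of an arc meeting $\gamma$), where the intersection numbers with $\alpha$ and $\beta$ can be read off directly; the smallness of $M_1^\gamma$ (it contains no quasi-arc other than $\alpha_\gamma$ and $\beta_\gamma$) is what prevents any exotic behaviour.
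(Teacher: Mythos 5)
Your counting in the main case is correct and is essentially the argument the paper leaves implicit: by Lemma \ref{arcproperty} the crossings of $\gamma$ with $T$ consist of $s$ Type I arcs met twice and $t-s$ Type III arcs met once, each Type I or Type III piece crosses $\alpha$ exactly once, a Type I piece must cross $\beta$ (if it missed $\beta$ it would live in the disk $M_1^\gamma \setminus \beta$ and hence be boundary-parallel, contradicting that it passes through the crosscap) and can be drawn to cross it exactly once, while a Type III piece can be pushed off $\beta$; this reproduces exactly the tile counts of Proposition \ref{decomposition}, so the identity follows.

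However, there is a missing case: $\beta \in T$ (equivalently $s=0$), which the corollary must cover, since Proposition \ref{welldefined}, Proposition \ref{splitproduct} and the proof of Theorem \ref{coefficientfreeexpansion} explicitly allow it. In that case your blanket claim that an arc of $T$ disjoint from $\gamma$ meets neither $\alpha$ nor $\beta$ is false: $\beta$ itself is such an arc and $\alpha$ crosses it once, so $cross(T,\alpha) = x_{\beta}\prod_{j=1}^{t}x_{\gamma_{i_j}}$, not $\prod_{j=1}^{t}x_{\gamma_{i_j}}$; moreover your formula $cross(T,\beta)=\prod_{j=1}^{s}x_{\gamma_{i_j}}$ would give the empty product $1$, whereas the paper's convention for arcs in $T$ is $cross(T,\beta)=1/x_{\beta}$. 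The two discrepancies happen to cancel, so the identity $cross(T,\gamma)=cross(T,\alpha)cross(T,\beta)$ still holds, but your proof neither detects the case split nor invokes the $1/x_{\beta}$ convention, so as written it does not establish the statement in the situation where $\beta$ is an arc of the triangulation. (A minor related point: your ``$M_1^\gamma$ is too small'' justification should also note that an arc of $T$ contained in $M_1^\gamma$ could a priori be isotopic to $\gamma$; this is excluded because (quasi-)triangulations contain no arc cutting out a M\"obius strip with one marked point, which is also what guarantees $\gamma \notin T$ and hence that $cross(T,\gamma)$ is the genuine crossing product you use.)
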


\underline{\textbf{Notation}}: Given a tile T in a snake graph we denote by $N(T), E(T), S(T), W(T)$ the north, east, south and west edges in $T$, respectively.

\begin{lem}[Zig-zag lemma]
\label{zigzag}

Let $Z_n$ be a zig-zag snake graph with $n$ tiles $G_1, \ldots, G_n$. Consider the $sgn$ function on $Z_n$ such that the sign of each glued edge $e_i$ is $-$.

Define $x$ to be the unique edge in $G_1$ such that $x \in \{W(G_1), S(G_1)\}$ and $sgn(x) = +$.
Similarly, define $y$ to be the unique edge in $G_n$ such that $y \in \{E(T_1), N(T_1)\}$ and $sgn(y) = +$.

Then any perfect matching of $Z_n$ contains $x$ or $y$.

\end{lem}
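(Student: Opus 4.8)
The plan is to prove this by induction on the number of tiles $n$, exploiting the rigid local structure that the $sgn$ conditions impose on a zig-zag graph. First I would set up the base case: when $n=1$, the tile $G_1$ has $S(G_1)=E(G_1)$ in sign and $N(G_1)=W(G_1)$ in sign, with $sgn(N(G_1))\neq sgn(S(G_1))$. Here $x$ and $y$ are the two edges of positive sign, which are an opposite pair (either $\{W,N\}$ or $\{S,E\}$ of $G_1$), and every perfect matching of a single tile (a $4$-cycle) consists of exactly one of the two opposite pairs of edges — so it must contain $\{x,y\}$, hence certainly $x$ or $y$.

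For the inductive step, I would look at the last tile $G_n$ and the edge $e_{n-1}$ gluing $G_{n-1}$ to $G_n$, which by hypothesis has $sgn(e_{n-1})=-$. The key combinatorial fact I would extract is: because the graph is zig-zag, the edges $e_{n-2}, e_{n-1}$ sit in "opposite corners" of $G_{n-1}$ (if they were in adjacent positions we would get a straight piece $G_{n-2},G_{n-1},G_n$), and the $sgn$ rules then pin down exactly which of $\{E(G_n),N(G_n)\}$ is the positive edge $y$ and which edge of $G_n$ is the one "across" from $e_{n-1}$. Now consider a perfect matching $P$ of $Z_n$ and restrict attention to $G_n$. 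The two vertices of $G_n$ not shared with $G_{n-1}$ must be matched either by the edge of $G_n$ joining them (the edge "opposite" to $e_{n-1}$), or individually by $e_{n-1}$ together with the remaining edge of $G_n$. A short sign check shows that the edge opposite to $e_{n-1}$ in $G_n$ is precisely $y$: since $sgn(e_{n-1})=-$ and in a tile the two edges at a vertex not sharing the North/West vs South/East split must alternate, the edge parallel and opposite to $e_{n-1}$ carries the sign forced to make $y\in\{E(G_n),N(G_n)\}$ positive. So in the first case $P$ contains $y$ and we are done. In the second case $P$ contains $e_{n-1}$; then deleting $G_n$ and viewing $P\setminus\{(\text{edge of }G_n)\}$ as a perfect matching of the sub snake graph $Z_{n-1}=(G_1,\dots,G_{n-1})$ — which is again zig-zag with the same induced $sgn$ function (all glued edges still negative) — the inductive hypothesis gives that this matching of $Z_{n-1}$ contains $x'$ or $y'$, where $x'=x$ (the edge $G_1$ is untouched) and $y'$ is the positive edge in $\{E(G_{n-1}),N(G_{n-1})\}$. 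It then remains to observe that $y'=e_{n-1}$: by the zig-zag condition the shared edge $e_{n-1}$ lies in $\{E(G_{n-1}),N(G_{n-1})\}$, and since $P$ was assumed to contain $e_{n-1}$, this is consistent; more importantly the inductive conclusion "contains $x$ or $y'$" becomes "contains $x$ or $e_{n-1}$", and in the second branch we already have $e_{n-1}\in P$ — but we need to translate back to the original $Z_n$ statement, so the genuinely useful outcome of the recursion is the branch giving $x\in P$.

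The main obstacle I anticipate is bookkeeping the sign and position data correctly: one must verify rigorously that in a zig-zag graph the edge of $G_n$ opposite to $e_{n-1}$ is the positive edge $y$, and that the positive edge $y'$ of $G_{n-1}$ (as produced by the inductive hypothesis applied to $Z_{n-1}$) coincides with the glued edge $e_{n-1}$, so that the two cases genuinely collapse into "$P$ contains $y$" or "$P$ contains $x$". This is a finite case analysis over the two local shapes of a zig-zag turn (NE-turn vs. the reflected one) and the two global sign choices, and the cleanest way to handle it is to fix the normalization $sgn(e_i)=-$ for all $i$ as in the statement, draw the two turn types, and read off the forced signs of $N,S,E,W$ on each tile. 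Once that is in hand, the induction closes: every perfect matching either uses $y$ directly in $G_n$, or uses $e_{n-1}$ and hence restricts to a perfect matching of $Z_{n-1}$, which by induction must contain $x$ (the $y'$-alternative being $e_{n-1}$ itself, already accounted for). I would also remark that the symmetric statement with the roles of $G_1$ and $G_n$ swapped follows by reversing the order of the tiles, which is why the conclusion is the symmetric "$x$ or $y$".
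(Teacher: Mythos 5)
Your high-level plan---induct by peeling off the last tile $G_n$, which is the mirror image of the paper's argument peeling off $G_1$---is sound, but the execution has several genuine errors that prevent the proof from closing as written.

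First, the base case: you claim that $x$ and $y$ form an ``opposite pair'' in $G_1$, but this is false. If $sgn(W)=sgn(N)=+$ then $x=W(G_1)$ and $y=N(G_1)$, which are adjacent edges sharing the NW vertex (similarly $\{S,E\}$ share the SE vertex). The correct observation, and the one the paper uses, is that $x$ and $y$ share a vertex; since any perfect matching covers that vertex by exactly one of its two incident edges, the matching contains exactly one of $x,y$. Your stated reason (``every perfect matching of a $4$-cycle consists of exactly one of the two opposite pairs, so it must contain $\{x,y\}$'') is incorrect both because $\{x,y\}$ is not an opposite pair and because no perfect matching of a $4$-cycle contains two adjacent edges.

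Second, and more seriously, your inductive step misidentifies the case in which one descends to $Z_{n-1}$. You place that descent under ``$P$ contains $e_{n-1}$,'' but if $e_{n-1}\in P$ then the two new vertices of $G_n$ are forced onto the opposite edge $f$ (any other edge of $G_n$ would double-cover a vertex of $e_{n-1}$), so $f=y\in P$ and you are already done. The case that genuinely requires induction is the complementary one, where $f=y\notin P$: then the two new vertices of $G_n$ must be matched by the other two edges of $G_n$, which also cover the shared vertices, and hence $e_{n-1}\notin P$. In that case one must \emph{add} $e_{n-1}$ to the restriction of $P$ to obtain a perfect matching $P'$ of $Z_{n-1}$.

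Third, your identification $y'=e_{n-1}$ is false. By definition $sgn(y')=+$, while by hypothesis $sgn(e_{n-1})=-$, so they cannot coincide. What is true, and what actually closes the induction, is that $e_{n-1}$ and $y'$ are the two edges of $\{E(G_{n-1}),N(G_{n-1})\}$ and therefore share the NE vertex of $G_{n-1}$; since $e_{n-1}\in P'$ this forces $y'\notin P'$, and the inductive hypothesis then yields $x\in P'\subseteq P\cup\{e_{n-1}\}$, hence $x\in P$. Once these three points are corrected your argument becomes a valid mirror of the paper's proof, which instead assumes $x\notin P$ up front, determines the matching on $G_1$, forms $P'$ by adding $e_1$, and observes that the positive edge of $\{W(G_2),S(G_2)\}$ is excluded from $P'$.
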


\begin{proof}

We apply induction on $n$. The base case of $n=1$ trivially holds since $x$ and $y$ share a common vertex, so assume the result is true for $Z_{n-1}$ where $n>1$.

Given a perfect matching $P$ of $Z_n$, if $x \notin P$ then either $N(G_1), S(G_1) \in P$ (when $x = W(G_1)$) or $W(G_1), E(G_1) \in P$ (when $x = S(G_1)$). Hence $P$ descends to a perfect matching $P'$ of $Z_{n-1}$ -- the sub snake graph of $Z_n$ on tiles $G_2, \ldots, G_n$. Specifically, if $x = W(G_1)$ then $$P' := P\setminus \{N(G_1), S(G_1)\} \cup \{W(G_2)\}.$$ Likewise, if $x = E(G_1)$ then $$P' := P\setminus \{W(G_1), E(G_1)\} \cup \{S(G_2)\}.$$ Consequently, the edge in $\{W(G_2), S(G_2)\}$ with sgn '$+$' is not in $P'$. So by induction, $y \in P$.

\end{proof}

\underline{\textbf{Notation}}: Given a snake graph $G$ with tiles labelled by ${i_1}, \ldots {i_d}$ then:

\begin{itemize}

\item $G_{[j,k]}$ denotes the sub snake graph of $G$ with consecutive tiles $i_j, i_{j+1} \ldots i_{k-1}, i_k$.

\end{itemize}

Moreover, for any $r \in \{1,\ldots, d\}$ let $e_r$ denote the shared edge of the tiles corresponding to $i_r$ and $i_{r+1}$. Using this notation:

\begin{itemize}

\item For $j >1$, $G_{(j,k]} := G_{[j,k]}\setminus \{e_{j-1}\}$.

\item For $k < d$, $G_{[j,k)} := G_{[j,k]}\setminus \{e_{k+1}\}$.

\end{itemize}

\begin{rmk}

For a snake graph $G$ and edge $e$, $G\setminus \{e\}$ indicates that we have removed the vertices of the edge $e$, and consequently any edges in $G$ connected to either of these vertices. Therefore, $G_{(j,k]}$ and $G_{[j,k)}$ may not actually be snake graphs. Nevertheless, since they arise from $G_{[j,k]}$, our snake graph terminology still makes sense.

\end{rmk}

\begin{figure}[H]
\begin{center}
\includegraphics[width=13cm]{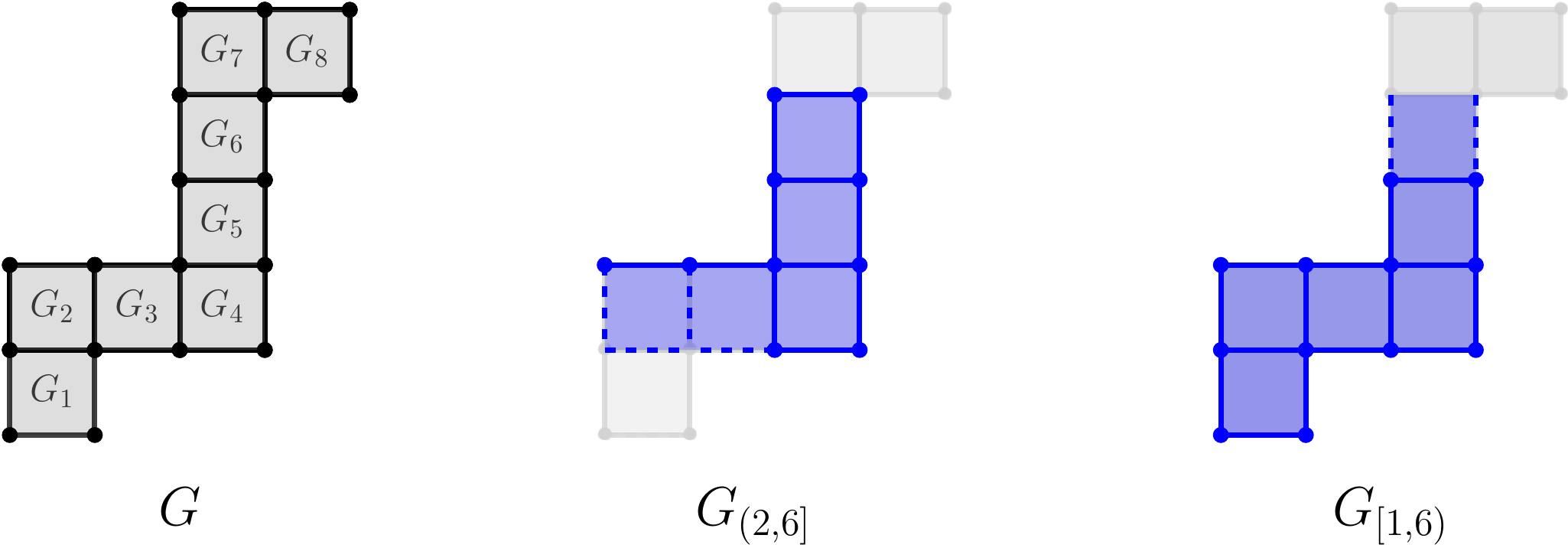}
\caption{An illustration of the subgraphs $G_{(2,6]}$ and $G_{[1,6)}$ of a snakegraph $G = (G_1, \ldots, G_8)$. The dotted lines indicate which edges are not included in the respective graphs. However, as indicated by the blue shading, we consider $G_{(2,6]}$ and $G_{[1,6)}$ arising from subsnakegraphs $G_{[2,6]}$ and $G_{[1,6]}$, respectively. This viewpoint is needed later when considering coefficients, since tiles will encode frozen variables -- so it is important we do not `forget' them.}
\end{center}
\end{figure}

\begin{defn} 

Let $G$ be a snake graph. We let $inv(G)$ denote the reflection of $G$ through the line $x =y$ (with respect to the $x$ and $y$ axes).

\end{defn}

\begin{prop}
\label{decomposition}

Let $\alpha$ be any one-sided closed curve and $T$ an ideal triangulation without self-folded triangles. Choose $\gamma$ (and $\beta$) as in Lemma \ref{arcexistence}, and adhere to the notation used in \ref{arcproperty}.

\begin{itemize}

\item If $\beta$ is \underline{not} in $T$ then:

\[  (S_{\gamma,\overline{T}^{\circ}})_{[1,s]} = S_{\beta,\overline{T}^{\circ}} \hspace{10mm} \text{and} \hspace{10mm} (S_{\gamma,\overline{T}^{\circ}})_{(s+1,s+t]}  = \left\{
\begin{array}{ll}
        S_{\alpha,\overline{T}^{\circ}} \hspace{2mm}, & s \hspace{3mm} \text{is even} \\ 
        inv(S_{\alpha,\overline{T}^{\circ}}) \hspace{2mm}, & s \hspace{3mm} \text{is odd} \\
\end{array} 
\right. \]

\[  (S_{\gamma,\overline{T}^{\circ}})_{[1,t)} = S_{\alpha,\overline{T}^{\circ}} \hspace{10mm} \text{and} \hspace{10mm} (S_{\gamma,\overline{T}^{\circ}})_{[t+1,t+s]}  = \left\{
\begin{array}{ll}
        S_{\tilde{\beta},\overline{T}^{\circ}} \hspace{2mm}, &t \hspace{3mm} \text{is even} \\ 
        inv(S_{\tilde{\beta},\overline{T}^{\circ}}) \hspace{2mm}, &t \hspace{3mm} \text{is odd} \\
\end{array} 
\right. \]

\item If $\beta$ is an arc in $T$ then $s=0$ and $S_{\alpha,\overline{T}^{\circ}} = (G_{\gamma_{i_1}}, \ldots, G_{\gamma_{i_t}}, G_{\gamma_{i_{t+1}}} = G_{\beta})$ has one more tile than $S_{\gamma,\overline{T}^{\circ}} = (G_{\gamma_{i_1}}, \ldots, G_{\gamma_{i_{t+1}}})$; corresponding to $\alpha$'s intersection with $\beta$). Specifically, $$S_{\gamma,\overline{T}^{\circ}} = (S_{\alpha,\overline{T}^{\circ}})_{[1,t]}$$.

\end{itemize}

\end{prop}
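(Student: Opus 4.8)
The plan is to argue geometrically on the surface, matching up the triangles crossed by $\gamma$, $\beta$, and $\alpha$, and then transport the resulting identification of quadrilaterals into the combinatorial identification of tiles via the snake-graph construction of Definition \ref{arcsnakegraph}. The starting point is Lemma \ref{arcproperty}: after fixing an orientation on $\gamma$, the crossing sequence $(\gamma_{i_1},\ldots,\gamma_{i_d})$ splits into an initial block of $s$ Type~I crossings, a middle block of $t-s$ Type~III crossings, and a final block of $s$ Type~I crossings which repeats the first block (so $\gamma_{i_k}=\gamma_{i_{t+k}}$ for $k=1,\ldots,s$). Geometrically, traversing $\gamma$ amounts to: entering $M_1$, running parallel to $\beta$ (this is the first Type~I block, hitting exactly the arcs $\beta$ crosses before the crosscap), passing through the crosscap while tracing $\alpha$ (the Type~III block), and then running back out parallel to $\tilde\beta$ (the second Type~I block, which is the first block traversed in the opposite direction). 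I would make this precise by pushing $\gamma$ to a curve that, outside a neighbourhood of the crosscap, literally coincides with $\beta$ (resp. $\tilde\beta$), and inside coincides with $\alpha$; Lemma \ref{arcexistence}'s choice of $\gamma$ (no Type~II crossings) is exactly what guarantees this decomposition has no extra crossings interleaved.

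Next I would compare the tiles. For each crossing point $p_k$ the tile $G_k$ of $S_{\gamma,\overline T^{\circ}}$ is the quadrilateral $Q_{i_k}$ embedded according to Definition \ref{relativeorientation}, with orientation flipping each step so that $rel(G_k)\neq rel(G_{k+1})$. Since on the first Type~I block $\gamma$ crosses the same arcs, in the same order, and through the same triangles as $\beta$, the underlying quadrilaterals agree: $Q_{i_k}$ for $k=1,\ldots,s$ are the same quadrilaterals, in the same order, crossed by $\beta$. Hence $(S_{\gamma,\overline T^{\circ}})_{[1,s]}$ and $S_{\beta,\overline T^{\circ}}$ have the same tiles glued the same way, giving the first equality. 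For the middle block, the arcs crossed and triangles traversed are exactly those of $\alpha$, so the tile sequences match as abstract snake graphs; the only subtlety is the embedding orientation. The relative-orientation rule means the parity of the tile index at which the $\alpha$-block begins (namely $s+1$) determines whether $S_{\alpha}$ is read off in the same planar orientation or reflected through $x=y$ — if $s$ is even the block starts on an even-indexed... rather, the orientation of tile $G_{s+1}$ relative to its quadrilateral is determined by $(-1)^{s}$ times the orientation of tile $G_1$, and since $S_\alpha$'s own first tile is normalised, we pick up $inv(\cdot)$ precisely when $s$ is odd. That accounts for the case split. The symmetric pair of equalities (reading from the $\tilde\beta$ end) follows by the same argument run from the other orientation of $\gamma$, with $t$ replacing $s$ in the parity bookkeeping. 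Finally, when $\beta\in T$, $\gamma$ has no Type~I crossings with the arcs $\beta$ crosses (there are none), so $s=0$; the sole difference between $S_{\alpha,\overline T^{\circ}}$ and $S_{\gamma,\overline T^{\circ}}$ is the extra tile recording $\alpha$'s crossing of $\beta$ itself, which $\gamma$ does not cross — giving $S_{\gamma,\overline T^{\circ}}=(S_{\alpha,\overline T^{\circ}})_{[1,t]}$, and $s+t=d$ as in Lemma \ref{arcproperty}.

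The main obstacle I anticipate is pinning down the endpoints of the sub-snake-graphs correctly — i.e.\ justifying the boundary-index conventions $[1,s]$, $(s+1,s+t]$, $[1,t)$, $[t+1,t+s]$, and in particular why the glued edge $e_s$ (resp.\ $e_{s+1}$, etc.) must be deleted on one side but retained on the other. This is where Corollary \ref{3tiles} does the real work: it tells us the tiles at the junctions $(\gamma_{i_s},\gamma_{i_{s+1}},\gamma_{i_{s+2}})$ and $(\gamma_{i_{t-1}},\gamma_{i_t},\gamma_{i_{t+1}})$ sit in a \emph{straight} configuration inside $S_{\gamma,\overline T^{\circ}}$, which is exactly the local picture forced by the shared edge $\tau_{[i_s]}$ of a triangle two of whose sides ($\gamma_{i_s}=\gamma_{i_{t+s}}$ and $\gamma_{i_{s+1}}$, or $\gamma_{i_{t}}$ and $\gamma_{i_{t+1}}=\gamma_{i_1}$) are crossed consecutively. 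Matching that straight junction against the (zig-zag) ends of $S_{\alpha}$ and $S_{\beta}$ is what dictates which edge is dropped. I would therefore spend most of the write-up on a careful case analysis of these two (resp.\ four) junction tiles, using Corollary \ref{3tiles} and the $s+1=t$ degenerate sub-case it isolates, and treat the interior identifications as immediate from the triangle-by-triangle correspondence established in the first two paragraphs.
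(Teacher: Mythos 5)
Your proposal is correct and takes essentially the same route the paper does: the paper's proof consists of the single sentence ``This follows directly from the geometry of $T, \alpha, \beta$ and $\gamma$,'' and what you have written is precisely the geometric argument that sentence gestures at --- the triangle-by-triangle identification of tiles coming from the Type~I / Type~III block structure of Lemma~\ref{arcproperty}, with Corollary~\ref{3tiles} governing the junction tiles and the relative-orientation parity governing the appearance of $inv(\cdot)$.
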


\begin{proof}

This follows directly from the geometry of $T, \alpha, \beta$ and $\gamma$.

\end{proof}

\begin{rmk}

Note that Proposition \ref{decomposition} tells us that, when $\beta$ is not in $T$, $S_{\gamma,\overline{T}^{\circ}}$ decomposes into the snake and band graphs $S_{\alpha,\overline{T}^{\circ}}$ and $S_{\beta,\overline{T}^{\circ}}$ in two canonical ways. Moreover, Corollary \ref{3tiles} indicates precisely how they fit together in $S_{\gamma,\overline{T}^{\circ}}$.

\end{rmk}

\begin{figure}[H]
\label{liftabc}
\begin{center}
\includegraphics[width=12cm]{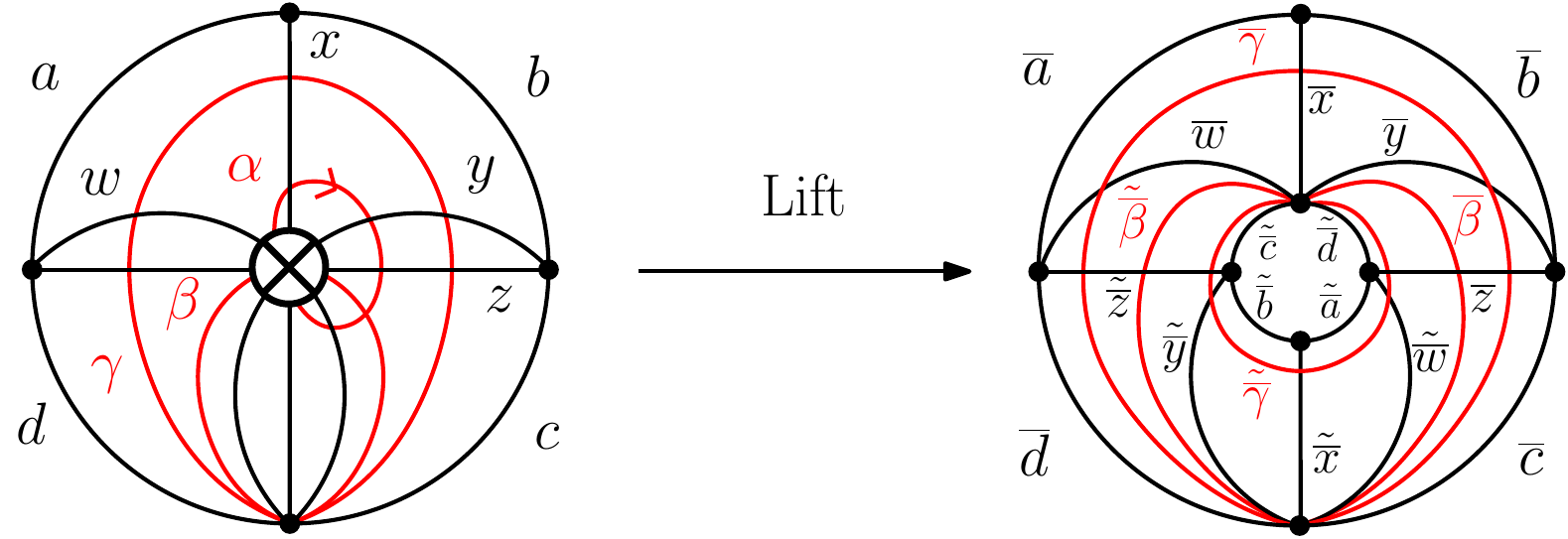}
\caption{The lift of a triangulation $T$ as well as the arcs $\beta$ and $\gamma$.}
\end{center}
\end{figure}

\begin{figure}[H]
\begin{center}
\includegraphics[width=15cm]{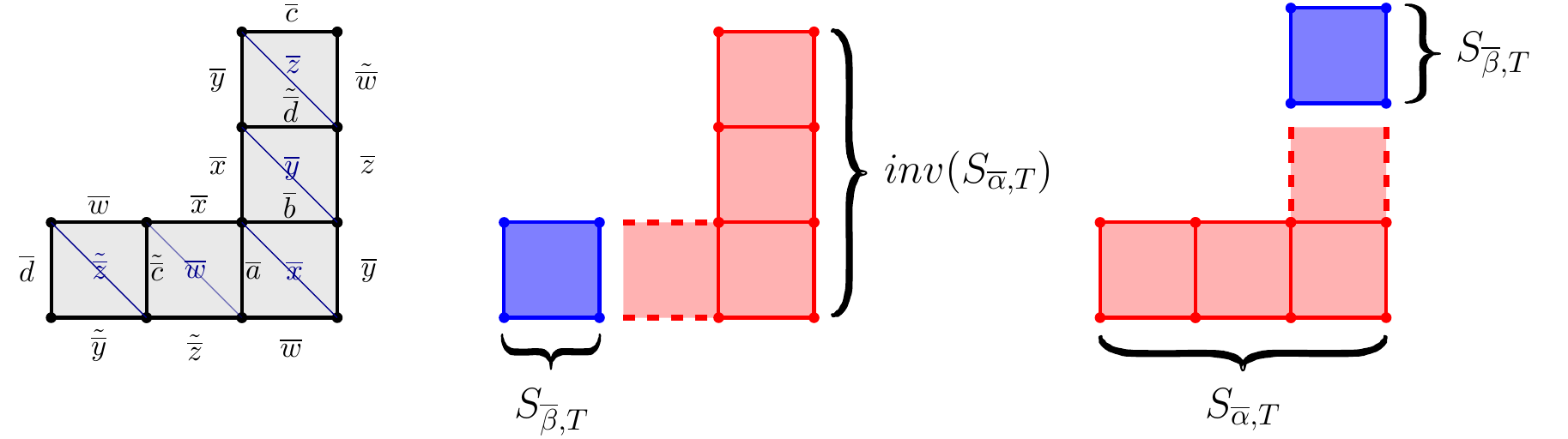}
\caption{The snake graph decomposition of $S_{\gamma,T}$, with respect to the triangulation in Figure \ref{liftabc}.}
\end{center}
\end{figure}

\underline{\textbf{Notation}}: Let $P$ be a perfect matching of a snake graph $G$ with tiles labelled by ${i_1}, \ldots {i_d}$. Following the spirit of the notation used above, if $P$ restricts to a perfect matching on consecutive tiles $i_{j}, \ldots, i_{k}$ (i.e. restricts to a perfect matching on $G_{[j,k]}$) then we denote this perfect matching by $P_{[j,k]}$. \indent
Similarly, if $P$ restricts to a perfect matching on $G_{(j,k]}$ (resp. $G_{[j,k)}$) then we denote this perfect matching by $P_{(j,k]}$ (resp. $P_{[j,k)}$).

\begin{defn}
For a quasi-arc $\gamma$ and an ideal triangulation $T$ without self-folded triangles, let $\mathcal{P}_{T,\gamma}$ denote the set of perfect (resp. good) matchings of the associated snake (resp. band) graph $S_{\gamma,T}$. If $\gamma$ is in $T$ then we formally set $\mathcal{P}_{T,\gamma} := \{*\}$ as a singleton set.

\end{defn}

\begin{prop}
\label{welldefined}
Fix an ideal triangulation $T$ without self-folded triangles. Let $\alpha$ be a one-sided closed curve and let $\beta$ and $\gamma$ be the arcs guaranteed by Lemma \ref{arcexistence}. Then the following map is well defined:

\begin{align*}
 \Phi \hspace{1mm}: \hspace{2mm} \mathcal{P}_{T,\gamma} &\longrightarrow \mathcal{P}_{T,\alpha} \times \mathcal{P}_{T,\beta}\\
  P \hspace{3mm} & \hspace{1mm} \mapsto \hspace{7mm} (P_1 \hspace{1mm},\hspace{0.5mm}P_2).
\end{align*}

where, \begin{itemize}

\item if $\beta$ is not in $T$ then \[  (P_1 \hspace{1mm},\hspace{0.5mm}P_2)  := \left\{
\begin{array}{ll}
        (\hspace{1mm} P_{[1,s]} \hspace{1mm},\hspace{0.5mm}P_{(s+1,t]} \hspace{1mm}), &\text{if there is a left or centre cut at $e_s$ in $P$}. \\
        \\
        (\hspace{1mm} P_{[t+1,t+s]} \hspace{1mm},\hspace{0.5mm}P_{[1,t)} \hspace{1mm}), &\text{if there is a right cut at $e_s$ in $P$}.\\
\end{array} 
\right. \]

\item if $\beta$ is an arc in $T$, then $S_{\gamma,\overline{T}^{\circ}} = (S_{\alpha,\overline{T}^{\circ}})_{[1,t]}$ and $\mathcal{P}_{T,\beta} = \{*\}$. In which case, $$(P_1 \hspace{1mm},\hspace{0.5mm}P_2)  := (P, *).$$

\end{itemize}

\end{prop}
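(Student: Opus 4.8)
The plan is to verify that $\Phi$ lands in the claimed target sets by a case analysis driven by Proposition \ref{decomposition} and the notion of left/right/centre cut. The key point is that a perfect matching $P$ of the snake graph $S_{\gamma,\overline{T}^\circ}$, restricted to a sub snake graph of the form $G_{[j,k]}$ (or $G_{(j,k]}$, $G_{[j,k)}$), is a perfect matching of that sub snake graph \emph{precisely when $P$ does not use the edge $e_{j-1}$ (resp. $e_{k+1}$) that would match a vertex of the sub snake graph to a vertex outside it}. So the whole argument reduces to tracking which of the three `cut' possibilities occurs at the distinguished edge $e_s$, and checking in each case that the two resulting restrictions are defined and are matchings of the correct graphs.

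First I would record the structural input: by Proposition \ref{decomposition}, when $\beta\notin T$ we have the two decompositions $(S_{\gamma,\overline T^\circ})_{[1,s]}=S_{\beta,\overline T^\circ}$, $(S_{\gamma,\overline T^\circ})_{(s+1,s+t]}=S_{\alpha,\overline T^\circ}$ (up to $inv$, which does not affect existence of matchings), and symmetrically $(S_{\gamma,\overline T^\circ})_{[1,t)}=S_{\alpha,\overline T^\circ}$, $(S_{\gamma,\overline T^\circ})_{[t+1,t+s]}=S_{\tilde\beta,\overline T^\circ}$. Next I would unwind the definition of a cut at $e_s$. Here $e_s$ is the shared edge of the tiles for $\gamma_{i_s}$ and $\gamma_{i_{s+1}}$, and by Corollary \ref{3tiles} (the $s+1\ne t$ case, the tiles $\gamma_{i_s},\gamma_{i_{s+1}},\gamma_{i_{s+2}}$ being straight) the local picture at $e_s$ is exactly the band-graph cut-edge configuration; every perfect matching $P$ of $S_{\gamma,\overline T^\circ}$ restricts at $e_s$ to one of: a \emph{left cut} (the two vertices of $e_s$ are matched towards the $[1,s]$ side, so $P_{[1,s]}$ is a perfect matching of $S_{\beta,\overline T^\circ}$ and $P_{(s+1,s+t]}$ is a perfect matching of $S_{\alpha,\overline T^\circ}$), a \emph{right cut} (matched towards the $[s+1,s+t]$ side, giving $P_{[1,t)}$ a perfect matching of $S_{\alpha,\overline T^\circ}$ and $P_{[t+1,t+s]}$ a perfect matching of $S_{\tilde\beta,\overline T^\circ}=S_{\beta,\overline T^\circ}$), or a \emph{centre cut} (where $e_s\in P$, and then removing $e_s$ leaves $P\setminus\{e_s\}$ restricting to a perfect matching on $G_{[1,s]}$ on the left \emph{and} on $G_{(s+1,s+t]}$ on the right, so both conventions in the first branch of the definition agree). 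The zig-zag lemma (Lemma \ref{zigzag}) is used implicitly to guarantee that these restrictions are themselves \emph{good} matchings of the band graph $S_{\beta,\overline T^\circ}$ when $\beta$ is a one-sided curve: a good matching is by definition one extendable across the cut, and the left/centre/right trichotomy is exactly the extendability condition, so no matching produced by $\Phi$ can be `bad'.

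After that the $\beta\in T$ case is immediate: by Proposition \ref{decomposition}, $s=0$ and $S_{\gamma,\overline T^\circ}=(S_{\alpha,\overline T^\circ})_{[1,t]}$ with one fewer tile (the missing tile corresponds to $\alpha$ crossing $\beta$), and $\mathcal P_{T,\beta}=\{*\}$; a perfect matching $P$ of $S_{\gamma,\overline T^\circ}$ is literally a perfect matching of $(S_{\alpha,\overline T^\circ})_{[1,t]}$, which (again by the cut trichotomy, with the cut now at $e_t$) extends to a good matching of the band graph $S_{\alpha,\overline T^\circ}$, so $(P,*)\in\mathcal P_{T,\alpha}\times\{*\}$ is legitimate. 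The only remaining thing to check for well-definedness is that the two clauses in the $\beta\notin T$ branch are not in conflict — i.e. that `left or centre cut at $e_s$' and `right cut at $e_s$' are genuinely mutually exclusive and exhaustive over all $P\in\mathcal P_{T,\gamma}$ — which is exactly the content of the remark following Definition \ref{good matching}, since every vertex of the glued edge must be matched somewhere and the straightness from Corollary \ref{3tiles} forces one of the three local configurations.

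The main obstacle, I expect, is not any single step but the bookkeeping of parities and the $inv$ operation: one must check that the index shifts (the $(s+1,t]$ versus $[1,s]$ ranges, the role of whether $s$ or $t$ is even or odd in Proposition \ref{decomposition}) are consistent on both sides, so that $P_{(s+1,t]}$ really is a perfect matching of the \emph{same} graph $S_{\alpha,\overline T^\circ}$ regardless of which decomposition one uses, and that applying $inv$ does not secretly swap a left cut for a right cut. Verifying this carefully — essentially, that the two canonical decompositions of $S_{\gamma,\overline T^\circ}$ are related by the obvious symmetry and that $\Phi$ is insensitive to the choice — is the technical heart of the proof; everything else is a direct unwinding of the definitions of good matching, cut, and the snake/band graph construction of Section \ref{snakebandconstruction}.
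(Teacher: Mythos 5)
Your overall strategy — a case analysis driven by the cut type at $e_s$, using Proposition~\ref{decomposition} and Corollary~\ref{3tiles} to identify the relevant sub snake/band graphs — matches the paper's, but there are genuine gaps in how you justify the key steps.

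The most important one concerns the right-cut branch. You assert, without argument, that a right cut at $e_s$ gives $P_{[t+1,t+s]}$ a perfect matching of $S_{\tilde\beta,\overline T^{\circ}}$; but the vertices of $e_t$ are shared between $G_t$ and $G_{t+1}$, and nothing you have said prevents them from being matched into $G_{[1,t]}$, in which case $P_{[t+1,t+s]}$ fails to be a perfect matching. The paper's proof handles precisely this: it combines Corollary~\ref{3tiles} (tiles $G_{s+1},\dots,G_t$ form a zig-zag, with the straight triples at either end) with the Zig-zag Lemma~\ref{zigzag} to deduce that a right cut at $e_s$ \emph{forces} a right or centre cut at $e_t$, which is exactly what makes $P_{[t+1,s+t]}$ a perfect matching and $P_{[1,t)}$ restrict to a good matching of the band graph. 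You instead invoke Lemma~\ref{zigzag} ``implicitly'' to guarantee goodness of the matchings on the band graph side, but that is not where the paper (or the argument) needs it: goodness of the $S_\alpha$ restriction follows directly from the observation that the vertices of the cut edge are unmatched on that side, not from the zig-zag lemma. You have also switched $\alpha$ and $\beta$ in this passage — $\beta$ is the arc (snake graph), $\alpha$ is the one-sided closed curve (band graph), so ``the band graph $S_{\beta,\overline T^\circ}$ when $\beta$ is a one-sided curve'' is not the correct object.

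The $\beta\in T$ case is also underargued. Saying that a perfect matching of $S_{\gamma,\overline T^\circ}=(S_{\alpha,\overline T^\circ})_{[1,t]}$ ``extends to a good matching by the cut trichotomy at $e_t$'' does not work as stated: you are starting from a perfect matching on $t$ tiles, not from a perfect matching of the full $(t+1)$-tile snake graph, so the trichotomy does not directly apply. What the paper actually does is a short geometric argument: since $\gamma_{i_t}, \beta, \gamma_{i_1}$ form a zig-zag in $T$, the cut edge of the band graph avoids a specific edge of $G_t$, which is what makes the extension of $P$ to a good matching automatic. Finally, your closing diagnosis — that the ``technical heart'' is the parity bookkeeping around $inv$ — is not quite right for this proposition; the $inv$ and parity considerations only become essential in Proposition~\ref{splitproductcoeffcient}. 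Here the heart is the propagation of the cut from $e_s$ to $e_t$.
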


\begin{proof}

We will split the proof into two cases. \newline 

\textbf{Case 1}: $\beta$ is not in $T$. When there is a left or centre cut at $e_s$ in $P$, then $P_{[1,s]} \in \mathcal{P}_{T,\beta}$ by restriction of perfect matchings. Moreover, $P_{(s+1,s+t]}$ induces a good perfect matching of $S_{\alpha, T}$, since $S_{\alpha, T}$ is the band graph obtained from gluing the snake graph $(S_{\gamma,T})_{[s+1,s+t]}$ along $e_s$, and the vertices of $e_s$ are unmatched in $P_{[s+1,s+t]}$. \newline

\indent Similarly, when there is a right cut at $e_s$ in $P$ then, by Corollary \ref{3tiles} and Lemma \ref{zigzag}, we know there must be a right or centre cut at $e_t$. Hence $P_{[t+1,s+t]} \in \mathcal{P}_{T,\beta}$ by restriction. Analogous to the paragraph above, $P_{(1,t]}$ induces a good perfect matching on $S_{\alpha, T}$ since $S_{\alpha, T}$ is obtained from gluing $(S_{\gamma,T})_{[1,t]}$ along $e_t$, and the vertices of $e_t$ in $P_{[1,t]}$ are unmatched. \newline

\textbf{Case 2}: $\beta$ is an arc in $T$. In this case $s=0$, and $\alpha$ has one more intersection with $\beta$ than $\gamma$ does. Consequently, $S_{\alpha, T} = (G_1, \ldots, G_{t+1})$ has one more tile than $S_{\gamma, T} = (G_1, \ldots, G_t)$. It suffices to show the glued edge in $S_{\alpha, T}$ does not contain the north-west edge of the tile $G_t$. This follows from the fact $\gamma_{i_t}, \beta, \gamma_{i_1}$ form a zig-zag in $T$.

\end{proof}

\begin{prop}
\label{splitproduct}
The map $\Phi$ above is a bijection. Moreover, $$x(P) = x(P_1)x(P_2).$$

\end{prop}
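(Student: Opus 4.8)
The plan is to exhibit an explicit two-sided inverse to $\Phi$ and verify that the weight monomial factors along the decomposition. First I would construct the candidate inverse $\Psi : \mathcal{P}_{T,\alpha}\times\mathcal{P}_{T,\beta}\longrightarrow\mathcal{P}_{T,\gamma}$. Given $(P_1,P_2)$, recall from Proposition \ref{decomposition} that $S_{\gamma,\overline{T}^{\circ}}$ is glued together from (a copy of, possibly reflected via $inv$) $S_{\beta,\overline{T}^{\circ}}$ on the tiles $[1,s]$ and (a copy of) $S_{\alpha,\overline{T}^{\circ}}$ realized as the band graph on tiles $[s+1,s+t]$ cut along $e_s$, and also in the mirror way on $[1,t)$ and $[t+1,t+s]$. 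A good matching of $S_{\alpha,\overline{T}^{\circ}}$ is, by definition, either a right cut or a left/centre cut; in the right-cut case I glue $P_2$ (the $\beta$-matching) onto the tiles $[t+1,t+s]$ and extend $P_1$ across $e_s$ via the interface edges, while in the left/centre-cut case I glue $P_2$ onto the tiles $[1,s]$ and extend $P_1$ across $e_s$ the other way. The point is that a good matching, viewed on the unglued snake graph, already matches both endpoints of one copy of the cut edge, so there is a canonical way to "straighten" it into a matching of the longer snake graph $S_{\gamma,\overline{T}^{\circ}}$; the zig-zag geometry recorded in Corollary \ref{3tiles} guarantees the gluing tiles $G_{s},G_{s+1},G_{s+2}$ and $G_{t-1},G_t,G_{t+1}$ are straight (resp. zig-zag), so the matching conditions at the interface propagate consistently and the result is a genuine perfect matching of $S_{\gamma,\overline{T}^{\circ}}$.

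Next I would check $\Psi\circ\Phi=\mathrm{id}$ and $\Phi\circ\Psi=\mathrm{id}$. Both are local verifications: restricting a perfect matching to a sub snake graph and then re-extending recovers the original matching provided one keeps track of which of the two sub snake graph decompositions (the $[1,s]\,/\,(s+1,s+t]$ one versus the $[1,t)\,/\,[t+1,t+s]$ one) is being used, and Proposition \ref{welldefined} already pins this down by the cut type at $e_s$. The only subtlety is that the cut type of $P$ at $e_s$ must be consistent with the cut type of $P_1$ as a good matching of the band graph $S_{\alpha,\overline{T}^{\circ}}$ — i.e. a left/centre cut at $e_s$ must produce a left/centre cut of $S_{\alpha,\overline{T}^{\circ}}$ and a right cut a right cut — and this is exactly what the Zig-zag Lemma (Lemma \ref{zigzag}) enforces: on the zig-zag portion corresponding to the Type III intersections $\gamma_{i_{s+1}},\ldots,\gamma_{i_t}$, every perfect matching contains one of the two distinguished boundary edges, which forces the cut at $e_t$ to agree with the cut at $e_s$. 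In the degenerate case $\beta\in T$ (so $s=0$) the statement is immediate from Proposition \ref{welldefined}, since $\Phi=(\,\cdot\,,*)$ and $S_{\gamma,\overline{T}^{\circ}}=(S_{\alpha,\overline{T}^{\circ}})_{[1,t]}$, so $\Psi$ just forgets the last tile's matching edge and re-adds the unique one.

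For the weight identity $x(P)=x(P_1)x(P_2)$, I would argue that the edge set of $P$ is, up to the interface, the disjoint union of the edge sets of $P_1$ and $P_2$: the tiles of $S_{\gamma,\overline{T}^{\circ}}$ partition (via Proposition \ref{decomposition}) into the tiles contributing to $S_{\beta,\overline{T}^{\circ}}$ and those contributing to $S_{\alpha,\overline{T}^{\circ}}$, and the matched edges of $P$ distribute accordingly, with the edges incident to $e_s$ (and $e_t$) accounted for exactly once on the correct side because of the right/left cut bookkeeping. Since $x(\,\cdot\,)$ is the product of the cluster variables labelling the matched edges, multiplicativity of the product over this partition of matched edges gives the claim; one also uses that $inv$ does not change edge labels, only the planar embedding, so passing to $inv(S_{\alpha,\overline{T}^{\circ}})$ or $inv(S_{\tilde\beta,\overline{T}^{\circ}})$ is harmless. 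I expect the main obstacle to be the careful case analysis at the two interface edges $e_s$ and $e_t$: making sure that "a good matching glued onto one side extends uniquely and consistently across the cut" holds in every parity case for $s$ and $t$ (hence the $inv$ dichotomy in Proposition \ref{decomposition}), and that no matched edge at the interface is double-counted or dropped in the weight. Everything else is a routine restriction/extension argument once the Zig-zag Lemma is invoked to control the cut types.
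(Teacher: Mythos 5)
Your proof is correct and essentially follows the paper's own argument: both rely on Proposition~\ref{decomposition}, Corollary~\ref{3tiles}, and the Zig-zag Lemma~\ref{zigzag} to control cut types at the interface edges $e_s$ and $e_t$, with the same case split between $\beta\in T$ and $\beta\notin T$ and between right cuts and left/centre cuts, and the same ``partition of matched edges'' observation for the weight identity. The paper phrases bijectivity as surjectivity onto the two disjoint pieces of $\mathcal{P}_{T,\alpha}\times\mathcal{P}_{T,\beta}$ together with obvious injectivity on each piece, whereas you package the identical content as an explicit two-sided inverse $\Psi$; the only small imprecision on your side is saying the cut at $e_t$ ``agrees with'' the cut at $e_s$, when the correct (and sufficient) consequence of the Zig-zag Lemma is that a right cut at $e_s$ forces a right \emph{or centre} cut at $e_t$.
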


\begin{proof}

If $\beta$ is in $T$ then it suffices to show all good matchings of $S_{\alpha,T}$ have right or centre cuts at $e_{t+1}$. This follows from the Zig-zag Lemma \ref{zigzag}. \newline \indent So suppose $\beta$ is not an arc in $T$. Let $\mathcal{P}_{T,\gamma}^{l,c}$ denote all the perfect matchings in $\mathcal{P}_{T,\gamma}$ with a left or centre cut at $e_s$. We see:

$$ \Phi(\mathcal{P}_{T,\gamma}^{l,c}) = \text{ \Bigg\{ \parbox{12em}{all good matchings\\ of $S_{\alpha,\overline{T}^{\circ}}$ with a left\\ or centre cut at $e_s$.} \hspace{-15mm} \Bigg\} $ \times \hspace{2mm} \mathcal{P}_{\beta,T}$}$$

Similarly, let $\mathcal{P}_{T,\gamma}^{r}$ denote all the perfect matchings in $\mathcal{P}_{T,\gamma}$ with a right cut at $e_s$. By Corollary \ref{3tiles} and Lemma \ref{zigzag}, we know there must be a right or centre cut at $e_t$, which consequently induces a perfect matching on $(S_{\gamma,\overline{T}^{\circ}})_{[t+1,t+s]}$. Moreover, in this way, we can obtain all perfect matchings of $(S_{\gamma,\overline{T}^{\circ}})_{[t+1,t+s]}$. Hence we see:

$$ \Phi(\mathcal{P}_{T,\gamma}^{r}) = \text{ \Bigg\{ \parbox{12em}{all good matchings\\ of $S_{\alpha,\overline{T}^{\circ}}$ with a\\ right cut at $e_s$.} \hspace{-15mm} \Bigg\} $ \times \hspace{2mm} \mathcal{P}_{\beta,T}$}$$

Therefore, $\Phi$ is surjective. To recognise injectivity, note that $\Phi$ is trivially injective on $\mathcal{P}_{T,\gamma}^{l,c}$ and $\mathcal{P}_{T,\gamma}^{r}$. And since their images are disjoint, $\Phi$ is indeed injective. \newline

The property $x(P) = x(P_1)x(P_2)$ follows directly from the definition of $\Phi$.

\end{proof}

\begin{thm}
\label{coefficientfreeexpansion}
Let $T$ be an ideal triangulation of a bordered surface $(S,M)$ which has no self-folded triangles. Then for any one-sided closed curve $\alpha$ we have:

$$x_{\alpha} = \frac{1}{cross(T,\alpha)}\sum_{P \in \mathcal{P}_{T,\alpha}} x(P). $$

\end{thm}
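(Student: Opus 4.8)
The plan is to reduce the statement for one-sided closed curves to the already-established expansion formula for arcs (Theorem~\ref{coefficientfreeexpansionarcs}) via the relation $x_\gamma = x_\alpha x_\beta$, where $\gamma$ and $\beta$ are the arcs supplied by Lemma~\ref{arcexistence}. This relation is a consequence of the exchange relations catalogued in Figure~\ref{combinatorialflips} (flip type~3), since $\gamma$ encloses $\alpha$ and $\beta$ in a M\"obius strip $M_1$: flipping $\beta$ inside $M_1^\gamma$ produces $\alpha$ (or vice versa) and the associated exchange relation reads $\lambda(\gamma) = \lambda(\alpha)\lambda(\beta)$, hence $x_\gamma = x_\alpha x_\beta$ in the coefficient-free algebra $\mathcal{A}(S,M)$.

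First I would invoke Theorem~\ref{coefficientfreeexpansionarcs} for the two arcs $\beta$ and $\gamma$ (both lie on $(S,M)$, and $T$ has no self-folded triangles), giving
$$x_\beta = \frac{1}{cross(T,\beta)}\sum_{P_2 \in \mathcal{P}_{T,\beta}} x(P_2), \qquad x_\gamma = \frac{1}{cross(T,\gamma)}\sum_{P \in \mathcal{P}_{T,\gamma}} x(P).$$
(If $\beta \in T$ then $x_\beta = x_\beta$ is an initial variable, $cross(T,\beta) = 1/x_\beta$, and $\mathcal{P}_{T,\beta} = \{*\}$ — the degenerate case handled below.) Then I would compute $x_\alpha = x_\gamma / x_\beta$. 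Using Corollary~\ref{crossings}, $cross(T,\gamma) = cross(T,\alpha)\,cross(T,\beta)$, so
$$x_\alpha = \frac{x_\gamma}{x_\beta} = \frac{cross(T,\beta)}{cross(T,\gamma)} \cdot \frac{\sum_{P} x(P)}{\sum_{P_2} x(P_2)} = \frac{1}{cross(T,\alpha)} \cdot \frac{\sum_{P \in \mathcal{P}_{T,\gamma}} x(P)}{\sum_{P_2 \in \mathcal{P}_{T,\beta}} x(P_2)}.$$
It therefore remains to show $\sum_{P \in \mathcal{P}_{T,\gamma}} x(P) = \Big(\sum_{P_1 \in \mathcal{P}_{T,\alpha}} x(P_1)\Big)\Big(\sum_{P_2 \in \mathcal{P}_{T,\beta}} x(P_2)\Big)$. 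This is exactly where Propositions~\ref{welldefined} and~\ref{splitproduct} do the work: the map $\Phi: \mathcal{P}_{T,\gamma} \to \mathcal{P}_{T,\alpha} \times \mathcal{P}_{T,\beta}$ is a well-defined bijection with $x(P) = x(P_1)x(P_2)$, so the sum over $\mathcal{P}_{T,\gamma}$ factors as a product of sums, and the quotient collapses to $\sum_{P_1 \in \mathcal{P}_{T,\alpha}} x(P_1)$, yielding the claimed formula.

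For the degenerate case $\beta \in T$, one has $s = 0$, $x_\beta$ is an initial cluster variable, $\mathcal{P}_{T,\beta} = \{*\}$, $x(*) = 1$, and by Proposition~\ref{welldefined} $\Phi$ identifies $\mathcal{P}_{T,\gamma}$ with $\mathcal{P}_{T,\alpha}$ via $S_{\gamma,\overline{T}^\circ} = (S_{\alpha,\overline{T}^\circ})_{[1,t]}$ (Proposition~\ref{decomposition}); one checks $x(P) = x(P_1)$ here too, and the same division argument goes through with $cross(T,\gamma) = cross(T,\alpha)/x_\beta$.

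The main obstacle is not in this section — it has been front-loaded into the combinatorial scaffolding (Lemma~\ref{arcexistence} through Proposition~\ref{splitproduct}), in particular establishing that the snake graph $S_{\gamma,\overline{T}^\circ}$ genuinely decomposes into $S_{\alpha,\overline{T}^\circ}$ and $S_{\beta,\overline{T}^\circ}$ in the two canonical ways of Proposition~\ref{decomposition}, and that the cut-type analysis (left/centre vs.\ right cut at $e_s$, forced by the zig-zag structure of Corollary~\ref{3tiles} and the Zig-zag Lemma~\ref{zigzag}) makes $\Phi$ a bijection respecting weights. Given all of that, the present theorem is a short formal consequence: lift via Theorem~\ref{coefficientfreeexpansionarcs}, divide, and apply the bijection. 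The one point requiring a line of care is justifying $x_\gamma = x_\alpha x_\beta$ in $\mathcal{A}(S,M)$ itself — i.e.\ that $\alpha$, $\beta$, $\gamma$ really sit in a common flip configuration of type~3 — which follows from the construction of $\gamma$ and $\beta$ in Lemma~\ref{arcexistence} as the arc enclosing $\alpha$ (together with $\beta$) in a M\"obius strip with one marked point.
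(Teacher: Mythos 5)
Your proposal is correct and follows essentially the same route as the paper's proof: invoke the relation $x_{\gamma}=x_{\alpha}x_{\beta}$ together with Theorem \ref{coefficientfreeexpansionarcs} for the arcs $\beta$ and $\gamma$, factor $\sum_{P\in\mathcal{P}_{T,\gamma}}x(P)$ via the bijection of Propositions \ref{welldefined} and \ref{splitproduct}, and conclude with Corollary \ref{crossings}. Your explicit treatment of the degenerate case $\beta\in T$ and of the flip-type justification of $x_{\gamma}=x_{\alpha}x_{\beta}$ only spells out details the paper leaves implicit.
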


\begin{proof}

Recall that for any $\gamma$ enclosing $\alpha$ in $M_1$, with regards to the unique arc $\beta$ contained in $M_1$, we have the relation $$x_{\gamma} = x_{\alpha}x_{\beta}.$$ Moreover, by Theorem \ref{coefficientfreeexpansionarcs} we know,

$$ \frac{x_{\alpha}}{cross(T,\beta)}\sum_{P \in \mathcal{P}_{T,\beta}} x(P) = x_{\alpha}x_{\beta} = x_{\gamma} = \frac{1}{cross(T,\gamma)}\sum_{P \in \mathcal{P}_{T,\gamma}} x(P).$$

By Proposition \ref{splitproduct} we have:

$$\frac{1}{cross(T,\gamma)}\sum_{P \in \mathcal{P}_{T,\gamma}} x(P) = \frac{1}{cross(T,\gamma)}\Big(\sum_{P \in \mathcal{P}_{T,\alpha}} x(P)\Big)\Big(\sum_{P \in \mathcal{P}_{T,\beta}} x(P)\Big).$$

Corollary \ref{crossings} then completes the proof.

\end{proof}

\begin{exmp}

Let us continue with our example of the M\"{o}bius strip with four marked points on the boundary, $M_4$. Consider the one-sided closed curve $\alpha$ of $M_4$ and let $T$ denote the triangulation of $M_4$ given in Figure \ref{M4lift}. Theorem \ref{coefficientfreeexpansion} and Figure \ref{bandgraphexample} tell us that the expansion of $x_{\alpha}$ with respect to $T$ is: $$ x_{\alpha} = \frac{x_xx_zx_yx_d + x_cx_ax_yx_d + x_cx_wx_bx_d + x_ax_y^2x_w + x_wx_bx_yx_w + x_cx_wx_xx_z}{x_wx_xx_yx_z}$$

\end{exmp}

\begin{figure}[H]
\label{bandgraphexample}
\begin{center}
\includegraphics[width=14cm]{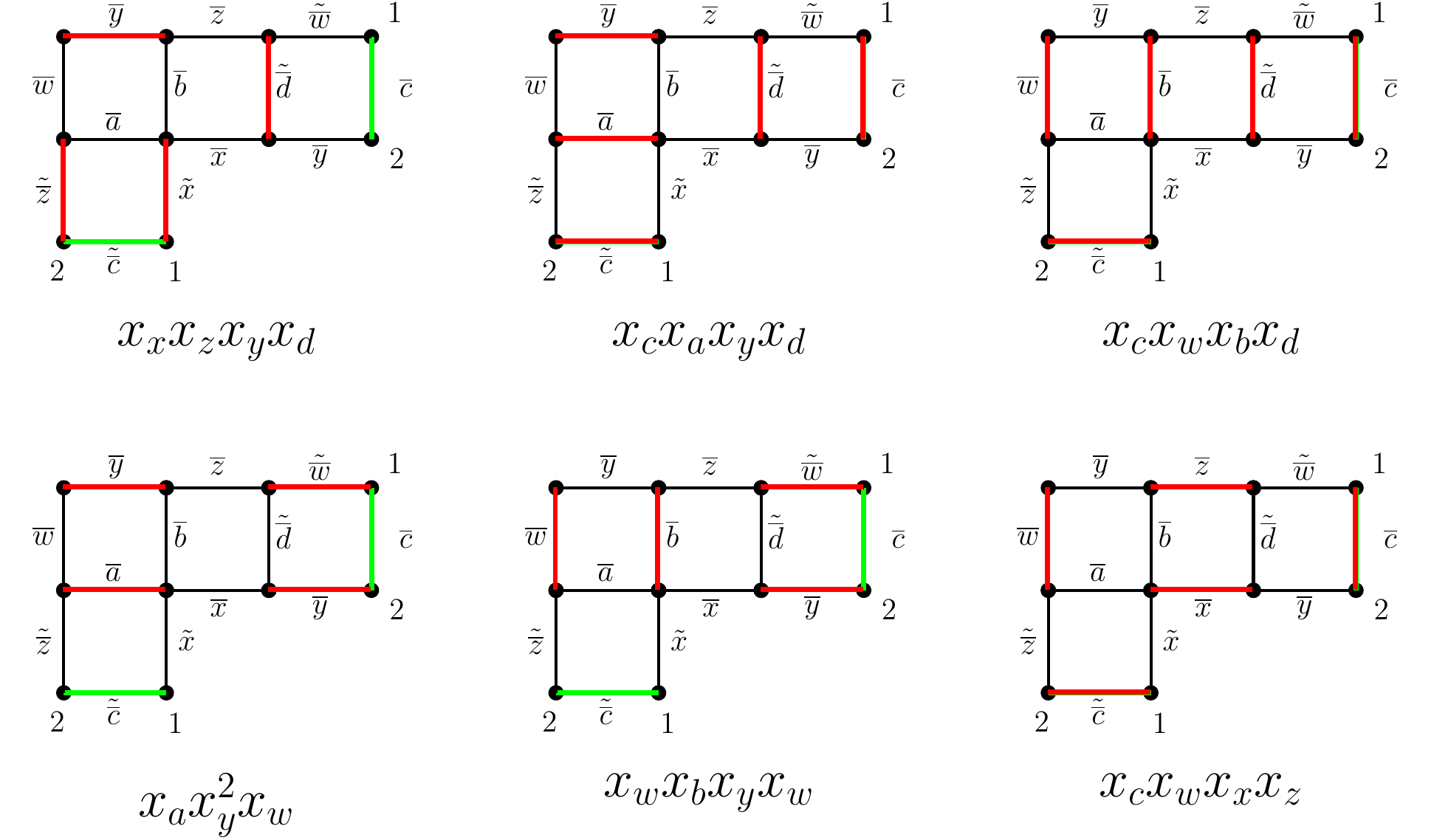}
\caption{All the good matchings of the band graph in Figure \ref{bandgraph} together with their associated weight monomials.}
\end{center}
\end{figure}

\subsection{Principal laminations on orientable marked surfaces.}

To obtain expansion formulae for quasi-cluster algebras with arbitrary coefficients, we shall first restrict our attention to orientable surfaces. In this section we work in the generality of punctured orientable surfaces $(S,M)$.

\begin{defn}[Principal laminations for orientable surfaces]
\label{principal lamination def}
Let $\gamma$ be a tagged arc in $(S,M)$. We define a lamination, $L_{\gamma}$, as follows:

\begin{itemize}

\item If $\gamma$ is a plain arc then $L_{\gamma}$ is taken to be a lamination that runs along $\gamma$ in a small neighbourhood thereof, which consistently spirals around the endpoints of $\gamma$ both clockwise (or anti-clockwise). For endpoints of $\gamma$ which are not punctures $L_{\gamma}$ cannot `spiral', instead we mean it turns clockwise (resp. anti-clockwise) at the marked point, and ends when it reaches the boundary.

\item If $\gamma$ is a tagged arc with some notched endpoints, $L_{\gamma}$ is defined as above, except now, at notched endpoints the direction of spiralling is reversed.

\end{itemize}

\end{defn}

\begin{rmk}

Note that following the rules listed in Definition \ref{principal lamination def} there are precisely two choices for $L_{\gamma}$ for each tagged arc $\gamma$.

\end{rmk}

\begin{defn}
A multi-lamination $\mathbf{L_T}$ of the form $\{L_{\gamma} | \gamma \in T\}$ is called a \textbf{principal lamination}.

\end{defn}

\begin{rmk}
\label{principalcoeffcients}
One may rephrase the above condition in terms of Thurston's shear coordinates. Namely, $\mathbf{L_T}$ is the multi-lamination such that $b_{\gamma}(L_{\gamma},T) = \pm1$ for each $\gamma \in T$. In particular, choosing a principal lamination $\mathbf{L_T}$ such that $b_{\gamma}(L_{\gamma},T) = 1$ for each $\gamma \in T$ recovers the notion of principal coefficients.

\end{rmk}

\begin{defn}
\label{alphaoriented}
Let $\mathbf{L_T}$ be a principal lamination and $\alpha$ a curve in $(S,M)$. A diagonal $\gamma_{i_k}$ of a tile in $S_{\alpha,T}$ is \textbf{$\mathbf{L_T}$-oriented} with respect to a good matching $P$ if: \vspace{3mm}

\begin{itemize}

\item The tile is \underline{odd} and either:

\begin{itemize}

\item $b_{T}(L_{\gamma_{i_k}},\gamma_{i_k}) = 1$ and the diagonal is oriented \underline{down}.

\item $b_{T}(L_{\gamma_{i_k}},\gamma_{i_k})  = -1$ and the diagonal is oriented \underline{up}.

\end{itemize}

\item The tile is \underline{even} and either:

\begin{itemize}

\item $b_{T}(L_{\gamma_{i_k}},\gamma_{i_k})  = 1$ and the diagonal is oriented \underline{up}.

\item $b_{T}(L_{\gamma_{i_k}},\gamma_{i_k})  = -1$ and the diagonal is oriented \underline{down}.

\end{itemize}

\end{itemize}

\end{defn}

\begin{defn} 
\label{ymonomial}
Given a good matching $P$ of $S_{\alpha,T}$ we define the following \textbf{coefficient monomial}:

\begin{center}
\large{$y_{\mathbf{L_T}}(P) := {\displaystyle \prod_{\gamma_{i_k} \text{is} \hspace{1mm} \mathbf{L_T}\text{-oriented}} y_{\gamma_{i_k}}}$}

\end{center}

\end{defn}

\begin{thm}[Theorem 4.9, \cite{musiker2011positivity}]
\label{orientedarcexpansioncoefficients}
Let $(S,M)$ be an orientable bordered surface and let $T$ be an ideal triangulation without self-folded triangles. If $\mathbf{L_{T}^{\bullet}}$ is the principal lamination corresponding to principal coefficients at $\Sigma_T$, then for any arc $\alpha$ we have:

\begin{equation}
x_{\mathbf{L_{T}^{\bullet}}}(\alpha) = \frac{1}{cross(\alpha, T)} {\displaystyle \sum_{P} x(P)y_{\mathbf{L_{T}^{\bullet}}}(P)}
\end{equation}

Where:

\begin{itemize}

\item The sum is over all perfect matchings of the snake graph $\mathcal{G}_{\alpha, T}$.

\item $x_{\mathbf{L_{T}^{\bullet}}}(\gamma)$ is the cluster variable (corresponding to $\gamma$) in the cluster algebra $\mathcal{A} := \mathcal{A}_{\mathbf{L_{T}^{\bullet}}}(S,M)$ with initial seed $\Sigma_T$.
 
 \end{itemize}

\end{thm}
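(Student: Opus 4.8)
The plan is to argue by induction on the number $d$ of points in which $\alpha$ meets $T$, following the strategy of Musiker, Schiffler and Williams. For $d=0$ the arc $\alpha$ lies in $T$, the snake graph is empty, $cross(\alpha,T)=1/x_\alpha$, and the unique matching $*$ has $x(*)=y_{\mathbf{L_{T}^{\bullet}}}(*)=1$, so both sides equal $x_\alpha$. For $d=1$, the curve $\alpha$ is the diagonal $\tau'$ of the quadrilateral $Q_\tau$ cut out by the single crossed arc $\tau$ (here one uses that $T$ has no self-folded triangles, so $Q_\tau$ is an honest quadrilateral); the snake graph $\mathcal{G}_{\alpha,T}$ is a single tile with exactly two perfect matchings, and one checks directly that the two resulting monomials $x(P)y_{\mathbf{L_{T}^{\bullet}}}(P)$ are precisely the two terms of the exchange relation for $x_\tau x_{\tau'}$ in $\mathcal{A}_{\mathbf{L_{T}^{\bullet}}}(S,M)$. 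In particular, the matching whose diagonal is $\mathbf{L_T^\bullet}$-oriented (Definition \ref{alphaoriented}) is the one carrying the factor $y_\tau$, and this matches the principal-coefficient normalisation $b_T(L_\tau,\tau)=1$.

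For the inductive step, fix an orientation of $\alpha$ and let $\tau=\tau_{i_1}$ be the first arc of $T$ that $\alpha$ crosses, with $\Delta_1$ the triangle preceding the crossing. Then $\tau$ is a side of $\Delta_1$ not incident to the starting point $v$ of $\alpha$; write $w_1,w_2$ for the endpoints of $\tau$ and $\sigma_1=vw_1$, $\sigma_2=vw_2$ for the remaining sides of $\Delta_1$ (arcs of $T$ or boundary segments). Letting $\alpha_1$ (resp.\ $\alpha_2$) denote the arc obtained by starting at $w_1$ (resp.\ $w_2$) and then following the tail of $\alpha$, each of $\alpha_1,\alpha_2$ crosses $T$ in $d-1$ points, and the Ptolemy relation for lambda lengths gives, schematically,
\[
x_\tau\,x_\alpha \;=\; \widehat{y}_\tau\, x_{\sigma_1}x_{\alpha_2} \;+\; x_{\sigma_2}x_{\alpha_1},
\]
where a factor $x_{\sigma_j}$ is simply omitted when $\sigma_j$ is a boundary segment, and $\widehat{y}_\tau$ is the principal-coefficient $\widehat{y}$-factor attached to $\tau$ (which, by $b_T(L_\tau,\tau)=1$, is $y_\tau$ up to the evident monomial in the $x_{\sigma_j}$). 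Applying the inductive hypothesis to $\alpha_1$ and $\alpha_2$ turns the right-hand side into a sum over the perfect matchings of the two smaller snake graphs, and it remains to identify this with the matching sum for $\mathcal{G}_{\alpha,T}$ itself.

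The combinatorial identification rests on the fact that $\mathcal{G}_{\alpha,T}$ is obtained from $\mathcal{G}_{\alpha_j,T}$ by grafting one extra tile $G_1$ (recording the crossing with $\tau$) at the start: the perfect matchings of $\mathcal{G}_{\alpha,T}$ partition into two classes according to which pair of opposite edges of $G_1$ is used, and these classes biject with the matchings of $\mathcal{G}_{\alpha_2,T}$ and $\mathcal{G}_{\alpha_1,T}$ respectively (each decorated by the trivial matching of $\sigma_j$ when $\sigma_j\in T$). Under this bijection the weight $x(P)$ and the crossing monomial $cross(\alpha,T)$ factor multiplicatively in the obvious way -- this is exactly the bookkeeping already underlying the coefficient-free formula of Theorem \ref{coefficientfreeexpansionarcs} -- so that, after dividing by $x_\tau$, the $x$-part of the identity falls out. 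The genuinely delicate point, which I expect to be the main obstacle, is the coefficient monomial $y_{\mathbf{L_{T}^{\bullet}}}(P)$ of Definition \ref{ymonomial}: prepending the tile $G_1$ reverses the parity of every subsequent tile, so the odd/even classification in $\mathcal{G}_{\alpha,T}$ differs from that in $\mathcal{G}_{\alpha_j,T}$; and since $\mathbf{L_T^\bullet}$-orientedness of a diagonal depends on both this parity and the up/down direction of the matching through that tile, one must check that the two effects cancel, so that $y_{\mathbf{L_{T}^{\bullet}}}(P)$ equals the $y$-monomial of the corresponding smaller matching, times exactly the $\widehat{y}_\tau$ appearing in the Ptolemy relation on one class and a bare $1$ on the other. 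This reduces to a finite case analysis over the local pictures in $Q_\tau$, and over whether $\tau$ sits in an interior triangle or against the boundary; once it is in place, collecting terms completes the induction.

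I would also note two points. First, an alternative route -- also due to Musiker and Schiffler -- is to prove a $y$-graded $T$-path expansion for $x_{\mathbf{L_{T}^{\bullet}}}(\alpha)$ by the same induction on crossings and then construct a weight-preserving bijection between complete $T$-paths and perfect matchings of $\mathcal{G}_{\alpha,T}$; either way, the coefficient bookkeeping is the crux. Second, one cannot shortcut the argument via the separation-of-additions formula (Theorem \ref{orientablesep}): the coefficient-free formula we already have corresponds to specialising the principal-coefficient variable at $y=1$, which determines only the value, not the full $y$-dependence, of the principal expression -- so the principal formula has to be established on its own.
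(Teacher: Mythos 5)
The paper does not prove this theorem at all: it is imported verbatim from Musiker, Schiffler and Williams (\cite{musiker2011positivity}, Theorem 4.9) and used as a black box. So there is no internal proof to compare against, and your task was effectively to reconstruct the external argument. That reconstruction is essentially faithful to the cited literature: the induction on the number of crossings, the resolution of the first crossing into two shorter arcs together with two sides of the triangle, the accompanying skein/Ptolemy relation for lambda lengths, and the grafting of one tile onto the smaller snake graph with a two-class partition of matchings is exactly the Schiffler/Musiker--Schiffler strategy that MSW build on for this case (no self-folded triangles). Your identification of the $y$-monomial bookkeeping as the real work is also correct; in the cited sources this is where the case analysis over local configurations lives, including the parity-of-tiles subtlety you flag.

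Two remarks on what is glossed over, neither of which is a strategic error. First, after resolving the crossing, the arcs $\alpha_1,\alpha_2$ need not cross $T$ in exactly $d-1$ points: they can cross fewer, can coincide with arcs of $T$, or can degenerate to boundary segments, and the induction hypothesis and the ``grafting'' picture need to be phrased so that all these degenerations are covered (this is what the $\mathcal{P}_{T,\gamma}=\{*\}$ convention and the crossing-monomial convention $cross(\gamma,T)=1/x_\gamma$ are there for). Second, and this is a genuine strength of your write-up, the observation that one cannot obtain this theorem from the coefficient-free expansion via separation of additions is exactly right: the coefficient-free formula is the specialisation $y\mapsto 1$ of the principal one and hence strictly weaker, whereas the paper's own Theorem \ref{arcexpansioncoefficients} goes the other direction, using the present theorem as the input to separation of additions.
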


\begin{prop}[Corollary 6.4, \cite{schiffler2010cluster} and Theorem 5.1, \cite{musiker2010cluster}]
\label{maximalmatchingtriangulation}
Following the set-up of Theorem \ref{orientedarcexpansioncoefficients}, let $S_{\alpha,T}$ be the snake graph of $\alpha$ with respect to $T$. Then there exists a unqiue perfect matching $P_{+}$ of $S_{\alpha,T}$ such that:

$$y_{\mathbf{L_{T}^{\bullet}}}(P_{+}) = \displaystyle \prod_{\substack{\text{$\gamma$ is a} \\ \text{diagonal of a}\\ \text{tile in $S_{\alpha,T}$}}} y_{\gamma}$$

\noindent We call $P_+$ the \textit{\textbf{maximal matching}} of $S_{\alpha,T}$.

\end{prop}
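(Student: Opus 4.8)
The plan is to prove the existence and uniqueness of the maximal matching $P_+$ by directly exhibiting a matching that makes every diagonal $\mathbf{L_T^{\bullet}}$-oriented, and then arguing that the $\mathbf{L_T^{\bullet}}$-orientation condition pins down the matching tile-by-tile. First I would recall from Definition \ref{alphaoriented} that, since $\mathbf{L_T^{\bullet}}$ is the principal lamination with $b_T(L_{\gamma_{i_k}},\gamma_{i_k}) = 1$ for every arc, a diagonal of an odd tile is $\mathbf{L_T^{\bullet}}$-oriented exactly when it is oriented down, and a diagonal of an even tile is $\mathbf{L_T^{\bullet}}$-oriented exactly when it is oriented up. So $P_+$ must be the (conjecturally unique) matching in which every odd-tile diagonal points down and every even-tile diagonal points up. Concretely, the orientation of the diagonal $\gamma_{i_k}$ of tile $G_k$ is determined by which of the two triangles $\Delta_k, \Delta_{k+1}$ adjacent to it is matched to $\gamma_{i_k}$ by $P$ — equivalently, by which of the two edges incident to the south-east (or north-west) vertex of $G_k$ lies in $P$. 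I would make this precise using the standalone tile picture: in a given tile, "diagonal oriented down" means the perfect matching uses the pair of edges forming the lower triangle relative to $\gamma_{i_k}$, "oriented up" the upper one.

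Next I would build $P_+$ explicitly. Starting from tile $G_1$, the parity and the required orientation force a specific choice of the two boundary edges of $G_1$ not shared with $G_2$; this in turn forces, via the shared edge $e_1$, the configuration on $G_2$; and so on inductively along the snake graph. The key point to check is consistency: that the orientation demanded on $G_{k+1}$ by its parity is compatible with whatever edge-usage is propagated across $e_k$ from $G_k$. This is exactly where the alternating parity of tiles ($G_k$ odd $\iff G_{k+1}$ even) and the rule $rel(G_k) \neq rel(G_{k+1})$ of Definition \ref{arcsnakegraph} enter: when we move from an odd tile with its diagonal down to the next (even) tile, the geometry of how $G_k$ and $G_{k+1}$ are glued (north-or-east edge of $G_k$ to south-or-west of $G_{k+1}$) means "down on $G_k$" propagates to exactly the edge-configuration on $G_{k+1}$ that realizes "up on $G_{k+1}$". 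I would verify this by a short case analysis on the four local gluing types (NE, NW via a straight step, or the two zig-zag steps), or alternatively cite the analogous bookkeeping already done by Musiker, Schiffler and Williams for $\alpha$-oriented diagonals in \cite{musiker2011positivity}, \cite{musiker2010cluster}, since $\mathbf{L_T^{\bullet}}$-orientation here is defined precisely to generalise their $\alpha$-orientation. This produces a well-defined perfect matching $P_+$ in which every diagonal is $\mathbf{L_T^{\bullet}}$-oriented, hence $y_{\mathbf{L_T^{\bullet}}}(P_+) = \prod_{\gamma} y_{\gamma}$, and for the band-graph case one checks the constructed $P_+$ is a good matching (it does not cut the glued edge, by the Zig-zag Lemma \ref{zigzag} applied to the relevant zig-zag subgraph).

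For uniqueness, suppose $P$ is any matching with $y_{\mathbf{L_T^{\bullet}}}(P) = \prod_\gamma y_\gamma$, i.e. every diagonal is $\mathbf{L_T^{\bullet}}$-oriented. Then the orientation of every diagonal is forced (down on odd tiles, up on even tiles), and since the orientation of the diagonal of $G_k$ together with the orientation of the diagonal of $G_{k+1}$ determines which of $\{e_k\} \cup (\text{boundary edges})$ are used in the two tiles, an induction along the snake graph shows $P$ agrees with $P_+$ edge by edge; the boundary (first/last) tiles are handled by the same local rule. I would phrase the induction in terms of the shared edges $e_k$: knowing $P \cap G_{[1,k]}$ and the forced orientation on $G_{k+1}$ determines $P \cap G_{[1,k+1]}$. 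Hence $P = P_+$.

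The main obstacle I anticipate is the local consistency check in the construction step: making airtight the claim that the propagation of the matching across a shared edge $e_k$ is compatible with the parity-dictated orientation on the next tile, uniformly over all four gluing/relative-orientation patterns. This is genuinely a finite verification but it is the kind of place where sign and orientation conventions (how $rel(G_k)$ interacts with "up"/"down", and with the NE/NW/SW/SE edge labels) must be tracked with care; the cleanest route is probably to reduce to, and quote, the corresponding statement for $\alpha$-oriented diagonals in \cite{musiker2010cluster}, \cite{schiffler2010cluster}, observing that with principal coefficients $b_T(L_{\gamma_{i_k}},\gamma_{i_k}) = 1$ the present Definition \ref{alphaoriented} literally reduces to their notion, so that $P_+$ here is their maximal matching and the result is inherited. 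Everything else — the existence of the forced matching, the good-matching verification in the band case via Lemma \ref{zigzag}, and the uniqueness induction — is then routine.
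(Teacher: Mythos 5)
The paper supplies no proof of this proposition: it is stated with a bibliographic attribution to Corollary~6.4 of Schiffler and Theorem~5.1 of Musiker--Schiffler, and then the paper moves on. Your proposal's ``cleanest route'' --- observe that when $b_T(L_{\gamma_{i_k}},\gamma_{i_k})=1$ for every $\gamma_{i_k}\in T$, Definition~\ref{alphaoriented} reduces verbatim to the $\alpha$-orientation of Musiker--Schiffler, so that the proposition is literally their maximal-matching statement --- is exactly what the paper's citation is doing, and is the correct argument.

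The direct construction you sketch (force ``down'' on odd-relative-orientation tiles and ``up'' on even ones, propagate across each shared edge $e_k$, check consistency using the alternation $rel(G_k)\ne rel(G_{k+1})$, and run the same propagation for uniqueness) is a sound fleshing-out of the lattice fact that a snake graph has a unique top perfect matching, but two caveats are worth flagging. First, the clause ``the orientation of $\gamma_{i_k}$ is determined by which of the two edges incident to the SE (or NW) vertex of $G_k$ lies in $P$'' is too local: for an interior tile, that vertex may be matched by an edge of the neighbouring tile $G_{k\pm 1}$ that is not an edge of $G_k$, and the paper never defines ``oriented up/down'' from scratch --- it inherits the notion from the cited references. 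So a fully self-contained version of your construction would first have to fix that definition carefully, exactly the sign/convention bookkeeping you identify as the main obstacle. Second, the band-graph discussion and Lemma~\ref{zigzag} are not needed here: the proposition is stated ``following the set-up of Theorem~\ref{orientedarcexpansioncoefficients},'' which concerns arcs only, so $S_{\alpha,T}$ is a snake graph and the ``good matching'' subtlety does not arise. Neither point is a gap in the argument you actually rely on (the reduction-to-literature route), which is correct; they are only issues if one insists on the from-scratch construction.
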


We now generalise Theorem \ref{orientedarcexpansioncoefficients} to principal laminations on orientable surfaces.

\begin{thm}
\label{arcexpansioncoefficients}
Let $T$ be a triangulation of an orientable surface $(S,M)$ which contains no self-folded triangles. Then for any plain arc $\gamma$ of $(S,M)$ we have:

\begin{center}
 
 \large{$x_{\mathbf{L_T}}(\gamma) = \frac{1}{cross(\gamma, T)} {\displaystyle \sum_{P} x(P)y_{\mathbf{L_T}}(P)}$}

\end{center}

Where the sum is over all perfect matchings of the snake graph $S_{\gamma, T}$.

\end{thm}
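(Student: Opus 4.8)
The plan is to reduce Theorem~\ref{arcexpansioncoefficients} to the known principal-coefficient case (Theorem~\ref{orientedarcexpansioncoefficients}) by means of Fomin and Zelevinsky's separation-of-additions formula (Theorem~\ref{orientablesep}). Fix the triangulation $T$ and a plain arc $\gamma$. Let $\mathbf{L_T}$ be the given principal lamination, so that $b_{T}(L_{\gamma_j},\gamma_j)=\varepsilon_j\in\{+1,-1\}$ for each $j$, and let $\mathbf{L_T^{\bullet}}$ be the principal lamination with all shear coordinates equal to $+1$. Both give rise to cluster algebras with the \emph{same} underlying exchange matrix $B(T)$ at $\Sigma_T$, hence the same cluster variable $X_\gamma^T$ in the principal-coefficient algebra $\mathcal{A}_{\bullet}(S,M)$; by Theorem~\ref{orientablesep} we have
\begin{equation*}
x_{\mathbf{L_T}}(\gamma)=\frac{X_\gamma^T|_{\mathcal{F}}(x_1,\dots,x_n;y_1,\dots,y_n)}{X_\gamma^T|_{\mathrm{Trop}}(1,\dots,1;y_1,\dots,y_n)},\qquad y_j=\prod_{k}x_k^{b_{T}(L_k,\gamma_j)},
\end{equation*}
and similarly for $x_{\mathbf{L_T^{\bullet}}}(\gamma)$ with $y_j^{\bullet}=\prod_k x_k^{(b^{\bullet})_{kj}}$. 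So the first step is: apply Theorem~\ref{orientedarcexpansioncoefficients} to write $X_\gamma^T|_{\mathcal{F}}=\tfrac{1}{\mathrm{cross}(\gamma,T)}\sum_P x(P)\,y_{\mathbf{L_T^{\bullet}}}(P)$, where $y_{\mathbf{L_T^{\bullet}}}(P)=\prod_{\gamma_{i_k}\ \mathbf{L_T^{\bullet}}\text{-oriented}}y_{i_k}$, these $y_{i_k}$ now being the principal-coefficient $\hat{y}$-variables.

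The second step is the bookkeeping that converts $\mathbf{L_T^{\bullet}}$-orientedness into $\mathbf{L_T}$-orientedness. Inspecting Definition~\ref{alphaoriented}: changing the sign $\varepsilon_{i_k}=b_T(L_{\gamma_{i_k}},\gamma_{i_k})$ from $+1$ to $-1$ swaps, within each parity class of tiles, which diagonal orientation counts as oriented. Equivalently, for a fixed matching $P$, a diagonal $\gamma_{i_k}$ is $\mathbf{L_T}$-oriented in $P$ if and only if it is $\mathbf{L_T^{\bullet}}$-oriented in $P$ \emph{exclusive-or} $\varepsilon_{i_k}=-1$. Therefore, writing $D$ for the set of tiles of $S_{\gamma,T}$ whose diagonal $\delta$ has $b_T(L_\delta,\delta)=-1$ (counted with multiplicity, i.e. per tile), one gets the identity of monomials
\begin{equation*}
y_{\mathbf{L_T^{\bullet}}}(P)=y_{\mathbf{L_T}}(P)\cdot\frac{\prod_{k:\,\gamma_{i_k}\ \mathbf{L_T^{\bullet}}\text{-or.},\ i_k\in D}y_{i_k}}{\prod_{k:\,\gamma_{i_k}\ \mathbf{L_T}\text{-or.},\ i_k\notin\text{(swap)}}\cdots}
\end{equation*}
— more cleanly, since for each tile exactly one of "its diagonal is $\mathbf{L_T}$-oriented in $P$" / "its diagonal is $\mathbf{L_T^{\bullet}}$-oriented in $P$" fails precisely on the tiles in $D$, one has $y_{\mathbf{L_T^{\bullet}}}(P)/y_{\mathbf{L_T}}(P)=\prod_{i_k\in D}y_{i_k}^{\pm1}$ with the sign depending only on the tile (via $\varepsilon_{i_k}$), \emph{independently of $P$}. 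Call this $P$-independent monomial $m$. Here is where I would also bring in Proposition~\ref{maximalmatchingtriangulation}: the maximal matching $P_+$ realizes $y_{\mathbf{L_T^{\bullet}}}(P_+)=\prod_{\text{tiles}}y_\delta$, which pins down the normalization and lets me identify the tropical denominator cleanly.

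The third step is to recognize the change-of-variables $y_j\mapsto y_j^{\bullet}$ as exactly the substitution relating the two separation formulas. Substituting $y_k\mapsto \prod_j x_j^{\cdots}$ dictated by the shear coordinates of $\mathbf{L_T}$ into $X_\gamma^T|_{\mathcal F}$, the extra factor $m$ (a Laurent monomial in the $y$'s) specializes to a Laurent monomial in the $x$'s; dividing by the tropical evaluation $X_\gamma^T|_{\mathrm{Trop}}(1,\dots,1;y_1,\dots,y_n)$ — which by Proposition~\ref{maximalmatchingtriangulation} equals (the tropicalization of) the top-degree term — cancels exactly this monomial ambiguity, leaving $x_{\mathbf{L_T}}(\gamma)=\tfrac{1}{\mathrm{cross}(\gamma,T)}\sum_P x(P)\,y_{\mathbf{L_T}}(P)$ with the $y_{i_k}$ now interpreted as the correct coefficient monomials $y_{i_k}=\prod_l x_{n+l}^{b_T(L_l,\gamma_{i_k})}$ built from $\mathbf{L_T}$. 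The main obstacle I anticipate is the second step: keeping the sign/parity bookkeeping in Definition~\ref{alphaoriented} straight across the sign flips $\varepsilon_j$, and verifying that the discrepancy monomial $m$ really is $P$-independent so that it survives the passage through the separation formula and cancels against the tropical denominator rather than mixing into the matching sum. Once that $P$-independence is nailed down — for which the maximal matching $P_+$ of Proposition~\ref{maximalmatchingtriangulation} is the natural certificate — the rest is formal manipulation of Theorem~\ref{orientablesep}.
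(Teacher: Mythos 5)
Your framework is the paper's: reduce to Theorem~\ref{orientedarcexpansioncoefficients} via Fomin--Zelevinsky's separation-of-additions formula (Theorem~\ref{orientablesep}), using Proposition~\ref{maximalmatchingtriangulation} (the maximal matching) to identify the tropical denominator. But your pivotal step 2 asserts something false as stated, and you would run into it if you tried to verify it.

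Specifically, you claim that the ratio $y_{\mathbf{L_T^{\bullet}}}(P)/y_{\mathbf{L_T}}(P)$, taken as formal Laurent monomials in the coefficient variables, is $P$-independent. It is not. For a tile $G_k$ whose diagonal $\gamma_{i_k}$ has $\varepsilon_{i_k}=-1$, being $\mathbf{L_T}$-oriented with respect to $P$ is the \emph{negation} of being $\mathbf{L_T^{\bullet}}$-oriented. Thus if the diagonal happens to be $\mathbf{L_T^{\bullet}}$-oriented in $P$, this tile contributes $y_{i_k}^{+1}$ to the ratio; if it is not, the contribution is $y_{i_k}^{-1}$. Which of the two cases occurs depends on $P$, so the ratio of abstract $y$-monomials depends on $P$. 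The ``$P$-independent discrepancy monomial $m$'' you wish to factor out simply does not exist at this level.

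What does hold, and what the paper's proof actually exploits, is that the ratio becomes $P$-independent \emph{after} the specialisation $y_j \mapsto x_{n+j}^{\varepsilon_j}$ dictated by the separation formula. For a tile with $\varepsilon_{i_k}=-1$: if the diagonal is $\mathbf{L_T^{\bullet}}$-oriented, the numerator picks up $y_{i_k}^{\varepsilon_{i_k}}=x_{n+i_k}^{-1}$ and the denominator nothing; if it is not, the numerator picks up nothing and the denominator picks up $x_{n+i_k}$. Either way the per-tile ratio is $x_{n+i_k}^{-1}$, which is indeed $P$-independent. Summing over all tiles with $\varepsilon_j=-1$ yields $\prod_j x_{n+j}^{-a_j}$ with $a_j=\#\{\text{tiles with diagonal }\gamma_j\}$, and this is exactly what $X_\gamma^{T}|_{\mathrm{Trop}}(1,\dots,1;y_1,\dots,y_n)$ evaluates to (Proposition~\ref{maximalmatchingtriangulation} identifies the extremal exponent with the maximal matching $P_+$). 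The paper avoids your trap by never forming the $y$-monomial ratio; it directly computes, term by term, the exponent of each $x_{n+j}$ in $\bigl(x(P)y_{\mathbf{L_T^{\bullet}}}(P)\bigr)|_{y_j\mapsto x_{n+j}^{\varepsilon_j}}\big/\prod_j x_{n+j}^{-a_j}$ and identifies it with the $\mathbf{L_T}$-oriented count. You should do the same: drop the claim of $P$-independence for the $y$-monomial ratio, apply the substitution first, and only then observe the cancellation.
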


\begin{proof}

When $\mathbf{L_T}$ is the principal lamination, $\mathbf{L_{T}^{\bullet}}$, corresponding to principal coefficients at $\Sigma_T$, the statement is precisely Theorem \ref{orientedarcexpansioncoefficients}.

To prove the result for an arbitrary principal lamination $\mathbf{L_T}$ we use Theorem \ref{orientablesep} -- the `separation of addition' formula of Fomin and Zelevinsky. For our principal lamination $\mathbf{L_T}$, note that $m= 2n$ and $y_j = x_{n+j}^{\pm 1}$ for each $j \in \{1,\ldots n\}$. Specifically, adopting the notation used there and applying Proposition \ref{maximalmatchingtriangulation} we see \newline

$$X_{\alpha}^{T}|_{Trop(x_{n+1}, \ldots, x_{m})}(1,\ldots, 1; y_1, \ldots, y_n) = \displaystyle \prod_{j=1}^n x_{j+n}^{-a_j} $$

where

\[  a_j = \left\{
\begin{array}{ll}
      0, & y_j = x_{n+j} \\
      \# \{\text{tiles in $S_{\alpha,T}$ corresponding to $\gamma_j$}\}, & y_j = x_{n+j}^{-1} \\
\end{array} 
\right. \]

Therefore, for a given term $x(P)y_{\mathbf{L^{\bullet}_T}}(P)$ in the expansion of $X_{\alpha}^T$, the corresponding term in $x_{\alpha}^T$ will be

$$\frac{x(P)y_{\mathbf{L^{\bullet}_T}}(P)|_{y_j \leftarrow x_{n+j}^{\pm 1}}}{\displaystyle \prod_{j=1}^n x_{j+n}^{-a_j}} = x(P)\displaystyle \prod_{j=1}^n x_{j+n}^{b_j}$$ for some $b_j \in \mathbf{Z}_{\geq 0}$.

Using the notion of $\mathbf{L^{\bullet}_T}$-oriented arcs (see Definition \ref{alphaoriented}) we may describe the exponents $b_j$, for $j \in \{1,\ldots, n\}$, as follows:

\[  b_j = \left\{
\begin{array}{ll}
      \# \{ \hspace{1mm}\text{\parbox{15em}{\hspace{5mm} tiles in $S_{\alpha,T}$ whose\\ diagonal $\gamma_j$ is $\mathbf{L^{\bullet}_T}$-oriented}}\hspace{-15mm}\} , \hspace{2mm} & y_j = x_{n+j} \vspace{6mm}\\ 
      \# \{ \hspace{1mm}\text{\parbox{15em}{tiles in $S_{\alpha,T}$ whose \\ diagonal is $\gamma_j$}\hspace{-25mm}\}} - \# \{\hspace{1mm}\text{\parbox{15em}{\hspace{5mm} tiles in $S_{\alpha,T}$ whose \\ diagonal $\gamma_j$ is $\mathbf{L^{\bullet}_T}$-oriented}}\hspace{-13mm}\}, \hspace{2mm} & y_j = x_{n+j}^{-1} \\
\end{array} 
\right. \]

This coincides with the notion of $\mathbf{L_T}$-oriented arcs, which concludes the proof.

\end{proof}

\begin{rmk}

For this paper it is crucial for us to understand the behaviour of expansion formulae for principal laminations, and not just principal coefficients. Indeed, given a lamination on a non-orientable surface, the lift to the double cover will have both `S' and `Z' shape intersections.

\end{rmk}

\subsection{Expansion formulae for quasi-cluster algebras with principal laminations}

Recall from \cite{wilson2018laurent} that a tagged arc $\gamma$ of $(S,M)$ is called \textit{\textbf{orientable}} if it has an orientable neighbourhood. Otherwise $\gamma$ is said to be \textit{\textbf{non-orientable}}.

\begin{defn}
\label{principalquasi}
Let $\gamma$ be a tagged arc of $(S,M)$. We define a lamination $L_{\gamma}$ as follows:

\begin{itemize}

\item If $\gamma$ is an oriented tagged arc then, by definition, it has an orientable neighbourhood and we define $L_{\gamma}$ as in Definition \ref{principal lamination def}.

\item Otherwise $\gamma$ is non-orientable with unique endpoint $m$, and the definition is split into two cases.

\begin{itemize}

\item If $m$ is a marked point on the boundary then $L_{\gamma}$ can be chosen to be either: the one-sided closed curve compatible with $\gamma$; or the lamination that runs along $\gamma$ in a small neighbourhood thereof, with endpoints on the boundary (contained within a small neighbourhood of $m$).

\item If $m$ is a puncture then $L_{\gamma}$ is chosen to be a one-sided closed curve that intersects $\gamma$ and is also compatible with $\gamma$ (when viewed as a quasi-arc).

\end{itemize}

\end{itemize}

\end{defn}

\begin{figure}[H]
\begin{center}
\includegraphics[width=13cm]{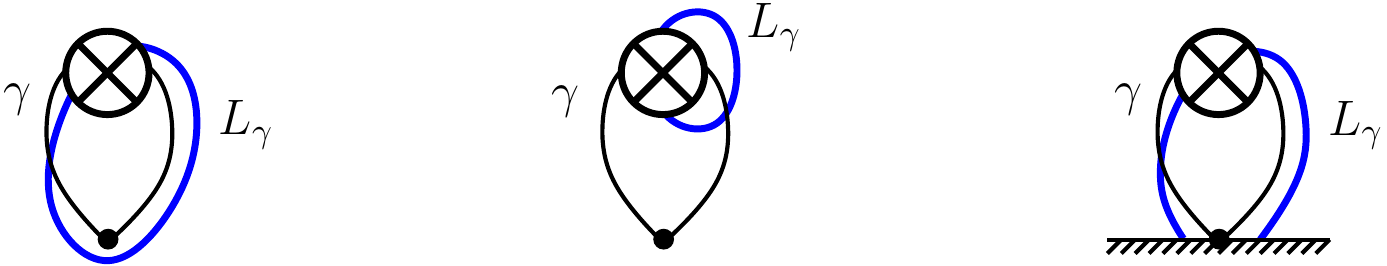}
\caption{A non-orientable arc $\gamma$ and the possible choices of the associated principal lamination $L_{\gamma}$.}
\label{onesidedlamination}
\end{center}
\end{figure}

\begin{rmk}

For each tagged arc $\gamma$ it is easily seen there are precisely two choices for $L_{\gamma}$.

\end{rmk}

\begin{defn}[Principal laminations for quasi-cluster algebras]

We say a multi-lamination of the form $\mathbf{L}_T := \{L_{\gamma} | \gamma \in T\}$ is a \textit{\textbf{principal lamination}} where $T$ is a triangulation of $(S,M)$ and $L_{\gamma}$ is the lamination defined in Definition \ref{principalquasi}.

\end{defn}

\subsubsection{Case 1: $\mathbf{L_T}$ contains no one-sided closed curves}

\begin{prop}
\label{arcexpansioncoefficientspuncture}
Let $T$ be an ideal triangulation of a non-orientable surface $(S,M)$, and let $\mathbf{L_T}$ be a principal lamination containing no one-sided closed curves. Then for any plain arc $\gamma$ of $(S,M)$ we have:

\begin{center}
 
 \large{$x_{\mathbf{L_T}}(\gamma) = \frac{1}{cross(\gamma, T)} {\displaystyle \sum_{P} x(P)y_{\mathbf{L_T}}(P)}$}

\end{center}

\noindent where the sum is over all perfect matchings of the snake graph $S_{\overline{\gamma}, \overline{T}}$.

\end{prop}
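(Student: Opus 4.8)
The plan is to reduce the claim to the orientable case already established in Theorem \ref{arcexpansioncoefficients} by passing to the orientable double cover $\overline{(S,M)}$. The crucial observation is that since $\mathbf{L_T}$ contains no one-sided closed curves, each laminate $L_\gamma$ (for $\gamma \in T$) is an ordinary lamination on $(S,M)$ whose lift $\overline{L_\gamma}$ consists of two laminates on $\overline{(S,M)}$; moreover, because $L_\gamma$ is built to run alongside $\gamma$ (Definition \ref{principalquasi} reverting to Definition \ref{principal lamination def} in the orientable-neighbourhood case, and the boundary-endpoint case here), its lift runs alongside the two lifts $\overline{\gamma}$ and $\tilde{\overline{\gamma}}$. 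Hence $\overline{\mathbf{L_T}} := \{\overline{L_\gamma}, \tilde{\overline{L_\gamma}} \mid \gamma \in T\}$ is precisely a principal lamination on the orientable surface $\overline{(S,M)}$ associated to the lifted triangulation $\overline{T}$. The first step, then, is to verify carefully that $\overline{\mathbf{L_T}}$ is a principal lamination of $\overline{T}$ in the sense of Definition \ref{principal lamination def}, i.e. that $b_{\overline{T}}(\overline{L_\gamma}, \overline{\gamma}) = \pm 1$ and similarly for the tilde copies, and that no cross-terms $b_{\overline{T}}(\overline{L_\gamma}, \overline{\delta})$ with $\delta \neq \gamma$ spoil this (they may be nonzero, but that is allowed for a principal lamination — only the diagonal entries need be $\pm1$).

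Once that identification is in place, I would apply Theorem \ref{arcexpansioncoefficients} on $\overline{(S,M)}$ to the arc $\overline{\gamma}$ with respect to $\overline{T}$ and the principal lamination $\overline{\mathbf{L_T}}$, obtaining
\[
x_{\overline{\mathbf{L_T}}}(\overline{\gamma}) = \frac{1}{cross(\overline{\gamma}, \overline{T})} \sum_{P} x(P)\, y_{\overline{\mathbf{L_T}}}(P),
\]
the sum over perfect matchings of $S_{\overline{\gamma}, \overline{T}}$. The second step is then a specialisation argument, exactly as in the coefficient-free case (the discussion preceding Theorem \ref{coefficientfreeexpansionarcs} and the proof of Theorem \ref{coefficientfreeexpansion}): imposing the identifications $x_{\overline{\gamma}_i} = x_{\tilde{\overline{\gamma}}_i}$ recovers the geometry of $(S,M)$, and correspondingly one must identify the coefficient variables attached to $\overline{L_\gamma}$ and $\tilde{\overline{L_\gamma}}$ with the single coefficient variable attached to $L_\gamma$ in $\mathbf{L_T}$. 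Under this specialisation the left-hand side becomes $x_{\mathbf{L_T}}(\gamma)$ and $cross(\overline{\gamma}, \overline{T})$ becomes $cross(\gamma, T)$. It remains to check that the $y$-monomials behave correctly, namely that $y_{\overline{\mathbf{L_T}}}(P)$, after the coefficient identification, is exactly $y_{\mathbf{L_T}}(P)$ as defined in Definition \ref{ymonomial} via $\mathbf{L_T}$-oriented diagonals. This follows because a diagonal $\gamma_{i_k}$ of a tile in $S_{\overline{\gamma},\overline{T}}$ lies over a diagonal of $(S,M)$, the shear coordinate $b_{\overline{T}}(\overline{L_{\gamma_{i_k}}}, \overline{\gamma}_{i_k})$ equals $b_T(L_{\gamma_{i_k}}, \gamma_{i_k})$, and the parity (odd/even) of the tile is intrinsic to the snake graph $S_{\overline{\gamma},\overline{T}}$; hence the notion of "$\overline{\mathbf{L_T}}$-oriented with respect to $P$'' on the cover matches "$\mathbf{L_T}$-oriented'' downstairs term by term.

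The main obstacle I anticipate is the first step — rigorously checking that the lift $\overline{\mathbf{L_T}}$ of a principal lamination is again a principal lamination on the cover, including the boundary-endpoint case of Definition \ref{principalquasi} where $L_\gamma$ may be taken to run alongside a non-orientable arc $\gamma$ with endpoints near a boundary marked point. One has to confirm that such an $L_\gamma$ genuinely lifts to two legitimate laminates (each a curve of the type allowed in Definition \ref{laminationdef}, with no spiralling pathologies) and that the shear coordinate on the cover is $\pm 1$; the geometry near the cross-cap is where the spiralling direction / "S versus Z'' distinction must be tracked with care, since, as the remark after Theorem \ref{arcexpansioncoefficients} warns, the lift will in general exhibit both S- and Z-shape intersections, which is exactly why the generalisation from principal \emph{coefficients} to principal \emph{laminations} in Theorem \ref{arcexpansioncoefficients} is needed here rather than merely Theorem \ref{orientedarcexpansioncoefficients}. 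A secondary, more bookkeeping-level obstacle is making the coefficient-variable identification precise: $\mathbf{L_T}$ has $n$ laminates but $\overline{\mathbf{L_T}}$ has $2n$, so one specialises $2n$ coefficient variables down to $n$, and one must confirm the two lifts $\overline{L_\gamma}$, $\tilde{\overline{L_\gamma}}$ interact symmetrically with $\overline{\gamma}$ so that this specialisation is well-defined and compatible with the laminated lambda length $x_{\mathbf{L_T}}(\gamma) = \lambda(\gamma)/c_{\mathbf{L_T}}(\gamma)$ pulling back correctly.
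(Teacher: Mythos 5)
Your proposal is correct and takes essentially the same approach as the paper, whose proof is a single sentence: the lift of $\mathbf{L_T}$ to $\overline{(S,M)}$ is again a principal lamination precisely because no one-sided closed curves are present, and the result then follows from Theorem \ref{arcexpansioncoefficients}. You have simply expanded that one-liner to make explicit the intermediate steps (the $\pm 1$ shear coordinates on the cover, the specialisation of the doubled coefficient variables, the term-by-term identification of the $y$-monomials) that the paper leaves implicit — including correctly pinpointing why the generalisation from principal \emph{coefficients} to principal \emph{laminations} in Theorem \ref{arcexpansioncoefficients} is the necessary ingredient.
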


\begin{proof}

Since $\mathbf{L_T}$ is a principal lamination of $(S,M)$ which contains no one-sided closed curves, its lift to the double cover will also be a principal lamination of $\overline{(S,M)}$. The result then follows from Proposition \ref{arcexpansioncoefficients}.

\end{proof}

Analogous to Proposition \ref{splitproduct} we have the following result.

\begin{prop}
\label{splitproductcoeffcient}

With respect to the map $\Phi$ defined in Proposition \ref{welldefined}, for any principal lamination $\mathbf{L_T}$ we have:

$$y_{\mathbf{L_T}}(P) = y_{\mathbf{L_T}}(P_1)y_{\mathbf{L_T}}(P_2).$$

\end{prop}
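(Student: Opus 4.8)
The plan is to leverage the same bijective decomposition $\Phi$ from Proposition \ref{welldefined} and its geometric origin, but now track the coefficient monomials $y_{\mathbf{L_T}}$ instead of (or rather, alongside) the weight monomials $x(P)$. Recall that $y_{\mathbf{L_T}}(P)$ is a product over those tiles of $S_{\alpha,T}$ whose diagonal is $\mathbf{L_T}$-oriented with respect to $P$ (Definitions \ref{alphaoriented}, \ref{ymonomial}). Since whether a tile is $\mathbf{L_T}$-oriented depends only on (i) the parity of the tile, (ii) the shear coordinate $b_T(L_{\gamma_{i_k}},\gamma_{i_k}) = \pm 1$ of the associated arc, and (iii) the direction (up/down) of the matched diagonal, the key observation is that all three of these data are \emph{local} to the tile and are preserved under the identifications of sub-snake-graphs given in Proposition \ref{decomposition}, up to the parity bookkeeping already accounted for there.

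The steps I would carry out, in order. First, recall from Proposition \ref{decomposition} the two canonical ways that $S_{\gamma,\overline{T}^{\circ}}$ decomposes: on the one hand $(S_{\gamma,\overline T^\circ})_{[1,s]} = S_{\beta,\overline T^\circ}$ and $(S_{\gamma,\overline T^\circ})_{(s+1,s+t]}$ is $S_{\alpha,\overline T^\circ}$ or its inverse depending on the parity of $s$; on the other hand $(S_{\gamma,\overline T^\circ})_{[1,t)} = S_{\alpha,\overline T^\circ}$ and $(S_{\gamma,\overline T^\circ})_{[t+1,t+s]}$ is $S_{\tilde\beta,\overline T^\circ}$ (or its inverse). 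Second, for a matching $P \in \mathcal{P}_{T,\gamma}$ with image $\Phi(P) = (P_1,P_2)$, observe that each tile of $S_{\gamma,T}$ is carried to a tile of either $S_{\alpha,T}$ or $S_{\beta,T}$ (or $S_{\tilde\beta,T}$), and the matched diagonal direction is carried along. Third, check that "$\mathbf{L_T}$-oriented" is preserved under this correspondence: a tile $G$ of $S_{\gamma,T}$ corresponding to an arc $\gamma_{i_k}$ has the same shear coordinate $b_T(L_{\gamma_{i_k}},\gamma_{i_k})$ whether we regard $G$ inside $S_{\gamma,T}$, $S_{\alpha,T}$, or $S_{\beta,T}$ — this is intrinsic to the arc $\gamma_{i_k}$ and the principal lamination, independent of which curve's snake graph it sits in. The only subtlety is parity: a tile that is the $j$-th tile of $S_{\gamma,T}$ may become the $j'$-th tile of $S_{\alpha,T}$ with $j \not\equiv j' \pmod 2$, but this is exactly compensated by the $inv(\,\cdot\,)$ appearing in Proposition \ref{decomposition} (reflection through $x=y$ swaps "up" and "down" for the diagonal, which compensates the parity flip in Definition \ref{alphaoriented}). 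Fourth, conclude that the set of $\mathbf{L_T}$-oriented tiles of $S_{\gamma,T}$ under $P$ is the disjoint union of the $\mathbf{L_T}$-oriented tiles of $S_{\alpha,T}$ under $P_1$ and those of $S_{\beta,T}$ under $P_2$, whence $y_{\mathbf{L_T}}(P) = y_{\mathbf{L_T}}(P_1) y_{\mathbf{L_T}}(P_2)$ by the definition of $y_{\mathbf{L_T}}$ as a product over such tiles. (When $\beta \in T$ one has $\mathcal{P}_{T,\beta} = \{*\}$, $y_{\mathbf{L_T}}(*) = 1$, $S_{\gamma,T} = (S_{\alpha,T})_{[1,t]}$, and the extra tile $G_{t+1}$ corresponding to $\alpha$'s crossing with $\beta$ must be checked to be non-$\mathbf{L_T}$-oriented — but its diagonal $\beta$ has $y_\beta = 1$ since $\beta$ is a frozen direction only through $L_\beta \in \mathbf{L_T}$, and in any case the $y$-monomial identity holds trivially in that degenerate case.)

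The main obstacle I anticipate is the careful reconciliation of the parity convention in Definition \ref{alphaoriented} with the reflection $inv$ and the two "cut" cases (left/centre versus right) of $\Phi$ in Proposition \ref{welldefined}: one must verify that in the right-cut case, where $(P_1,P_2) = (P_{[t+1,t+s]}, P_{[1,t)})$ and the roles of $\alpha$ and $\tilde\beta$ are used, the orientation data still matches after the relevant reflections, and similarly that passing between $\beta$ and $\tilde\beta$ (which are related by the deck transformation, hence have identified snake graphs with possibly reversed tile order) does not disturb the shear coordinates. All of this is bookkeeping that mirrors, sign-for-sign, the argument already given for $x(P) = x(P_1)x(P_2)$ in Proposition \ref{splitproduct}; the content is simply that the "decoration" of tiles by $\mathbf{L_T}$-orientation is as local and as compatible with the decomposition as the decoration by cluster variables $x_{\gamma_{i_k}}$ already shown to be.
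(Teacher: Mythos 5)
Your proposal for the main case ($\beta \notin T$) follows the paper's argument essentially step for step: decompose via Proposition \ref{decomposition}, observe that $\mathbf{L_T}$-orientedness is determined by local data (shear coordinate, parity, matched diagonal direction), and check that the parity flip introduced when $s$ (or $t$) is odd is exactly cancelled by the $\operatorname{inv}(\cdot)$ appearing in the decomposition. This is the same route and the same bookkeeping.

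The case $\beta \in T$, however, is where your proposal has a genuine gap. You write that the extra tile $G_{t+1}$ ``has $y_\beta = 1$ since $\beta$ is a frozen direction only through $L_\beta \in \mathbf{L_T}$, and in any case the $y$-monomial identity holds trivially in that degenerate case.'' Neither claim is correct. The variable $y_\beta$ (i.e.\ $y_{\gamma_j}$ for $\gamma_j = \beta$) is a genuine formal coefficient variable attached to the lamination $L_\beta \in \mathbf{L_T}$, not equal to $1$. And the identity $y_{\mathbf{L_T}}(P) = y_{\mathbf{L_T}}(P_1)y_{\mathbf{L_T}}(P_2)$ is not trivial here: since $P_2 = *$ and $y_{\mathbf{L_T}}(*)=1$, one must actually prove that the diagonal of the extra tile $T_{\overline\beta}$ is \emph{never} $\mathbf{L_T}$-oriented for any good matching $P$ of $S_{\alpha,\overline T}$, otherwise an unwanted factor of $y_\beta$ appears on the left and not on the right. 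The paper establishes this using the Zig-zag Lemma \ref{zigzag} together with the hypothesis (implicit from the subsection heading, and invoked explicitly in the proof) that $\mathbf{L_T}$ contains no one-sided closed curves, which forces $b_{\overline T}(\overline\beta, \overline{L_{\overline\beta}}) = -1$. This hypothesis is essential: when $L_\beta$ is a one-sided closed curve the conclusion fails, which is precisely why Proposition \ref{badsplitproductcoeffcient} later carries an extra $1/y_\beta$ correction in that situation. Your argument never invokes the no-closed-curve hypothesis and offers no substitute for the Zig-zag Lemma step, so this case is unproved as written.
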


\begin{proof}

If $\beta$ is not in $T$ then the proposition follows from Proposition \ref{decomposition}. Indeed, following the notation used there, if $s$ is even then $S_{\overline{\gamma},\overline{T}}$ decomposes as $S_{\overline{\beta},\overline{T}}$ and $S_{\alpha,\overline{T}}$. Recall that any perfect matching $P$ with a left or centre cut at $e_s$ induces perfect/good matchings $P_1$ and $P_2$ of $S_{\overline{\beta},\overline{T}}$ and $S_{\alpha,\overline{T}}$, respectively. Consequently, for any such $P$, a diagonal of $S_{\overline{\gamma},\overline{T}}$ is $\mathbf{L_T}$-oriented with respect to $P$ \textit{if an only if} the corresponding diagonal in $S_{\overline{\beta},\overline{T}}$ or $S_{\alpha,\overline{T}}$ is $\mathbf{L_T}$-oriented with respect to $P_1$ or $P_2$, respectively. \newline \indent Similarly, if $s$ is odd then $S_{\overline{\gamma},\overline{T}}$ decomposes as $S_{\overline{\beta},\overline{T}}$ and $inv(S_{\alpha,\overline{T}})$. Moreover, each $P$ of $S_{\overline{\gamma},\overline{T}}$ with a left or centre cut at $e_s$ induces perfect/good matchings $P_1$ and $P_2$ of $S_{\overline{\beta},\overline{T}}$ and $inv(S_{\alpha,\overline{T}})$, respectively. Since $s$ is odd then the relative orientation of the tile $s+1$ in $S_{\overline{\gamma},\overline{T}}$ is $-1$. Therefore, a diagonal of $S_{\overline{\gamma},\overline{T}}$ is $\mathbf{L_T}$-oriented with respect to $P$ \textit{if an only if} the corresponding diagonal in $S_{\overline{\beta},\overline{T}}$ or $inv(S_{\alpha,\overline{T}})$ is $\mathbf{L_T}$-oriented with respect to $P_1$ or $P_2$, respectively. \newline \indent This verifies that $y_{\mathbf{L_T}}(P) = y_{\mathbf{L_T}}(P_1)y_{\mathbf{L_T}}(P_2)$ for all perfect matchings of $S_{\overline{\gamma},\overline{T}}$ with a left or centre cut at $e_s$. An analogous argument holds when we look at the case when $t$ is even or odd and we consider the other decomposition of $S_{\overline{\gamma},\overline{T}}$ promised by Proposition \ref{decomposition}. In particular, one then sees that $y_{\mathbf{L_T}}(P) = y_{\mathbf{L_T}}(P_1)y_{\mathbf{L_T}}(P_2)$ for all perfect matchings of $S_{\overline{\gamma},\overline{T}}$ with a right cut at $e_s$. \newline \indent 

If $\beta$ is an arc in $T$ then by Proposition \ref{decomposition} we know $S_{\alpha,\overline{T}}$ decomposes as $S_{\overline{\gamma},\overline{T}}$ and a single tile $T_{\overline{\beta}}$ corresponding to $\overline{\beta}$. For any perfect matching $P$ of $S_{\overline{\gamma},\overline{T}}$ we know $P$ is also a good matching of $S_{\alpha,\overline{T}}$. Immediately we see a diagonal of $S_{\overline{\gamma},\overline{T}}$ is $\mathbf{L_T}$-oriented with respect to $P$ \textit{if and only if} the corresponding diagonal in $S_{\alpha,\overline{T}}$ is $\mathbf{L_T}$-oriented with respect to $P$. To prove the proposition it remains to show the diagonal of the tile $T_{\beta}$ is never $\mathbf{L_T}$-oriented with respect to any $P$. \newline \indent By assumption there are no closed curves in the principal lamination $\mathbf{L_T}$, so $L_{\beta}$ is the lamination appearing in the right of Figure \ref{onesidedlamination}. Consequently, $b_{\overline{T}}(\overline{\beta},\overline{L_{\overline{\beta}}}) = -1$. By the Zig-zag Lemma \ref{zigzag}, the diagonal of the tile $T_{\overline{\beta}}$ is never $\mathbf{L_T}$-oriented with respect to any good matching $P$ of $S_{\alpha,\overline{T}}$. Since $y_{\mathbf{L_T}}(P_2) = 1$ (by construction of $\Phi$), we have verified $y_{\mathbf{L_T}}(P) = y_{\mathbf{L_T}}(P_1)y_{\mathbf{L_T}}(P_2)$ for any perfect matching $P$ of $S_{\overline{\gamma},\overline{T}}$. This completes the proof.

\end{proof}

\begin{thm}
\label{closedcurveexpansioncoefficients}
Let $T$ be an ideal triangulation of a bordered $(S,M)$ which contains no self-folded triangles and $\mathbf{L_T}$ a principal lamination containing no one-sided closed curves. Then for any one-sided closed curve $\alpha$ we have:

\begin{center}
 
 \large{$x_{\mathbf{L_T}}(\alpha) = \frac{1}{cross(\alpha, T)} {\displaystyle \sum_{P} x(P)y_{\mathbf{L_T}}(P)}$}

\end{center}

Where:

\begin{itemize}

\item The sum is over all good matchings of the band graph $S_{\alpha, T}$.

\item $x_{\mathbf{L_T}}(\alpha)$ is the cluster variable (corresponding to $\alpha$ in the cluster algebra, $\mathcal{A}_{\mathbf{L_T}}(S,M)$, associated to the principal lamination $\mathbf{L_T}$.
 
 \end{itemize}

\end{thm}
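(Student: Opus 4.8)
The plan is to mimic the proof of Theorem \ref{coefficientfreeexpansion}, replacing each coefficient-free ingredient with its $\mathbf{L_T}$-laminated analogue. Fix the one-sided closed curve $\alpha$, choose the arc $\gamma$ (and accompanying arc $\beta$ contained in the same M\"obius strip $M_1$) guaranteed by Lemma \ref{arcexistence}. Recall the geometric relation $\lambda(\gamma) = \lambda(\alpha)\lambda(\beta)$, and since $\mathbf{L_T}$ contains no one-sided closed curves the tropical lambda lengths behave multiplicatively in the same way, so dividing through by the appropriate $c_{\mathbf{L_T}}$ factors yields $x_{\mathbf{L_T}}(\gamma) = x_{\mathbf{L_T}}(\alpha)\, x_{\mathbf{L_T}}(\beta)$. (One should check this multiplicativity directly from the definition of $l_{L}$ and the geometry of $M_1$; this is the one genuinely new bookkeeping point, and it is where the hypothesis ``$\mathbf{L_T}$ contains no one-sided closed curves'' is used so that no negative-multiplicity term appears.) Both $\gamma$ and $\beta$ are plain arcs, so by Proposition \ref{arcexpansioncoefficientspuncture} we already have their expansion formulae in terms of perfect matchings of $S_{\overline{\gamma},\overline{T}}$ and $S_{\overline{\beta},\overline{T}}$ respectively.

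Next I would combine the bijection $\Phi : \mathcal{P}_{T,\gamma} \to \mathcal{P}_{T,\alpha}\times\mathcal{P}_{T,\beta}$ of Proposition \ref{welldefined} and Proposition \ref{splitproduct}, which already gives $x(P) = x(P_1)x(P_2)$, with the newly established Proposition \ref{splitproductcoeffcient}, which gives $y_{\mathbf{L_T}}(P) = y_{\mathbf{L_T}}(P_1)y_{\mathbf{L_T}}(P_2)$. Together these say that the weighted sum over $\mathcal{P}_{T,\gamma}$ factors as a product of the weighted sums over $\mathcal{P}_{T,\alpha}$ and $\mathcal{P}_{T,\beta}$:
$$\sum_{P \in \mathcal{P}_{T,\gamma}} x(P)y_{\mathbf{L_T}}(P) = \Big(\sum_{P \in \mathcal{P}_{T,\alpha}} x(P)y_{\mathbf{L_T}}(P)\Big)\Big(\sum_{P \in \mathcal{P}_{T,\beta}} x(P)y_{\mathbf{L_T}}(P)\Big).$$

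Finally I would assemble the pieces. Using Proposition \ref{arcexpansioncoefficientspuncture} for $\gamma$, the factorization above, Corollary \ref{crossings} ($cross(T,\gamma) = cross(T,\alpha)cross(T,\beta)$), and the relation $x_{\mathbf{L_T}}(\gamma) = x_{\mathbf{L_T}}(\alpha) x_{\mathbf{L_T}}(\beta)$ with Proposition \ref{arcexpansioncoefficientspuncture} applied to $\beta$, one divides out the common factor $\frac{1}{cross(T,\beta)}\sum_{P\in\mathcal{P}_{T,\beta}} x(P)y_{\mathbf{L_T}}(P) = x_{\mathbf{L_T}}(\beta)$ (which is nonzero, being a cluster variable) from both sides to isolate $x_{\mathbf{L_T}}(\alpha) = \frac{1}{cross(T,\alpha)}\sum_{P} x(P)y_{\mathbf{L_T}}(P)$. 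The only point needing care in this last step is that the good matchings of $S_{\alpha,T}$ indexing the right-hand side are exactly the $P_1$'s (when $s$ is even) or require passing through $inv(S_{\alpha,\overline T})$ (when $s$ is odd); but since $\mathbf{L_T}$-orientedness was already shown in Proposition \ref{splitproductcoeffcient} to be compatible with the $inv$ operation under the parity of $s$, the resulting $y$-monomial is the intended $y_{\mathbf{L_T}}(P)$ on $S_{\alpha,T}$.

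\textbf{Main obstacle.} The essential content has been front-loaded into Propositions \ref{splitproduct} and \ref{splitproductcoeffcient}, so the theorem itself is a short deduction; the one place that still demands attention is verifying that the multiplicativity $x_{\mathbf{L_T}}(\gamma) = x_{\mathbf{L_T}}(\alpha) x_{\mathbf{L_T}}(\beta)$ survives the passage to laminated lambda lengths, i.e.\ that $c_{\mathbf{L_T}}(\gamma) = c_{\mathbf{L_T}}(\alpha) c_{\mathbf{L_T}}(\beta)$, which amounts to checking $l_{L}(\gamma) = l_{L}(\alpha) + l_{L}(\beta)$ for every lamination $L$ in $\mathbf{L_T}$ — this is a purely local computation inside $M_1$ and its surrounding quadrilateral, and the no-one-sided-closed-curve hypothesis is precisely what guarantees every term is a genuine (non-negative) intersection count.
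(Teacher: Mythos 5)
Your proposal is correct and follows essentially the same route as the paper: the paper's proof simply recalls the relation $x_{\mathbf{L}}(\gamma)=x_{\mathbf{L}}(\alpha)x_{\mathbf{L}}(\beta)$ and then reruns the coefficient-free argument of Theorem \ref{coefficientfreeexpansion}, with Proposition \ref{arcexpansioncoefficientspuncture} and Proposition \ref{splitproductcoeffcient} replacing their coefficient-free counterparts, together with Proposition \ref{splitproduct} and Corollary \ref{crossings}. One small remark: the multiplicativity you flag as the ``main obstacle'' is taken by the paper to hold for arbitrary multi-laminations (it is reused verbatim in Theorem \ref{closedcurveexpansioncoefficients2}, where the negative convention for one-sided curves in laminations makes $l_L(\gamma)=l_L(\alpha)+l_L(\beta)$ persist), so the no-one-sided-closed-curve hypothesis really enters through Propositions \ref{arcexpansioncoefficientspuncture} and \ref{splitproductcoeffcient} (lifting to a principal lamination on the double cover and the $y$-monomial factorisation), exactly as your argument already uses them.
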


\begin{proof}

Recall that for any $\gamma$ enclosing $\alpha$ in $M_1$, with regards to the unique arc $\beta$ contained in $M_1$, we have the relation $$x_{\mathbf{L}}(\gamma) = x_{\mathbf{L}}(\alpha)x_{\mathbf{L}}(\beta).$$

Moreover, Proposition \ref{arcexpansioncoefficientspuncture} and Proposition \ref{splitproductcoeffcient} allow us to follow an analogous argument to the one given in the proof of Theorem \ref{coefficientfreeexpansion}.

\end{proof}

\subsubsection{Case 2: $\mathbf{L_T}$ is an arbitrary principal lamination}

When a principal lamination $\mathbf{L_T}$ of $(S,M)$ contains one-sided closed curves, obtaining expansion formulae becomes more complicated. Indeed, $\mathbf{L_T}$ will no longer lift to a principal lamination on the double cover. To overcome this difficulty we will replace each one-sided closed curve with a self-intersecting curve $L$. Even though $L$ is self-intersecting it has the nice property that it lifts to two non-self-intersecting curves in the double cover $\overline{(S,M)}$. Moreover, this new collection of curves will actually lift to a principal lamination on $\overline{(S,M)}$. Consequently, for each arc $\gamma$ on the double cover we may obtain expansion formulae using Proposition \ref{arcexpansioncoefficients} with respect to the associated new coefficient system. By specialising this expansion, and dividing out a common multiple, we obtain expansion formulae for all quasi-arcs (with respect to $\mathbf{L_T}$).

\begin{defn}

Let $L_{\beta}$ be a one-sided closed curve which is an elementary lamination of a tagged arc $\beta$. Let $\overline{\beta}$ and $\tilde{\overline{\beta}}$ denote the two lifts of $\beta$ in $\overline{(S,M)}$, and note that $L_{\beta}$ lifts to a single closed curve $\overline{L}_{\beta}$. With respect to any tagged triangulation $T$ containing $\overline{\beta}$ and $\tilde{\overline{\beta}}$ we see there exists $\epsilon \in \{+1,-1\}$ such that: $$b_{\overline{T}}(\overline{\beta}, \overline{L}_{\beta}) = \epsilon, \hspace{10mm} b_{\overline{T}}(\tilde{\overline{\beta}}, \overline{L}_{\beta}) = -\epsilon, \hspace{10mm} b_{\overline{T}}(\gamma, \overline{L}_{\beta}) = 0 \hspace{3mm} \forall \gamma \in \overline{T}\setminus \{\overline{\beta}, \tilde{\overline{\beta}}\}.$$

We define $L_{\beta}^*$ to be the unique curve on $(S,M)$ that lifts to two laminations $\overline{L}_{\beta}^*$ and $\tilde{\overline{L}}_{\beta}^*$ in $\overline{(S,M)}$ where: $$b_{\overline{T}}(\overline{\beta}, \overline{L}_{\beta}^*) = \epsilon, \hspace{10mm} b_{\overline{T}}(\gamma, \overline{L}_{\beta}^*) = 0 \hspace{3mm} \forall \gamma \in \overline{T}\setminus \{\overline{\beta}\}.$$ 

$$b_{\overline{T}}(\tilde{\overline{\beta}}, \tilde{\overline{L}}_{\beta}^*) = \epsilon, \hspace{10mm} b_{\overline{T}}(\gamma, \tilde{\overline{L}}_{\beta}^*) = 0 \hspace{3mm} \forall \gamma \in \overline{T}\setminus \{\tilde{\overline{\beta}}\}.$$ \newline \noindent We say $L_{\beta}^*$ is the \textbf{\textit{quasi-lamination}} associated to $L_{\beta}$. As indicated in Figure \ref{quasi-lamination}, $L_{\beta}^*$ is self-intersecting and therefore not a lamination. However, by construction, its two lifts $\overline{L}_{\beta}^*$ and $\tilde{\overline{L}}_{\beta}^*$ are laminations of the orientable double cover (they are elementary laminations of $\overline{\beta}$ and $\tilde{\overline{\beta}}$, respectively). 

\end{defn}

\begin{figure}[H]
\begin{center}
\includegraphics[width=12cm]{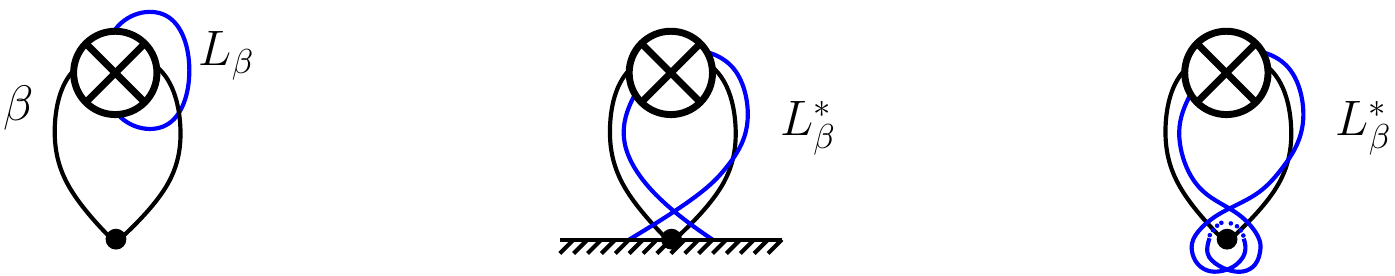}
\caption{On the left we show an elementary lamination of $\beta$ which is a one-sided closed curve. On the right we illustrate the associated quasi-lamination depending on whether the endpoint of $\beta$ is a marked point on the boundary or a puncture.}
\label{quasi-lamination}
\end{center}
\end{figure}

\begin{defn}
Let $\mathbf{L_T}$ be a principal lamination of $(S,M)$. We define $$ \mathbf{L_T^*} := \mathbf{L_T}\setminus \Big\{L_{\beta} | \substack{L_{\beta} \in \mathbf{L_T} \hspace{1mm} \text{is a one-sided } \\ \text{closed curve}}\Big\} \cup \Big\{L_{\beta}^* | \substack{L_{\beta} \in \mathbf{L_T} \hspace{1mm} \text{is a one-sided } \\ \text{closed curve}}\Big\}$$

\noindent and call $\mathbf{L_T^*}$ the \textit{\textbf{quasi-principal lamination}} associated to $\mathbf{L_T}$.

\end{defn}

\begin{defn}
\label{badencounterdefn}
Let $T$ be a triangulation and suppose $L_{\beta}^*$ is a quasi-lamination of an arc $\beta$ in $T$. In this case $\beta$ is a non-orientable arc with unique endpoint $m$. Let $B_m$ be a small neighbourhood of $m$ which is large enough so that no self-intersections of $L_{\beta}^*$ occur outside $B_m$. \newline \indent For any directed quasi-arc $\gamma$ of $(S,M)$ we say $\gamma$ has a \textit{\textbf{bad encounter}} with $L_{\beta}^*$ \textit{if and only if}, one of the following holds:

\begin{itemize}

\item $\gamma$ is a one-sided closed curve which is homotopic to $L_{\beta}$, or

\item within a small neighbourhood of $B_m$, $\gamma$ intersects $\beta, L_{\beta}^*, L_{\beta}^*, \beta$ (in that order).

\end{itemize}

\end{defn}

\begin{figure}[H]
\begin{center}
\includegraphics[width=12cm]{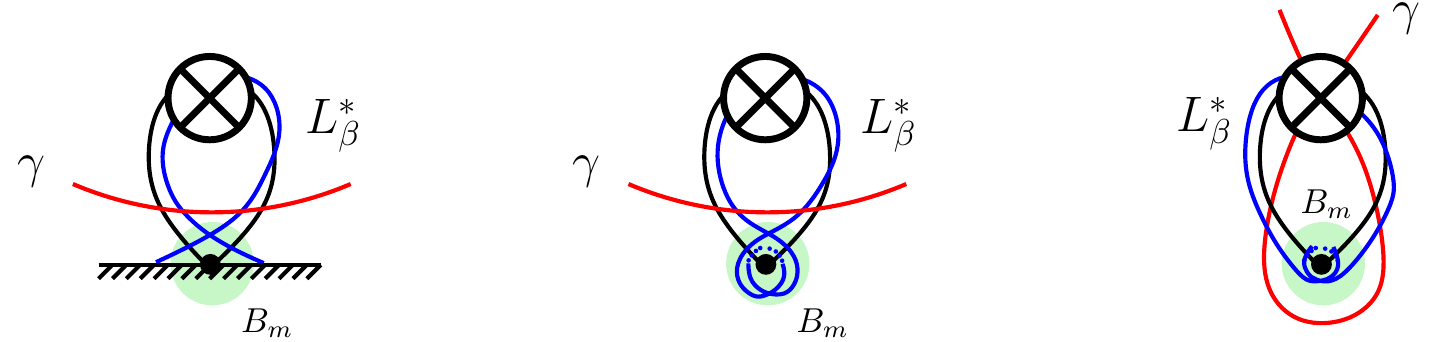}
\caption{Examples of bad encounters of a quasi-arc $\gamma$ with various quasi-laminations $L_{\beta}^*$. Here the green shaded area denotes the neighbourhood $B_m$ described in Definition \ref{badencounterdefn}.}
\label{badencounter}
\end{center}
\end{figure}

\begin{defn}

Let $T$ be a triangulation and $\mathbf{L_T^*}$ be a quasi-principal lamination. For any quasi-arc $\gamma$ of $(S,M)$ we define $$ bad( \mathbf{L_T^*},\gamma) := \displaystyle \prod_{\beta \in T} y_{\beta}^{a_{\beta}}$$ where $a_{\beta}$ is the number of bad encounters of $\gamma$ with $L_{\beta}^* \in  \mathbf{L_T^*}$.

\end{defn}

\begin{thm}
\label{badarcexpansioncoefficients}
Let $T$ be a triangulation and $\mathbf{L_T}$ a principal lamination. Then for any arc $\gamma$ of $(S,M)$ we have:

\begin{equation}
\label{factorproperty}
x_{\mathbf{L_T}}(\gamma) = \frac{x_{\mathbf{L_T^*}}(\gamma)}{bad( \mathbf{L_T^*},\gamma)}
\end{equation}

\end{thm}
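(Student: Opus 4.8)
The plan is to reduce everything to the orientable double cover, where the quasi-lamination $\mathbf{L_T}$ becomes an honest principal lamination and Theorem~\ref{orientablesep} (the separation-of-additions formula) applies. First I would recall that for the quasi-principal lamination $\mathbf{L_T^*}$, each one-sided closed curve $L_\beta\in\mathbf{L_T}$ has been replaced by the self-intersecting curve $L_\beta^*$, whose two lifts $\overline{L}_\beta^*$ and $\tilde{\overline{L}}_\beta^*$ are genuine laminations of $\overline{(S,M)}$ that are elementary laminations of $\overline{\beta}$ and $\tilde{\overline{\beta}}$. Thus the lift $\overline{\mathbf{L_T^*}}$ is a principal lamination of $\overline{(S,M)}$ with shear coordinates $b_{\overline{T}}(\overline{\gamma},\cdot)=\pm1$ along the diagonal. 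This means $\mathbf{L_T}$ and $\mathbf{L_T^*}$ have \emph{the same} shear coordinates at every arc of $T$ except along $\beta$ (and its lifts), where passing from $\overline{L}_\beta$ (which contributes $\epsilon$ to $\overline{\beta}$ and $-\epsilon$ to $\tilde{\overline{\beta}}$) to $\overline{L}_\beta^*\cup\tilde{\overline{L}}_\beta^*$ (contributing $\epsilon$ to $\overline{\beta}$ alone) and $\tilde{\overline{L}}_\beta^*$ (contributing $\epsilon$ to $\tilde{\overline{\beta}}$ alone). Concretely, the $y$-variable attached to $\beta$ differs between the two coefficient systems by a prescribed monomial.

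Next I would apply Theorem~\ref{orientablesep} to both coefficient systems simultaneously: both $x_{\mathbf{L_T}}(\gamma)$ and $x_{\mathbf{L_T^*}}(\gamma)$ are obtained from the \emph{same} principal-coefficient cluster variable $X_\alpha^T$ by substituting the respective $y_j$ and dividing by the respective tropical evaluation. Since the two $y$-tuples agree in all coordinates except the one(s) indexed by arcs $\beta$ with $L_\beta$ one-sided closed, the quotient $x_{\mathbf{L_T}}(\gamma)/x_{\mathbf{L_T^*}}(\gamma)$ is controlled entirely by the discrepancy in those coordinates and by the corresponding ratio of tropical normalizations. The key combinatorial step is to identify this discrepancy, term by term in the snake-graph expansion of $X_\alpha^T$, with the monomial $bad(\mathbf{L_T^*},\gamma)^{-1}$. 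Here I would use the geometric description of a bad encounter in Definition~\ref{badencounterdefn}: a bad encounter of $\gamma$ with $L_\beta^*$ records exactly the configuration in which $\gamma$ crosses $\beta,L_\beta^*,L_\beta^*,\beta$ in order near the endpoint $m$ — which is precisely the local picture that, on the double cover, distinguishes how the lift of $\gamma$ wraps around the cylinder replacing the cross-cap, and hence determines whether the extra $S$- versus $Z$-shape intersections of $\overline{L}_\beta$ with $\overline{\gamma}$ cancel or accumulate. Tracking these intersections through the shear-coordinate definition (Definition~\ref{shear}) yields exactly the exponent $a_\beta$ in $bad(\mathbf{L_T^*},\gamma)=\prod_\beta y_\beta^{a_\beta}$.

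I expect the main obstacle to be the last point: making the identification of the $y$-monomial discrepancy with $bad(\mathbf{L_T^*},\gamma)$ rigorous and uniform. The subtlety is that $L_\beta$ is self-intersecting downstairs, so its "shear coordinate" on $(S,M)$ is not directly defined; one must genuinely work with the two lifts on $\overline{(S,M)}$ and carefully compare $b_{\overline{T}}(\overline{\gamma}, \overline{L}_\beta)$ (and its tilde-counterpart) with $b_{\overline{T}}(\overline{\gamma}, \overline{L}_\beta^*) + b_{\overline{T}}(\overline{\gamma}, \tilde{\overline{L}}_\beta^*)$ — these differ precisely because $L_\beta$, being closed, can spiral past the cross-cap multiple times while $L_\beta^*$ is truncated. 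A clean way to organize this is to note that $\overline{L}_\beta$ is homotopic to the concatenation of $\overline{L}_\beta^*$ and $\tilde{\overline{L}}_\beta^*$ up to the half-twist identification, and then invoke additivity/sign-counting of intersection numbers; each place where $\gamma$ "doubles back" through the cross-cap (a bad encounter) contributes a net extra $+1$ to the relevant shear count, and hence one extra factor of $y_\beta$ in the numerator of the Fomin--Zelevinsky formula, which after the tropical normalization becomes the claimed denominator. Once this local bookkeeping is settled, equation~(\ref{factorproperty}) follows by dividing the two instances of Theorem~\ref{orientablesep} and collecting the surviving monomial factor.
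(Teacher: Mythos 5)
Your route is genuinely different from the paper's, and it has a real gap at its centre. The paper never passes to the double cover for this statement: it proves equation~(\ref{factorproperty}) by induction on flips inside $(S,M)$, first classifying (Figure~\ref{differentexchangerelations}) the quadrilaterals in which the exchange relations of $\mathbf{L_T}$ and $\mathbf{L_T^*}$ actually differ -- namely those containing a self-crossing of some $L_{\beta}^*$ -- and then checking, configuration by configuration and keeping track of bad encounters at the corners of the quadrilateral, that if the sides and one diagonal satisfy (\ref{factorproperty}) then so does the flipped diagonal. Your plan instead is to apply Theorem~\ref{orientablesep} upstairs to the two lifted coefficient systems and divide. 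Two things block this. First, to invoke Theorem~\ref{orientablesep} for $x_{\mathbf{L_T}}(\gamma)$ you need to know that this quasi-cluster variable, whose coefficients come from a lamination containing one-sided closed curves on the non-orientable surface, is a specialisation of a cluster variable on $\overline{(S,M)}$ with coefficients given by the lift $\overline{L}_{\beta}$; that bridge is not established anywhere before this theorem, and in fact the whole point of introducing $\mathbf{L_T^*}$ and $bad(\mathbf{L_T^*},\gamma)$ is that the naive lift of $\mathbf{L_T}$ does not interact well with the machinery -- the later expansion formulae for $\mathbf{L_T}$ (Theorems~\ref{closedcurveexpansioncoefficients2}, \ref{singlynotchedarc}, \ref{doublynotchedarc}) are all derived \emph{through} the present theorem, so assuming such a comparison here is close to circular.

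Second, even granting that comparison, "dividing the two instances of Theorem~\ref{orientablesep}" does not by itself produce a monomial. The two substituted coefficient tuples agree except in the coordinate indexed by $\tilde{\overline{\beta}}$, where one is $y_{\beta}^{-\epsilon}$ and the other $y_{\beta}^{\epsilon}$; but in the snake-graph expansion of $X_{\overline{\gamma}}^{\overline{T}}$ the exponent of that coordinate varies with the perfect matching, so the two evaluations differ term by term by matching-dependent powers of $y_{\beta}$, not by an overall factor. The assertion that, after specialising to $(S,M)$, this discrepancy collapses to the single monomial $bad(\mathbf{L_T^*},\gamma)$ is precisely the nontrivial content of the theorem, and it cannot be read off from the shear coordinates of the laminations along $T$ alone: $bad(\mathbf{L_T^*},\gamma)$ is defined by how $\gamma$ itself crosses $\beta$ and $L_{\beta}^*$ near the endpoint $m$, data that never enters the $y_j$-substitution of the separation formula. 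Your own sketch flags this as the "main obstacle" and offers only the heuristic that each bad encounter contributes a net $+1$ to a shear count; making that rigorous would require exactly the kind of local analysis along a flip sequence (or a term-by-term cancellation argument in the specialised expansion) that the paper's inductive proof supplies. As written, the proposal identifies the right objects to compare but does not prove the identity.
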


\begin{proof}

Let $Q$ be a quadrilateral in $(S,M)$. First we wish to determine when the exchange relations of the diagonals of $Q$ are different with respect to the two coefficient systems arising from $\mathbf{L_T}$ and $\mathbf{L_T^*}$. We shall then prove the proposition by induction on the flipping of arcs; showing the flipped arc also satisfies equation (\ref{factorproperty}). \newline \indent Without loss of generality, for an non-orientable arc $\beta$ in $T$, we may restrict our attention to a single one-sided lamination $L_{\beta}$ of $\beta$ and the associated quasi-lamination $L_{\beta}^*$. There is a difference in the exchange relations of $Q$ only if a self-crossing of $L_{\beta}$ lies inside $Q$ (up to isotopy). In Figure \ref{differentexchangerelations}, up to reflectional symmetry, we list all configurations for which the exchange relations differ. One can verify this list is complete by running through all possible combinations of how the four endpoints of the crossing can exit the quadrilateral.

To prove the proposition we will show that if the sides $a,b,c,d$ and a diagonal $\gamma$ of a quadrilateral $Q$ satisfy equation equation (\ref{factorproperty}), then so does the flipped diagonal $\gamma'$. \newline \indent When the two exchange relations of $Q$ (associated to $L_{\beta}$ and $L_{\beta}^*$) coincide then the only bad encounters of $a,b,c,d, \gamma, \gamma'$ with $L_{\beta}^*$ can occur at the endpoints of $Q$. Specifically, for some $n_1, n_2, n_3, n_4 \geq 0$ we see $a,b,c,d, \gamma, \gamma'$ have precisely $n_1+n_2, n_2 + n_3, n_3+ n_4, n_1+n_3, n_2 + n_4$ bad encounters with $L_{\beta}^*$, respectively. Therefore, if $a,b,c,d, \gamma$ satisfy equation (\ref{factorproperty}) we see 
\begin{align*}
x_{\mathbf{L_T^*}}(\gamma') &= \frac{x_{\mathbf{L_T^*}}(a)x_{\mathbf{L_T^*}}(c) + x_{\mathbf{L_T^*}}(b)x_{\mathbf{L_T^*}}(d)}{x_{\mathbf{L_T^*}}(\gamma)} = \\ &= \frac{y_{\beta}^{n_1+n_2+n_3+n_4}\big(x_{\mathbf{L_T}}(a)x_{\mathbf{L_T}}(c) + x_{\mathbf{L_T}}(b)x_{\mathbf{L_T}}(d)\big)}{y_{\beta}^{n_1+n_3}x_{\mathbf{L_T}}(\gamma)} = \\ &= y_{\beta}^{n_2+n_4}x_{\mathbf{L_T}}(\gamma').
\end{align*}

\noindent Hence $\gamma'$ also satisfies equation (\ref{factorproperty}). It remains to verify the case when the exchange relations of $Q$ are different. As mentioned already, a complete list of such instances are provided in Figure \ref{differentexchangerelations}. Analogous to the case considered above, the endpoints of the quadrilateral $Q$ may also have bad encounters with $L_{\beta}^*$. Specifically, in addition to the bad encounters illustrated in Figure \ref{differentexchangerelations}, for some $n_1, n_2, n_3, n_4 \geq 0$ we see $a,b,c,d, \gamma, \gamma'$ have precisely $n_1+n_2, n_2 + n_3, n_3+ n_4, n_1+n_3, n_2 + n_4$ additional bad encounters with $L_{\beta}^*$, respectively. Without loss of generality, we shall assume $n_1, n_2, n_3, n_4 = 0$ as otherwise the calculation essentially reduces to the one presented above. We will show that for configurations (a) and (b) in Figure \ref{differentexchangerelations} that if $a,b,c,d, \gamma$ satisfy equation (\ref{factorproperty}) then so does $\gamma'$. For configuration (a) we obtain:

\begin{align*}
x_{\mathbf{L_T^*}}(\gamma') &= \frac{y_{\beta}x_{\mathbf{L_T^*}}(a)x_{\mathbf{L_T^*}}(c) + y_{\beta}x_{\mathbf{L_T^*}}(b)x_{\mathbf{L_T^*}}(d)}{x_{\mathbf{L_T^*}}(\gamma)} = \\ &= \frac{y_{\beta}\big(x_{\mathbf{L_T}}(a)x_{\mathbf{L_T}}(c) + x_{\mathbf{L_T}}(b)x_{\mathbf{L_T}}(d)\big)}{x_{\mathbf{L_T}}(\gamma)} = \\ &= y_{\beta}x_{\mathbf{L_T}}(\gamma').
\end{align*}

For configuration (b) we see:

\begin{align*}
x_{\mathbf{L_T^*}}(\gamma') &= \frac{y_{\beta}x_{\mathbf{L_T^*}}(a)x_{\mathbf{L_T^*}}(c) + x_{\mathbf{L_T^*}}(b)x_{\mathbf{L_T^*}}(d)}{x_{\mathbf{L_T^*}}(\gamma)} = \\ &= \frac{y_{\beta}\big(x_{\mathbf{L_T}}(a)x_{\mathbf{L_T}}(c) + x_{\mathbf{L_T}}(b)x_{\mathbf{L_T}}(d)\big)}{x_{\mathbf{L_T}}(\gamma)} = \\ &= y_{\beta}x_{\mathbf{L_T}}(\gamma').
\end{align*}

The remaining configurations (c),(d) and (e) follow in a similar way. This concludes the proof since any two arcs of (S,M) are related by a sequence of flips.

\end{proof}

\begin{figure}[H]
\begin{center}
\includegraphics[width=14cm]{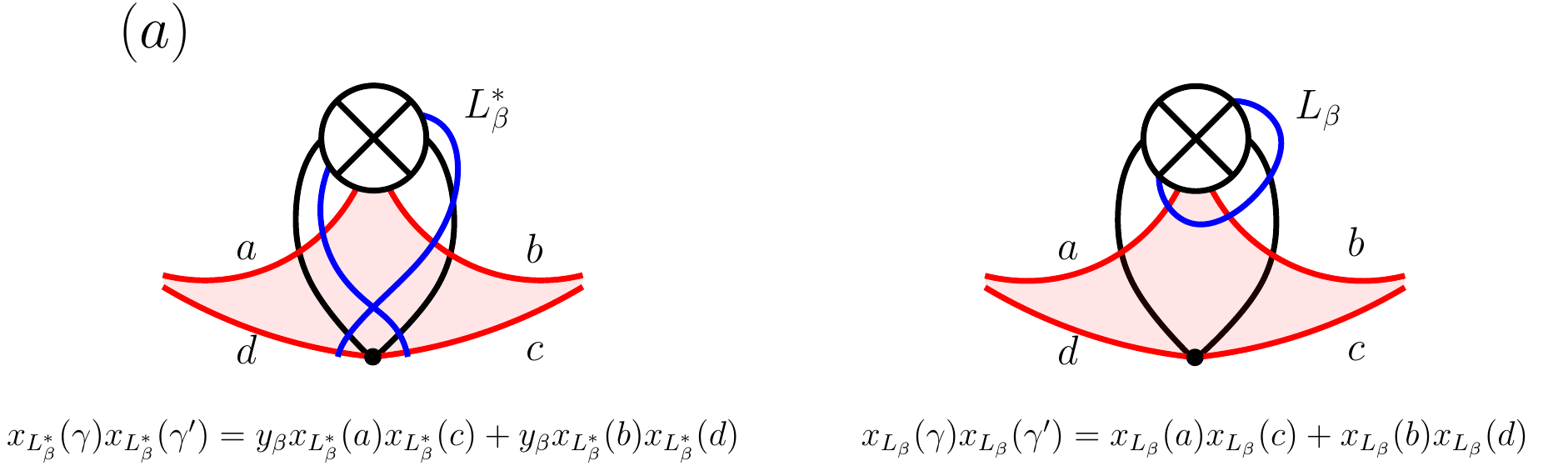}
\end{center}
\end{figure}

\begin{figure}[H]
\begin{center}
\includegraphics[width=14cm]{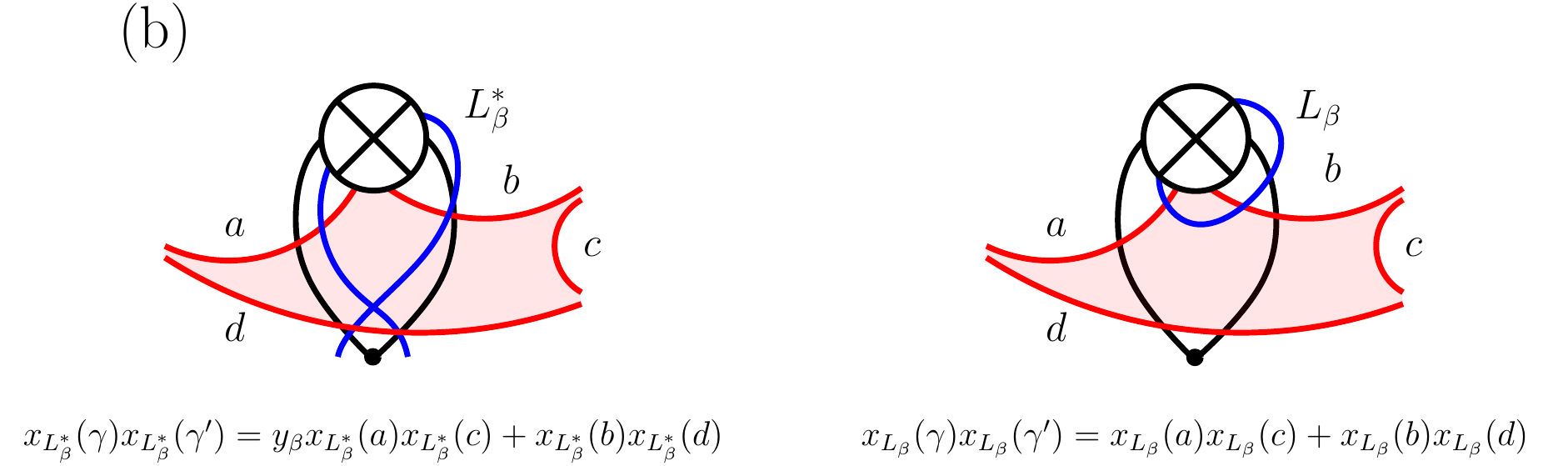}
\end{center}
\end{figure}

\begin{figure}[H]
\begin{center}
\includegraphics[width=14cm]{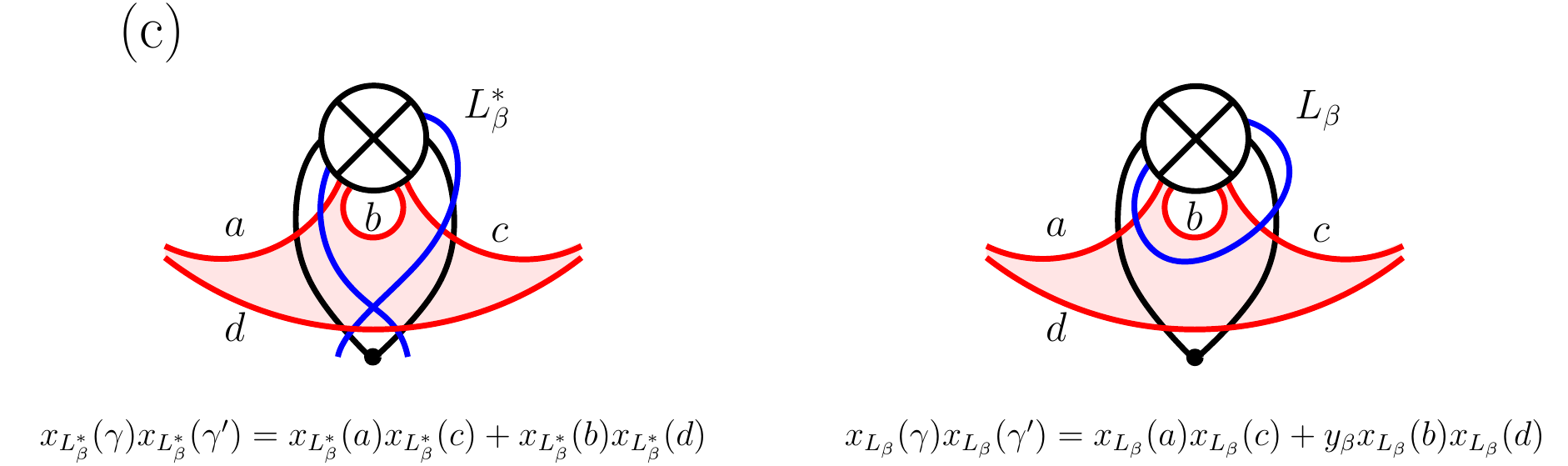}
\end{center}
\end{figure}

\begin{figure}[H]
\begin{center}
\includegraphics[width=14cm]{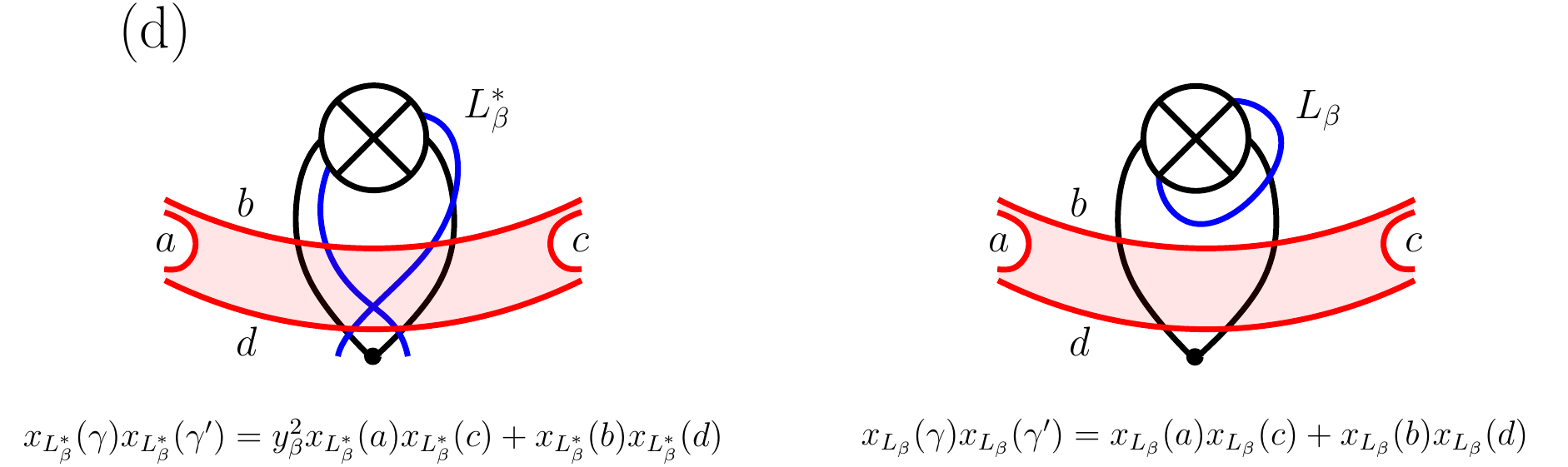}
\end{center}
\end{figure}

\begin{figure}[H]
\begin{center}
\includegraphics[width=14cm]{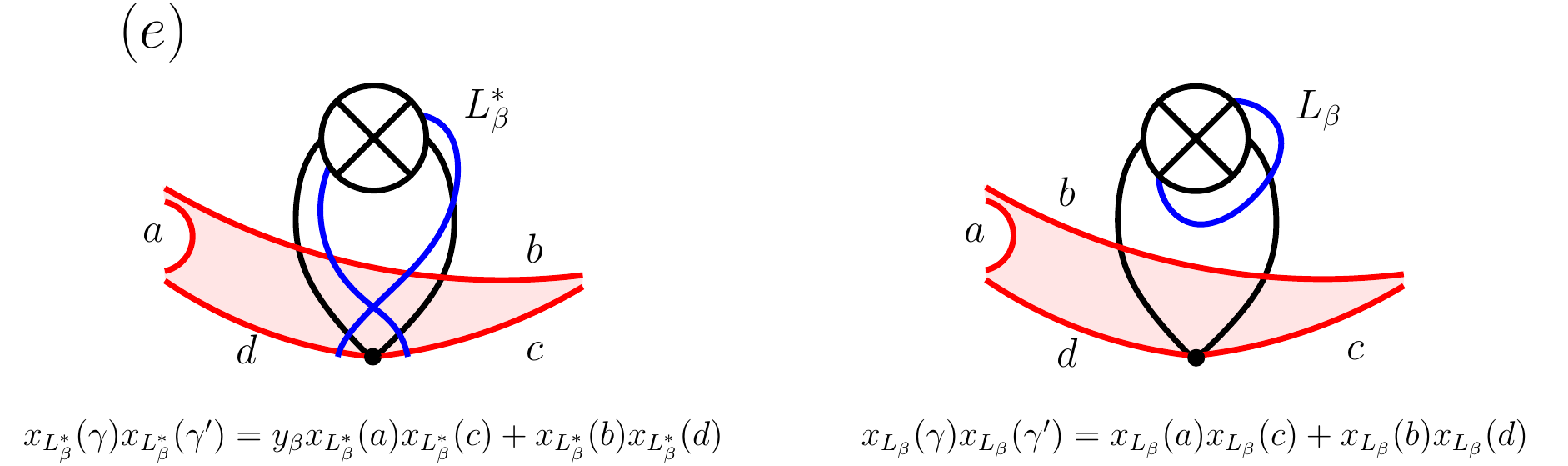}
\caption{Examples of bad encounters of a quasi-arc $\gamma$ with various quasi-laminations $L_{\beta}^*$. Here the green shaded area denotes the neighbourhood $B_m$ described in Definition \ref{badencounterdefn}.}
\label{differentexchangerelations}
\end{center}
\end{figure}

With respect to an initial triangulation $T$ and a principal lamination $\mathbf{L_T}$, we now wish to obtain expansion formulae for each one-sided closed curve $\alpha$ of $(S,M)$.

\begin{lem}
\label{badencountermobius}
Let $\alpha$ be a one-sided closed curves and consider an arc $\gamma$ enclosing $\alpha$ in a M\"{o}bius strip with one marked point on the boundary, $M_1$. Furthermore, let $\beta$ be the arc in this $M_1$. Then for any triangulation $T$ and principal lamination $\mathbf{L_T}$ we have:

\begin{equation}
bad(\gamma, \mathbf{L_T^*}) := \left\{
\begin{array}{ll}
      \frac{bad(\alpha, \mathbf{L_T^*})bad(\beta, \mathbf{L_T^*})}{y_{\beta}} , \hspace{2mm} & \substack{\text{$\beta$ is in $T$, and $L_{\beta}$} \\ \text{ is homotopic to $\alpha$.}} \\ 
      bad(\alpha, \mathbf{L_T^*})bad(\beta, \mathbf{L_T^*}) , \hspace{2mm} & \text{otherwise}. \\
\end{array} 
\right.
\end{equation}

\end{lem}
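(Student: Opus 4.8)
The plan is to reduce the computation of $bad(\gamma, \mathbf{L_T^*})$ to a tally of bad encounters occurring near the unique marked point $m_\gamma$ that $\gamma$ wraps around in $M_1$, and to compare this tally with the bad encounters of $\alpha$ and of $\beta$. The key structural fact to exploit is that $\gamma$ is (isotopic to) the arc enclosing $\alpha$ and $\beta$ in $M_1$, so any curve $L_{\delta}^*$ of the quasi-principal lamination $\mathbf{L_T^*}$ either lives entirely outside $M_1$ — in which case its self-intersections cannot be involved and its contribution to $bad$ is governed purely by how $\gamma$, $\alpha$, $\beta$ enter and leave $M_1$ near their shared endpoint — or else $L_\delta = L_\beta$ is homotopic to $\alpha$ and sits inside $M_1$, which is the genuinely special case. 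First I would fix notation: write $\mathbf{L_T^*} = \{L_\delta^* : \delta \in T\}$, and for each $\delta \in T$ let $a_\delta(\eta)$ denote the number of bad encounters of a quasi-arc $\eta$ with $L_\delta^*$, so that $bad(\eta,\mathbf{L_T^*}) = \prod_{\delta} y_\delta^{a_\delta(\eta)}$; it then suffices to prove, for each $\delta$, the additive identity $a_\delta(\gamma) = a_\delta(\alpha) + a_\delta(\beta)$ (or, in the exceptional case, $a_\beta(\gamma) = a_\beta(\alpha) + a_\beta(\beta) - 1$, with additivity for all other $\delta$).

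The main case is when $L_\delta^*$ is a quasi-lamination whose associated arc $\delta$ is \emph{not} the arc $\beta$ of $M_1$, or when $\delta = \beta$ but $L_\beta$ is not homotopic to $\alpha$. Here $\delta$ cannot be an arc lying inside $M_1$, so $B_{m_\delta}$ is disjoint from the crosscap region that distinguishes $\alpha$ from $\beta$; the only way $\gamma$, $\alpha$, or $\beta$ can have a bad encounter with $L_\delta^*$ is for the curve to pass through $B_{m_\delta}$ and execute the pattern $\delta, L_\delta^*, L_\delta^*, \delta$ there (the first clause of Definition \ref{badencounterdefn} is vacuous since $\gamma,\alpha,\beta$ are not homotopic to $L_\beta$ unless $\delta=\beta$ and $L_\beta \simeq \alpha$). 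Since $\gamma$ encloses $\alpha$ and $\beta$, a neighbourhood of $\gamma$ near its endpoint decomposes as the concatenation of a neighbourhood of (a representative of) $\alpha$ near $m_\gamma$ and a neighbourhood of $\beta$ near $m_\gamma$ — this is exactly the geometry underlying the relation $x_\gamma = x_\alpha x_\beta$ and the crossing identity $cross(T,\gamma) = cross(T,\alpha) cross(T,\beta)$ of Corollary \ref{crossings}. Tracing $\gamma$ through $B_{m_\delta}$, every bad encounter of $\gamma$ with $L_\delta^*$ is inherited from a corresponding bad encounter of either the $\alpha$-portion or the $\beta$-portion, and conversely; so $a_\delta(\gamma) = a_\delta(\alpha) + a_\delta(\beta)$. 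This is the bulk of the argument but it is essentially bookkeeping along the curve.

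The remaining, and most delicate, case is $\delta = \beta$ with $\beta \in T$ and $L_\beta$ homotopic to $\alpha$. Here the first clause of Definition \ref{badencounterdefn} activates: $\alpha$ itself is homotopic to $L_\beta$, contributing one bad encounter that has no counterpart for $\gamma$ or $\beta$. Concretely I would argue: since $\beta \in T$, $\beta$ has no bad encounter of the crossing type with $L_\beta^*$ at $m_\beta$ — one checks from Figure \ref{quasi-lamination} that $L_\beta^*$ runs alongside $\beta$ without producing the $\beta, L_\beta^*, L_\beta^*, \beta$ pattern — but $\beta$ \emph{is} compatible-but-crossing with $L_\beta$ in the sense relevant here, so a short local analysis gives $a_\beta(\beta)$ exactly; similarly $a_\beta(\alpha) = 1$ plus possibly crossing contributions, and $a_\beta(\gamma)$ counts only crossing contributions since $\gamma$ is an arc (not homotopic to $L_\beta$) bounding $M_1$. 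Matching the crossing contributions additively as in the generic case and accounting for the single extra unit from the homotopy clause yields $a_\beta(\gamma) = a_\beta(\alpha) + a_\beta(\beta) - 1$, which is precisely the $y_\beta^{-1}$ correction in the first branch of the displayed formula. I expect this exceptional local count — disentangling which of $\alpha$, $\beta$, $\gamma$ picks up the homotopy term versus the crossing terms when $L_\beta^*$ sits inside $M_1$ and self-intersects there — to be the main obstacle; the cleanest route is to draw the three relevant pictures (as in Figures \ref{intersectioncompatible} and \ref{quasi-lamination}) inside $M_1$ and read off the encounters directly, then assemble the product over all $\delta \in T$.
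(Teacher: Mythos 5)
Your proposal is correct and follows essentially the same route as the paper: the paper also reduces to a single quasi-lamination $L_{\delta}^*$ and verifies the multiplicative identity by a local analysis of the possible configurations of $\alpha$, $\beta$, $\gamma$ near the marked point — its explicit list of elementary configurations in Figure \ref{badencounterclassification} playing the role of your tracing/bookkeeping argument. Your exceptional case is exactly the paper's case (d), $\delta=\beta\in T$ with $L_{\beta}$ homotopic to $\alpha$, where the homotopy clause gives $bad(\alpha)=y_{\beta}$ while $bad(\beta)=bad(\gamma)=1$, yielding the $y_{\beta}^{-1}$ correction.
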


\begin{proof}

To prove this lemma it suffices to consider verify the identities for a single quasi-lamination $L_{\delta}^*$ (associated to a non-orientable arc $\delta$). In Figure \ref{badencounterclassification} we list all \textit{elementary} configurations (up to reflectional symmetry) which involve at least one of $\alpha$, $\beta$ or $\gamma$ having a bad encounter with $L_{\delta}^*$. By `elementary' we mean that any configuration (in which $\alpha$, $\beta$ or $\gamma$ having a bad encounter with $L_{\delta}^*$) can be written as a union of these configurations. One can verify the list in Figure \ref{badencounterclassification} is complete by supposing there is a bad encounter between $L_{\delta}^*$ and one of the quasi-arcs $\alpha$, $\beta$ or $\gamma$. Then, using the fact $\gamma$ encloses $\alpha$ and $\beta$ in a M\"{o}bius strip with one marked point on the boundary, it is easy to run through the possible configurations involving the remaining two quasi-arcs. Note that in case (d) we have that $\delta = \beta$ and $L_{\beta}$ is homotopic to $\alpha$. Moreover, $bad(\alpha) = y_{\beta}$ and $bad(\beta) = bad(\gamma) = 1$, so $bad(\gamma) = \frac{bad(\alpha)bad(\beta)}{y_{\beta}}$. For all other cases one sees that $bad(\gamma) = bad(\alpha)bad(\beta)$ which completes the proof of the lemma.

\end{proof}

\begin{figure}[H]
\begin{center}
\includegraphics[width=14cm]{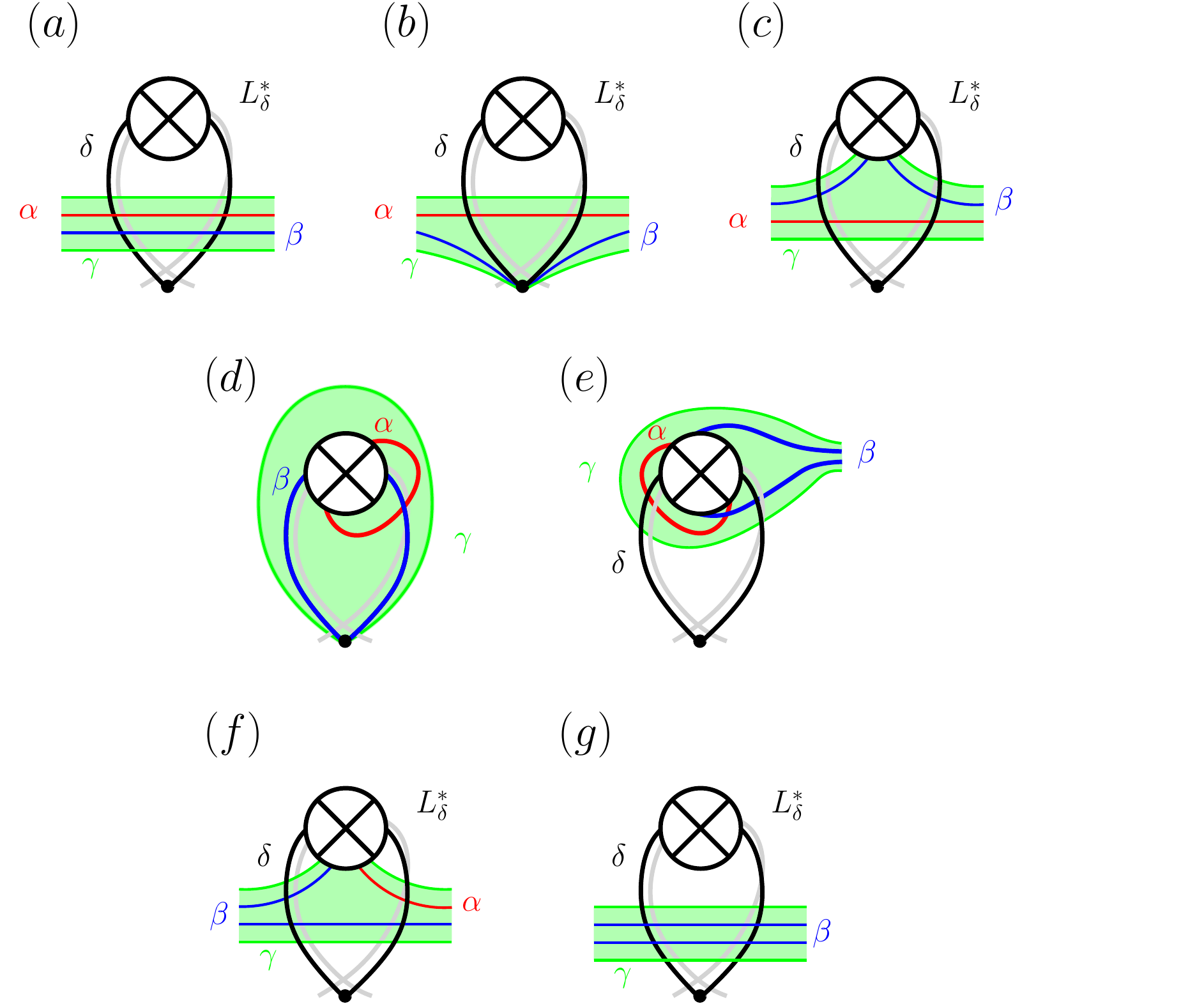}
\caption{With respect to the scenario discussed in Lemma \ref{badencountermobius}, we list the various configurations (up to reflectional symmetry) in which at least one of $\alpha$, $\beta$ or $\gamma$ have a bad encounter with $L_{\delta}^*$ -- the quasi-lamination of a lamination $L_{\delta}$ in $\mathbf{L_{T}}$.}
\label{badencounterclassification}
\end{center}
\end{figure}

\begin{prop}
\label{badsplitproductcoeffcient}

Let $T$ be a triangulation of $(S,M)$ and $\mathbf{L_{T}}$ be a principal lamination of $T$. With respect to the coefficient system of $\mathbf{L_{T}^*}$ and the map $\Phi$ defined in Proposition \ref{welldefined}, we have:

\begin{equation}
y_{\mathbf{L_T^*}}(P) = \left\{
\begin{array}{ll}
      \hspace{5mm} \frac{y_{\mathbf{L_T^*}}(P_1)}{y_{\beta}} , \hspace{2mm} & \substack{\text{some $L_{\beta} \in \mathbf{L_{T}}$ is} \\ \text{homotopic to $\alpha$ (as curves).}} \\ 
      y_{\mathbf{L_T^*}}(P_1)y_{\mathbf{L_T^*}}(P_2) , \hspace{2mm} & \text{otherwise}. \\
\end{array} 
\right.
\end{equation}

\end{prop}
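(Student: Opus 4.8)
The plan is to mirror the proof of Proposition \ref{splitproductcoeffcient} but now with respect to the quasi-principal lamination $\mathbf{L_T^*}$, and keep careful track of one extra source of asymmetry: unlike a genuine principal lamination, a quasi-lamination $L_\beta^*$ is self-intersecting, and its contribution to the $y$-monomial counts \emph{bad encounters} rather than $\mathbf{L_T^*}$-oriented diagonals. So the first step is to recall, via Proposition \ref{decomposition} and Corollary \ref{3tiles}, how $S_{\overline\gamma,\overline T}$ decomposes into $S_{\overline\beta,\overline T}$ (or a single tile $T_{\overline\beta}$ when $\beta\in T$) and $S_\alpha{}_{,\overline T}$ (or $inv(S_\alpha{}_{,\overline T})$), and recall how the map $\Phi$ of Proposition \ref{welldefined} splits a matching $P$ into $(P_1,P_2)$ according to the type of cut at $e_s$.

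Next, for the quasi-laminations $L_\delta^*$ with $\delta\ne\beta$ (and, when $\beta\notin T$, also $\delta=\beta$), the argument is exactly as in Proposition \ref{splitproductcoeffcient}: each lift $\overline{L}_\delta^*$, $\tilde{\overline L}_\delta^*$ is an elementary lamination of $\overline\delta$, respectively $\tilde{\overline\delta}$, on the double cover, so the notion of $\mathbf{L_T^*}$-oriented diagonal is a purely local condition on each tile, and a diagonal of $S_{\overline\gamma,\overline T}$ is $\mathbf{L_T^*}$-oriented with respect to $P$ if and only if the corresponding diagonal of the relevant piece ($S_{\overline\beta,\overline T}$ or $S_\alpha{}_{,\overline T}$, possibly inverted) is $\mathbf{L_T^*}$-oriented with respect to $P_1$ or $P_2$; the parity of $s$ (or $t$) flips the relative orientation of tile $s+1$ (or $t+1$) in exactly the way needed for this to go through on both the left/centre-cut and right-cut families. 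Hence the factors $y_\delta$ for $\delta\ne\beta$ already satisfy $y_\delta\text{-exponent}(P)=y_\delta\text{-exponent}(P_1)+y_\delta\text{-exponent}(P_2)$.

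The heart of the matter is the single variable $y_\beta$ in the case $\beta\in T$, i.e. when $S_\alpha{}_{,\overline T}$ is obtained from $S_{\overline\gamma,\overline T}$ by adjoining one extra tile $T_{\overline\beta}$ and gluing. If $L_\beta\in\mathbf{L_T}$ is \emph{not} homotopic to $\alpha$, then $L_\beta^*$ is the self-intersecting replacement whose relevant lift has shear coordinate $\epsilon$ on $\overline\beta$ and $0$ elsewhere; combining this with the Zig-zag Lemma \ref{zigzag} exactly as in Proposition \ref{splitproductcoeffcient} shows the diagonal of $T_{\overline\beta}$ is never $\mathbf{L_T^*}$-oriented for a good matching of $S_\alpha{}_{,\overline T}$, so its $y_\beta$-contribution is $0$ and $y_{\mathbf{L_T^*}}(P)=y_{\mathbf{L_T^*}}(P_1)y_{\mathbf{L_T^*}}(P_2)$ with $y_{\mathbf{L_T^*}}(P_2)=1$. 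The subtle case is when some $L_\beta\in\mathbf{L_T}$ \emph{is} homotopic to $\alpha$: here $\alpha$ is a one-sided closed curve homotopic to $L_\beta$, so by the second clause of Definition \ref{badencounterdefn} the curve $\alpha$ has a bad encounter with $L_\beta^*$, and I expect the bookkeeping to show that the extra tile $T_{\overline\beta}$ in $S_\alpha{}_{,\overline T}$ \emph{does} contribute a single factor of $y_\beta$ to $y_{\mathbf{L_T^*}}(P)$ that is absent from $y_{\mathbf{L_T^*}}(P_1)$ — equivalently, that $y_{\mathbf{L_T^*}}(P)\cdot y_\beta = y_{\mathbf{L_T^*}}(P_1)$, which is the claimed identity. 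I would verify this by unwinding what ``$L_\beta$ homotopic to $\alpha$'' forces: the lift $\overline{L}_\beta$ is then (up to isotopy) homotopic to $\overline\alpha$, the shear coordinate computation of the defining display for $L_\beta^*$ changes accordingly, and the orientedness condition on the diagonal of $T_{\overline\beta}$ becomes satisfied for \emph{every} good matching (rather than no matching), producing exactly one uncancelled $y_\beta$. The main obstacle is precisely this local computation at the glued tile $T_{\overline\beta}$ in the homotopic case — one must check that the $\mathbf{L_T^*}$-orientedness of its diagonal holds uniformly over all good matchings $P$ and contributes multiplicity exactly one, matching the single $y_\beta$ that Lemma \ref{badencountermobius} divides out in the accompanying $bad$-identity. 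Once that is settled, the two stated formulae follow by combining this $y_\beta$-analysis with the $\delta\ne\beta$ multiplicativity established above, and by the usual symmetric treatment of the right-cut family via Corollary \ref{3tiles} and Lemma \ref{zigzag}.
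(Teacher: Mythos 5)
Your approach matches the paper's: reduce to Proposition \ref{splitproductcoeffcient} when $\beta\notin T$, and when $\beta\in T$ invoke the Zig-zag Lemma \ref{zigzag} to decide whether the diagonal of the extra tile $T_{\overline{\beta}}$ is $\mathbf{L_T^*}$-oriented, with the answer depending on whether $L_\beta$ is the closed-curve (homotopic to $\alpha$) or boundary-endpoint choice. The paper's proof is essentially the terse assertion of exactly the dichotomy you flag as the ``main obstacle,'' so your sketch identifies the right lemma and the right final claims.

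There are, however, two local slips worth correcting. First, you write that in the non-homotopic case ``$L_\beta^*$ is the self-intersecting replacement.'' That has the cases swapped: if $L_\beta$ is not homotopic to $\alpha$ then $L_\beta$ is the boundary-endpoint lamination (right of Figure \ref{onesidedlamination}), which is not a one-sided closed curve, so by the definition of the quasi-principal lamination $\mathbf{L_T^*}$ it is left untouched, $L_\beta^* = L_\beta$. The self-intersecting quasi-lamination arises only in the homotopic case, where $L_\beta$ is the closed curve. Second, you write that in the homotopic case the extra tile ``contribute[s] a single factor of $y_\beta$ to $y_{\mathbf{L_T^*}}(P)$ that is absent from $y_{\mathbf{L_T^*}}(P_1)$.'' This is backwards: $T_{\overline{\beta}}$ is a tile of $S_{\alpha,\overline{T}}$, the graph of $P_1$, not of $S_{\overline{\gamma},\overline{T}}$, the graph of $P$; so if its diagonal is $\mathbf{L_T^*}$-oriented, the extra $y_\beta$ appears in $y_{\mathbf{L_T^*}}(P_1)$, not in $y_{\mathbf{L_T^*}}(P)$. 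Your displayed equation $y_{\mathbf{L_T^*}}(P)\cdot y_\beta = y_{\mathbf{L_T^*}}(P_1)$ has the correct direction, so only the surrounding sentence needs fixing. With those two corrections, the step you leave as an acknowledged obstacle --- that the Zig-zag Lemma forces the diagonal of $T_{\overline{\beta}}$ into a single orientation across all good matchings of the band graph, and that the shear coordinate $b_{\overline{T}}(\overline{\beta},\overline{L}_\beta^*)$ has the sign making that orientation $\mathbf{L_T^*}$-oriented precisely when $L_\beta$ is the closed-curve choice --- is the local computation the paper settles in one line by the same lemma.
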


\begin{proof}

We follow the same notation used in Proposition \ref{splitproductcoeffcient}. If $\beta$ is not in $T$ then the proof is the same as in Proposition \ref{splitproductcoeffcient}. If $\beta$ is in $T$ then by the Zig-Zag Lemma \ref{zigzag}, for any good matching $P$ of $S_{\alpha,T}$, the tile $T_{\beta}$ is $\alpha$-oriented when $L_{\beta}$ is homotopic to $\alpha$, and is never $\alpha$-oriented otherwise. This completes the proof. 

\end{proof}

\begin{thm}
\label{closedcurveexpansioncoefficients2}
Let $T$ be an ideal triangulation of a bordered $(S,M)$ which contains no self-folded triangles and let $\mathbf{L_T}$ be a principal lamination. Then for any one-sided closed curve $\alpha$ we have:

\begin{equation}
\label{onesidedbadexpansion}
x_{\mathbf{L_T}}(\alpha) = \frac{1}{bad(\mathbf{L_T},\alpha)cross(\alpha, T)} {\displaystyle \sum_{P} x(P)y_{\mathbf{L_T^*}}(P)}
\end{equation}

Where:

\begin{itemize}

\item The sum is over all good matchings of the band graph $S_{\alpha, T}$.

\item $x_{\mathbf{L_T}}(\alpha)$ is the cluster variable (corresponding to $\alpha$) in the quasi-cluster algebra, $\mathcal{A}_{\mathbf{L_T}}(S,M)$, associated to the principal lamination $\mathbf{L_T}$.
 
 \end{itemize}

\end{thm}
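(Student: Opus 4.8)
The plan is to imitate the proof of Theorem \ref{coefficientfreeexpansion}, replacing each ingredient by its coefficient-laden analogue. First I would recall that for any arc $\gamma$ enclosing $\alpha$ in a M\"obius strip $M_1$ with unique interior arc $\beta$, we have the multiplicative relation $x_{\mathbf{L_T}}(\gamma) = x_{\mathbf{L_T}}(\alpha)x_{\mathbf{L_T}}(\beta)$, and likewise $x_{\mathbf{L_T^*}}(\gamma) = x_{\mathbf{L_T^*}}(\alpha)x_{\mathbf{L_T^*}}(\beta)$, which follows from the geometry of lambda lengths. Applying Lemma \ref{arcexistence} we may choose $\gamma$ (and hence $\beta$) so that no arc of $T$ has a Type II intersection with $\gamma$; this puts us in the situation of Proposition \ref{decomposition}, Proposition \ref{welldefined}, and the bijection $\Phi$ of Proposition \ref{splitproduct}.

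Next I would invoke Theorem \ref{badarcexpansioncoefficients} for the arcs $\gamma$ and $\beta$, giving $x_{\mathbf{L_T}}(\gamma) = x_{\mathbf{L_T^*}}(\gamma)/bad(\mathbf{L_T^*},\gamma)$ and $x_{\mathbf{L_T}}(\beta) = x_{\mathbf{L_T^*}}(\beta)/bad(\mathbf{L_T^*},\beta)$. Since $\beta$ lifts to two arcs on the orientable double cover and $\mathbf{L_T^*}$ lifts to a genuine principal lamination there, Proposition \ref{arcexpansioncoefficientspuncture} (applied to $\gamma$ and to $\beta$) furnishes the snake-graph expansions $x_{\mathbf{L_T^*}}(\gamma) = \tfrac{1}{cross(\gamma,T)}\sum_{P\in\mathcal{P}_{T,\gamma}} x(P)y_{\mathbf{L_T^*}}(P)$ and similarly for $\beta$. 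Combining these with the relation $x_{\mathbf{L_T}}(\alpha) = x_{\mathbf{L_T}}(\gamma)/x_{\mathbf{L_T}}(\beta)$ and Theorem \ref{badarcexpansioncoefficients} gives
\[
x_{\mathbf{L_T}}(\alpha) = \frac{bad(\mathbf{L_T^*},\beta)}{bad(\mathbf{L_T^*},\gamma)} \cdot \frac{x_{\mathbf{L_T^*}}(\gamma)}{x_{\mathbf{L_T^*}}(\beta)}.
\]
Now I would feed in the factorization results: $x(P) = x(P_1)x(P_2)$ (Proposition \ref{splitproduct}), $cross(\gamma,T) = cross(\alpha,T)cross(\beta,T)$ (Corollary \ref{crossings}), and the $y$-monomial splitting of Proposition \ref{badsplitproductcoeffcient}, which says $y_{\mathbf{L_T^*}}(P) = y_{\mathbf{L_T^*}}(P_1)y_{\mathbf{L_T^*}}(P_2)$ unless some $L_\beta\in\mathbf{L_T}$ is homotopic to $\alpha$, in which case an extra factor $y_\beta^{-1}$ appears. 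Dividing the $\gamma$-expansion by the $\beta$-expansion and using that $\Phi$ is a bijection onto $\mathcal{P}_{T,\alpha}\times\mathcal{P}_{T,\beta}$, the $\beta$-sum cancels, leaving $\tfrac{1}{cross(\alpha,T)}\sum_{P}x(P)y_{\mathbf{L_T^*}}(P)$ up to a monomial prefactor.

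The final step is bookkeeping of the monomial prefactor: one must check that $\dfrac{bad(\mathbf{L_T^*},\beta)}{bad(\mathbf{L_T^*},\gamma)}$ together with any $y_\beta$-factor coming from Proposition \ref{badsplitproductcoeffcient} collapses exactly to $\dfrac{1}{bad(\mathbf{L_T},\alpha)}$. This is precisely where Lemma \ref{badencountermobius} is used: it computes $bad(\gamma,\mathbf{L_T^*})$ in terms of $bad(\alpha,\mathbf{L_T^*})$ and $bad(\beta,\mathbf{L_T^*})$, with the same $y_\beta^{-1}$ discrepancy appearing exactly in the case that $L_\beta$ is homotopic to $\alpha$ — so the discrepancy in Lemma \ref{badencountermobius} and the discrepancy in Proposition \ref{badsplitproductcoeffcient} match up and cancel. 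I expect this reconciliation of the two exceptional cases to be the main obstacle: one has to verify that $bad(\mathbf{L_T},\alpha)$ (computed via the $\mathbf{L_T}$-version of $bad$) agrees with the combination of $bad(\cdot,\mathbf{L_T^*})$ terms that survives, and that the homotopy-to-$\alpha$ case is handled consistently on both the surface side and the snake-graph side. Everything else is a formal manipulation of the already-established bijection $\Phi$ and the multiplicativity statements, entirely parallel to the proof of Theorem \ref{coefficientfreeexpansion} and of Theorem \ref{closedcurveexpansioncoefficients}.
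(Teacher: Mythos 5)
Your proposal is correct and follows essentially the same route as the paper: the relation $x_{\mathbf{L_T}}(\gamma) = x_{\mathbf{L_T}}(\alpha)x_{\mathbf{L_T}}(\beta)$, the arc expansions for $\gamma$ and $\beta$ with the $bad$-correction of Theorem \ref{badarcexpansioncoefficients}, the bijection $\Phi$ with Proposition \ref{splitproduct}, Corollary \ref{crossings}, and the matching of the exceptional $y_{\beta}$-discrepancies in Lemma \ref{badencountermobius} and Proposition \ref{badsplitproductcoeffcient}. The reconciliation you flag as the main obstacle is exactly the point the paper's proof rests on, so nothing essential is missing.
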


\begin{proof}

Recall that for any $\gamma$ enclosing $\alpha$ in $M_1$, with regards to the unique arc $\beta$ contained in $M_1$, we have the relation: $$x_{\mathbf{L_T}}(\gamma) = x_{\mathbf{L_T}}(\alpha)x_{\mathbf{L_T}}(\beta).$$ Moreover, by combining Theorem \ref{closedcurveexpansioncoefficients} and Theorem \ref{arcexpansioncoefficients} we see:

\begin{align*}
&\frac{x_{\mathbf{L_T}}(\alpha)}{bad(\mathbf{L_T^*},\beta)cross(T,\beta)}\sum_{P \in \mathcal{P}_{T,\beta}} x(P)y_{\mathbf{L_T^*}}(P) = x_{\mathbf{L_T}}(\alpha)x_{\mathbf{L_T}}(\beta) = x_{\mathbf{L_T}}(\gamma) = \\ =& \frac{1}{bad(\mathbf{L_T^*},\gamma)cross(T,\gamma)}\sum_{P \in \mathcal{P}_{T,\gamma}} x(P)y_{\mathbf{L_T^*}}(P).
\end{align*}

By Proposition \ref{splitproduct}, Proposition \ref{badencountermobius} and Proposition \ref{badsplitproductcoeffcient} we have:

\begin{align*}
&\frac{1}{bad(\mathbf{L_T^*},\gamma)cross(T,\gamma)}\sum_{P \in \mathcal{P}_{T,\gamma}} x(P)y_{\mathbf{L_T^*}}(P) = \\ = & \frac{1}{bad(\mathbf{L_T^*},\alpha)bad(\mathbf{L_T^*},\beta)cross(T,\gamma)}\Big(\sum_{P \in \mathcal{P}_{T,\alpha}} x(P)y_{\mathbf{L_T^*}}(P)\Big)\Big(\sum_{P \in \mathcal{P}_{T,\beta}} x(P)y_{\mathbf{L_T^*}}(P)\Big).
\end{align*}

Corollary \ref{crossings} then completes the proof.

\end{proof}

\begin{rmk}

It is important to note that the coefficient monomial in equation (\ref{onesidedbadexpansion}) is defined with respect to the quasi-principal lamination $\mathbf{L_T^*}$.

\end{rmk}

\section{Expansion formulae for tagged quasi-arcs with respect to any triangulations}
\label{alltriangulations}
\subsection{Snake and band graphs for triangulations with self-folded triangles}

In Section \ref{snakebandconstruction}, we saw one can assign a graph $S_{\alpha, T}$ when given a quasi arc $\alpha$ and an ideal triangulation $T$ containing no self-folded triangles. However, if $T$ contains self-folded triangles then ambiguities may arise in this procedure with respect to how tiles are defined and glued -- in particular, ambiguity occurs precisely when passing through a self-folded triangle. Musiker, Schiffler and Williams \cite{musiker2011positivity} addressed these points using the following two definitions.

\begin{defn}
Let $\gamma$ be an oriented plain arc or a one-sided closed curve. As usual, we let $p_1, \ldots, p_d$ denote the intersection points of $\gamma$ and $T$, and let $\tau_{i_j}$ denote the arc in $T$ containing the point $p_j$. \newline \indent  If $\tau_{i_j}$ is not the folded side of a self-folded triangle in $T$ then the \textit{\textbf{ordinary}} tile $G_j$ is defined as in Definition \ref{relativeorientation}. Otherwise, the \textit{\textbf{non-ordinary}} tile $G_j$ is defined by glueing two copies of the triangle $(\tau_{i_{j+1}}, \tau_{i_{j+1}}, \tau_{i_j} = \tau_{i_{j+2}})$ along $\tau_{i_{j+1}}$, such that the labels on the north and west (equivalently south and east) edges of $G_j$ are equal -- there are two such tiles.

\end{defn}

\begin{defn}
\label{puncturedsnakegraph}

The \textit{\textbf{snake graph}} (resp. \textit{\textbf{band graph}}) $\mathcal{G}_{\gamma,T}$ associated to a directed plain arc (resp. one-sided closed curve) $\gamma$ is defined as follows:

\begin{itemize}

\item If $G_j$ and $G_{j+1}$ are both ordinary tiles then the glueing procedure is defined as in Definition \ref{arcsnakegraph} (resp. Definition \ref{surfacebandgraph}).

\item If $G_j$ is a non-ordinary tile then it is glued to $G_{j-1}$ and $G_{j+1}$ by the rules outlined in Figure \ref{nonordinarytile}.

\end{itemize}

For ordinary tiles we have the notion of \textit{\textbf{relative orientation}} given in Definition \ref{relativeorientation}. We extend this to each non-ordinary tile $G_j$ appearing in some $\mathcal{G}_{\gamma,T}$ by defining $rel(G_j) := -rel(G_{j-1})$. Note that by Figure \ref{nonordinarytile} we therefore have that $rel(G_i) := -rel(G_{i-1})$ for any tile appearing in $\mathcal{G}_{\gamma,T}$.

\end{defn}

\begin{figure}[H]
\begin{center}
\includegraphics[width=10cm]{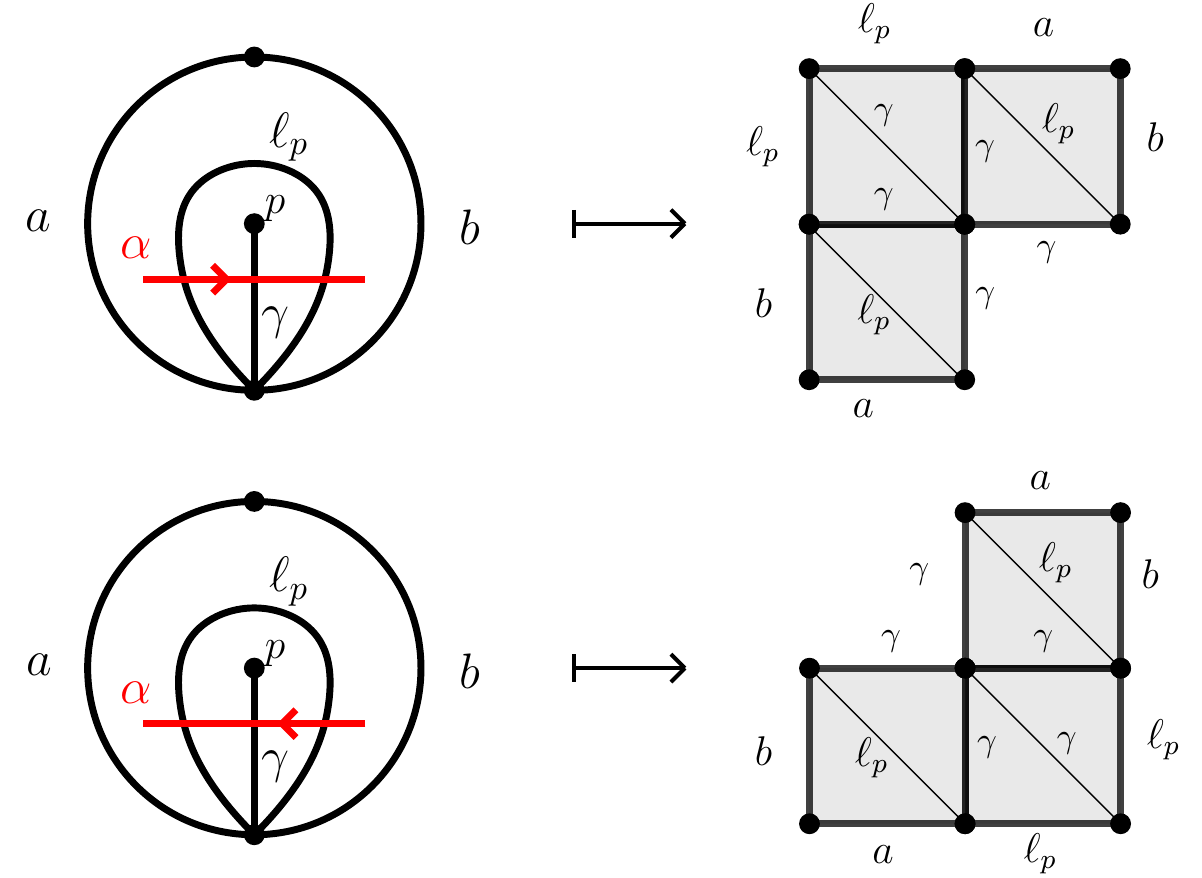}
\caption{The gluing specifications of a non-ordinary tile for the snake and band graph constructions described in Definition \ref{puncturedsnakegraph}.}
\label{nonordinarytile}
\end{center}
\end{figure}

\subsection{Expansion formulae for plain arcs}

Note that to define the snake graph or band graph of a curve $\gamma$ with respect to a tagged triangulation $T$, we actually pass to the associated ideal triangulation $T^{\circ}$. Consequently, when $T^{\circ}$ contains self-folded triangles, the graph is larger than usual, in the sense that formulae found in Theorem \ref{arcexpansioncoefficients} and Theorem \ref{closedcurveexpansioncoefficients} will no longer hold. \newline \indent More specifically, the terms in the right hand side of these formulae contain more frozen variables than required (the crossing monomial $cross(T^{\circ}, \gamma)$ prevents extra cluster variables appearing). In the context of cluster algebras with principal coefficients, Musiker, Schiffler and Williams eliminate these extra frozen variables by a certain specialisation of the coefficient monomials (see [Definition 4.8 \cite{musiker2011positivity}]). In the interest of obtaining expansion formulae for any choice of principal lamination we are required to define the coefficient monomial from a different viewpoint.

\begin{defn}

Let $T$ be an ideal triangulation and $\mathbf{L_T}$ be a principal lamination. Let $S_{\alpha,T}$ be the snake (resp. band) graph of some directed plain arc (resp. one-sided closed curve) $\alpha$. Suppose $G_j$ is a tile in $S_{\alpha,T}$ corresponding to the radius, $\gamma$, of a self-folded triangle in $T$ with puncture $p$ (i.e. $G_j$ is a non-ordinary tile). \newline \indent Recall that every perfect matching $P$ of $S_{\alpha,T}$ induces an orientation on the diagonals of each tile in $S_{\alpha,T}$. We say the radius $\gamma$ associated to $G_j$ is \textit{\textbf{$\mathbf{L_T}$-directed}} with respect to some $P$ if:

\begin{itemize}

\item the relative orientation of the tile $G_j$ is odd and one of the following statements holds:

\begin{itemize}

\item $b_{T}(L_{\gamma^{(p)}}, \gamma^{(p)}) = 1$ and the diagonal of $G_j$ is oriented `down'.

\item $b_{T}(L_{\gamma^{(p)}}, \gamma^{(p)}) = -1$ and the diagonal of $G_j$ is oriented `up'.

\end{itemize}

\item the relative orientation of the tile $G_j$ is even and one of the following statements holds:

\begin{itemize}

\item $b_{T}(L_{\gamma^{(p)}}, \gamma^{(p)}) = 1$ and the diagonal of $G_j$ is oriented `up'.

\item $b_{T}(L_{\gamma^{(p)}}, \gamma^{(p)}) = -1$ and the diagonal of $G_j$ is oriented `down'.

\end{itemize}

\end{itemize}

\end{defn}

\begin{defn}

Let $T$ be an ideal triangulation and $\mathbf{L_T}$ be a principal lamination. For any perfect/good matching $P$ of a snake/band graph $S_{\alpha,T}$ we define the \textit{\textbf{coefficient monomial}}, $y_{\mathbf{L_T}}(P)$, as follows:

\begin{equation}
y_{\mathbf{L_T}}(P) := \displaystyle \Big(\prod_{\substack{\text{$\gamma_{i_j}$ is an} \\ \text{$\mathbf{L_T}$-oriented}\\ \text{diagonal}}} y_{\gamma_{i_j}} \Big) \Big(\prod_{\substack{\text{$\gamma_{i_j}$ is an} \\ \text{$\mathbf{L_T}$-directed}\\ \text{radius}}} y_{\gamma_{i_j}^{(p)}}^{-1} \Big)
\end{equation}

\end{defn}

\begin{rmk}

Note that if a quasi-arc $\alpha$ does not intersect the radius of a self-folded triangle in $T$, then the the definition of the coefficient monomial coincides with the one in Definition \ref{ymonomial}.

\end{rmk}

Musiker, Schiffler and Williams obtained the following two results in the setting of cluster algebras with principal coefficients.

\begin{thm}[Theorem 4.9, \cite{musiker2011positivity}]
\label{orientedpuncturearc}
Let $(S,M)$ be an orientable bordered surface and let $T^{\circ}$ be an ideal triangulation with corresponding tagged triangulation $T = i(T^{\circ})$. If $\mathbf{L_{T}^{\bullet}}$ is the principal lamination corresponding to principal coefficients at $\Sigma_T$, then for any arc $\alpha$ we have:

\begin{equation}
x_{\mathbf{L_{T}^{\bullet}}}(\alpha) = \frac{1}{cross(\alpha, T^{\circ})} {\displaystyle \sum_{P} x(P)y_{\mathbf{L_{T}^{\bullet}}}(P)}
\end{equation}

Where:

\begin{itemize}

\item The sum is over all perfect matchings of the snake graph $\mathcal{G}_{\gamma, T^{\circ}}$.

\item $x_{\mathbf{L_{T}^{\bullet}}}(\gamma)$ is the cluster variable (corresponding to $\gamma$) in the cluster algebra $\mathcal{A} := \mathcal{A}_{\mathbf{L_{T}^{\bullet}}}(S,M)$ with initial seed $\Sigma_T$.
 
 \end{itemize}

\end{thm}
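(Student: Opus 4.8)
The statement to be proven is Theorem \ref{orientedpuncturearc}, which is attributed directly to Musiker, Schiffler and Williams (Theorem 4.9 of \cite{musiker2011positivity}). Since this is quoted verbatim from the literature rather than being a new contribution, the plan is simply to recall the structure of their argument and indicate how the coefficient monomial used here—defined via $\mathbf{L_T}$-oriented diagonals and $\mathbf{L_T}$-directed radii—matches their combinatorial specialisation. First I would observe that when $T^{\circ}$ has no self-folded triangles the claim is exactly Theorem \ref{orientedarcexpansioncoefficients}, so the content is entirely in the passage to triangulations with self-folded triangles; there, the snake graph $\mathcal{G}_{\gamma,T^{\circ}}$ is built using the non-ordinary tiles of Definition \ref{puncturedsnakegraph}, and the crossing monomial $cross(\alpha,T^{\circ})$ accounts for the genuine crossings while the non-ordinary tiles introduce spurious factors of the radius variable.

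The key steps, in order, would be: (1) reduce to the case of a single self-folded triangle by induction on the number of such triangles crossed by $\alpha$, or alternatively work locally around each one; (2) recall that flipping the radius $\gamma$ of a self-folded triangle inside a once-punctured digon replaces it by an arc in an orientable, self-folded-free triangulation, and invoke Theorem \ref{orientedarcexpansioncoefficients} there; (3) transport the expansion back through this flip using the exchange relation, tracking how principal coefficients transform under mutation in the direction of the radius; and (4) verify term-by-term that the $y$-monomial obtained after this transport is precisely $y_{\mathbf{L_T^{\bullet}}}(P)=\big(\prod_{\gamma_{i_j}\ \mathbf{L_T^{\bullet}}\text{-oriented}} y_{\gamma_{i_j}}\big)\big(\prod_{\gamma_{i_j}\ \mathbf{L_T^{\bullet}}\text{-directed}} y_{\gamma_{i_j}^{(p)}}^{-1}\big)$, where the inverse powers on the radius variables are exactly what cancels the spurious factors flagged in step (2). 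The bijection between good/perfect matchings on the two sides of the flip—and the matching of their orientations on diagonals and radii—is what makes the $\mathbf{L_T^{\bullet}}$-directed condition the correct bookkeeping device.

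The main obstacle is step (4): confirming that the sign conventions in the definition of ``$\mathbf{L_T^{\bullet}}$-directed radius'' (which branches on the parity of the relative orientation of the non-ordinary tile and on the value $b_T(L_{\gamma^{(p)}},\gamma^{(p)})=\pm 1$) really reproduces the coefficient specialisation of \cite{musiker2011positivity}; this requires carefully checking the orientation induced on the diagonal of a non-ordinary tile by each matching and comparing with the shear coordinate of the radius's enclosing loop. Since this is essentially a transcription of an established result, I would not grind through the full case analysis but would cite \cite{musiker2011positivity} for the matching-level verification, noting only that their Definition 4.8 specialisation and the present intrinsic definition via Definition \ref{alphaoriented} agree on every non-ordinary tile.
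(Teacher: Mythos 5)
The paper gives no proof of Theorem~\ref{orientedpuncturearc} at all: it is imported wholesale from Musiker--Schiffler--Williams as a cited result (Theorem 4.9 of \cite{musiker2011positivity}), so there is nothing in the paper to compare your argument against. You correctly recognize this, and you correctly put your finger on the one thing that does require checking but is left implicit: that the intrinsic coefficient monomial $y_{\mathbf{L_T^{\bullet}}}(P)$ defined here via ``$\mathbf{L_T}$-oriented diagonals'' and ``$\mathbf{L_T}$-directed radii'' reproduces the specialisation in \cite[Definition 4.8]{musiker2011positivity}. That is indeed a matter of unwinding conventions on non-ordinary tiles rather than an independent proof, and the paper takes it for granted.

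Be aware, though, that the proof strategy you sketch in steps (1)--(3) is \emph{not} how MSW establish this theorem, and as stated it has a gap. Flipping the radius $r$ of a self-folded triangle in the tagged triangulation $T = i(T^\circ)$ produces the tagged triangulation with $r$ replaced by its flip; but the \emph{ideal} triangulation attached to that new tagged triangulation generically still carries the same self-folded triangle, since the conversion $T \mapsto T^\circ$ reinstates the loop whenever a puncture carries exactly one notch. So ``flip the radius and invoke Theorem~\ref{orientedarcexpansioncoefficients}'' does not actually land you in a self-folded-free setting, and the induction in step (1) would not close. MSW instead prove the self-folded case directly, by a careful combinatorial analysis of matchings of the enlarged snake graph with non-ordinary tiles (building on Schiffler--Thomas). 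Since you defer to \cite{musiker2011positivity} for the substance anyway, the outcome is fine, but the intermediate reduction you describe should not be presented as a viable alternative route without repair.
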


\begin{prop}[Corollary 6.4, \cite{schiffler2010cluster} and Theorem 5.1, \cite{musiker2010cluster}]
\label{maximalpunctureorientable}
Following the set-up of Theorem \ref{orientedpuncturearc}, let $S_{\alpha,T^{\circ}}$ be the snake graph of $\alpha$ with respect to $T^{\circ}$. Then there exists a perfect matching $P_{+}$ of $S_{\alpha,T^{\circ}}$ such that:

$$y_{\mathbf{L_{T}^{\bullet}}}(P_{+}) = \displaystyle \Big(\prod_{\substack{\text{$\gamma$ is a} \\ \text{diagonal of a}\\ \text{tile in $S_{\alpha,T^{\circ}}$}}} y_{\gamma} \Big) \Big(\prod_{\substack{\text{$\gamma$ is a diagonal} \\ \text{of a non-ordinary} \\ \text{tile in $S_{\alpha,T^{\circ}}$}}} y_{\gamma_{i_j}^{(p)}}^{-1} \Big)$$

\noindent We call $P_+$ the \textit{\textbf{maximal matching}} of $S_{\alpha,T^{\circ}}$.

\end{prop}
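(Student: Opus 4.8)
The plan is to produce $P_{+}$ explicitly as a \emph{sign-homogeneous} perfect matching of $S_{\alpha,T^{\circ}}$ and then to read off its coefficient monomial tile by tile; this is the natural way to recover [Corollary 6.4, \cite{schiffler2010cluster}] and [Theorem 5.1, \cite{musiker2010cluster}] in the present language. Fix the sign function $\sgn$ on $S_{\alpha,T^{\circ}}$ in which every glued edge $e_{i}$ has sign $-$. As recalled in Section 3 (and in [Section 4, \cite{musiker2011positivity}]), a snake graph admits exactly two perfect matchings consisting entirely of boundary edges of one fixed sign; one of them, $P_{-}$, satisfies $y_{\mathbf{L_{T}^{\bullet}}}(P_{-})=1$ because it orients no diagonal, and I will call the other one $P_{+}$. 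When $S_{\alpha,T^{\circ}}$ has no non-ordinary tiles this is exactly Proposition \ref{maximalmatchingtriangulation}, so the only genuinely new point is to check that such a matching still exists and behaves correctly once $T^{\circ}$ contains self-folded triangles.

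Next I would simplify the orientation conditions. Because $\mathbf{L_{T}^{\bullet}}$ is the \emph{principal} lamination, $b_{T^{\circ}}(L_{\gamma},\gamma)=+1$ for every $\gamma\in T^{\circ}$, so Definition \ref{alphaoriented} says a diagonal of an ordinary tile $G_{j}$ is $\mathbf{L_{T}^{\bullet}}$-oriented with respect to $P$ exactly when $P$ induces the ``down'' orientation if $G_{j}$ is odd and the ``up'' orientation if $G_{j}$ is even; the relevant parity alternates along the whole snake graph since $rel(G_{j})=-rel(G_{j-1})$ by Definition \ref{puncturedsnakegraph}. The $\mathbf{L_{T}^{\bullet}}$-directedness condition for the radius of a non-ordinary tile simplifies in the same way.

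The core of the proof is a local computation at each tile for $P=P_{+}$. In $G_{j}$ the two edges of $P_{+}$ incident to the north-west and south-east vertices of $G_{j}$ are the boundary edges of $G_{j}$ carrying the sign opposite to that used by $P_{-}$; since $\sgn$ is constant on the pair $\{N,W\}$ and on the pair $\{S,E\}$ of each tile, differs between these two pairs, and flips across every glued edge, this choice forces precisely the extremal induced orientation of the diagonal of $G_{j}$ — the one demanded by $\mathbf{L_{T}^{\bullet}}$-orientedness — on every ordinary tile, and the corresponding extremal orientation on the radius of every non-ordinary tile, making it $\mathbf{L_{T}^{\bullet}}$-directed. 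Substituting into the definition of $y_{\mathbf{L_{T}^{\bullet}}}(P)$ then gives
\[
y_{\mathbf{L_{T}^{\bullet}}}(P_{+})=\Bigl(\prod_{\gamma\ \text{a diagonal of a tile in}\ S_{\alpha,T^{\circ}}} y_{\gamma}\Bigr)\Bigl(\prod_{\gamma\ \text{a diagonal of a non-ordinary tile}} y_{\gamma^{(p)}}^{-1}\Bigr),
\]
which is the claim; uniqueness of $P_{+}$ (hence the name ``maximal matching'') follows because it is the maximum of the distributive lattice of perfect matchings.

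The step I expect to be the main obstacle is the bookkeeping for the non-ordinary tiles. One must verify that the sign-homogeneous matching $P_{+}$ genuinely extends over the doubled-triangle tile obtained from a self-folded triangle with the glueings of Figure \ref{nonordinarytile}, and that on such a tile the induced orientation makes the radius $\mathbf{L_{T}^{\bullet}}$-\emph{directed} rather than $\mathbf{L_{T}^{\bullet}}$-\emph{oriented} — this is precisely what produces the inverse factor $y_{\gamma^{(p)}}^{-1}$, and it is the only place where the parity conventions interact non-trivially with the identification of the north/west edge with the south/east edge of the doubled tile. Away from such tiles everything is a routine induction on the number of tiles, exactly as in the references cited in the statement.
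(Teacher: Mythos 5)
The proposition you are proving is stated in the paper as a citation to [Corollary 6.4, \cite{schiffler2010cluster}] and [Theorem 5.1, \cite{musiker2010cluster}]; the paper supplies no proof of its own, so there is no in-text argument to compare against. Your approach --- taking $P_{+}$ to be the sign-homogeneous matching opposite to the minimal one and reading off the coefficient monomial tile by tile --- is indeed the route used in those references, so you are on the right track in spirit.

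The problem is that you have identified the genuine difficulty but not resolved it. The whole point of this proposition, as opposed to Proposition~\ref{maximalmatchingtriangulation}, is the presence of non-ordinary tiles, and your final paragraph concedes that the verification there is ``the step I expect to be the main obstacle'' without carrying it out. Concretely, two things are left unchecked. First, you assume that a non-ordinary tile (the doubled-triangle tile of Figure~\ref{nonordinarytile}, glued with coinciding labels on north/west and on south/east) still admits the sign-homogeneous edge selection used for $P_{+}$ and that the resulting perfect matching of $S_{\alpha,T^{\circ}}$ is well defined; this is not covered by the Section~3 discussion, which only treats ordinary snake graphs. Second, for the displayed formula to come out, each non-ordinary tile must contribute \emph{both} a factor $y_{\gamma}$ (the radius, as the diagonal of that tile, being $\mathbf{L_{T}^{\bullet}}$-oriented) \emph{and} a factor $y_{\gamma^{(p)}}^{-1}$ (the radius being $\mathbf{L_{T}^{\bullet}}$-directed). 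Your sketch only asserts that $P_{+}$ makes the radius $\mathbf{L_{T}^{\bullet}}$-directed; you never say why it is simultaneously $\mathbf{L_{T}^{\bullet}}$-oriented. The reason these coincide here is that for the principal lamination one has $b_{T}(L_{\gamma},\gamma)=b_{T}(L_{\gamma^{(p)}},\gamma^{(p)})=1$, and both $\gamma,\gamma^{(p)}\in T$ when $\gamma$ is a radius of $T^{\circ}$, so Definition~\ref{alphaoriented} and the $\mathbf{L_{T}}$-directedness condition reduce to the \emph{same} parity condition on the induced orientation of the tile's diagonal. That observation is what makes both factors appear, and it needs to be spelled out; without it the formula in the statement does not follow from what you wrote. (A minor point: your final sentence about uniqueness of $P_{+}$ via the distributive lattice is unnecessary --- unlike Proposition~\ref{maximalmatchingtriangulation}, this version of the statement does not assert uniqueness.)
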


\begin{thm}
\label{puncturearc}
Let $T^{\circ}$ be an ideal triangulation of a bordered surface $(S,M)$ with corresponding tagged triangulation $T = i(T^{\circ})$, and let $\mathbf{L_T}$ be a principal lamination. Then for any arc $\alpha$ we have:

\begin{equation}
x_{\mathbf{L_T}}(\alpha) = \frac{1}{bad(\alpha, T^{\circ})cross(\alpha, T^{\circ})} {\displaystyle \sum_{P} x(P)y_{\mathbf{L_T}}(P)}
\end{equation}

Where:

\begin{itemize}

\item The sum is over all perfect matchings of the snake graph $\mathcal{G}_{\alpha, T^{\circ}}$.

\item $x_{\mathbf{L_T}}(\alpha)$ is the cluster variable (corresponding to $\alpha$) in the quasi-cluster algebra $\mathcal{A} := \mathcal{A}_{\mathbf{L_T}}(S,M)$ with initial seed $\Sigma_T$.
 
 \end{itemize}

\end{thm}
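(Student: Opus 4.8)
The plan is to run the same three-layer strategy that produced Theorems~\ref{arcexpansioncoefficients}, \ref{closedcurveexpansioncoefficients} and \ref{badarcexpansioncoefficients}, but now feeding in the self-folded-triangle versions of the orientable inputs (Theorem~\ref{orientedpuncturearc} and Proposition~\ref{maximalpunctureorientable}) rather than their self-folded-free counterparts. So I would first settle the orientable case with an arbitrary principal lamination, then lift to the double cover to reach the non-orientable case when $\mathbf{L_T}$ has no one-sided closed curves, and finally invoke the $bad$-factor comparison of Theorem~\ref{badarcexpansioncoefficients} to handle a general principal lamination.

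\textbf{Stage 1 (orientable $(S,M)$, arbitrary principal lamination, self-folded triangles allowed).} Here $\mathbf{L_T}$ contains no one-sided closed curves, $\mathbf{L_T^*} = \mathbf{L_T}$, and $bad(\alpha, T^{\circ}) = 1$, so the target formula is exactly the extension of Theorem~\ref{arcexpansioncoefficients} to ideal triangulations with self-folded triangles. By Remark~\ref{principalcoeffcients} the shear-coordinate matrix of a principal lamination is diagonal with entries $\pm 1$; hence in Theorem~\ref{orientablesep} we have $m = 2n$ and $y_j = x_{n+j}^{\pm 1}$. I would substitute the principal-coefficient expansion of Theorem~\ref{orientedpuncturearc} for $X_\alpha^T$, evaluate the tropical denominator using the maximal matching $P_+$ of Proposition~\ref{maximalpunctureorientable} (which now carries the extra factor $\prod_{\gamma} y_{\gamma^{(p)}}^{-1}$ over non-ordinary tiles), and track what the specialisation $y_j \leftarrow x_{n+j}^{\pm 1}$ followed by division by the tropical evaluation does to each term $x(P)y_{\mathbf{L_T^{\bullet}}}(P)$: exactly as in the proof of Theorem~\ref{arcexpansioncoefficients}, over ordinary tiles it produces the count of $\mathbf{L_T}$-oriented diagonals labelled $\gamma_j$, and over non-ordinary tiles the $y_{\gamma^{(p)}}^{-1}$ contribution of $P_+$ converts the exponent into (minus) the count of $\mathbf{L_T}$-directed radii. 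This is precisely $y_{\mathbf{L_T}}(P)$ in the new sense, giving $x_{\mathbf{L_T}}(\alpha) = \frac{1}{cross(\alpha, T^{\circ})}\sum_P x(P)y_{\mathbf{L_T}}(P)$.

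\textbf{Stages 2--3 (non-orientable $(S,M)$).} If $\mathbf{L_T}$ has no one-sided closed curves, lift: $\overline{\mathbf{L_T}}$ is a principal lamination on $\overline{(S,M)}$ (as in the proof of Proposition~\ref{arcexpansioncoefficientspuncture}) and $\overline{T^{\circ}}$ has self-folded triangles precisely where $T^{\circ}$ does, so Stage~1 applies to $\overline{\alpha}$; specialising $x_{\overline{\gamma}_i} = x_{\tilde{\overline{\gamma}}_i}$ and $y_{\overline{\gamma}_i} = y_{\tilde{\overline{\gamma}}_i}$ recovers the geometry of $(S,M)$, and since $S_{\overline{\alpha}, \overline{T^{\circ}}} \cong S_{\alpha, T^{\circ}}$ by construction and the $\mathbf{L_T}$-oriented/$\mathbf{L_T}$-directed conditions descend through the cover, the formula follows with $bad(\alpha, T^{\circ}) = 1$. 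For an arbitrary principal lamination, apply Theorem~\ref{badarcexpansioncoefficients}: $x_{\mathbf{L_T}}(\alpha) = x_{\mathbf{L_T^*}}(\alpha)/bad(\mathbf{L_T^*}, \alpha)$, where the lift $\overline{\mathbf{L_T^*}}$ of the quasi-principal lamination is an honest principal lamination on $\overline{(S,M)}$. Re-running Stages~1--2 with $\mathbf{L_T^*}$ in place of $\mathbf{L_T}$ (only the lift needs to be a principal lamination) yields $x_{\mathbf{L_T^*}}(\alpha) = \frac{1}{cross(\alpha, T^{\circ})}\sum_P x(P)y_{\mathbf{L_T^*}}(P)$, and dividing by $bad(\mathbf{L_T^*}, \alpha) = bad(\alpha, T^{\circ})$ gives the theorem.

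\textbf{Main obstacle.} The delicate point is the non-ordinary-tile bookkeeping in Stage~1: one must verify that the combination of the $y_{\gamma^{(p)}}^{-1}$ factors in the maximal matching $P_+$, the tropical-denominator evaluation, and the shear coordinate $b_{T}(L_{\gamma^{(p)}}, \gamma^{(p)}) = \pm 1$ at a notched-at-$p$ radius reproduces the $\mathbf{L_T}$-directed radius contribution with the correct sign for every relative orientation and every choice of principal lamination. In effect this replaces Musiker--Schiffler--Williams' ad hoc specialisation of coefficient monomials by the intrinsic coefficient monomial, so the equivalence of the two has to be checked case by case (odd/even tile, $b = \pm 1$, $y_j = x_{n+j}^{\pm 1}$); once this is pinned down, the remaining steps are routine applications of the already-established results and the lifting dictionary.
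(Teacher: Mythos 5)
Your proof is correct and follows the paper's strategy almost exactly: in the orientable case, substitute the principal-coefficient expansion of Theorem~\ref{orientedpuncturearc} into the separation-of-additions formula (Theorem~\ref{orientablesep}), evaluate the tropical denominator via the maximal matching of Proposition~\ref{maximalpunctureorientable}, and match the resulting $y$-exponents against the $\mathbf{L_T}$-oriented / $\mathbf{L_T}$-directed bookkeeping, which is precisely what the paper does in its Case~1. For the non-orientable case the paper only writes one sentence appealing to ``Case~1 and Theorem~\ref{closedcurveexpansioncoefficients}'' (which appears to be a slip, since that result concerns one-sided closed curves rather than arcs; the intended reference is almost certainly Theorem~\ref{badarcexpansioncoefficients}); your Stages~2--3 reconstruct the argument that the paper leaves implicit, namely lifting to the double cover where $\overline{\mathbf{L_T^*}}$ is an honest principal lamination, applying Case~1 there, and dividing by the bad-factor.
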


\begin{proof}
We split the proof into two cases depending on orientability. \newline
 
\underline{Case 1}: $(S,M)$ is an orientable bordered surface. \newline First note that since $(S,M)$ is orientable we have that $bad(\alpha, T^{\circ}) = 1$. Consequently, when $\mathbf{L_T}$ is the principal lamination, $\mathbf{L_T^{\bullet}}$, corresponding to principal coefficients at $\Sigma_T$, the statement is precisely Theorem \ref{orientedpuncturearc}. Following the theme of Theorem \ref{arcexpansioncoefficients} we prove the result for a general principal lamination $\mathbf{L_T}$ by using Theorem \ref{orientablesep} and Proposition \ref{maximalpunctureorientable}. Namely, for any choice of $\epsilon_i \in \{1,-1\}$, we see 

$${x_{\mathbf{L_T^{\bullet}}}(\alpha)|_{Trop(y_1, \ldots, y_n)}}(1,\ldots, 1; y_1^{\epsilon_1}, \ldots, y_n^{\epsilon_n}) = \displaystyle \prod_{j=1}^n y_{j}^{-a_j}$$ where

\[  a_j = \left\{
\begin{array}{ll}
      0, & \epsilon_j = 1 \\
      \# \{\substack{\text{tiles in $S_{\alpha,T}$}\\ \text{corresponding to $\gamma_j$}}\} - \# \{\substack{\text{tiles in $S_{\alpha,T}$ corresponding}\\ \text{to some $\gamma$, if $\gamma_j = \gamma^{(p)}$}}\}, & \epsilon_j = -1 \\
\end{array} 
\right. \]

Therefore, for a given term $x(P)y_{\mathbf{L^{\bullet}_T}}(P)$ in the expansion of $x_{\mathbf{L_T^{\bullet}}}(\alpha)$, the corresponding term in $x_{\mathbf{L_T}}(\alpha)$ will be

$$\frac{x(P)y_{\mathbf{L^{\bullet}_T}}(P)|_{y_j \leftarrow y_j^{\epsilon_j}}}{\displaystyle \prod_{j=1}^n y_{j}^{-a_j}} = x(P)\displaystyle \prod_{j=1}^n y_{j}^{b_j}$$

where

\[  b_j = \left\{
\begin{array}{ll}
      \# \Big\{\substack{\text{tiles in $S_{\alpha,T}$ whose}\\ \text{diagonal $\gamma_j$ is $\mathbf{L^{\bullet}_T}$-oriented}}\Big\} - \# \Big\{\substack{\text{tiles in $S_{\alpha,T}$ corresponding}\\ \text{to some $\mathbf{L_T^{\bullet}} -directed$} \\ \text{radius $\gamma$, if $\gamma_j = \gamma^{(p)}$}}\Big\}, & \epsilon_j = 1 \\
      \# \Big\{\substack{\text{tiles in $S_{\alpha,T}$ whose}\\ \text{diagonal $\gamma_j$ is \underline{not} $\mathbf{L^{\bullet}_T}$-oriented}}\Big\} - \# \Big\{\substack{\text{tiles in $S_{\alpha,T}$ corresponding}\\ \text{to some radius $\gamma$ which is \underline{not}} \\ \text{$\mathbf{L_T^{\bullet}} -directed$, if $\gamma_j = \gamma^{(p)}$}}\Big\}, & \epsilon_j = -1 \\
\end{array} 
\right. \]

Hence $\displaystyle \prod_{j=1}^n y_{j}^{b_j} = y_{\mathbf{L_T}}(P)$ and this concludes the proof of Case 1.  \newline 

\underline{Case 2}: $(S,M)$ is a non-orientable bordered surface. In this setting the statement follows from Case 1 and Theorem \ref{closedcurveexpansioncoefficients}.

\end{proof}

\subsection{Expansion formulae for singly notched arcs}

\begin{defn}
\label{singlynotched}
Let $p$ be a puncture and $\gamma^{(p)}$ be a tagged arc which has one end notched at $p$, and its other end at $q \neq p$ tagged plain. We denote the underlying plain arc of $\gamma^{(p)}$ by $\gamma$. \newline \indent
Moreover, let $\ell_p$ be the unique arc enclosing $p$ in a monogon with radius $\gamma$. We say $\gamma^{(p)}$ is a \textit{\textbf{singly notched arc}} at $p$, and $\ell_p$ is the \textit{\textbf{loop}} corresponding to $\gamma^{(p)}$.

\end{defn}

Following \cite{musiker2011positivity}, to obtain expansion formulae for singly notched arcs $\gamma^{(p)}$ we introduce the notion of $\gamma$-symmetric perfect matchings. Firstly, note that if $\gamma$ has $d$ intersection points with $T^{\circ}$, then $\ell_p$ has $2d +k$ intersection points for $k \geq 1$ (here $k$ is actually the degree of the puncture $p$ in $T^{\circ}$).

\begin{prop}

Consider the snake graph $\mathcal{G}_{\ell_p,T^{\circ}} = (G_1, \ldots, G_{2d +k})$ and let $$\mathcal{H}_{\gamma, 1} := (G_1,\ldots,G_d) \setminus \{NE(G_d)\}$$ denote the subgraph of $G_{\gamma, 1} := (G_1,\ldots,G_d)$ where the north-east vertex of $G_d$ has been removed (and consequently so have the edges $N(G_d)$ and $E(G_d)$).

Similarly, let $$\mathcal{H}_{\gamma, 2} := (G_{d+k+1},\ldots,G_{2d+k}) \setminus \{SW(G_{d+k+1})\}$$ denote the subgraph of $G_{\gamma, 2} := (G_{d+k+1},\ldots,G_{2d+k})$ where the south-west vertex of $G_{d+k+1}$ has been removed (and consequently so have the edges $S(G_{d+k+1})$ and $W(G_{d+k+1})$). \newline \indent

Then, $$\mathcal{H}_{\gamma, 1} \cong \mathcal{H}_{\gamma, 2} \hspace{10mm} \text{and} \hspace{10mm} \mathcal{G}_{\gamma, 1} \cong \mathcal{G}_{\gamma, 2} \cong \mathcal{G}_{\gamma, T^{\circ}}$$

\end{prop}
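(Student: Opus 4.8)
The plan is to argue geometrically, by reading off the sequence of arcs of $T^{\circ}$ that $\ell_p$ crosses. Recall (Definition \ref{singlynotched} and the discussion preceding the statement) that $\ell_p$ together with its radius $\gamma$ cuts out the once-punctured monogon around $p$; concretely $\ell_p$ is isotopic to the loop based at $q$ that runs parallel to $\gamma$ out to a small disc $D$ around $p$, winds once around $p$ inside $D$, and returns to $q$ parallel to $\gamma$. Orienting $\ell_p$ so that its initial leg is the outward copy of $\gamma$, its $2d+k$ crossings with $T^{\circ}$ are, in order: the $d$ crossings $\tau_{i_1},\ldots,\tau_{i_d}$ of $\gamma$ (outward leg, through the same intervening triangles as $\gamma$); the $k$ crossings of the arcs of $T^{\circ}$ incident to $p$ (the winding inside $D$, counted with multiplicity at $p$); and the $d$ crossings $\tau_{i_d},\ldots,\tau_{i_1}$ of $\gamma$ read in reverse (return leg). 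This is exactly the block decomposition of $\mathcal{G}_{\ell_p,T^{\circ}}$ into $(G_1,\ldots,G_d)$, $(G_{d+1},\ldots,G_{d+k})$, and $(G_{d+k+1},\ldots,G_{2d+k})$. If $(S,M)$ is non-orientable one first lifts to $\overline{(S,M)}$, where a loop around a puncture is two-sided so that $\ell_p$ lifts to loops $\overline{\ell_p},\tilde{\overline{\ell_p}}$ around the two lifts of $p$, and applies the same local analysis to a lift.

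The second step is to identify the two outer blocks with $\mathcal{G}_{\gamma, T^{\circ}}$. By Definitions \ref{relativeorientation} and \ref{arcsnakegraph}, together with their extension to non-ordinary tiles in Definition \ref{puncturedsnakegraph} and Figure \ref{nonordinarytile}, the tile produced at a crossing and the edge along which it is glued to the next tile depend only on the local data at that crossing (the crossed arc, the two bordering triangles, and the third side of the triangle between the two consecutive crossings). Since the first $d$ crossings of $\ell_p$ carry the same local data as $\gamma$ oriented from $q$ to $p$, the sub-snake-graph $\mathcal{G}_{\gamma,1} = (G_1,\ldots,G_d)$ is isomorphic to $\mathcal{G}_{\gamma, T^{\circ}}$. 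The last $d$ crossings carry the local data of $\gamma$ oriented from $p$ to $q$, so $\mathcal{G}_{\gamma,2} = (G_{d+k+1},\ldots,G_{2d+k})$ is the snake graph of the orientation-reversed arc; and reversing the orientation of an arc reverses the order of its intersection points and interchanges the two triangles $\Delta_j,\Delta_{j+1}$ bordering each $\tau_{i_j}$ in Definition \ref{relativeorientation}, hence replaces the snake graph by the same tiles taken in reverse order, each reflected across its NW--SE diagonal. This is a graph isomorphism, so $\mathcal{G}_{\gamma,2}\cong\mathcal{G}_{\gamma, T^{\circ}}$ and therefore $\mathcal{G}_{\gamma,1}\cong\mathcal{G}_{\gamma,2}\cong\mathcal{G}_{\gamma, T^{\circ}}$.

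For $\mathcal{H}_{\gamma,1}\cong\mathcal{H}_{\gamma,2}$ I would then track the distinguished corners through the reversal isomorphism $\psi\colon\mathcal{G}_{\gamma,1}\to\mathcal{G}_{\gamma,2}$ of the previous step. It sends $G_j$ to $G_{2d+k+1-j}$, so the far-end tile $G_d$ of $\mathcal{G}_{\gamma,1}$ is sent to the near-end tile $G_{d+k+1}$ of $\mathcal{G}_{\gamma,2}$; and since $\psi$ reflects each tile across its NW--SE diagonal it interchanges the north-east and south-west corners, so it carries $NE(G_d)$ to $SW(G_{d+k+1})$. Hence $\psi$ restricts to an isomorphism $\mathcal{H}_{\gamma,1}=\mathcal{G}_{\gamma,1}\setminus\{NE(G_d)\}\ \to\ \mathcal{G}_{\gamma,2}\setminus\{SW(G_{d+k+1})\}=\mathcal{H}_{\gamma,2}$, which is the remaining claim.

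I expect the main obstacle to be making the second step fully rigorous: pinning down the precise form of the orientation-reversal isomorphism of a snake graph — that it simultaneously reverses the tile order and reflects each tile across its NW--SE diagonal, and in particular matches the north-east corner of the far-end tile with the south-west corner of the near-end tile — and checking that this description is unaffected by self-folded triangles, where some of the tiles $G_j$ with $j\le d$, or the middle tiles $G_{d+1},\ldots,G_{d+k}$, are non-ordinary and are glued according to the special rules of Figure \ref{nonordinarytile}. Once the reversal symmetry of $\mathcal{G}_{\gamma,T^{\circ}}$ is established, the corner matching and hence $\mathcal{H}_{\gamma,1}\cong\mathcal{H}_{\gamma,2}$ follow immediately.
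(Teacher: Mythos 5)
The paper gives no proof of this proposition: it is stated as an immediate consequence of the geometric construction, imported from the analogous statement in Musiker--Schiffler--Williams (on which the surrounding Lemmas 12.4, 12.5 are explicitly cited). Your argument supplies the natural proof, and it is correct. The block decomposition of $\ell_p$'s crossing sequence (outward leg parallel to $\gamma$, $k$-fold winding around $p$, return leg reversing the outward crossings) is right, and the identification of the two outer blocks with $\mathcal{G}_{\gamma,T^\circ}$ --- the first by the identity on local data, the second via the orientation-reversal symmetry --- is the right mechanism. Your description of the reversal is also the correct one: reversing the direction of $\gamma$ interchanges the preceding and following triangles at each crossing, so each tile is replaced by its reflection across the NW--SE diagonal. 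That reflection swaps $\{N,E\}$ with $\{S,W\}$, which is exactly what is needed for the reversed tile sequence to again be a snake graph, and it swaps the $NE$ and $SW$ corners, so $NE(G_d)$ is carried to $SW(G_{d+k+1})$ as required for $\mathcal{H}_{\gamma,1}\cong\mathcal{H}_{\gamma,2}$.

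On the concern you flag about non-ordinary tiles: by the standing hypothesis in Theorem~\ref{singlynotchedarc} ($T$ contains no notched arc at $p$), $T^\circ$ has no self-folded triangle at $p$, so the $k$ middle tiles are all ordinary; non-ordinary tiles can only occur in symmetric positions among $G_1,\ldots,G_d$ and $G_{d+k+1},\ldots,G_{2d+k}$. Those tiles have, by construction, equal labels on their north and west edges and equal labels on their south and east edges, so they are invariant under the NW--SE reflection; the gluing prescriptions of Figure~\ref{nonordinarytile} are likewise compatible with the symmetry. So the gap you identify is real but closes with a one-line observation. The only other thing worth saying explicitly is that when $(S,M)$ is non-orientable, both $\mathcal{G}_{\gamma,T^\circ}$ and $\mathcal{G}_{\ell_p,T^\circ}$ are by definition computed from lifts to $\overline{(S,M)}$, so the argument should be phrased there from the outset (as you note); the lifts of $\gamma$ and $\ell_p$ may be chosen with matching based points so that the three blocks of $\mathcal{G}_{\overline{\ell_p},\overline{T}^\circ}$ align with the lift $\mathcal{G}_{\overline{\gamma},\overline{T}^\circ}$ as claimed.
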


\begin{defn}

A perfect matching $P$ of $\mathcal{G}_{\ell_p,T^{\circ}}$ is said to be \textit{\textbf{$\gamma$-symmetric}} if: $$P_{\vert_{\mathcal{H}_{\gamma, 1}}} \cong P_{\vert_{\mathcal{H}_{\gamma, 2}}}$$

\end{defn}

\begin{prop}[Lemma 12.5, \cite{musiker2011positivity}]
\label{maximalnotch}
The maximal matching $P_+$ of $\mathcal{G}_{\ell_p,T^{\circ}}$ is $\gamma$-symmetric.

\end{prop}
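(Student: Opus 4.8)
The plan is to analyze the maximal matching $P_+$ of $\mathcal{G}_{\ell_p,T^{\circ}}$ directly, using the fact from Proposition \ref{maximalpunctureorientable} (and its predecessor Proposition \ref{maximalmatchingtriangulation}) that $P_+$ is the \emph{unique} perfect matching whose coefficient monomial $y_{\mathbf{L_{T}^{\bullet}}}(P_+)$ is maximal, i.e. every diagonal of every ordinary tile is $\mathbf{L_{T}^{\bullet}}$-oriented and every radius-diagonal of every non-ordinary tile is $\mathbf{L_{T}^{\bullet}}$-directed. The key structural input is the decomposition $\mathcal{G}_{\ell_p,T^{\circ}} = (G_1,\ldots,G_{2d+k})$, where $\mathcal{G}_{\gamma,1} = (G_1,\ldots,G_d)$ and $\mathcal{G}_{\gamma,2} = (G_{d+k+1},\ldots,G_{2d+k})$ are each isomorphic to $\mathcal{G}_{\gamma,T^{\circ}}$, and the intermediate tiles $G_{d+1},\ldots,G_{d+k}$ form the `fan' around the puncture $p$. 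First I would pin down how this isomorphism interacts with relative orientation: because $\ell_p$ retraces $\gamma$ in reverse after circling $p$, the copy $\mathcal{G}_{\gamma,2}$ is glued in with reversed orientation relative to $\mathcal{G}_{\gamma,1}$ (up to the $inv$ operation), but — and this is the crucial point — the shear coordinates $b_{T}(L_{\gamma_{i_j}},\gamma_{i_j})$ attached to the two copies of each arc $\gamma_{i_j}$ agree, since they depend only on the arc of $T^{\circ}$, not on which pass of $\ell_p$ crosses it.

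Next, I would argue that maximality is a purely local condition on each tile, determined by: (i) the parity of the relative orientation of that tile, and (ii) the sign $b_{T}(L_{\gamma_{i_j}},\gamma_{i_j})$ of the corresponding arc. Since both of these match up between tile $G_j$ in $\mathcal{G}_{\gamma,1}$ and the corresponding tile $G_{2d+k+1-j}$ (or the appropriately indexed tile) in $\mathcal{G}_{\gamma,2}$ — the parity matches because $rel$ alternates along the snake graph and the two copies are glued with a parity-preserving offset once the $k$ fan tiles are accounted for, and the sign matches by the observation above — the unique orientation on the diagonals of $\mathcal{H}_{\gamma,1}$ forced by maximality is carried to the unique orientation on the diagonals of $\mathcal{H}_{\gamma,2}$ forced by maximality under the isomorphism $\mathcal{H}_{\gamma,1}\cong\mathcal{H}_{\gamma,2}$. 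Concretely, I would show that $P_+$ restricted to each of $\mathcal{H}_{\gamma,1}$ and $\mathcal{H}_{\gamma,2}$ is the unique matching making all diagonals there $\mathbf{L_T^{\bullet}}$-oriented/directed; since that matching is transported correctly by the isomorphism, $P_+\vert_{\mathcal{H}_{\gamma,1}} \cong P_+\vert_{\mathcal{H}_{\gamma,2}}$, which is exactly $\gamma$-symmetry.

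The main obstacle I anticipate is bookkeeping the parity/offset carefully at the junctions where the fan tiles $G_{d+1},\ldots,G_{d+k}$ meet $G_d$ and $G_{d+k+1}$: one has to verify that after passing through these $k$ tiles the relative orientation lines up so that $G_j \leftrightarrow G_{2d+k+1-j}$ really is a parity-matching correspondence, and that the vertex removals defining $\mathcal{H}_{\gamma,1}$ (at $NE(G_d)$) and $\mathcal{H}_{\gamma,2}$ (at $SW(G_{d+k+1})$) are exactly the `boundary' vertices that get matched outward into the fan in $P_+$ — so that $P_+$ genuinely descends to a perfect matching on each $\mathcal{H}_{\gamma,i}$. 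This is essentially the content of [Lemma 12.5, \cite{musiker2011positivity}] in the orientable case; here one simply checks that nothing in the argument used orientability beyond what is already absorbed into the shear-coordinate formalism, so the same proof goes through verbatim once the snake-graph construction of Definition \ref{puncturedsnakegraph} is in place. I would therefore present the proof as a short reduction: invoke the orientable statement [Lemma 12.5, \cite{musiker2011positivity}] for the lifted picture on $\overline{(S,M)}$ if $(S,M)$ is non-orientable, and otherwise note it is the cited result directly, supplementing with the parity/shear-coordinate remarks above to justify that the coefficient-monomial maximality characterization still forces symmetry.
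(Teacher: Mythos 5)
The paper offers no proof of Proposition \ref{maximalnotch} at all; it is a pure citation of [Lemma 12.5, \cite{musiker2011positivity}], and this is sound because the snake graph $\mathcal{G}_{\ell_p,T^{\circ}}$ and its maximal matching $P_+$ are, by construction in this paper (Definitions \ref{arcsnakegraph}, \ref{puncturedsnakegraph}, Proposition \ref{maximalpunctureorientable}), defined via the lift to the orientable double cover with the principal-coefficient lamination $\mathbf{L}_T^{\bullet}$ — i.e.\ the statement is literally an instance of the orientable-surface result, not a new claim to be re-derived. Your proposal ultimately lands in the same place (invoke Lemma 12.5 on the lifted picture), so it is not wrong, but the preceding sketch is a detour that the paper deliberately avoids and that introduces a few loose ends: (i) you frame the argument around matching the parity of relative orientation between a tile $G_j$ in $\mathcal{G}_{\gamma,1}$ and a tile $G_{2d+k+1-j}$ in $\mathcal{G}_{\gamma,2}$, but the proposition preceding \ref{maximalnotch} asserts $\mathcal{G}_{\gamma,1}\cong\mathcal{G}_{\gamma,2}\cong\mathcal{G}_{\gamma,T^{\circ}}$, not an orientation-reversing correspondence, so the specific index pairing and parity bookkeeping would need to be reconciled with the stated isomorphism (which implicitly uses that a snake graph is isomorphic to its $180^{\circ}$ rotation); (ii) the appeal to shear coordinates $b_T(L_{\gamma_{i_j}},\gamma_{i_j})$ of an arbitrary principal lamination is broader than what is actually at stake, since $P_+$ here is the $\mathbf{L}_T^{\bullet}$-maximal matching of Proposition \ref{maximalpunctureorientable}, where all those shear coordinates equal $+1$ by definition; and (iii) the remark about ``nothing in the argument uses orientability'' is unnecessary, since the object $\mathcal{G}_{\ell_p,T^{\circ}}$ already lives on the orientable double cover. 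If you want to present a proof rather than a citation, it would be cleaner to drop the orientability discussion, restrict from the outset to the principal-coefficient case, and then simply observe, as you do in your middle paragraph, that the uniqueness of the maximal matching on $\mathcal{G}_{\gamma,T^{\circ}}$ (Proposition \ref{maximalmatchingtriangulation}) forces $P_+\vert_{\mathcal{H}_{\gamma,1}}$ and $P_+\vert_{\mathcal{H}_{\gamma,2}}$ to agree under the given isomorphism.
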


\begin{defn}

Let $P$ be a $\gamma$-symmetric perfect matching of $\mathcal{G}_{\ell_p,T^{\circ}}$. We define the associated \textbf{\textit{cluster monomial}} and \textbf{\textit{coefficient monomial}}, respectively, as follows: $$\overline{x}(P) := \frac{x(P)}{x(P_{\vert_{\mathcal{G}_{\gamma, i}}})} \hspace{10mm} \text{and} \hspace{10mm} \overline{y}(P) := \frac{y(P)}{y(P_{\vert_{\mathcal{G}_{\gamma, i}}})}.$$

Where the index $i$ is chosen such that $P_{\vert_{\mathcal{G}_{\gamma, i}}}$ is a perfect matching of $\mathcal{G}_{\gamma, i}$ for some $i \in \{1,2\}$ -- this is well defined by [Lemma 12.4, \cite{musiker2011positivity}].

\end{defn}

\begin{defn}

The \textit{\textbf{crossing monomial}} of the singly notched arc $\gamma^{(p)}$ with respect to $T^{\circ}$ is defined as: $$cross(\gamma^{(p)},T^{\circ}) := \frac{cross(\ell_p,T^{\circ})}{cross(\gamma,T^{\circ})}$$

\end{defn}

\begin{defn}

Let $T$ be a triangulation and $\mathbf{L_T^*}$ be a quasi-principal lamination. For any singly notched arc $\gamma^{(p})$ of $(S,M)$ we define $$ bad( \mathbf{L_T^*},\gamma^{(p)}) := \frac{bad( \mathbf{L_T^*},\ell_p)}{bad( \mathbf{L_T^*},\gamma)}$$.
\end{defn}

\begin{thm}
\label{singlynotchedarc}
Let $(S,M)$ be a bordered surface with puncture $p$. Let $T^{\circ}$ be an ideal triangulation with corresponding tagged triangulation $T = i(T^{\circ})$, and let $\mathbf{L_T}$ be a principal lamination. Let $\mathcal{A}_{\mathbf{L_T}}$ be the corresponding quasi-cluster algebra with respect to the seed $\Sigma_T$. Consider any singly notched arc $\gamma^{(p)}$. Then if the underlying arc $\gamma$ is not in $T$, and $T$ contains no notched arc at $p$ then the Laurent expansion of $x_{\gamma^{(p)}}$ with respect to $\Sigma_T$ may be written as follows:

\begin{equation}
x_{\mathbf{L_T}}(\gamma^{(p)}) = \frac{1}{bad(\gamma^{(p)},\mathbf{L_T^*})cross(\gamma^{(p)}, T^{\circ})} {\displaystyle \sum_{P} \overline{x}(P)\overline{y}_{\mathbf{\overline{L}_T^*}}(P)}
\end{equation}

\noindent where the sum is over all $\gamma$-symmetric perfect matchings of the snake graph $\mathcal{G}_{\ell_p, T^{\circ}}$.

\end{thm}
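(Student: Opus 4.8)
The plan is to reduce the notched-arc formula to the already-established plain-arc formula (Theorem \ref{puncturearc}) via the same trick Musiker, Schiffler and Williams use in the orientable setting, namely by exploiting the algebraic identity relating the cluster variables $x_{\mathbf{L_T}}(\gamma^{(p)})$, $x_{\mathbf{L_T}}(\gamma)$ and $x_{\mathbf{L_T}}(\ell_p)$ coming from the geometry of the once-punctured monogon. Concretely, flipping inside the once-punctured monogon bounded by $\ell_p$ (with radius $\gamma$) gives an exchange relation of type 2) in Figure \ref{combinatorialflips}, which in the laminated setting takes the form
\begin{equation}
x_{\mathbf{L_T}}(\ell_p) = x_{\mathbf{L_T}}(\gamma)\, x_{\mathbf{L_T}}(\gamma^{(p)}) \cdot (\text{coefficient factor}),
\end{equation}
so that $x_{\mathbf{L_T}}(\gamma^{(p)})$ is obtained from $x_{\mathbf{L_T}}(\ell_p)$ by dividing out $x_{\mathbf{L_T}}(\gamma)$ together with an explicit monomial in the frozen variables. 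One then substitutes the known expansions of $x_{\mathbf{L_T}}(\ell_p)$ and $x_{\mathbf{L_T}}(\gamma)$ from Theorem \ref{puncturearc}.

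First I would set up the combinatorial comparison: the proposition preceding the theorem gives $\mathcal{G}_{\gamma,1} \cong \mathcal{G}_{\gamma,2} \cong \mathcal{G}_{\gamma,T^{\circ}}$ sitting inside $\mathcal{G}_{\ell_p,T^{\circ}}$ as the first $d$ and last $d$ tiles, overlapping a central block of $k$ tiles coming from the degree-$k$ puncture $p$. The key structural input is that perfect matchings of $\mathcal{G}_{\ell_p,T^{\circ}}$ which are \emph{not} $\gamma$-symmetric cancel in pairs when one performs the division by $\sum_P x(P)y(P)$ for $\mathcal{G}_{\gamma,T^{\circ}}$: this is exactly [Lemma 12.4 and the surrounding argument, \cite{musiker2011positivity}], and since everything is happening inside a fixed orientable neighbourhood of $\ell_p$ (a once-punctured monogon is orientable), their lemma applies verbatim after lifting to the double cover. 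The surviving terms are indexed by $\gamma$-symmetric matchings $P$, and for such $P$ the quotient $x(P)/x(P_{|\mathcal{G}_{\gamma,i}}) = \overline{x}(P)$ and $y(P)/y(P_{|\mathcal{G}_{\gamma,i}}) = \overline{y}_{\mathbf{\overline{L}_T^*}}(P)$ by definition. The crossing-monomial bookkeeping is immediate from $cross(\gamma^{(p)},T^{\circ}) := cross(\ell_p,T^{\circ})/cross(\gamma,T^{\circ})$, and likewise the $bad$-factor bookkeeping from $bad(\mathbf{L_T^*},\gamma^{(p)}) := bad(\mathbf{L_T^*},\ell_p)/bad(\mathbf{L_T^*},\gamma)$, so all three denominators on the right-hand side assemble correctly once the numerators do.

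The remaining task, and the one requiring genuine care rather than citation, is tracking the coefficient monomials through the division in the \emph{laminated} (not merely principal-coefficient) setting. I would split into two cases as in Theorem \ref{puncturearc}: when $(S,M)$ is orientable this is the content of [Section 12, \cite{musiker2011positivity}] applied to an arbitrary principal lamination, which is handled exactly as in the proof of Theorem \ref{arcexpansioncoefficients} — invoke the separation-of-additions formula (Theorem \ref{orientablesep}), reduce to principal coefficients, use Proposition \ref{maximalnotch} (that $P_+$ is $\gamma$-symmetric) to identify the tropical evaluation in the denominator, and check that the resulting exponents reassemble into $\overline{y}_{\mathbf{L_T}}(P)$ via the $\mathbf{L_T}$-oriented/$\mathbf{L_T}$-directed dictionary. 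When $(S,M)$ is non-orientable, one lifts $\gamma^{(p)}$, $\gamma$ and $\ell_p$ to the double cover and applies the orientable case together with the $\mathbf{L_T}$ versus $\mathbf{L_T^*}$ comparison of Theorem \ref{badarcexpansioncoefficients}, which is why the formula is stated with $\overline{y}_{\mathbf{\overline{L}_T^*}}$ and the $bad$-factor; the specialisations $x_{\overline{\gamma}_i} = x_{\tilde{\overline{\gamma}}_i}$ then descend the double-cover expansion to $(S,M)$.

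The main obstacle I anticipate is verifying that non-$\gamma$-symmetric matchings really do cancel once coefficients are present — in the coefficient-free case the cancellation is a clean sign-free statement about weight monomials, but with frozen variables attached one must confirm that a non-symmetric matching $P$ and its mirror partner $P'$ contribute terms $\overline{x}(P)\overline{y}(P)$ and $\overline{x}(P')\overline{y}(P')$ that are \emph{literally equal as monomials} (so they add, contributing a factor of $2$? no — rather, the point is that the full sum $\sum_P x(P)y(P)$ factors as $\big(\sum x(P_{|\mathcal{G}_{\gamma,i}}) y(P_{|\mathcal{G}_{\gamma,i}})\big) \cdot \big(\sum_{\text{sym }P} \overline{x}(P)\overline{y}(P)\big)$ after the division, with the non-symmetric contributions absorbed into the first factor). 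Pinning down this factorisation statement precisely, and checking the $\mathbf{L_T}$-directed-radius contributions behave correctly under it, is the step where I would expect to spend the most effort; everything else is assembling cited results.
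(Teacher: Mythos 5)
Your overall route is the same as the paper's: cite MSW Theorem~4.16 for the orientable principal-coefficient case, lift to the orientable double cover, apply separation of additions (with Proposition~\ref{maximalnotch} to identify the tropical denominator) to extend to any principal lamination on the cover, and pass from $\mathbf{L_T^*}$ to $\mathbf{L_T}$ using the $bad$-factor relation of Theorem~\ref{puncturearc} together with the self-folded-triangle identity $x_{\mathbf{L}}(\ell_p) = x_{\mathbf{L}}(\gamma)\, x_{\mathbf{L}}(\gamma^{(p)})$.

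Two corrections are worth making. First, that identity holds \emph{exactly}, with no extra coefficient monomial, for any multi-lamination $\mathbf{L}$; the ``coefficient factor'' you insert is spurious and seems to arise from conflating the lambda-length relation of a self-folded triangle with the once-punctured-digon exchange relation. Second, the ``main obstacle'' you anticipate --- rechecking that non-$\gamma$-symmetric matchings cancel or factor once coefficients are present --- is not an open step: MSW prove Theorem~4.16 (and the Section~12 cancellation argument it rests on) already in the \emph{principal-coefficient} setting, so the $y$-monomials are carried through their proof from the start. The correct way to proceed, and what the paper actually does, is to take the finished principal-coefficient expansion of $x(\gamma^{(p)})$ as a closed formula (a sum over $\gamma$-symmetric matchings of $\mathcal{G}_{\ell_p,T^{\circ}}$) and apply separation of additions to that formula term by term, exactly as in Theorem~\ref{arcexpansioncoefficients}; the internal pairing-and-cancellation of non-symmetric matchings never needs to be reopened. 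Your effort estimate for that step should drop essentially to zero --- it is absorbed in the citation.
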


\begin{proof}

When $(S,M)$ is an oriented surface and $\mathcal{A}$ is the cluster algebra with principal coefficients (with respect to $\Sigma_T$), then the statement is precisely Theorem 4.16 of \cite{musiker2011positivity}. Consequently, for any bordered surface $(S,M)$ we may lift the picture to the orientable double cover $\overline{(S,M)}$ and get the following expansion formula: 
$$x_{\mathbf{L^{\bullet}_T}}(\overline{\gamma}^{(p)}) = \frac{1}{cross(\overline{\gamma}^{(p)}, \overline{T}^{\circ})} {\displaystyle \sum_{P} \overline{x}(P)\overline{y}_{\mathbf{L^{\bullet}_{\overline{T}}}}(P)}$$
where $\overline{T}$ and $\overline{\gamma}^{(p)}$ are the lifts of $T$ and ${\gamma}^{(p)}$ respectively; $\mathbf{L^{\bullet}_{\overline{T}}}$ is the principal lamination of $\overline{T}$ corresponding to principal coefficients at $\Sigma_{\overline{T}}$; and the sum is taken over all $\gamma$-symmetric perfect matchings $P$ of the snake graph $\mathcal{G}_{\ell_p, \overline{T}^{\circ}}.$ \newline \indent Using Fomin and Zelevinsky's separation of addition formula and recalling that $x_{\mathbf{L}}(\ell_p) = x_{\mathbf{L}}(\gamma)x_{\mathbf{L}}(\gamma^{(p)})$ for any multi-lamination $\mathbf{L}$ we see that: 
$$\frac{x_{\mathbf{\overline{L}_T^*}}(\overline{\ell}_p)}{x_{\mathbf{\overline{L}_T^*}}(\overline{\gamma})} = x_{\mathbf{\overline{L}_T^*}}(\overline{\gamma}^{(p)}) = \frac{1}{cross(\overline{\gamma}^{(p)}, \overline{T}^{\circ})} {\displaystyle \sum_{P} \overline{x}(P)\overline{y}_{\mathbf{\overline{L}_T^*}}(P)}$$
for any principal lamination $\mathbf{L_T}$.  \newline Moreover, by Theorem \ref{puncturearc} we know that $x_{\mathbf{L_T}}(\delta) = \frac{x_{\mathbf{L_T^*}}(\delta)}{bad(\delta, \mathbf{L_T^*})}$ for any plain arc or loop, $\delta$, of $(S,M)$. Therefore we obtain the following:
$$x_{\mathbf{L_T}}(\gamma^{(p)}) = \frac{bad(\gamma, \mathbf{L_T^*})x_{\mathbf{\overline{L}_T^*}}(\overline{\ell}_p)}{bad(\ell_p, \mathbf{L_T^*})x_{\mathbf{\overline{L}_T^*}}(\overline{\gamma})} = \frac{1}{bad(\gamma^{(p)}, \mathbf{L_T^*})cross(\gamma^{(p)}, T^{\circ})} {\displaystyle \sum_{P} \overline{x}(P)\overline{y}_{\mathbf{\overline{L}_T^*}}(P)}$$
where the sum is taken over all $\gamma$-symmetric perfect matchings $P$ of the snake graph $\mathcal{G}_{\ell_p, T^{\circ}}$.
\end{proof}

\subsection{Expansion formulae for doubly notched arcs}

\begin{defn}

Let $p$ and $q$ be punctures and $\gamma^{(pq)}$ be a tagged arc which has one end notched at $p$, and its other end notched at $q$ (we allow $p = q$). \indent Following Definition \ref{singlynotched}, we define $\ell_p$ and $\ell_q$ in the same fashion -- note, however, when $p=q$ then $\ell_p$ and $\ell_q$ are not strictly arcs as they are self-intersecting. \newline \indent We say $\gamma^{(pq)}$ is a \textit{\textbf{doubly notched arc}} at $p$ and $q$, and $\ell_p$ and $\ell_q$ are the two \textit{\textbf{loops}} corresponding to $\gamma^{(pq)}$.
\end{defn}

Following \cite{musiker2011positivity}, to obtain expansion formulae for doubly notched arcs $\gamma^{(pq)}$ with respect to an ideal triangulation $T^{\circ}$ we introduce the notion of a pair of $\gamma$-symmetric perfect matchings being \textit{$\gamma$-compatible}.

\begin{defn}
\label{compatiblepair}
Let $\gamma^{(pq)}$ be a doubly notched arc and let $P_p$ and $P_q$ be $\gamma$-symmetric perfect matchings of $\mathcal{G}_{\ell_p, T^{\circ}}$ and $\mathcal{G}_{\ell_p, T^{\circ}}$, respectively. \newline \indent The pair $(P_p, P_q)$ is called \textbf{\textit{$\gamma$-compatible}} if the following two conditions hold for some $i,j \in \{1,2\}$:

\begin{itemize}

\item ${P_p}_{\vert_{\mathcal{G}_{\gamma, i}}} \cong {P_q}_{\vert_{\mathcal{G}_{\gamma, j}}}$

\item ${P_p}_{\vert_{\mathcal{G}_{\gamma, i}}}$ and ${P_q}_{\vert_{\mathcal{G}_{\gamma, j}}}$ are perfect matchings of $\mathcal{G}_{\gamma, i}$ and $\mathcal{G}_{\gamma, j}$, respectively.
 
\end{itemize} 

\end{defn}

\begin{defn}

Let $(P_p,P_q)$ be a $\gamma$-compatible pair of $\gamma$-symmetric perfect matchings of $(\mathcal{G}_{\ell_p,T^{\circ}},\mathcal{G}_{\ell_q,T^{\circ}})$. We define the associated \textbf{\textit{cluster monomial}} and \textbf{\textit{coefficient monomial}}, respectively, as follows: $$\overline{\overline{x}}(P_p,P_q) := \frac{x(P_p)x(P_q)}{x({P_p}_{\vert_{\mathcal{G}_{\gamma, i}}})^3} \hspace{10mm} \text{and} \hspace{10mm} \overline{\overline{y}}(P_p,P_q) := \frac{y(P_p)y(P_q)}{y({P_p}_{\vert_{\mathcal{G}_{\gamma, i}}})^3}.$$

Where the index $i \in \{1,2\}$ is chosen such that ${P_p}_{\vert_{\mathcal{G}_{\gamma, i}}} \cong {P_q}_{\vert_{\mathcal{G}_{\gamma, j}}}$ for some $j \in \{1,2\}$, as in Definition \ref{compatiblepair}. This is well defined by [Lemma 12.4, \cite{musiker2011positivity}].

\end{defn}

\begin{defn}

The \textit{\textbf{crossing monomial}} of the doubly notched arc $\gamma^{(pq)}$ with respect to $T^{\circ}$ is defined as: $$cross(\gamma^{(pq)},T^{\circ}) := \frac{cross(\ell_p,T^{\circ})cross(\ell_q,T^{\circ})}{cross(\gamma,T^{\circ})^3}$$

\end{defn}

\begin{thm}[Theorem 4.2, \cite{musiker2011positivity}]
\label{doublynotchedarcorientable}
Let $(S,M)$ be an orientable bordered surface which is not a once or twice punctured closed surface. Let $T^{\circ}$ be an ideal triangulation with corresponding tagged triangulation $T = i(T^{\circ})$, and let $\mathbf{L_T^{\bullet}}$ be a principal lamination corresponding to principal coefficients at $\Sigma_T$. Then for any doubly notched arc $\gamma^{(pq)}$ of $(S,M)$, if $\gamma$ is not in $T$ and $T$ contains no notched arcs at $p$ or $q$, we have the following expansion formula:

\begin{equation}
 x_{\mathbf{L_T^{\bullet}}}(\gamma^{(pq)}) = \frac{1}{cross(\gamma^{(pq)}, T^{\circ})} {\displaystyle \sum_{(P_p,P_q)} \overline{\overline{x}}(P_p,P_q)\overline{\overline{y}}(P_p,P_q)}
\end{equation}

Where:

\begin{itemize}

\item The sum is over all $\gamma$-compatible pairs of $\gamma$-symmetric perfect matchings of the pair of snake graphs $(\mathcal{G}_{\ell_p, T^{\circ}},\mathcal{G}_{\ell_q, T^{\circ}})$.

\item $x_{\mathbf{L_T^{\bullet}}}(\gamma^{(pq)})$ is the cluster variable (corresponding to $\gamma^{(pq)}$) in the cluster algebra, $\mathcal{A}_{\mathbf{L_T^{\bullet}}}(S,M)$, with principal coefficients at $\Sigma_T$.
 
 \end{itemize}

\end{thm}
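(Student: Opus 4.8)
This statement is Theorem 4.2 of \cite{musiker2011positivity}, so strictly nothing new is required; I nevertheless outline the proof I would give, since its architecture reappears below. The plan is to deduce the doubly notched expansion from the already-established loop expansion of Theorem \ref{orientedpuncturearc} by exploiting the multiplicative relations among the four taggings of the underlying arc $\gamma$. Recall from the lambda-length calculus of Fomin--Shapiro--Thurston \cite{fomin2012cluster} that, up to coefficient monomials which I suppress, $x_{\mathbf{L_T^{\bullet}}}(\ell_p) = x_{\mathbf{L_T^{\bullet}}}(\gamma)\,x_{\mathbf{L_T^{\bullet}}}(\gamma^{(p)})$, the analogous identity at $q$, and $x_{\mathbf{L_T^{\bullet}}}(\gamma)\,x_{\mathbf{L_T^{\bullet}}}(\gamma^{(pq)}) = x_{\mathbf{L_T^{\bullet}}}(\gamma^{(p)})\,x_{\mathbf{L_T^{\bullet}}}(\gamma^{(q)})$. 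Eliminating the singly notched variables yields
$$x_{\mathbf{L_T^{\bullet}}}(\gamma^{(pq)}) \;=\; \frac{x_{\mathbf{L_T^{\bullet}}}(\ell_p)\,x_{\mathbf{L_T^{\bullet}}}(\ell_q)}{x_{\mathbf{L_T^{\bullet}}}(\gamma)^{3}} .$$
Substituting the snake-graph expansions of $x(\ell_p)$, $x(\ell_q)$, $x(\gamma)$ from Theorem \ref{orientedpuncturearc} and unwinding the definitions of $cross(\gamma^{(pq)}, T^{\circ})$, $\overline{\overline{x}}$ and $\overline{\overline{y}}$, the desired formula becomes a purely combinatorial identity expressing the (suitably normalised) generating function of $\gamma$-compatible pairs of $\gamma$-symmetric matchings in terms of the matching generating functions of $\mathcal{G}_{\ell_p, T^{\circ}}$, $\mathcal{G}_{\ell_q, T^{\circ}}$ and $\mathcal{G}_{\gamma, T^{\circ}}$.

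To prove that identity I would use the restriction calculus on snake graphs. The graph $\mathcal{G}_{\ell_p, T^{\circ}} = (G_1, \ldots, G_{2d+k_p})$ decomposes as a copy of $\mathcal{G}_{\gamma, T^{\circ}}$, a fan of $k_p$ tiles encircling $p$, and a reversed copy of $\mathcal{G}_{\gamma, T^{\circ}}$; every perfect matching restricts to an honest matching of exactly one of these two $\mathcal{G}_{\gamma}$-copies, is $\gamma$-symmetric exactly when its two restrictions agree on the shared subgraph $\mathcal{H}_{\gamma}$, and the cube-power normalisations in $\overline{\overline{x}}, \overline{\overline{y}}$ exist precisely to divide out the three over-counted copies of a $\mathcal{G}_{\gamma, T^{\circ}}$-matching. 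One then checks, by an explicit sign-free bijection, that after multiplying by the cube of the $\mathcal{G}_{\gamma, T^{\circ}}$ matching generating function the $\gamma$-compatible-pair sum reorganises into the full unrestricted product for $\ell_p$ and $\ell_q$, the non-$\gamma$-symmetric matchings being accounted for by the choices of the three auxiliary $\mathcal{G}_{\gamma}$-matchings. Proposition \ref{maximalnotch} and Proposition \ref{maximalpunctureorientable} guarantee that each divided sub-matching is a genuine perfect matching, so every $\overline{\overline{x}}(P_p,P_q)$ is a legitimate monomial and no negative exponents occur; the parallel bookkeeping for the $y$-part, via $\mathbf{L_T^{\bullet}}$-oriented and $\mathbf{L_T^{\bullet}}$-directed diagonals, identifies $\overline{\overline{y}}(P_p,P_q)$ with the predicted coefficient monomial.

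Two remarks complete the plan. When $p = q$ the loops $\ell_p$ and $\ell_q$ coincide and are self-intersecting, but their snake graphs, the notion of $\gamma$-symmetric matching, and the relation $x(\gamma^{(pp)}) = x(\ell_p)^{2}/x(\gamma)^{3}$ all remain meaningful, and the argument above goes through verbatim; the exclusion of once or twice punctured closed surfaces and the hypothesis that $T$ carries no notch at $p$ or $q$ are exactly what make the identities of the first paragraph valid and $\ell_p, \ell_q$ well defined relative to $T^{\circ}$. The main obstacle is the combinatorial identity of the second paragraph: reconciling the restrict-and-divide operation on $\mathcal{G}_{\ell_p, T^{\circ}}$ and $\mathcal{G}_{\ell_q, T^{\circ}}$ with both multiplication and the $\gamma$-compatibility constraint, while tracking the coefficient monomials so that $\overline{\overline{y}}$ comes out correctly. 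Everything else is formal manipulation of the first-paragraph identities together with the already-proved loop expansion.
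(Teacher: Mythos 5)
The paper offers no proof of this statement at all: it is imported verbatim from Musiker--Schiffler--Williams (cited as their Theorem 4.2), so there is nothing internal to compare your argument with, and your sketch must stand on its own. It does not: the elimination in your first paragraph fails in exactly the setting of the theorem. With principal coefficients the relation among the four taggings of $\gamma$ is \emph{not} multiplicative, and the discrepancy is additive rather than a suppressible coefficient monomial. Indeed, the paper's own Theorem \ref{doublynotchedarc2} (quoting the proof of Proposition 4.21 of \cite{musiker2011positivity}) records
$$x_{\mathbf{L_T}}(\gamma)\,x_{\mathbf{L_T}}(\gamma^{(pq)}) \;=\; x_{\mathbf{L_T}}(\gamma^{(p)})\,x_{\mathbf{L_T}}(\gamma^{(q)})\,y_{\gamma}\;+\;\Big(\prod_{L_{\delta}=-1} y_{\delta}^{e_p(\delta)}-\prod_{L_{\delta}=1} y_{\delta}^{e_p(\delta)}\Big)\Big(\prod_{L_{\delta}=-1} y_{\delta}^{e_q(\delta)}-\prod_{L_{\delta}=1} y_{\delta}^{e_q(\delta)}\Big),$$
and for $\mathbf{L_T^{\bullet}}$ the correction term is $\big(1-\prod_{\delta} y_{\delta}^{e_p(\delta)}\big)\big(1-\prod_{\delta} y_{\delta}^{e_q(\delta)}\big)\neq 0$; it vanishes only under the coefficient-free specialisation $y_{\delta}=1$. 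Your horocycle argument (conjugation shifts signed distances additively) proves the identity for plain lambda lengths, not for the normalised variables with principal coefficients. Since $x_{\mathbf{L}}(\ell_p)=x_{\mathbf{L}}(\gamma)x_{\mathbf{L}}(\gamma^{(p)})$ and its analogue at $q$ do hold with coefficients (the paper uses them), your conclusion $x_{\mathbf{L_T^{\bullet}}}(\gamma^{(pq)})=x_{\mathbf{L_T^{\bullet}}}(\ell_p)x_{\mathbf{L_T^{\bullet}}}(\ell_q)/x_{\mathbf{L_T^{\bullet}}}(\gamma)^{3}$ is simply false here.

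Consequently the combinatorial identity you reduce to cannot hold either: if a weight-preserving, sign-free bijection showed that the $\gamma$-compatible-pair sum, multiplied by the cube of the matching generating function of $\mathcal{G}_{\gamma,T^{\circ}}$, equals the unrestricted product of the matching sums of $\mathcal{G}_{\ell_p,T^{\circ}}$ and $\mathcal{G}_{\ell_q,T^{\circ}}$, then combining the stated theorem with Theorem \ref{orientedpuncturearc} would force the naive multiplicative identity above, contradicting the nonzero correction term. The $\gamma$-compatibility condition correlates $P_p$ and $P_q$ precisely so that the sum does not factor; this is the whole point of the definition of $\overline{\overline{x}}$ and $\overline{\overline{y}}$. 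The actual MSW proof therefore cannot be, and is not, a formal elimination from the loop expansions: it works with the digon relation including its correction terms and shows combinatorially which matchings of the loop graphs survive, an analysis your plan omits (and the claim that the case $p=q$ goes through ``verbatim'' inherits the same gap). To repair your outline you would need either to restrict to the coefficient-free case, where your identity is valid, or to incorporate the correction terms explicitly into the matching bookkeeping.
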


\begin{prop}
\label{maximaldoublenotch}
Let ${P_p}_+$ and ${P_q}_+$ be the maximal matchings of $\mathcal{G}_{\ell_p,T^{\circ}}$ and $\mathcal{G}_{\ell_q,T^{\circ}}$, respectively. Then $({P_p}_+,{P_q}_+)$ is $\gamma$-compatible.

\end{prop}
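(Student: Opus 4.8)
The plan is to reduce this to the corresponding statement for singly notched arcs (Proposition \ref{maximalnotch}), together with a careful check that the two maximal matchings, which are each $\gamma$-symmetric, restrict \emph{compatibly} on a common copy of $\mathcal{G}_{\gamma,i}$. Recall that by Proposition \ref{maximalnotch} the maximal matching ${P_p}_+$ of $\mathcal{G}_{\ell_p,T^{\circ}}$ is $\gamma$-symmetric, so ${{P_p}_+}_{\vert_{\mathcal{H}_{\gamma,1}}} \cong {{P_p}_+}_{\vert_{\mathcal{H}_{\gamma,2}}}$; the same holds for ${P_q}_+$ on $\mathcal{G}_{\ell_q,T^{\circ}}$. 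By [Lemma 12.4, \cite{musiker2011positivity}] at least one of the two restrictions ${{P_p}_+}_{\vert_{\mathcal{G}_{\gamma,i}}}$ is actually a perfect matching of $\mathcal{G}_{\gamma,i}$; fix such an index $i$ for ${P_p}_+$ and such an index $j$ for ${P_q}_+$. What remains to verify is the identity ${{P_p}_+}_{\vert_{\mathcal{G}_{\gamma,i}}} \cong {{P_q}_+}_{\vert_{\mathcal{G}_{\gamma,j}}}$, which is the content of Definition \ref{compatiblepair}.

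The key observation is that the restriction of a maximal matching to a subgraph isomorphic to $\mathcal{G}_{\gamma,T^{\circ}}$ is intrinsically determined: the maximal matching $P_+$ of $\mathcal{G}_{\gamma,T^{\circ}}$ is the unique perfect matching realizing the full $y$-monomial of Proposition \ref{maximalpunctureorientable}. So I would argue that both ${{P_p}_+}_{\vert_{\mathcal{G}_{\gamma,i}}}$ and ${{P_q}_+}_{\vert_{\mathcal{G}_{\gamma,j}}}$ coincide, under the isomorphisms $\mathcal{G}_{\gamma,1} \cong \mathcal{G}_{\gamma,2} \cong \mathcal{G}_{\gamma,T^{\circ}}$, with the maximal matching $P_+$ of $\mathcal{G}_{\gamma,T^{\circ}}$. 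Concretely: since $\mathcal{G}_{\gamma,i}$ sits inside $\mathcal{G}_{\ell_p,T^{\circ}}$ as the first (or last) $d$ tiles, and $\mathcal{G}_{\ell_p,T^{\circ}}$ itself is built so that its maximal matching is "maximal tile-by-tile", the restriction of ${P_p}_+$ to those $d$ tiles must be the matching that orients every diagonal in the direction contributing $y_\gamma$ (away from the puncture, in the appropriate relative orientation) — and that is exactly $P_+$ on $\mathcal{G}_{\gamma,T^{\circ}}$. The same reasoning applies verbatim to ${P_q}_+$. Hence both restrictions equal $P_+$ under the identifications, giving ${{P_p}_+}_{\vert_{\mathcal{G}_{\gamma,i}}} \cong {{P_q}_+}_{\vert_{\mathcal{G}_{\gamma,j}}}$, and the pair is $\gamma$-compatible.

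The main obstacle I anticipate is making the phrase ``maximal matchings restrict to maximal matchings'' precise, i.e.\ pinning down exactly which embedded copies $\mathcal{G}_{\gamma,1}, \mathcal{G}_{\gamma,2}$ receive a genuine perfect matching from $P_+$ and checking that these restrictions genuinely agree combinatorially rather than merely abstractly. For orientable surfaces with principal coefficients this is implicit in the analysis of \cite{musiker2011positivity} (indeed Theorem \ref{doublynotchedarcorientable} would be vacuous without it), so one route is simply to quote that work for the orientable case and then lift to the orientable double cover $\overline{(S,M)}$: the loops $\ell_p, \ell_q$ and the arc $\gamma$ all lift, the snake graphs $\mathcal{G}_{\ell_p,T^{\circ}}$ etc.\ are the lifted snake graphs, maximality is preserved under lifting, and $\gamma$-compatibility of the lifted pair descends to $\gamma$-compatibility downstairs. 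I expect the cleanest writeup combines the double-cover reduction with the uniqueness characterization of $P_+$ from Proposition \ref{maximalpunctureorientable}, so that the whole argument is a couple of lines.
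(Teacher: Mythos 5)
Your first two paragraphs are exactly the paper's argument: invoke $\gamma$-symmetry of ${P_p}_+,{P_q}_+$ from Proposition \ref{maximalnotch}, then use the uniqueness of $P_+$ from Proposition \ref{maximalmatchingtriangulation} to force both restrictions to coincide with $P_+$ under the identifications $\mathcal{G}_{\gamma,1}\cong\mathcal{G}_{\gamma,2}\cong\mathcal{G}_{\gamma,T^{\circ}}$. The double-cover detour floated in your last paragraph is unnecessary; the combinatorial argument you already gave is precisely what the paper does.
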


\begin{proof}

Recall that by Lemma \ref{maximalnotch} we know ${P_p}_+$ and ${P_q}_+$ are $\gamma$-symmetric perfect matchings. Hence, by the definition of $\gamma$-symmetric and the uniqueness part of Lemma \ref{maximalmatchingtriangulation}, ${P_p}_+$ and ${P_q}_+$ must restrict to the maximal matching $P_+$ of $\mathcal{G}_{\gamma,T^{\circ}}$ at one of the endpoints of $\mathcal{G}_{\ell_p,T^{\circ}}$ and $\mathcal{G}_{\ell_q,T^{\circ}}$, respectively. This concludes the proof.

\end{proof}

\begin{thm}
\label{doublynotchedarc}
Let $(S,M)$ be a bordered surface which is not a once punctured closed surface nor a twice punctured orientable closed surface. Let $T^{\circ}$ be an ideal triangulation with corresponding tagged triangulation $T = i(T^{\circ})$, and let $\mathbf{L_T}$ be a principal lamination. Then for any doubly notched arc $\gamma^{(pq)}$ of $(S,M)$, if $\gamma$ is not in $T$ and $T$ contains no notched arcs at $p$ or $q$, we have the following expansion formula:

\begin{equation}
 x_{\mathbf{L_T}}(\gamma^{(pq)}) = \frac{1}{bad(\gamma^{(pq)},\mathbf{L_T^*})cross(\gamma^{(pq)}, T^{\circ})} {\displaystyle \sum_{(P_p,P_q)} \overline{\overline{x}}(P_p,P_q)\overline{\overline{y}}(P_p,P_q)}
\end{equation}

Where:

\begin{itemize}

\item The sum is over all $\gamma$-compatible pairs of $\gamma$-symmetric perfect matchings of the pair of snake graphs $(\mathcal{G}_{\ell_p, T^{\circ}},\mathcal{G}_{\ell_q, T^{\circ}})$.

\item $x_{\mathbf{L_T}}(\gamma^{(pq)})$ is the cluster variable (corresponding to $\gamma^{(pq)}$) in the quasi-cluster algebra, $\mathcal{A}_{\mathbf{L_T}}(S,M)$, associated to the principal lamination $\mathbf{L_T}$.
 
 \end{itemize}

\end{thm}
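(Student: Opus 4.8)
The plan is to mimic, almost verbatim, the strategy used for Theorem \ref{singlynotchedarc} and Theorem \ref{puncturearc}: settle the orientable case with principal coefficients by citing \cite{musiker2011positivity}, upgrade to an arbitrary principal lamination via Fomin--Zelevinsky's separation of additions, and descend from the orientable double cover to $(S,M)$ by dividing out the bad-encounter factor. First I would dispose of the base case: when $(S,M)$ is orientable and $\mathcal{A}$ has principal coefficients at $\Sigma_T$, the claim is exactly Theorem \ref{doublynotchedarcorientable} (here $bad(\gamma^{(pq)},\mathbf{L_T^*})=1$, since an orientable surface has no one-sided closed curves and so $\mathbf{L_T^*}=\mathbf{L_T}$). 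For a general bordered $(S,M)$ I would then lift $\gamma^{(pq)}$ to the doubly notched arc $\overline{\gamma}^{(pq)}$ on $\overline{(S,M)}$; a short check shows that the hypothesis on $(S,M)$ forces $\overline{(S,M)}$ not to be a once- or twice-punctured closed surface (once-punctured closed non-orientable surfaces are excluded from being bordered surfaces, and interior punctures lift two-to-one), so Theorem \ref{doublynotchedarcorientable} applies on $\overline{(S,M)}$ with the principal-coefficient lamination $\mathbf{L^{\bullet}_{\overline{T}}}$ and yields
$$x_{\mathbf{L^{\bullet}_{\overline{T}}}}(\overline{\gamma}^{(pq)}) = \frac{1}{cross(\overline{\gamma}^{(pq)}, \overline{T}^{\circ})} \sum_{(P_p,P_q)} \overline{\overline{x}}(P_p,P_q)\,\overline{\overline{y}}_{\mathbf{L^{\bullet}_{\overline{T}}}}(P_p,P_q),$$
the sum being over all $\gamma$-compatible pairs of $\gamma$-symmetric perfect matchings of $(\mathcal{G}_{\ell_p,\overline{T}^{\circ}},\mathcal{G}_{\ell_q,\overline{T}^{\circ}})$.

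Next I would run the separation-of-additions argument exactly as in the proof of Theorem \ref{arcexpansioncoefficients} and Theorem \ref{puncturearc} (Case 1), but with Proposition \ref{maximaldoublenotch} taking the place of Proposition \ref{maximalpunctureorientable}. For any principal lamination $\mathbf{L_T}$ of $(S,M)$, the lift $\mathbf{\overline{L}_T^*}$ of its quasi-principal lamination is a genuine principal lamination on $\overline{(S,M)}$, so all $y$-variables on $\overline{(S,M)}$ are of the form $y_j^{\pm 1}$; Proposition \ref{maximaldoublenotch} (together with Propositions \ref{maximalnotch} and \ref{maximalpunctureorientable}) singles out the maximal $\gamma$-compatible pair $({P_p}_+,{P_q}_+)$, whose coefficient monomial carries every diagonal $y$ once and every directed-radius $y$ with exponent $-1$. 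Feeding this into Theorem \ref{orientablesep}, the tropical evaluation of $x_{\mathbf{L^{\bullet}_{\overline{T}}}}(\overline{\gamma}^{(pq)})$ is precisely the monomial coming from $({P_p}_+,{P_q}_+)$, and the specialisation-and-division therefore replaces each $\overline{\overline{y}}_{\mathbf{L^{\bullet}_{\overline{T}}}}(P_p,P_q)$ by $\overline{\overline{y}}_{\mathbf{\overline{L}_T^*}}(P_p,P_q)$ with no spurious factor, giving
$$x_{\mathbf{\overline{L}_T^*}}(\overline{\gamma}^{(pq)}) = \frac{1}{cross(\overline{\gamma}^{(pq)}, \overline{T}^{\circ})} \sum_{(P_p,P_q)} \overline{\overline{x}}(P_p,P_q)\,\overline{\overline{y}}_{\mathbf{\overline{L}_T^*}}(P_p,P_q).$$

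Finally I would descend to $(S,M)$. By Theorem \ref{badarcexpansioncoefficients}, applied to the tagged arc $\gamma^{(pq)}$, we have $x_{\mathbf{L_T}}(\gamma^{(pq)}) = x_{\mathbf{L_T^*}}(\gamma^{(pq)})/bad(\mathbf{L_T^*},\gamma^{(pq)})$; and, by construction of $\mathbf{L_T^*}$, the variable $x_{\mathbf{L_T^*}}(\gamma^{(pq)})$ is obtained from $x_{\mathbf{\overline{L}_T^*}}(\overline{\gamma}^{(pq)})$ under the specialisation $x_{\overline{\gamma}_i}=x_{\tilde{\overline{\gamma}}_i}$ together with the matching identification of lifted $y$-variables. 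Under this specialisation $\mathcal{G}_{\ell_p,\overline{T}^{\circ}}$ and $\mathcal{G}_{\ell_q,\overline{T}^{\circ}}$ are identified with $\mathcal{G}_{\ell_p,T^{\circ}}$ and $\mathcal{G}_{\ell_q,T^{\circ}}$, so $\overline{\overline{x}}(P_p,P_q)$, $\overline{\overline{y}}_{\mathbf{\overline{L}_T^*}}(P_p,P_q)$ and $cross(\overline{\gamma}^{(pq)},\overline{T}^{\circ})$ pass to the corresponding quantities on $(S,M)$; dividing by $bad(\mathbf{L_T^*},\gamma^{(pq)})$ then produces the stated formula.

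The step I expect to be the main obstacle is the middle one: checking that separation of additions interacts cleanly with the cube $y\big({P_p}|_{\mathcal{G}_{\gamma,i}}\big)^{3}$ appearing in $\overline{\overline{y}}$ and with the directed-radius corrections from non-ordinary tiles. In the singly notched setting this bookkeeping is light because one can reroute through the already-established loop-and-radius expansions; here the denominator carries a third power of the restricted matching's coefficient monomial, and the $\gamma$-compatibility condition couples $P_p$ and $P_q$, so one must verify carefully that the tropical degree of $x_{\mathbf{L^{\bullet}_{\overline{T}}}}(\overline{\gamma}^{(pq)})$ is governed by the single pair $({P_p}_+,{P_q}_+)$ and that Proposition \ref{maximaldoublenotch} pins this down uniquely. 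A secondary, purely combinatorial point is to confirm that $bad(\mathbf{L_T^*},\gamma^{(pq)})$ behaves multiplicatively under the identification of the double-cover snake graphs with those on $(S,M)$; this, however, is the same verification as in the singly notched proof, now relativised to the pair of loops $\ell_p,\ell_q$.
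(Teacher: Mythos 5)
Your overall architecture coincides with the paper's: the orientable case is handled by applying the separation of additions formula (Theorem \ref{orientablesep}) to Theorem \ref{doublynotchedarcorientable} together with Proposition \ref{maximaldoublenotch}, and the non-orientable case is handled by lifting to the orientable double cover, where the lift $\mathbf{\overline{L}_T^*}$ of the quasi-principal lamination is a genuine principal lamination, so the orientable case applies to $\overline{\gamma}^{(pq)}$. Up to that point your proposal is essentially the paper's proof, and your worry about the cube in $\overline{\overline{y}}$ is treated no less tersely in the paper itself.

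The divergence, and the genuine gap, is the final descent from $x_{\mathbf{\overline{L}_T^*}}(\overline{\gamma}^{(pq)})$ to $x_{\mathbf{L_T}}(\gamma^{(pq)})$. You invoke Theorem \ref{badarcexpansioncoefficients} ``applied to the tagged arc $\gamma^{(pq)}$'', i.e. the identity $x_{\mathbf{L_T}}(\gamma^{(pq)}) = x_{\mathbf{L_T^*}}(\gamma^{(pq)})/bad(\mathbf{L_T^*},\gamma^{(pq)})$. That theorem is not available for notched arcs: its proof is an induction on flips in which the only exchange relations analysed are the Ptolemy relations of plain quadrilaterals (the case analysis of Figure \ref{differentexchangerelations}), and elsewhere in the paper it is only ever invoked for plain arcs and loops (the proof of Theorem \ref{singlynotchedarc} is explicit about this restriction). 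Indeed, for notched arcs the quantity $bad$ is not defined by a direct bad-encounter count but by ratios such as $bad(\mathbf{L_T^*},\gamma^{(p)}) = bad(\mathbf{L_T^*},\ell_p)/bad(\mathbf{L_T^*},\gamma)$, and the corresponding division identity for notched arcs is obtained in the paper as a \emph{consequence} of the generalised separation of additions for quasi-cluster algebras (Theorem \ref{separation}); that is precisely the tool the paper uses to finish the non-orientable case of the present theorem, and also to prove the later identity $x_{\mathbf{L_T}}(\gamma^{(pq)}) = x_{\mathbf{L_T^*}}(\gamma^{(pq)})/\bigl(bad(\mathbf{L_T^*},\gamma^{(p)})bad(\mathbf{L_T^*},\gamma^{(q)})\bigr)$. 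So as written your last step assumes a statement that is established only later and only via Theorem \ref{separation}. To repair it, either cite Theorem \ref{separation} at that point (as the paper does, accepting the forward reference to Section \ref{allpositivity}), or extend the flip induction of Theorem \ref{badarcexpansioncoefficients} to the tagged exchange relations (once-punctured digon flips, and flips at punctures where some $L_{\beta}^*$ is a one-sided closed curve), which is a nontrivial additional verification and also requires specifying a definition of $bad(\mathbf{L_T^*},\gamma^{(pq)})$ compatible with the one implicitly used in the statement.
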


\begin{proof}
As in the proof of Theorem \ref{singlynotchedarc} we shall split proof into two cases.

\textbf{Case 1}: $(S,M)$ is an orientable bordered surface. \newline The proof of this case follows analogous to the argument used for Theorem \ref{puncturearc}. Namely, with respect to an arbitrary principal lamination $\mathbf{L_T}$, one applies the separation of additions formulae to Theorem \ref{doublynotchedarcorientable} and employs Proposition \ref{maximaldoublenotch}.

\textbf{Case 2}: $(S,M)$ is a non-orientable bordered surface. \newline In this case we follow the usual procedure of lifting the picture to the orientable double cover $\overline{(S,M)}$. Note that since $(S,M)$ is not a once-punctured closed surface, $\overline{(S,M)}$ is not a once or twice punctured closed surface. Therefore we may apply Case 1 to obtain an expansion formula for the lifted tagged arc $\overline{\gamma}^{(pq)}$ with respect to the lifted principal quasi-lamination $\mathbf{\overline{L}_T^*}$. Consequently, we obtain 
\begin{equation}
x_{\mathbf{\overline{L}_T^*}}(\overline{\gamma}^{(pq)}) = \frac{1}{bad(\gamma^{(pq)},\mathbf{\overline{L}_T^*})cross(\gamma^{(pq)}, \overline{T}^{\circ})} {\displaystyle \sum_{(P_p,P_q)} \overline{\overline{x}}(P_p,P_q)\overline{\overline{y}}(P_p,P_q)}
\end{equation}

\noindent where the sum is over all $\gamma$-compatible pairs of $\gamma$-symmetric perfect matchings of the pair of snake graphs $(\mathcal{G}_{\ell_p, T^{\circ}},\mathcal{G}_{\ell_q, T^{\circ}})$. Applying Theorem \ref{separation} (which is proven in Section \ref{allpositivity}) completes the proof.

\end{proof}

\begin{rmk}

Note that in Theorem \ref{doublynotchedarc} we assume none of the tagged arcs $\gamma$, $\gamma^{(p)}$, $\gamma^{(q)}$ are in $T$. However, in view of Theorem \ref{singlynotchedarc}, the case where $\gamma^{(p)}$ or $\gamma^{(q)}$ is in $T$ has already been treated. Therefore our remaining task is to obtain an expansion for $x_{\mathbf{L_T}}(\gamma^{(pq)})$ when $\gamma$ is in $T$.
\end{rmk}

\begin{defn}

Let $\delta$ be a tagged arc of some bordered surface $(S,M)$. Suppose $p$ is a puncture of $(S,M)$. Then $e_p(\delta)$ is defined as the number of endpoints of $\delta$ incident to $p$.

\end{defn}

\begin{thm}
\label{doublynotchedarc2}
Let $(S,M)$ be an orientable bordered surface which is not a once or twice puncture closed surface. Let $T$ and $\mathbf{L_T}$ be as in Theorem \ref{doublynotchedarc}. Then for any doubly notched arc $\gamma^{(pq)}$ of $(S,M)$, if $\gamma$ is an arc in $T$ and $T$ contains no notched arcs at $p$ or $q$, we have the following expansion formula:

\begin{equation}
 x_{\mathbf{L_T}}(\gamma^{(pq)}) = \frac{x_{\mathbf{L_T}}(\gamma^{(p)})x_{\mathbf{L_T}}(\gamma^{(q)})y_{\gamma} + \displaystyle \Big(\prod_{\substack{\delta \in T \\ L_{\delta}=-1}}y_{\delta}^{e_p(\delta)}-\prod_{\substack{\delta \in T \\ L_{\delta}=1}}y_{\delta}^{e_p(\delta)}\Big)\Big(\prod_{\substack{\delta \in T \\ L_{\delta}=-1}}y_{\delta}^{e_q(\delta)}-\prod_{\substack{\delta \in T \\ L_{\delta}=1}}y_{\delta}^{e_q(\delta)}\Big)}{x_{\mathbf{L_T}}(\gamma)}
\end{equation}

\noindent where, by abuse of notation, $L_{\delta}=\pm 1$ is used as shorthand for $b_{T}(\delta,L_{\delta})=\pm 1$.

Moreover, the two negative terms in this expansion cancel with terms in $x_{\mathbf{L_T}}(\gamma^{(p)})x_{\mathbf{L_T}}(\gamma^{(q)})y_{\gamma}$. Consequently, the resulting expansion has positive coefficients.
\end{thm}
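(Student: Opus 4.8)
The plan is to derive the expansion of $x_{\mathbf{L_T}}(\gamma^{(pq)})$ when $\gamma\in T$ by a direct exchange-relation computation, exploiting the fact that $\gamma$ being in $T$ means $\gamma^{(p)}$, $\gamma^{(q)}$ and $\gamma^{(pq)}$ all live in seeds very close to the initial one. First I would recall the quadrilateral relation in a surface cluster algebra with principal lamination: flipping $\gamma$ at $p$ to pass from $\gamma$ to $\gamma^{(p)}$ (using that $T$ has no notched arc at $p$, so $p$ has a plain-tagged incident arc structure), one obtains an exchange relation of the form $x_{\mathbf{L_T}}(\gamma)x_{\mathbf{L_T}}(\gamma^{(p)}) = (\text{monomial}) + (\text{monomial})$, where the two $y$-monomials are exactly $\prod_{L_\delta=1}y_\delta^{e_p(\delta)}$ and $\prod_{L_\delta=-1}y_\delta^{e_p(\delta)}$ up to normalization — this is precisely the shear-coordinate bookkeeping for the loop $\ell_p$ around $p$ versus the notched arc $\gamma^{(p)}$. (Here one should think of $\ell_p$ as the flip partner; the identity $x_{\mathbf{L}}(\ell_p)=x_{\mathbf{L}}(\gamma)x_{\mathbf{L}}(\gamma^{(p)})$ together with the coefficient-free/principal expansion of $\ell_p$ pins down which $y$'s appear.) The same holds at $q$. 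Then I would write the exchange relation obtained by flipping one notch of $\gamma^{(pq)}$ — say, the relation realizing $\gamma^{(pq)}$ as the flip of $\gamma^{(q)}$ inside the appropriate quadrilateral, whose other diagonal is the twice-around loop, or directly the relation $x_{\mathbf{L_T}}(\gamma)x_{\mathbf{L_T}}(\gamma^{(pq)}) = (\cdots)$.

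The technical heart is to pin down all the $y$-monomials. I would set up the computation at the level of shear coordinates: compute $b_T(L_\delta,\cdot)$ evaluated against $\gamma$, $\ell_p$, $\ell_q$, $\gamma^{(p)}$, $\gamma^{(q)}$, $\gamma^{(pq)}$. The key combinatorial input is that the shear coordinate against a loop $\ell_p$ "doubles" the contribution of arcs incident to $p$ relative to $\gamma$, and the sign depends on whether $L_\delta$ turns $S$- or $Z$-shaped through the relevant quadrilateral — which is exactly what $b_T(\delta,L_\delta)=\pm1$ records in the principal case, hence the exponents $e_p(\delta)$ and $e_q(\delta)$ and the two competing products. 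Once the $y$'s for the relevant exchange relations are identified, the claimed formula should follow by substituting the relation for $x_{\mathbf{L_T}}(\gamma^{(p)})x_{\mathbf{L_T}}(\gamma^{(q)})$ (product of the two single-notch relations) into the exchange relation for $x_{\mathbf{L_T}}(\gamma^{(pq)})$ and simplifying, with $x_{\mathbf{L_T}}(\gamma)$ appearing in the denominator because $\gamma\in T$ is an initial cluster variable.

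For the final sentence — the cancellation of the two negative terms and hence positivity — I would argue as follows. Expand $x_{\mathbf{L_T}}(\gamma^{(p)})x_{\mathbf{L_T}}(\gamma^{(q)})y_{\gamma}$ using the single-notch exchange relations $x_{\mathbf{L_T}}(\gamma)x_{\mathbf{L_T}}(\gamma^{(p)}) = x(\text{sides})\big(\prod_{L_\delta=1}y_\delta^{e_p(\delta)} + \prod_{L_\delta=-1}y_\delta^{e_p(\delta)}\big)$ (and similarly at $q$); the cross-term when multiplying the two binomials produces exactly $\prod_{L_\delta=1}y_\delta^{e_p(\delta)}\prod_{L_\delta=-1}y_\delta^{e_q(\delta)} + \prod_{L_\delta=-1}y_\delta^{e_p(\delta)}\prod_{L_\delta=1}y_\delta^{e_q(\delta)}$, which are precisely the two monomials one gets when expanding the product $\big(\prod_{L_\delta=-1}y_\delta^{e_p(\delta)}-\prod_{L_\delta=1}y_\delta^{e_p(\delta)}\big)\big(\prod_{L_\delta=-1}y_\delta^{e_q(\delta)}-\prod_{L_\delta=1}y_\delta^{e_q(\delta)}\big)$ with a $+$ sign. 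Hence those two negative contributions cancel termwise, and what remains is a sum of Laurent monomials with nonnegative coefficients (dividing by $x_{\mathbf{L_T}}(\gamma)$ only shifts the $\gamma$-exponent). I expect the main obstacle to be the sign/shear-coordinate bookkeeping in the previous paragraph — correctly matching the $S$/$Z$ pattern of $L_\delta$ through the quadrilaterals cut out by $\ell_p$ and $\ell_q$ to the exponents $e_p(\delta), e_q(\delta)$ and the "$L_\delta=\pm1$" split — since an error there would flip a sign and destroy the cancellation; everything after that is a routine binomial expansion.
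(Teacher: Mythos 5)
Your approach is genuinely different from the paper's, and it has a gap at the key step. The paper's proof is a one-line deferral: it cites the proof of [Proposition~4.21, \cite{musiker2011positivity}] for the principal-coefficient case (where Musiker--Schiffler--Williams establish the Ptolemy-type identity and the cancellation via a careful analysis of $\gamma$-symmetric matchings of $\mathcal{G}_{\ell_p,T^\circ}$ and $\mathcal{G}_{\ell_q,T^\circ}$), and then applies Fomin--Zelevinsky's separation of additions (Theorem~\ref{orientablesep}) to transport the formula from principal coefficients to an arbitrary principal lamination $\mathbf{L_T}$. You instead try to rederive the whole identity from scratch by writing down exchange relations.

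The gap: your claimed single-notch relation $x_{\mathbf{L_T}}(\gamma)x_{\mathbf{L_T}}(\gamma^{(p)}) = x(\text{sides})\bigl(\prod_{L_\delta=1}y_\delta^{e_p(\delta)} + \prod_{L_\delta=-1}y_\delta^{e_p(\delta)}\bigr)$ is not an exchange relation and does not hold in general. We have $x_{\mathbf{L_T}}(\gamma)x_{\mathbf{L_T}}(\gamma^{(p)}) = x_{\mathbf{L_T}}(\ell_p)$, and $\ell_p$ is a plain arc whose Laurent expansion has one term per perfect matching of $\mathcal{G}_{\ell_p,T^\circ}$; this is a binomial only in the very special case where $\gamma$ is the interior arc of a once-punctured digon in $T$ (so $\ell_p\in T$ or crosses a single arc of $T$). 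The hypothesis here is merely $\gamma\in T$, so $p$ can have arbitrary degree in $T$ and $\ell_p$ can cross many arcs. Since the supposed binomial is the engine of your cancellation argument, that argument does not get off the ground: one would instead have to locate two specific terms inside the full Laurent expansion of $x_{\mathbf{L_T}}(\gamma^{(p)})x_{\mathbf{L_T}}(\gamma^{(q)})y_\gamma$ (corresponding to the extremal $\gamma$-symmetric matchings) and show they occur with coefficient~$1$ and the correct $y$-weight. That identification is exactly the non-trivial content of MSW's Proposition~4.21, and it cannot be replaced by a naive quadrilateral flip. Even granting the principal-coefficient identity, you would then still need the separation-of-additions step (or an equivalent) to pass to a general $\mathbf{L_T}$, since the $y_\delta$ appearing in the statement are specialized according to the signs $b_T(\delta,L_\delta)=\pm1$ — a dependence your sketch acknowledges but does not actually account for beyond ``shear-coordinate bookkeeping.''
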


\begin{proof}

When $L_T$ is the principal lamination corresponding to principal laminations at $(S,M)$ the statement is found in (the proof of) [Proposition 4.21, \cite{musiker2011positivity}]. The statement then follows by an application of Theorem \ref{orientablesep}.

\end{proof}

\begin{thm}

Let $(S,M)$, $T$ and $\mathbf{L_T}$ be as in Theorem \ref{doublynotchedarc}. Then for any doubly notched arc $\gamma^{(pq)}$ of $(S,M)$, if $\gamma$ is an arc in $T$ and $T$ contains no notched arcs at $p$ or $q$, we have the following expansion formula:

\begin{equation}
 x_{\mathbf{L_T}}(\gamma^{(pq)}) = \frac{x_{\mathbf{L_T^*}}(\gamma^{(pq)})}{bad(\mathbf{L_T^*},\gamma^{(p)})bad(\mathbf{L_T^*},\gamma^{(q)})}
\end{equation}

\end{thm}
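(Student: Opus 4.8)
The plan is to reduce this statement to results already established, since this is a "factor out the bad encounters" lemma of exactly the same flavour as Theorem \ref{badarcexpansioncoefficients} (the case where $\gamma$ is not in $T$) and Theorem \ref{badsplitproductcoeffcient}. First I would recall the setup: $\gamma^{(pq)}$ is a doubly notched arc whose underlying plain arc $\gamma$ lies in $T$, and we want to compare its laminated lambda length with respect to the principal lamination $\mathbf{L_T}$ against the one with respect to the quasi-principal lamination $\mathbf{L_T^*}$. The natural first move is to express $x_{\mathbf{L_T}}(\gamma^{(pq)})$ using Theorem \ref{doublynotchedarc2}, which gives an explicit formula in terms of $x_{\mathbf{L_T}}(\gamma^{(p)})$, $x_{\mathbf{L_T}}(\gamma^{(q)})$, $x_{\mathbf{L_T}}(\gamma)$, and coefficient monomials — but that theorem is stated for orientable $(S,M)$, so one must either lift to the double cover $\overline{(S,M)}$ or re-derive the exchange relation for $\gamma^{(pq)}$ in the non-orientable setting. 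I would favour the exchange-relation approach: there is an exchange relation (from flipping, cf. Definition \ref{flip} and the flip figures, or from the skein-type relations implicit in the notched-arc constructions) relating $x_{\mathbf{L}}(\gamma^{(pq)})$ to $x_{\mathbf{L}}(\gamma^{(p)})$, $x_{\mathbf{L}}(\gamma^{(q)})$, $x_{\mathbf{L}}(\gamma)$ and frozen variables, valid for any multi-lamination $\mathbf{L}$.

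The key steps, in order, would be: (1) Write down the exchange relation expressing $x_{\mathbf{L}}(\gamma^{(pq)})$ in terms of the singly notched arcs and $\gamma$, valid simultaneously for $\mathbf{L} = \mathbf{L_T}$ and $\mathbf{L} = \mathbf{L_T^*}$; since $\gamma \in T$, this relation's shape is controlled by the local geometry near $p$ and $q$ only. (2) Apply Theorem \ref{singlynotchedarc} (or rather the already-proven factorisation $x_{\mathbf{L_T}}(\delta) = x_{\mathbf{L_T^*}}(\delta)/bad(\mathbf{L_T^*},\delta)$ for $\delta = \gamma^{(p)}, \gamma^{(q)}$, which follows from Theorem \ref{puncturearc} applied to loops and the definition of $bad$ for singly notched arcs) together with Theorem \ref{badarcexpansioncoefficients} applied to the plain arc $\gamma$ — note $bad(\mathbf{L_T^*},\gamma) = 1$ since $\gamma \in T$ and a triangulation arc has no bad encounters with the quasi-laminations of $T$'s own arcs when the underlying arc is present plain. (3) Track how the coefficient monomials appearing in the $\gamma^{(pq)}$ exchange relation change when passing from $\mathbf{L_T}$ to $\mathbf{L_T^*}$: each quasi-lamination $L_\beta^*$ contributes a factor of $y_\beta$ to $bad(\mathbf{L_T^*},\delta)$ exactly when $\delta$ has a bad encounter with $L_\beta^*$, and one checks the $y$-exponents in the exchange relation are consistent with the multiplicativity $bad(\mathbf{L_T^*},\gamma^{(pq)}) = bad(\mathbf{L_T^*},\gamma^{(p)})bad(\mathbf{L_T^*},\gamma^{(q)})$ — this last identity being the analogue of Lemma \ref{badencountermobius} for the monogon/loop configuration rather than the M\"obius strip. (4) Combine: substituting the factorisations from step (2) into the exchange relation of step (1) and using the bookkeeping of step (3), every $bad$ factor cleanly collects into $bad(\mathbf{L_T^*},\gamma^{(p)})bad(\mathbf{L_T^*},\gamma^{(q)})$ out front, yielding the claimed formula.

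The main obstacle I expect is step (3): verifying that the coefficient monomials in the doubly notched exchange relation transform in precisely the way needed so that no stray $y_\beta$ factors remain. This is because the exchange relation for $\gamma^{(pq)}$ (unlike a simple quadrilateral flip) involves a sum of several monomials — including the subtracted terms visible in Theorem \ref{doublynotchedarc2} — and one must confirm that \emph{each} monomial picks up the \emph{same} overall power of each $y_\beta$ when passing $\mathbf{L_T} \to \mathbf{L_T^*}$, so that the common factor can be pulled out. The cleanest way to discharge this is probably to argue it structurally: a bad encounter of $\gamma^{(pq)}$ with $L_\beta^*$ occurs near $p$ or near $q$ (or is a homotopy of a one-sided closed curve, which cannot happen here since $\gamma^{(pq)}$ is an arc), the encounters near $p$ are in natural bijection with those of $\gamma^{(p)}$ and those near $q$ with those of $\gamma^{(q)}$, and each such encounter scales every term of the exchange relation uniformly — an observation one can check against the finite list of local configurations (analogous to Figure \ref{differentexchangerelations} and Figure \ref{badencounterclassification}). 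Once this multiplicativity and uniformity is in hand, the rest is a short algebraic manipulation identical in spirit to the proof of Theorem \ref{badarcexpansioncoefficients}.
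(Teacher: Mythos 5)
Your proposal correctly identifies two of the three ingredients the paper invokes --- Theorem \ref{singlynotchedarc} for the singly-notched factorisations, and Theorem \ref{doublynotchedarc2} for the doubly-notched exchange relation --- but you omit the third, Theorem \ref{separation}, which the paper's one-line proof uses as its workhorse. You propose to replace it with a direct bookkeeping argument (your step (3)), and this is a genuinely different route, but one that faces real obstacles you haven't fully resolved. First, Theorem \ref{doublynotchedarc2} is only stated for \emph{orientable} $(S,M)$ and for an actual principal lamination $\mathbf{L_T}$, so obtaining the analogous exchange relation on the non-orientable surface, and especially for the \emph{quasi-}principal lamination $\mathbf{L_T^*}$ (whose member $L_\beta^*$ is a self-intersecting curve, not a lamination), already requires lifting to $\overline{(S,M)}$ and descending --- which is precisely the content of Theorem \ref{separation}. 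Second, the coefficient term in Theorem \ref{doublynotchedarc2} is a product of two \emph{binomials}, not a single monomial, and its shape is governed by the shear coordinates $b_T(\delta,L_\delta)$ of the laminations with respect to $T$; since $b_T(\delta,L_\delta)=b_T(\delta,L_\delta^*)$ for all $\delta\in T$ at the initial triangulation, it is not immediately clear that passing $\mathbf{L_T}\to\mathbf{L_T^*}$ rescales that coefficient term by $bad(\mathbf{L_T^*},\gamma^{(p)})\,bad(\mathbf{L_T^*},\gamma^{(q)})$ at all, let alone that each term scales uniformly. Your structural heuristic (``encounters near $p$ pair with $\gamma^{(p)}$, encounters near $q$ with $\gamma^{(q)}$'') is plausible here because $\gamma\in T$ confines all bad encounters to neighbourhoods of $p$ and $q$, but you would still need a case analysis on the order of Figures \ref{differentexchangerelations} and \ref{badencounterclassification} for the more complicated doubly-notched exchange relation to carry it through. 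The paper avoids all of this by letting the separation formula handle the coefficient specialisation uniformly: both $x_{\mathbf{L_T}}(\gamma^{(pq)})$ and $x_{\mathbf{L_T^*}}(\gamma^{(pq)})$ are specialisations of the same principal-coefficient variable $X_{\gamma^{(pq)}}$ on the double cover, so the comparison reduces to comparing the tropical denominators, which Theorems \ref{singlynotchedarc} and \ref{doublynotchedarc2} already control. If you want to keep your direct route, the missing step is to prove that the exchange relation of Theorem \ref{doublynotchedarc2} holds verbatim for $\mathbf{L_T^*}$ and to verify the binomial-scaling claim explicitly; otherwise you should re-introduce Theorem \ref{separation} as the paper does.
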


\begin{proof}

This follows from Theorem \ref{singlynotchedarc}, Theorem \ref{doublynotchedarc2} and an application of Theorem \ref{separation}.

\end{proof}

\subsection{Expansion formulae for one-sided closed curves}

\begin{thm}
\label{closedcurveexpansioncoefficients3}
Let $(S,M)$ be a bordered surface and let $T^{\circ}$ be an ideal triangulation with corresponding tagged triangulation $T = i(T^{\circ})$. Consider a principal lamination $\mathbf{L_T}$ and let $\mathcal{A}_{\mathbf{L_T}}$ be the corresponding quasi-cluster algebra with respect to the seed $\Sigma_T$. Then for any one-sided closed curve $\alpha$ we have:

\begin{equation}
\label{onesidedbadexpansion}
x_{\mathbf{L_T}}(\alpha) = \frac{1}{bad(\mathbf{L_T},\alpha)cross(\alpha, T)} {\displaystyle \sum_{P} x(P)y_{\mathbf{L_T^*}}(P)}
\end{equation}

Where:

\begin{itemize}

\item The sum is over all good matchings of the band graph $S_{\alpha, T}$.

\item $x_{\mathbf{L_T}}(\alpha)$ is the cluster variable (corresponding to $\alpha$) in the quasi-cluster algebra, $\mathcal{A}_{\mathbf{L_T}}(S,M)$, associated to the principal lamination $\mathbf{L_T}$.
 
 \end{itemize}

\end{thm}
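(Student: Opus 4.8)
The plan is to mimic the argument used for Theorem \ref{closedcurveexpansioncoefficients2}, but now allowing $T^{\circ}$ to contain self-folded triangles. The backbone of the proof is the multiplicative relation $x_{\mathbf{L_T}}(\gamma) = x_{\mathbf{L_T}}(\alpha)x_{\mathbf{L_T}}(\beta)$, valid whenever $\gamma$ is an arc enclosing the one-sided closed curve $\alpha$ in a M\"{o}bius strip $M_1$ with unique interior arc $\beta$. First I would invoke Lemma \ref{arcexistence} to choose the enclosing arc $\gamma$ (and hence $\beta$) so that no arc of $T$ has a Type II intersection with $\gamma$; this makes the snake graph decomposition of Proposition \ref{decomposition} available. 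Since both $\gamma$ and $\beta$ are now genuine plain arcs (loops are allowed), Theorem \ref{puncturearc} applies to each of them and gives
\begin{equation*}
x_{\mathbf{L_T}}(\delta) = \frac{1}{bad(\delta, T^{\circ})\,cross(\delta, T^{\circ})}\sum_{P} x(P)\,y_{\mathbf{L_T}}(P) \qquad \text{for } \delta \in \{\gamma,\beta\}.
\end{equation*}
Here $bad(\gamma,T^{\circ})$ and $bad(\beta,T^{\circ})$ denote $bad(\mathbf{L_T^*},\gamma)$ and $bad(\mathbf{L_T^*},\beta)$; I will keep the compressed notation from Theorem \ref{puncturearc} to avoid clutter.

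The combinatorial heart of the argument is to transport the three factorisation statements established in Section 5 to the present setting: the snake-graph bijection $\Phi$ of Proposition \ref{welldefined}, the multiplicativity of weight monomials $x(P) = x(P_1)x(P_2)$ (Proposition \ref{splitproduct}), the multiplicativity (up to a $y_\beta$ correction) of coefficient monomials $y_{\mathbf{L_T^*}}(P) = y_{\mathbf{L_T^*}}(P_1)y_{\mathbf{L_T^*}}(P_2)$ or $y_{\mathbf{L_T^*}}(P_1)/y_\beta$ (Proposition \ref{badsplitproductcoeffcient}), the crossing-monomial identity $cross(T^{\circ},\gamma) = cross(T^{\circ},\alpha)\,cross(T^{\circ},\beta)$ (Corollary \ref{crossings}), and the $bad$-monomial identity of Lemma \ref{badencountermobius}. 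All of these were proved for ideal triangulations without self-folded triangles. The key observation is that none of them actually used the absence of self-folded triangles in an essential way: $\Phi$ is defined purely in terms of the sub-snake-graph decomposition of $S_{\gamma,T^{\circ}}$ into $S_{\alpha,T^{\circ}}$ and $S_{\beta,T^{\circ}}$, which continues to hold verbatim when some of the tiles involved are non-ordinary (the decomposition in Proposition \ref{decomposition} is local to the triangles $\Delta_j$ of $T^{\circ}$ that $\gamma$ crosses, and a non-ordinary tile is just a glued pair of triangles that $\gamma$ passes through without any new choice being made). The Zig-zag Lemma \ref{zigzag} is a statement about abstract zig-zag snake graphs and their sign functions, so it applies unchanged; the only point requiring care is that the $\mathbf{L_T}$-oriented/$\mathbf{L_T}$-directed dichotomy in the coefficient monomial now includes radii of self-folded triangles, but since $\gamma$ encloses $\alpha$ in an $M_1$ and $\beta$ is the interior arc of that $M_1$, any self-folded triangle of $T^{\circ}$ that $\gamma$ crosses is also crossed by $\alpha$ or $\beta$ in exactly the same way, so the decomposition respects the $y_{\gamma_{i_j}^{(p)}}^{-1}$ factors just as it respects the ordinary $y_{\gamma_{i_j}}$ factors. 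I would state this as a lemma (or simply a remark) that Propositions \ref{welldefined}, \ref{splitproduct}, \ref{badsplitproductcoeffcient}, Corollary \ref{crossings}, and Lemma \ref{badencountermobius} remain valid for $T^{\circ}$ with self-folded triangles, with the coefficient monomial interpreted as in the current subsection.

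Granting this, the computation is a direct copy of the proof of Theorem \ref{closedcurveexpansioncoefficients2}. Using $x_{\mathbf{L_T}}(\gamma) = x_{\mathbf{L_T}}(\alpha)x_{\mathbf{L_T}}(\beta)$ and Theorem \ref{puncturearc} applied to $\gamma$ and $\beta$,
\begin{align*}
x_{\mathbf{L_T}}(\alpha)\cdot\frac{1}{bad(\beta,T^{\circ})cross(\beta,T^{\circ})}\sum_{P\in\mathcal{P}_{T,\beta}} x(P)y_{\mathbf{L_T^*}}(P) &= x_{\mathbf{L_T}}(\gamma) \\ &= \frac{1}{bad(\gamma,T^{\circ})cross(\gamma,T^{\circ})}\sum_{P\in\mathcal{P}_{T,\gamma}} x(P)y_{\mathbf{L_T^*}}(P).
\end{align*}
Applying the (now-valid) Proposition \ref{splitproduct} and Proposition \ref{badsplitproductcoeffcient} to split the right-hand sum as a product of the $\alpha$-sum and the $\beta$-sum, then cancelling the common $\beta$-sum (which is a nonzero Laurent polynomial, since it equals $bad(\beta,T^{\circ})cross(\beta,T^{\circ})x_{\mathbf{L_T}}(\beta)$), and finally using Corollary \ref{crossings} together with Lemma \ref{badencountermobius} to simplify $\dfrac{bad(\beta,T^{\circ})\,cross(\beta,T^{\circ})}{bad(\gamma,T^{\circ})\,cross(\gamma,T^{\circ})}$ into $\dfrac{1}{bad(\mathbf{L_T},\alpha)\,cross(\alpha,T)}$, yields the claimed formula. (In the case distinction of Lemma \ref{badencountermobius} and Proposition \ref{badsplitproductcoeffcient}, the stray $y_\beta$ factors cancel between numerator and denominator, so the final expression is uniform.) The main obstacle I anticipate is precisely the verification in the previous paragraph: one must check carefully that the snake-graph decomposition $S_{\gamma,T^{\circ}} = S_{\alpha,T^{\circ}} \sqcup S_{\beta,T^{\circ}}$ of Proposition \ref{decomposition}, and the bijection $\Phi$ built from it, behave correctly at non-ordinary tiles — in particular that the gluing rules of Figure \ref{nonordinarytile} are compatible with cutting $S_{\gamma,T^{\circ}}$ at the edges $e_s$ and $e_t$, and that the coefficient monomial's radius contributions $y_{\gamma_{i_j}^{(p)}}^{-1}$ distribute over the decomposition. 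Once this bookkeeping is in place, no genuinely new idea is needed beyond what already appears in Section 5.
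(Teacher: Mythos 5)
Your proposal is correct and follows essentially the same route as the paper: the paper's own proof simply observes that Propositions \ref{splitproduct} and \ref{badsplitproductcoeffcient} and Lemma \ref{badencountermobius} remain valid for ideal triangulations with self-folded triangles and then reruns the argument of Theorem \ref{closedcurveexpansioncoefficients2}, which is precisely the extension-plus-replication you carry out (your verification that the decomposition of Proposition \ref{decomposition} and the bijection $\Phi$ behave correctly at non-ordinary tiles is the same point the paper asserts, just spelled out in more detail).
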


\begin{proof}

Note that our proofs of Proposition \ref{splitproduct}, Proposition \ref{badencountermobius} and Proposition \ref{badsplitproductcoeffcient} also hold in the setting of ideal triangulations with self-folded triangles. Therefore we may employ the same argument used in the proof of Theorem \ref{closedcurveexpansioncoefficients2}.

\end{proof}

\section{Extending results to all quasi-triangulations}

So far we have only considered expansion formulae for tagged quasi-arcs with respect to an initial triangulation. We shall now explain how one can use this to obtain expansion formulae with respect to any quasi-triangulation. First we need to extend our notion of a principal lamination to quasi-triangulations. Recall that for a fixed quasi-triangulation $T$, each one-sided closed curve $\alpha \in T$ intersects precisely one arc $\beta_{\alpha} \in T$. Moreover, there exists a unique tagged arc $\gamma_{\alpha}$ enclosing $\alpha$ and $\beta_{\alpha}$ in a M\"{o}bius strip with one marked point on the boundary. We define the \textit{\textbf{associated triangulation}} of $T$ as follows: $$T^{\otimes} := T\setminus\{\alpha | \substack{\text{$\alpha \in T$ is a one-sided} \\ \text{closed curve}}\}\cup \{\gamma_{\alpha} | \substack{\text{$\alpha \in T$ is a one-sided} \\ \text{closed curve}}\}$$

\begin{defn}

Let $T$ be a quasi-triangulation with associated triangulation $T^{\otimes}$. We define a \textit{\textbf{principal lamination}} $\mathbf{L_T}$ of $T$ to be a principal lamination $\mathbf{L_{T^{\otimes}}}$ of $T^{\otimes}$.

\end{defn}

Let $T$ be a quasi-triangulation of a non-orientable bordered surface $(S,M)$. Then for any tagged quasi-arc $\delta$, using the results of Section \ref{alltriangulations}, we may obtain an expansion of $x_{\mathbf{L_T}}(\delta)$ with respect to the initial collection of variables $\{x_{\mathbf{L_T}}(\gamma_i) | \gamma_i \in T^{\otimes}\}$. Finally, recall that $$x_{\mathbf{L_T}}(\gamma_{\alpha}) = x_{\mathbf{L_T}}(\alpha)x_{\mathbf{L_T}}(\beta_{\alpha})$$ for each one-sided closed curve $\alpha$. Consequently, making the substitution $x_{\mathbf{L_T}}(\gamma_{\alpha}) \rightarrow x_{\mathbf{L_T}}(\alpha)x_{\mathbf{L_T}}(\beta_{\alpha})$ for each $\gamma_{\alpha} \in T^{\otimes}$ provides us with a positive Laurent polynomial expansion of $x_{\mathbf{L_T}}(\delta)$ with respect to the initial seed $\Sigma_T$.

\section{Positivity for quasi-cluster algebras with arbitrary coefficients}
\label{allpositivity}
In order to prove positivity for all coefficient systems, not just those coming from principal laminations, we shall prove that Fomin and Zelevinsky's `\textit{Separation of additions}' formula extends to quasi-cluster algebras. To achieve this we shall need a modified version of their Y-dynamics.

\subsection{Y-dynamics for quasi-cluster algebras}

Let $\mathbb{T}_n$ be the (labelled) $n$-regular tree where edges are labelled by the numbers $1,\ldots, n$ such that the $n$ edges incident to a vertex receive different labels.

\begin{defn}
\label{labelledseedpattern}
A \textit{\textbf{(labelled) seed pattern}} on $\mathbb{T}_n$ is an assignment of a (labelled) quasi-cluster algebra seed to each vertex in $\mathbb{T}_n$, such that if two seeds are connected by an edge labelled $k$, then they are related by a mutation in direction $k$.

\end{defn}

Let us fix an initial seed $t_0 \in \mathbb{T}_n$ which corresponds to a triangulation $T_0$ of $(S,M)$. We shall denote by $\mathbb{T}_n^\Delta$ the subgraph of  $\mathbb{T}_n$ consisting of all seeds corresponding to triangulations, which are connected to $t_0$ via sequences of mutations corresponding to flips between triangulations.

\begin{defn}
\label{positivenegative}
For each $t \in \mathbb{T}_n^\Delta$ note that the collection of exchange polynomials, $F_t := \{F_{1;t},\ldots, F_{n;t}\}$, corresponding to $t$ are binomials. For each binomial $F_{j;t}$ we make a choice regarding which monomial is \textit{\textbf{positive}} and which monomial is \textit{\textbf{negative}}. We indicate such a choice using the following notation: $$F_{j;t} = M_{j;t}^{+} + M_{j;t}^{-}$$

The \textit{\textbf{$\textbf{Y}$-seed}} at $t$ is then defined as $Y_{t} := \{Y_{1:t},\ldots, Y_{n;t}\}$ where: $$ Y_{j:t} := \frac{M_{j;t}^{+}}{M_{j;t}^{-}}.$$

\end{defn}

\begin{rmk}
\label{uniquematrix}
Recall that the shortened exchange matrix of a seed is only defined up to multiplication of its columns by $-1$. However, with respect to our choice of a `positive' and 'negative' monomial for each $F_{j;t} \in F_t$ we can uniquely assign a \textit{\textbf{shortened exchange}} matrix $\overline{B}_t = (\overline{b}_{ij})_{\substack{1 \leq i \leq m \\ 1 \leq j \leq n}}$ to the seed $t$ such that for each $j \in \{1,\ldots,n\}$ we have: $$ M_{j;t}^{+} = \displaystyle \prod_{i=1}^{m} x_{i;t}^{[\overline{b}_{ij}]_+} \hspace{10mm} \text{and} \hspace{10mm} M_{j;t}^{-} = \displaystyle \prod_{i=1}^{m} x_{i;t}^{[-\overline{b}_{ij}]_+}.$$ \newline \indent With respect to this notation, our \textit{Y-variables} in the seed $t$ may be written as: $$ Y_{j:t} = \displaystyle \prod_{i=1}^{m} x_{i;t}^{\overline{b}_{ij}}.$$

\end{rmk}

\begin{defn}
\label{prepostmultiplication}
By [Lemma 6.23,\cite{wilson2018laurent}], to perform matrix mutation on $\overline{B}_t$, in a direction $k$, we may need to multiply some of its columns indexed by $\{1,\ldots,n\}\setminus \{k\}$ by $-1$ to obtain $\overline{B}_t^* = (\epsilon_{j;t}\overline{b}_{ij})_{\substack{1 \leq i \leq m \\ 1 \leq j \leq n}}$, which is mutable at $k$ -- we call this \textbf{\textit{pre-multiplication}} with respect to $k$. \newline \indent Similarly, we need to multiply the columns of $\mu_k(\overline{B}_t^*)$ indexed by $j \in \{1,\ldots,n\}$ by $\mu_{j;t} \in \{1,-1\}$ to obtain $\overline{B}_t$ (see Remark \ref{uniquematrix}) -- we call this \textbf{\textit{post-multiplication}} with respect to $k$.

\end{defn}

\begin{prop}
\label{ymutation}
Let $t$ and $t'$ be seeds in $\mathbb{T}_n^{\Delta}$ which are connected by an edge $k$. With respect to our choices and notation discussed in Definition \ref{positivenegative} and Remark \ref{uniquematrix}, the $Y$-seeds $Y_t = \{Y_{1,t},\ldots, Y_{n;t}\}$ and $Y_{t'} = \{Y_{1;t'} \ldots Y_{n;t'}\}$ are related as follows:

\[  Y_{j;t'}^{\mu_{j;t}}  = \left\{
\begin{array}{ll}
         {Y_{k;t}^{-1}}, &j=k. \\
        \\
        {Y_{j;t}^{\epsilon_{j;t}}}{Y_{k;t}^{[\overline{b}_{kj}]_+}}{({Y_{k;t}^{\epsilon_{j;t}}}+1)^{-\epsilon_{j;t}\overline{b}_{kj}}}, &j\neq k.
\end{array} 
\right. \]

Where: \begin{itemize}

\item $\overline{B}_t = (\overline{b}_{ij})_{\substack{1 \leq i \leq m \\ 1 \leq j \leq n}}$

\item $\epsilon_{j;t}$ (resp. $\mu_{j;t})$ arises from the required pre-multiplication (resp. post-multiplication) occurring in matrix mutation -- see Definition \ref{prepostmultiplication}.

\end{itemize}

\end{prop}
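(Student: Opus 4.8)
The plan is to reduce Proposition \ref{ymutation} to the classical $Y$-seed mutation rule for cluster algebras, exploiting the fact that every seed in $\mathbb{T}_n^{\Delta}$ has binomial exchange polynomials (so locally the combinatorics are exactly those of a cluster algebra), and then to carefully track the sign bookkeeping introduced by the pre- and post-multiplications of Definition \ref{prepostmultiplication}. First I would recall from Remark \ref{uniquematrix} that, once a choice of positive/negative monomial is fixed for each $F_{j;t}$, the $Y$-variables are the Laurent monomials $Y_{j;t} = \prod_{i=1}^{m} x_{i;t}^{\overline{b}_{ij}}$, so the whole statement is an identity between monomials in the $x_{i;t}$ (equivalently, an identity of exponent vectors). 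The edge $k$ in $\mathbb{T}_n^{\Delta}$ gives a flip between triangulations, under which the cluster variable $x_{k;t'} = x_{k;t}^{-1}(M_{k;t}^+ + M_{k;t}^-)$ and $x_{i;t'} = x_{i;t}$ for $i \neq k$; this is the only relation between the two coordinate systems I will need.

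The core computation I would carry out is: start from $\overline{B}_t$, pre-multiply the columns $j \neq k$ by $\epsilon_{j;t}$ to obtain the $k$-mutable matrix $\overline{B}_t^*$, apply ordinary matrix mutation $\mu_k$, then post-multiply columns by $\mu_{j;t}$ to recover $\overline{B}_{t'}$. Writing $Y_{j;t'}^{\mu_{j;t}} = \prod_i x_{i;t'}^{\mu_{j;t}\overline{b}_{ij;t'}} = \prod_i x_{i;t'}^{(\mu_k(\overline{B}_t^*))_{ij}}$ and substituting the Fomin--Zelevinsky mutation formula $(\mu_k(\overline{B}_t^*))_{ij} = -\epsilon_{j;t}\overline{b}_{ij}$ if $i=k$, and $\epsilon_{j;t}\overline{b}_{ij} + \tfrac12(|\overline{b}_{ik}^*|\overline{b}_{kj}^* + \overline{b}_{ik}^*|\overline{b}_{kj}^*|)$ otherwise, one then substitutes $x_{k;t'} = x_{k;t}^{-1}(M_{k;t}^+ + M_{k;t}^-)$ and $M_{k;t}^{\pm} = \prod_i x_{i;t}^{[\pm\overline{b}_{ik}]_+}$. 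Collecting the resulting Laurent monomials in the $x_{i;t}$ and recognising the factors $M_{k;t}^+/M_{k;t}^- = Y_{k;t}$ and $M_{k;t}^+ + M_{k;t}^- = M_{k;t}^-(Y_{k;t}+1)$ yields, after the sign $\epsilon_{j;t}$ is carried through, precisely the stated formula $Y_{j;t'}^{\mu_{j;t}} = Y_{j;t}^{\epsilon_{j;t}} Y_{k;t}^{[\overline{b}_{kj}]_+}(Y_{k;t}^{\epsilon_{j;t}}+1)^{-\epsilon_{j;t}\overline{b}_{kj}}$; the $j=k$ case is immediate since mutation negates the $k$-th column. This is essentially the standard derivation of $Y$-seed mutation (as in \cite{fomin2007cluster}) applied column-by-column, the only new feature being that the $\epsilon_{j;t}$ and $\mu_{j;t}$ are genuinely needed because our shortened exchange matrices are not skew-symmetrizable in the usual sense.

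The main obstacle, and the step I expect to require the most care, is the sign bookkeeping: verifying that the pre-multiplication sign $\epsilon_{j;t}$ propagates consistently through the exponent $[\overline{b}_{kj}]_+$ and the exponent $-\epsilon_{j;t}\overline{b}_{kj}$ on $(Y_{k;t}^{\epsilon_{j;t}}+1)$, and confirming that the post-multiplication sign $\mu_{j;t}$ is exactly what is needed to land back in the canonically-signed matrix $\overline{B}_{t'}$ of Remark \ref{uniquematrix}. The subtlety is that $[\overline{b}_{kj}]_+$ and $[-\overline{b}_{kj}]_+$ are not symmetric under sign change, so one must check separately the cases $\overline{b}_{kj} \geq 0$ and $\overline{b}_{kj} < 0$ (and likewise track whether $\epsilon_{j;t} = 1$ or $-1$), using the identity $[a]_+ = [-a]_+ + a$ to reconcile the two. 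I would handle this by fixing the convention that $\overline{B}_t^* = (\epsilon_{j;t}\overline{b}_{ij})$ is the matrix on which $\mu_k$ is literally performed, keeping $M_{k;t}^{\pm}$ expressed via $[\pm\overline{b}_{ik}]_+$ (note column $k$ is untouched by pre-multiplication, so $\epsilon_{k;t}=1$), and then doing the two-case check at the very end; once the signs are pinned down the algebra collapses to the displayed formula. Everything else — the monomial substitutions and the recognition of $Y_{k;t}$-factors — is routine and I would not spell it out in full.
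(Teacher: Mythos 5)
Your plan is correct and essentially matches the paper's proof: both start from the monomial expression $Y_{j;t'}^{\mu_{j;t}} = \prod_i x_{i;t'}^{(\mu_k(\overline{B}_t^*))_{ij}}$, invoke the Fomin--Zelevinsky matrix-mutation formula (the paper writes it as $\mathrm{sgn}(\epsilon_{j;t}\overline{b}_{kj})[\epsilon_{j;t}\overline{b}_{ik}\overline{b}_{kj}]_+$, equivalent to your $\tfrac12(|\cdot|\cdot+\cdot|\cdot|)$ form), substitute the exchange relation for $x_{k;t'}$, and finish by a two-case split on the sign of $\overline{b}_{kj}$ to recognise $Y_{k;t}$, $Y_{k;t}^{\epsilon_{j;t}}+1$, and the residual power of $Y_{k;t}$; the paper outsources the first step to an existing lemma from \cite{wilson2018laurent} rather than re-deriving it, but the argument is the same.
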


\begin{proof}

Let us denote the entries of the matrix $(\overline{B}_{t'})$ by $\overline{b}'_{ij}$. \newline \indent 

If $j = k$ then by [Lemma 6.23,\cite{wilson2018laurent}] we see: $$ Y_{k;t'}^{\mu_{k;t}} = \displaystyle \prod_{i=1}^{m} x_{i;t'}^{\mu_{k;t}\overline{b}'_{ij}} = \displaystyle \prod_{i=1}^{m} x_{i;t}^{-\overline{b}_{ij}} = Y_{k;t}^{-1}$$

If $j \neq k$ then by employing [Lemma 6.23,\cite{wilson2018laurent}] again we see: $$ Y_{j;t'}^{\mu_{j;t}} = \displaystyle \prod_{i=1}^{m} x_{i;t'}^{\mu_{j;t}\overline{b}'_{ij}} = x_{k;t'}^{-\epsilon_{j;t}\overline{b}_{kj}}\displaystyle \prod_{\substack{i=1\\ (i\neq k)}}^{m} x_{i;t}^{\epsilon_{j;t}\overline{b}_{ij}+sgn(\epsilon_{j;t}\overline{b}_{kj})[\epsilon_{j;t}\overline{b}_{ik}\overline{b}_{kj}]_+} = $$ $$ = (\displaystyle \prod_{i=1}^{m} x_{i;t}^{[\overline{b}_{ik}]_+} + \displaystyle \prod_{i=1}^{m} x_{i;t}^{[-\overline{b}_{ik}]_+})^{-\epsilon_{j;t}\overline{b}_{kj}}\Big(\displaystyle \prod_{i=1}^{m} x_{i;t}^{\epsilon_{j;t}\overline{b}_{ij}}\Big)\Big(\displaystyle \prod_{i=1}^{m} x_{i;t}^{sgn(\epsilon_{j;t}\overline{b}_{kj})[\epsilon_{j;t}\overline{b}_{ik}\overline{b}_{kj}]_+}\Big)$$

Moreover, since we have the following identity: \[  sgn(\epsilon_{j;t}\overline{b}_{kj})[\epsilon_{j;t}\overline{b}_{ik}\overline{b}_{kj}]_+  = \left\{
\begin{array}{ll}
         \epsilon_{j;t}[\epsilon_{j;t}\overline{b}_{ik}]_+\overline{b}_{kj}, &\overline{b}_{kj} \geq 0. \\
        \\
        \epsilon_{j;t}[-\epsilon_{j;t}\overline{b}_{ik}]_+\overline{b}_{kj}, & \overline{b}_{kj} \leq 0.
\end{array} 
\right. \]

Then we obtain: \[ Y_{j;t'}^{\mu_{j;t}} =  \left\{
\begin{array}{ll}
         Y_{j;t}^{\epsilon_{j;t}}(\displaystyle \prod_{i=1}^{m} x_{i;t}^{[\overline{b}_{ik}]_+} + \displaystyle \prod_{i=1}^{m} x_{i;t}^{[-\overline{b}_{ik}]_+})^{-\epsilon_{j;t}\overline{b}_{kj}}\Big(\displaystyle \prod_{i=1}^{m} x_{i;t}^{-[\epsilon_{j;t}\overline{b}_{ik}]_+}\Big)^{-\epsilon_{j;t}\overline{b}_{kj}}, &\overline{b}_{kj} \geq 0. \\
        \\
        Y_{j;t}^{\epsilon_{j;t}}(\displaystyle \prod_{i=1}^{m} x_{i;t}^{[\overline{b}_{ik}]_+} + \displaystyle \prod_{i=1}^{m} x_{i;t}^{[-\overline{b}_{ik}]_+})^{-\epsilon_{j;t}\overline{b}_{kj}}\Big(\displaystyle \prod_{i=1}^{m} x_{i;t}^{-[-\epsilon_{j;t}\overline{b}_{ik}]_+}\Big)^{-\epsilon_{j;t}\overline{b}_{kj}}, & \overline{b}_{kj} \leq 0.
\end{array} 
\right. \]

\[ \hspace{-48mm} = \left\{
\begin{array}{ll}
         Y_{j;t}^{\epsilon_{j;t}}(1+ Y_{k;t}^{-\epsilon_{j;t}})^{-\epsilon_{j;t}\overline{b}_{kj}}, & \overline{b}_{kj} \geq 0. \\
        \\
         Y_{j;t}^{\epsilon_{j;t}}(1+ Y_{k;t}^{\epsilon_{j;t}})^{-\epsilon_{j;t}\overline{b}_{kj}}, & \overline{b}_{kj} \leq 0.
\end{array} 
\right. \] \newline

$ \hspace{3.5mm}= \hspace{2mm} {Y_{j;t}^{\epsilon_{j;t}}}{Y_{k;t}^{[\overline{b}_{kj}]_+}}{({Y_{k;t}^{\epsilon_{j;t}}}+1)^{-\epsilon_{j;t}\overline{b}_{kj}}}.$ \newline \indent 

This concludes the proof.

\end{proof}

Proposition \ref{ymutation} gives us the following immediate corollary. 

\begin{cor}
\label{ymuationcorollary}
The Y-variables, $$Y_{j;t} := \displaystyle \prod_{i=1}^{m} {x_{i;t}^{{b}^{t}_{ij}}}_{{\vert}_{\mathcal{F}}}(x_1,\dots,x_m),$$ are functions in the initial Y-variables, $Y_{j;t_0}$. So we may write: $$Y_{j;t} = Y_{j;t}(Y_{1;t_0}, \ldots, Y_{n;t_0}).$$ \indent Moreover, the exchange relations between the Y-seeds are independent of the extended part of the exchange matrix (note, however, the initial Y-variables do depend on the extended part of the exchange matrix).

\end{cor}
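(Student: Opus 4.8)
The plan is a straightforward induction along the tree $\mathbb{T}_n^{\Delta}$, using Proposition \ref{ymutation} as the inductive step. Since $\mathbb{T}_n^{\Delta}$ is a subtree of $\mathbb{T}_n$, every seed $t \in \mathbb{T}_n^{\Delta}$ is joined to $t_0$ by a \emph{unique} finite path of mutations, so there is no consistency issue: it suffices to express the $Y$-variables at the far end of a single edge in terms of those at the near end, and then compose these expressions along the path. For the base case $t = t_0$ there is nothing to prove, since each $Y_{j;t_0}$ is visibly a function of $Y_{1;t_0},\ldots,Y_{n;t_0}$.

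For the inductive step, suppose $t$ and $t'$ are joined by an edge labelled $k$ with $t$ closer to $t_0$, and assume $Y_{j;t} = Y_{j;t}(Y_{1;t_0},\ldots,Y_{n;t_0})$ for each $j$. Proposition \ref{ymutation} expresses each $Y_{j;t'}$ (or rather $Y_{j;t'}^{\mu_{j;t}}$, but $\mu_{j;t} \in \{1,-1\}$) as an explicit subtraction-free rational expression in $Y_{1;t},\ldots,Y_{n;t}$, and crucially this expression involves only the data $\{\overline{b}_{k\ell} : 1 \le \ell \le n\}$ coming from the principal $n\times n$ block of $\overline{B}_t$ together with the pre/post-multiplication signs $\epsilon_{\ell;t}$, $\mu_{\ell;t}$ — no frozen variables $x_{n+1},\ldots,x_m$ appear. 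Substituting the inductive hypothesis then exhibits $Y_{j;t'}$ as a function of $Y_{1;t_0},\ldots,Y_{n;t_0}$, which establishes the first assertion.

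For the second assertion I would track, through this same induction, exactly what the composed rational function depends on. Along the path from $t_0$ to $t$, the transition formulas of Proposition \ref{ymutation} only ever invoke, at each intermediate seed, the principal $n \times n$ block of its shortened exchange matrix and the associated sign corrections; by the standard matrix mutation rule (with $k \le n$) and by [Lemma 6.23,\cite{wilson2018laurent}], this principal block — and the pre/post-multiplication signs needed to mutate it — evolves \emph{among itself}, i.e. it is determined by the principal block of $\overline{B}_{t_0}$ alone and never sees the extended rows $\overline{b}_{ij}$ with $i > n$. Hence the function $Y_{j;t}(-)$ is assembled purely from the combinatorial mutation data of the principal part, independent of the extended part of every exchange matrix along the path, while the entire dependence on the coefficient (frozen) variables $x_{n+1},\ldots,x_m$ is concentrated in the initial values $Y_{j;t_0} = \prod_{i=1}^m x_{i;t_0}^{b^{t_0}_{ij}}$.

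The main obstacle is precisely this last point: verifying that the pre/post-multiplication signs $\epsilon_{\ell;t}$ and $\mu_{\ell;t}$ — which in the non-orientable setting are genuinely required to make column mutation well-defined, unlike in the classical case — are governed by the principal block of $\overline{B}_t$ and are insensitive to its extended rows. This is where [Lemma 6.23,\cite{wilson2018laurent}] does the essential work; once it is in hand, the remainder of the proof is a routine induction and the corollary follows immediately.
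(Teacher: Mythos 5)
Your proof is correct and takes the same route the paper intends: the paper declares the corollary ``immediate'' from Proposition \ref{ymutation}, and your induction along the tree $\mathbb{T}_n^{\Delta}$ is exactly the unpacking of that, together with the observation that the transition formula involves only the principal block entries $\overline{b}_{kj}$ ($1\le j\le n$) and the signs $\epsilon_{j;t}$, $\mu_{j;t}$. You also correctly flag the only non-obvious point, namely that those pre/post-multiplication signs are insensitive to the extended rows, which is precisely what the appeal to [Lemma~6.23, \cite{wilson2018laurent}] in Definition~\ref{prepostmultiplication} supplies.
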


\subsection{Separation of additions for quasi-cluster algebras}
\label{Separation}

\begin{lem}

Let us define $y_j := \displaystyle \prod_{i=n+1}^m x_i^{b_{ij}}$ and $ \epsilon_{j} := b_{\gamma_{j}}(L_{\gamma_{j}}^{\bullet},\overline{T})$. Then:

\label{ydynamics}
\begin{equation}\label{normalydynamics}
\displaystyle \prod_{i=1}^{m} {x_{i;t}^{{b}^{t}_{ik}}}_{{\vert}_{\mathcal{F}}}(x_1,\dots,x_m) = \displaystyle \prod_{i=1}^{2n} {X_{i;t}^{\overset{\bullet}{b}^{t}_{ik}}}_{{\vert}_{\mathcal{F}}}(x_1,\ldots,x_n,y_1^{\epsilon_1}, \ldots, y_n^{\epsilon_n})
\end{equation}

and 

\begin{equation}\label{tropicalydynamics}
\displaystyle \prod_{i=n+1}^{m} {x_{i;t}^{{b}^{t}_{ik}}}_{{\vert}_{Trop(x_{n+1},\ldots,x_m)}}(x_1,\dots,x_m) = \displaystyle \prod_{i=1}^{2n} {X_{i;t}^{\overset{\bullet}{b}^{t}_{ik}}}_{{\vert}_{Trop(x_{n+1},\ldots,x_m)}}(1,\ldots,1,y_1^{\epsilon_1}, \ldots, y_n^{\epsilon_n}).
\end{equation}

\end{lem}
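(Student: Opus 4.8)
## Proof Proposal

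The plan is to reduce this lemma to the "Y-dynamics are independent of coefficients" statement already established in Corollary \ref{ymuationcorollary}, combined with a careful bookkeeping of how the extended exchange matrix $\overline{B}_t$ on $(S,M)$ relates to the extended exchange matrix $\overset{\bullet}{B}_t$ on the orientable double cover with principal coefficients. The crucial observation is that both sides of each identity are simply the Y-variable $Y_{k;t}$, evaluated either in the field $\mathcal{F}$ (with the actual coefficient variables $x_{n+1},\ldots,x_m$ present) or in the tropical semifield $Trop(x_{n+1},\ldots,x_m)$. So the content of the lemma is that the \emph{initial} Y-variables match up under the substitution $y_j \leftarrow y_j^{\epsilon_j}$, after which Corollary \ref{ymuationcorollary} — which says the Y-dynamics on $\mathbb{T}_n^\Delta$ are governed only by the shortened exchange matrix and not the extended part — propagates the identity to every seed $t$.

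First I would unwind the notation: on the left, $\prod_{i=1}^m x_{i;t}^{b^t_{ik}}|_{\mathcal F}$ is exactly $Y_{k;t}$ for the quasi-cluster algebra $\mathcal{A}_{\mathbf{L_T}}(S,M)$, while on the right $\prod_{i=1}^{2n} X_{i;t}^{\overset{\bullet}{b}^t_{ik}}|_{\mathcal F}$ is the corresponding Y-variable for the cluster algebra $\mathcal{A}_{\mathbf{L^\bullet_{\overline T}}}(\overline{(S,M)})$ with principal coefficients, but with its frozen variables $y_1,\ldots,y_n$ specialised to $y_1^{\epsilon_1},\ldots,y_n^{\epsilon_n}$. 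At the initial seed $t_0$: the left-hand side is $Y_{k;t_0}=\prod_{i=n+1}^m x_i^{b_{ik}}$ by Remark \ref{uniquematrix}, where $b_{ik}=b_T(L_i,\gamma_k)$. On the double cover the principal-coefficient Y-variable at $t_0$ is $\prod_{i=1}^n y_i^{\overset{\bullet}{b}_{ik}}$ where $\overset{\bullet}{b}_{ik}$ records the shear coordinates of the lifted triangulation. By Theorem \ref{coefficientbijection}, the shear coordinates $b_{\overline T}(\overline{L_i},\overline{\gamma}_k)$ on the double cover encode the original lamination $L_i$, and the sign $\epsilon_j = b_{\gamma_j}(L^\bullet_{\gamma_j},\overline T)$ is exactly the discrepancy between the "plain" convention used to define $Y_{k;t_0}$ on $(S,M)$ and the principal-coefficient convention on $\overline{(S,M)}$. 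Substituting $y_j \mapsto y_j^{\epsilon_j}$ absorbs precisely this sign discrepancy, so the two initial expressions agree as elements of $\mathcal F$; this establishes \eqref{normalydynamics} at $t_0$.

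Next I would propagate from $t_0$ to arbitrary $t\in\mathbb{T}_n^\Delta$. By Corollary \ref{ymuationcorollary}, $Y_{k;t}$ is a fixed rational function of the initial Y-variables $Y_{1;t_0},\ldots,Y_{n;t_0}$, and this function depends only on the sequence of flips (equivalently, the shortened exchange matrices) along the path from $t_0$ to $t$ — not on the extended parts. Proposition \ref{quiverandflip} guarantees that each flip on $(S,M)$ lifts to the corresponding pair of commuting mutations on $\overline{(S,M)}$, and by the usual compatibility of $B$-matrices with the double cover the shortened exchange matrices (and hence the Y-dynamics rational functions, via Proposition \ref{ymutation}) on the two sides are intertwined. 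Therefore the identity at $t_0$ propagates: both sides of \eqref{normalydynamics} are the same rational function applied to the same initial data, hence equal at every $t$. For \eqref{tropicalydynamics} the argument is identical except that one evaluates everything in the tropical semifield $Trop(x_{n+1},\ldots,x_m)$ instead of in $\mathcal F$; the tropical evaluation respects the same Y-dynamics recursion (the Y-mutation formula of Proposition \ref{ymutation} makes sense over any semifield), and setting the cluster variables $x_1,\ldots,x_n$ to $1$ on the left corresponds exactly to setting them to $1$ on the right.

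The main obstacle I anticipate is the base-case bookkeeping at $t_0$: precisely matching the convention-dependent signs. On $(S,M)$ the shortened exchange matrix $\overline{B}_{t_0}$ is only defined up to column sign, and a choice has been fixed via the "positive/negative monomial" convention of Definition \ref{positivenegative}; on $\overline{(S,M)}$ the principal-coefficient matrix $\overset{\bullet}{B}_{t_0}$ has its own rigid convention. One must verify that the column-sign discrepancy between these two is captured exactly by $\epsilon_j = b_{\gamma_j}(L^\bullet_{\gamma_j},\overline T)\in\{+1,-1\}$, and that the lift of the lamination $L_i$ contributes shear coordinates to $\overset{\bullet}{B}_{t_0}$ in the rows indexed $1,\ldots,n$ matching $b_T(L_i,\gamma_k)$ up to exactly these signs. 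This is a finite, case-based check over the flip types in Figure \ref{combinatorialflips} and the lamination lift behaviour, but it is where the real care is needed; once it is done, the propagation step is formal.
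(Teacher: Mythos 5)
Your proposal is correct and follows essentially the same route as the paper: both reduce, via Corollary \ref{ymuationcorollary}, to checking that the initial $Y$-variables agree after the substitution $x_{n+j}\leftarrow y_j^{\epsilon_j}$ (where the exponent $\epsilon_j=b_{\gamma_j}(L^{\bullet}_{\gamma_j},\overline{T})$ absorbs the sign discrepancy, since $Y^{\bullet}_{j;t_0}=\bigl(\prod_{i=1}^{n}X_{i}^{b_{ik}}\bigr)x_{n+j}^{\epsilon_j}$ specialises to $Y_{j;t_0}$), and then deduce the tropical identity by evaluating in $Trop(x_{n+1},\ldots,x_m)$ where every cluster variable becomes $1$. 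The only difference is cosmetic: what you flag as the delicate base-case bookkeeping is handled in the paper by the same short direct computation you sketch, so no additional argument is needed.
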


\begin{proof}

Let us begin by proving the first statement, (\ref{normalydynamics}). By the definition of $Y$-dynamics and Corollary \ref{ymuationcorollary}, we see that (\ref{normalydynamics}) may be rewritten as follows:

\begin{equation}
Y_{k;t}(Y_{1;t_0},\ldots,Y_{n;t_0}) =  Y_{k;t}(Y^{\bullet}_{1;t_0},\ldots,Y^{\bullet}_{n;t_0})_{{\vert}_{x_{n+j} \leftarrow y_j^{\epsilon_j}}}
\end{equation}

So it suffices to show that $$Y_{j;t_0} = {Y^{\bullet}_{j;t_0}}_{{\vert}_{x_{n+j} \leftarrow y_j^{\epsilon_j}}}$$ for each $j \in \{1,\ldots,n\}$. Indeed, $$Y^{\bullet}_{j;t_0} = \displaystyle \prod_{i=1}^{2n} X_{i;t}^{\overset{\bullet}{b}^{t}_{ik}} = (\prod_{i=1}^{n} X_{i;t}^{b^{t}_{ik}})x_{n+j}^{\epsilon_{j}}.$$

So, $${Y^{\bullet}_{j;t_0}}_{{\vert}_{x_{n+j} \leftarrow y_j^{\epsilon_j}}} = (\prod_{i=1}^{n} X_{i;t}^{b^{t}_{ik}})y_{j} = Y_{j;t_0}.$$ \newline

The second statement, (\ref{tropicalydynamics}), then follows from (\ref{normalydynamics}) after evaluating in $Trop(x_{n+1}, \ldots, x_m)$ and noting that for any cluster variable $X_{i;t}$, we have $${X_{i;t}}_{{\vert}_{Trop(x_{n+1}, \ldots, x_m)}} = 1$$.

\end{proof}

The following is a generalisation of Fomin and Zelevinsky's `separation of additions' formula [Theorem 3.7, \cite{fomin2007cluster}].

\begin{thm}
\label{separation}
Let $\mathcal{A}(S,M,\mathbf{L})$ be a quasi-cluster algebra with initial seed $t_0$ corresponding to some tagged quasi-triangulation $T$. Then the cluster variables $x_{k;t}$ in $\mathcal{A}(S,M,\mathbf{L})$ can be expressed as follows:

$$x_{k;t}(x_1,\ldots, x_n, x_{n+1},\ldots, x_m) = \frac{X_{k;t} \vert_{\mathcal{F}}(x_1,\ldots, x_n, y_1^{\epsilon_1},\ldots, y_n^{\epsilon_n})}{X_{k;t} \vert_{Trop(x_{n+1},\ldots,x_m)}(1,\ldots, 1, y_1^{\epsilon_1},\ldots, y_n^{\epsilon_n})}$$

where:

\begin{itemize}

\item $y_j := \displaystyle \prod_{i=n+1}^m x_i^{\overline{b}_{ij}}.$

\item $ \epsilon_{j} := b_{\gamma_{j}}(L_{\gamma_{j}}^{\bullet},\overline{T^{\otimes}})$

\end{itemize}

\end{thm}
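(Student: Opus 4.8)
The plan is to mirror Fomin and Zelevinsky's original argument for cluster algebras, using the $Y$-dynamics machinery just developed for quasi-cluster algebras to handle the complications coming from the non-binomial exchange relations and the sign ambiguity in the shortened exchange matrix. Recall that for a quasi-cluster algebra we may pass to the associated triangulation $T^{\otimes}$, and (via Proposition \ref{flip}, Proposition \ref{quiverandflip}, and the substitution $x_{\mathbf{L_T}}(\gamma_\alpha)\rightarrow x_{\mathbf{L_T}}(\alpha)x_{\mathbf{L_T}}(\beta_\alpha)$ of Section 7) it suffices to prove the separation formula for cluster variables $x_{k;t}$ indexed by triangulations $t\in\mathbb{T}_n^{\Delta}$, since on this subgraph every exchange polynomial is a binomial and the $Y$-dynamics of Proposition \ref{ymutation} applies. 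So I would first reduce to the case $t\in\mathbb{T}_n^{\Delta}$.

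Next I would set up the two ``template'' seed patterns to compare: the pattern $\mathcal{A}_{\mathbf{L}}(S,M)$ with its genuine coefficients $x_{n+1},\ldots,x_m$, and the principal-coefficient pattern $\mathcal{A}_{\mathbf{L}^{\bullet}}(S,M)$ whose cluster variables we denote $X_{k;t}$ (here I take the principal lamination $\mathbf{L}_{T^{\otimes}}^{\bullet}$ normalised so that $b_{\gamma_j}(L_{\gamma_j}^{\bullet},\overline{T^{\otimes}})=1$ for each $j$; the $\epsilon_j$ in the statement record the shear coordinates of the \emph{original} $\mathbf{L}$ relative to this choice). The proof proceeds by induction on the distance in $\mathbb{T}_n^{\Delta}$ from $t_0$ to $t$. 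The base case $t=t_0$ is immediate since $x_{k;t_0}=x_k$ and the denominator is $1$. For the inductive step, suppose $t'$ is obtained from $t$ by a flip in direction $k$. One writes the exchange relation $x_{k;t'}x_{k;t}=M_{k;t}^{+}+M_{k;t}^{-}$, divides the induction hypothesis for $x_{k;t}$ into it, and recognises that $M_{k;t}^{+}/M_{k;t}^{-}=Y_{k;t}$; the key point is that $M_{k;t}^{\pm}$, evaluated with $x_{n+j}\leftarrow y_j^{\epsilon_j}$ inside $X_{k;t}$'s variables, factors as a product of the $\mathcal{F}$-specialisation of the principal-coefficient exchange monomial and its $\mathrm{Trop}$-specialisation — this is exactly the content of Lemma \ref{ydynamics}, equations (\ref{normalydynamics}) and (\ref{tropicalydynamics}), which express the $Y$-variable $\prod x_{i;t}^{b^t_{ik}}$ in both patterns via the same universal functions $Y_{k;t}$. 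Substituting and using Proposition \ref{ymutation} to track how the $Y$-seed mutates, together with the corresponding exchange relation $X_{k;t'}X_{k;t}=\widehat{M}_{k;t}^{+}+\widehat{M}_{k;t}^{-}$ in $\mathcal{A}_{\mathbf{L}^{\bullet}}$, one checks that the claimed ratio for $x_{k;t'}$ drops out; both numerator and denominator transform in the same way under the flip because the $Y$-dynamics governing them is, by Corollary \ref{ymuationcorollary}, independent of the extended part of the exchange matrix.

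The main obstacle I anticipate is bookkeeping the sign factors $\epsilon_{j;t}$ (pre-multiplication) and $\mu_{j;t}$ (post-multiplication) of Definition \ref{prepostmultiplication} consistently on both sides of the comparison. In the orientable cluster-algebra setting these signs can be absorbed because one can always pick a consistent matrix representative, but here the shortened exchange matrix of a quasi-cluster seed is genuinely only defined up to column sign flips, and our choice of which monomial of each binomial is ``positive'' is what pins it down (Remark \ref{uniquematrix}). I would therefore take care that the same sign conventions (the same designation of $M_{j;t}^{+}$ versus $M_{j;t}^{-}$) are used for the $x$-pattern and the $X$-pattern at every vertex, so that the $Y$-variables $Y_{j;t}$ agree as rational functions of $Y_{1;t_0},\ldots,Y_{n;t_0}$; once this alignment is fixed, Lemma \ref{ydynamics} and Proposition \ref{ymutation} do all the work and the induction closes. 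A secondary subtlety is confirming that the reduction from arbitrary quasi-triangulations to triangulations in $\mathbb{T}_n^{\Delta}$ is compatible with the specialisation $x_{\mathbf{L_T}}(\gamma_\alpha)\rightarrow x_{\mathbf{L_T}}(\alpha)x_{\mathbf{L_T}}(\beta_\alpha)$ — i.e.\ that this substitution commutes with the $\mathcal{F}$- and $\mathrm{Trop}$-evaluations appearing in the formula — but this follows because the substitution is a ring map fixing the coefficient variables $x_{n+1},\ldots,x_m$, hence commutes with both specialisations.
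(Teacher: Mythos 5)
Your inductive core for an initial tagged \emph{triangulation} --- write the binomial exchange relation, compare it with the principal-lamination pattern evaluated at $x_{n+j}\leftarrow y_j^{\epsilon_j}$, and close the induction using Lemma \ref{ydynamics}, Proposition \ref{ymutation} and Corollary \ref{ymuationcorollary}, minding the pre/post-multiplication signs of Definition \ref{prepostmultiplication} --- is essentially the paper's Case 1. The genuine gap is your opening reduction of an initial quasi-triangulation to $\mathbb{T}_n^{\Delta}$ via the associated triangulation $T^{\otimes}$ and the substitution $x_{\mathbf{L_T}}(\gamma_{\alpha})\rightarrow x_{\mathbf{L_T}}(\alpha)x_{\mathbf{L_T}}(\beta_{\alpha})$. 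The arc $\gamma_{\alpha}$ cuts out a M\"obius strip with one marked point, so by Definition \ref{quasitriangulationdef} the collection $T^{\otimes}$ is \emph{not} a quasi-triangulation and hence not a seed of the seed pattern at all; the triangulation case therefore cannot be invoked ``at $T^{\otimes}$'' and then specialised. What actually has to be established is that the exchange relations at a quasi-triangulation seed are encoded by $B_{\overline{T^{\otimes}}}$, that this encoding is compatible with flips of quasi-arcs (the paper invokes [Theorem 13.5, \cite{fomin2012cluster}] to get $\mu_{\tilde{k}}\circ\mu_{k}(B_{\overline{T^{\otimes}}})=B_{\overline{T'^{\otimes}}}$), and that the $Y$-dynamics of Proposition \ref{ymutation} and Corollary \ref{ymuationcorollary} persists when the $Y$-variables are built from the modified entries $\overline{b}^t_{ij}=b^t_{ij}+b^t_{\tilde{i}j}$ on the subgraph $\mathbb{T}_n^{\Delta^{\otimes}}$; this is precisely the paper's Case 2, and your remark that the substitution ``is a ring map fixing the coefficient variables'' does not supply any of it. Note also that restricting the target seed $t$ to $\mathbb{T}_n^{\Delta}$ silently drops the cluster variables attached to one-sided closed curves, which occur in no triangulation seed.

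A second, smaller error is your parenthetical normalisation: one cannot in general choose the principal lamination so that $b_{\gamma_j}(L_{\gamma_j}^{\bullet},\overline{T^{\otimes}})=1$ for every $j$, and the $\epsilon_j$ of the statement are \emph{not} shear data of the original $\mathbf{L}$ (whose coefficients enter only through the $y_j$); they are, by definition, the shear coordinates of the chosen principal lamination itself. On a non-orientable surface these signs are genuinely constrained: the deck transformation of the double cover is orientation-reversing, so for instance when $L_{\beta}$ is a one-sided closed curve one has $b_{\overline{T}}(\overline{\beta},\overline{L}_{\beta})=\epsilon$ and $b_{\overline{T}}(\tilde{\overline{\beta}},\overline{L}_{\beta})=-\epsilon$, and in general the lift of a principal lamination has both $S$- and $Z$-shaped intersections --- this is exactly why Theorem \ref{arcexpansioncoefficients} had to be proved for arbitrary principal laminations rather than only principal coefficients. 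Your mechanism survives if you simply keep the $\epsilon_j$ as defined in Lemma \ref{ydynamics}; but with the proposed normalisation and reinterpretation the key identity $Y_{j;t_0}={Y^{\bullet}_{j;t_0}}|_{x_{n+j}\leftarrow y_j^{\epsilon_j}}$, which is what makes numerator and denominator transform identically, would be asserted for a coefficient pattern that need not exist, and the statement you would prove is not Theorem \ref{separation}.
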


\begin{proof} We split the proof in to two cases depending on whether or not $T$ contains one-sided closed curves. \newline

\noindent \textbf{Case 1:} $T$ is a tagged triangulation. \newline \indent Note that, given a triangulation $T$, every quasi-cluster variable can be obtained via a sequence of flips on arcs. We therefore prove the theorem by induction on the number of flips of arcs. \newline

Directly from the exchange relations we know:

\begin{equation} \label{exchangerelation}
{x_{k;t'}}_{{\vert}_{\mathcal{F}}}(\mathbf{x}) = \frac{\Big(\displaystyle \prod_{i=1}^m {x_{i;t}^{b^{t}_{ik}}}_{{\vert}_{\mathcal{F}}}(\mathbf{x})+1\Big)_{{\vert}_{\mathcal{F}}}}{\Big(\displaystyle \prod_{i=n+1}^m {x_{i;t}^{b^{t}_{ik}}}_{{\vert}_{\mathcal{F}}}(\mathbf{x})\oplus1\Big)_{{\vert}_{Trop(x_{n+1},\ldots,x_m)}}}\Big( \displaystyle \prod_{i=1}^n {x_{i;t}^{[-b^{t}_{ik}]_{+}}}_{{\vert}_{\mathcal{F}}}(\mathbf{x})\Big){\Big(x_{k:t}^{-1}}_{{\vert}_{\mathcal{F}}}(\mathbf{x})\Big)
\end{equation}
where $\mathbf{x} := (x_1,\ldots, x_m)$. \newline

Since equation (\ref{exchangerelation}) is true for every cluster algebra, we may consider the cluster algebra with principal lamination $L$ and evaluate $x_{n+j}$ at $y_j^{\epsilon_j}$ for each $j \in \{1,\ldots,n\}$ to get:

\begin{equation} \label{evaluatingprincipal}
{X_{k;t'}}_{{\vert}_{\mathcal{F}}}(\mathbf{y}) = \frac{\Big(\displaystyle \prod_{i=1}^{2n} {X_{i;t}^{\overset{\bullet}{b}^{t}_{ik}}}_{{\vert}_{\mathcal{F}}}(\mathbf{y})+1\Big)_{{\vert}_{\mathcal{F}}}}{\Big(\displaystyle \prod_{i=n+1}^{2n} {X_{i;t}^{\overset{\bullet}{b}_{ik}^t}}_{{\vert}_{\mathcal{F}}}(\mathbf{y})\oplus1\Big)_{{\vert}_{Trop(y_1^{\epsilon_1},\ldots,y_n^{\epsilon_n})}}}\Big( \displaystyle \prod_{i=1}^n {X_{i;t}^{[-\overset{\bullet}{b}^{t}_{ik}]_{+}}}_{{\vert}_{\mathcal{F}}}(\mathbf{y})\Big){\Big(X_{k:t}^{-1}}_{{\vert}_{\mathcal{F}}}(\mathbf{y})\Big)
\end{equation}

where $\mathbf{y} := (x_1,\ldots, x_n, {y_1}^{\epsilon_1},\ldots, {y_n}^{\epsilon_n})$. \newline

Specialising (\ref{evaluatingprincipal}) at $x_1 = \ldots, = x_n = 1$ and evaluating in $Trop(x_1, \ldots, x_n)$ we get:

\begin{equation} \label{specialisingprincipal}
\begin{aligned}
&{X_{k;t'}}_{{\vert}_{Trop(x_{n+1},\ldots,x_m)}}(\mathbf{z}) = \\ = & \frac{\Bigg(\Big(\displaystyle \prod_{i=1}^{2n} {X_{i;t}^{\overset{\bullet}{b}^{t}_{ik}}}_{{\vert}_{\mathcal{F}}}(\mathbf{z})+1\Big)\Big( \displaystyle \prod_{i=1}^n {X_{i;t}^{[-\overset{\bullet}{b}^{t}_{ik}]_{+}}}_{{\vert}_{\mathcal{F}}}(\mathbf{z})\Big){\Big(X_{k:t}^{-1}}_{{\vert}_{\mathcal{F}}}(\mathbf{z})\Big)\Bigg)_{{\vert}_{Trop(x_{n+1},\ldots,x_{m})}}}{\Big(\displaystyle \prod_{i=n+1}^{2n} {X_{i;t}^{\overset{\bullet}{b}_{ik}^t}}_{{\vert}_{\mathcal{F}}}(\mathbf{z})\oplus1\Big)_{{\vert}_{Trop(y_1^{\epsilon_1},\ldots,y_n^{\epsilon_n})}}}
\end{aligned}
\end{equation}

where $\mathbf{z} := (1,\ldots, 1, {y_1}^{\epsilon_1},\ldots, {y_n}^{\epsilon_n})$. \newline

Applying induction to the cluster variables in the terms: $$\Big(\displaystyle \prod_{i=1}^n {x_{i;t}^{[-b^{t}_{ik}]_{+}}}_{{\vert}_{\mathcal{F}}}(\mathbf{x})\Big) \hspace{4mm} \text{and} \hspace{4mm} {\Big(x_{k:t}^{-1}}_{{\vert}_{\mathcal{F}}}(\mathbf{x})\Big)$$

equation (\ref{exchangerelation}) becomes:

\begin{equation} \label{inductionexchangerelation}
{x_{k;t'}}_{{\vert}_{\mathcal{F}}}(\mathbf{x}) = \frac{\Big(\displaystyle \prod_{i=1}^{m} {x_{i;t}^{{b}^{t}_{ik}}}_{{\vert}_{\mathcal{F}}}(\mathbf{x})+1\Big)\Big( \displaystyle \prod_{i=1}^n {X_{i;t}^{[-{b}^{t}_{ik}]_{+}}}_{{\vert}_{\mathcal{F}}}(\mathbf{y})\Big){\Big(X_{k:t}^{-1}}_{{\vert}_{\mathcal{F}}}(\mathbf{y})\Big)}{\Bigg(\Big(\displaystyle \prod_{i=n+1}^{m} {x_{i;t}^{{b}_{ik}^t}}_{{\vert}_{\mathcal{F}}}(\mathbf{x})\oplus1\Big)\Big( \displaystyle \prod_{i=1}^n {X_{i;t}^{[-{b}^{t}_{ik}]_{+}}}_{{\vert}_{\mathcal{F}}}(\mathbf{z})\Big){\Big(X_{k:t}^{-1}}_{{\vert}_{\mathcal{F}}}(\mathbf{z})\Big)\Bigg)_{{\vert}_{Trop(x_{n+1},\ldots,x_m)}}}
\end{equation}

Case 1 then follows from Lemma \ref{ydynamics}, which states:

\begin{equation}
\displaystyle \prod_{i=1}^{2n} {x_{i;t}^{{b}^{t}_{ik}}}_{{\vert}_{\mathcal{F}}}(\mathbf{x}) = \displaystyle \prod_{i=1}^{2n} {X_{i;t}^{\overset{\bullet}{b}^{t}_{ik}}}_{{\vert}_{\mathcal{F}}}(\mathbf{y}).
\end{equation}

and 

\begin{equation}
\displaystyle \prod_{i=n+1}^{2n} {x_{i;t}^{{b}^{t}_{ik}}}_{{\vert}_{Trop(x_{n+1},\ldots,x_m)}}(\mathbf{x}) = \displaystyle \prod_{i=1}^{2n} {X_{i;t}^{\overset{\bullet}{b}^{t}_{ik}}}_{{\vert}_{Trop(x_{n+1},\ldots,x_m)}}(\mathbf{z}).
\end{equation}

As then, after multiplying the numerator and denominator of (\ref{inductionexchangerelation}) by $$\Big(\displaystyle \prod_{i=n+1}^{2n} {X_{i;t}^{\overset{\bullet}{b}_{ik}^t}}_{{\vert}_{\mathcal{F}}}(\mathbf{z})\oplus1\Big)_{{\vert}_{Trop(y_1^{\epsilon_1},\ldots,y_n^{\epsilon_n})}},$$ the new numerator and denominator equal (\ref{evaluatingprincipal}) and (\ref{specialisingprincipal}), respectively. \newline

\noindent \textbf{Case 2:} $T$ is a tagged quasi-triangulation. \newline \indent The proof in this case works analogous to Case 1. For each tagged quasi-triangulation $T$ one should consider the associated tagged triangulation $T^{\otimes}$. We can encode the exchange relations of $T$ (with respect to a multi-lamination $L$) via the extended exchange matrix $B_{\overline{T}^{\circ}}$. Moreover, by [Theorem 13.5,  \cite{fomin2012cluster}] we know that if $T$ and $T'$ are quasi-triangulations related by flipping a quasi-arc (labelled by $k$) which does not intersect a one-sided closed curve in $T$, then: $$\mu_{\tilde{k}} \circ \mu_{k}(B_{\overline{T^{\otimes}}}) = B_{\overline{T'^{\otimes}}}.$$ Similar to Definition \ref{labelledseedpattern} we may define a subgraph of $\mathbb{T}_n$ consisting of all labelled quasi-triangulations connected to some initial quasi-triangulation $t_0$ via flips of quasi-arcs not intersecting any one-sided closed curve. We denote this subgraph by $\mathbb{T}_n^{\Delta^{\otimes}}$. For any $t = (\{x_{1;t},\ldots, x_{n;t}\}, \tilde{B}_t = (b^t_ij))$ in $\mathbb{T}_n^{\Delta^{\otimes}}$ we may define the corresponding collection of $Y$-variables for $i \in \{1,\ldots, n\}$: $$ Y_{j:t} = \displaystyle \prod_{i=1}^{m} x_{i;t}^{\overline{b}^t_{ij}}.$$ where 

\[  \overline{b}^t_{ij} := \left\{
\begin{array}{ll}
         b^t_{ij} + b^t_{\tilde{i}j}, & i \in \{1,\ldots, n\} \\
        \\
        b^t_{ij}, & i \in \{n+1,\ldots, m\}.
\end{array} 
\right. \]

We may then obtain analogous formulae to Proposition \ref{ymutation} describing how $Y$-variables change under mutation. The crucial point being that Corollary \ref{ymuationcorollary} also holds in this setting. The proof then follows applying the same arguments used in Case 1.

\end{proof}

\bibliography{bases}
\bibliographystyle{plain}

\Addresses

\end{document}